\newcommand{\mk}{\mathfrak}
\newcommand{\CS}{Y}
\newcommand{\LS}{\mathfrak{L}(Y)}
\newcommand{\cone}{X}
\newcommand{\WF}{\mathrm{WF}}
\newcommand\R{\mathbb{R}}
\newcommand\C{\mathbb{C}}
\newcommand\Z{\mathbb{Z}}
\newcommand\N{\mathbb{N}}
\newcommand\A{\bf A}
\newcommand\CC{\mathcal{C}}
\newcommand\LL{\mathcal{L}}
\numberwithin{equation}{section}
\newtheorem{proposition}{Proposition}[section]
\newtheorem{definition}{Definition}[section]
\newtheorem{lemma}{Lemma}[section]
\newtheorem{theorem}{Theorem}[section]
\newtheorem{remark}{Remark}[section]
\begin{document}
\title[Strichartz estimates for electromagnetic wave]{Decay and Strichartz estimates \\ for critical electromagnetic wave equations \\ on conic manifolds}

\author{Qiuye Jia}
\address{Mathematical Sciences Institute, the Australian National University; }
\email{Qiuye.Jia@anu.edu.au; }

\author[J. Zhang]{Junyong Zhang}
\address{Junyong Zhang\newline
  Department of Mathematics, Beijing Institute of Technology, Beijing 100081}
\email{zhang\_junyong@bit.edu.cn}

\begin{abstract}
We establish the decay and Strichartz estimates for the wave equation with large scaling-critical  electromagnetic potentials on a conical singular space $(X,g)$ with dimension $n\geq3$, where the metric $g=dr^2+r^2 h$ and $X=C(Y)=(0,\infty)\times Y$ is a product cone over the closed Riemannian manifold $(Y,h)$ with metric $h$. 
The decay assumption on the magnetic potentials is scaling critical and includes the decay of Coulomb type.
The main technical innovation lies in proving localized pointwise estimates for the half-wave propagator by constructing a localized spectral measure, which effectively separates contributions from conjugate point pairs on $\CS$. 
In particular, when $Y=\mathbb{S}^{n-1}$, our results, which address the case of large critical electromagnetic potentials, extend and improve upon those in \cite{CS}, which considered sufficiently decaying, and small potentials and that of \cite{DF}, which considered potentials decaying faster than scaling critical ones.
\end{abstract}

\maketitle

\begin{center}
 \begin{minipage}{120mm}
   { \small {\bf Key Words:  Decay estimates,  Strichartz estimates,  Singular electromagnetic potentials, wave equation, conical singular space}
      {}
   }\\
    { \small {\bf AMS Classification:}
      { 42B37, 35Q40, 35Q41.}
      }
 \end{minipage}
 \end{center}


\tableofcontents

\section{Introduction and main results}

Strichartz and decay estimates are fundamental tools in the study of nonlinear Schrödinger equations, nonlinear wave equations, and other nonlinear dispersive equations. In particular, global-in-time Strichartz estimates play a crucial role in establishing global well-posedness and scattering results for such equations. In this paper, we continue the work initiated in our previous study \cite{JZ, JZ2}, where we mainly considered the Schrödinger equation, by investigating the validity of Strichartz estimates for the wave equation in high-dimensional conical singular spaces in the presence of a scaling-critical large magnetic potential. Magnetic potentials, which act as first-order differential operator with $O(|x|^{-1})$ coefficients, are critical with respect to the scaling of the operator, making their analysis particularly challenging even in Euclidean space. For example, in the special case $Y=\mathbb{S}^{n-1}$, the validity of Strichartz estimates for the dispersive equations with Coulomb type decay magnetic potential is an open problem posed in \cite{DFVV, Fanelli, FV}, and is a longstanding gap left by \cite{CS, DF, EGS1, EGS2}. \vspace{0.2cm}

\subsection{Background and motivation} 
By a conic manifold, we mean
\begin{equation}
X=C(Y)=(0,\infty) \times Y,
\end{equation}
which is an $n$-dimensional product cone over the closed Riemannian manifold $(Y, h)$. $X$ is equipped with the metric
\begin{equation}
g=dr^2+r^2 h.
\end{equation}
We will in fact view it as its completion\footnote{One can check, this `closure' is indeed complete.} $\overline{X}=[0,\infty) \times Y$.
 
The Euclidean space $\R^n$ is the simplest example of a cone, with the cross-section $Y=\mathbb{S}^{n-1}$ with its standard metric.
While the general product cones $X$ possess a dilation symmetry analogous to that of Euclidean space, but no other symmetries in general.

We consider the evolution of a charged particle (on the non-relativistic energy scale) in a potentially curved spacetime $\mathbb{R}_t \times X$ in an electromagnetic field.
Denoting the dual metric of $g$ by $\big(g^{jk}\big)$ and the determinant by $|g|=\mathrm{det}\big(g_{jk}\big)$, then such evolution is governed by the time-dependent Schr\"odinger or wave equation associated to the operator on the product cone $X=C(Y)$:
\begin{equation}\label{H-AV}
\begin{split}
H_{A, V}&=\frac1{\sqrt{|g|}}\sum_{j,k=1}^n\Big(i\frac{\partial}{\partial x_j}+A_j\Big)\sqrt{|g|} g^{jk} \Big(i\frac{\partial}{\partial x_k}+A_k\Big)+V(x)\\
&=-\Delta_g-2iA\cdot\nabla_g-i\mathrm{div}_g A+|A|^2+V(x)
\end{split}
\end{equation}
where $\cdot$ is the pairing of 1-forms and vector fields, $V: \cone\to \R$ is the electric scalar potential and 
\begin{equation} \label{eq:defn-A}
A = A_jdx^j
\end{equation}
is the one form representing the magnetic vector potential, and $|A|^2$ is defined by $\sum_{j,k=1}^{n}g^{jk}A_jA_k$. 
When we take divergence of one forms, we always mean the divergence of the vector field obtained from the one form using the metric.
 
An important special case is when $Y=\mathbb{S}^{n-1}$ is the unit sphere, then $X=\R^n$ and $H_{A, V}$ is an electromagnetic Schr\"odinger operator in the Euclidean space.
Schr\"odinger operators with electromagnetic potentials have been extensively studied in the context of spectral and scattering theory. 
Important physical potentials, such as those corresponding to constant magnetic fields and the Coulomb potential in Euclidean space $X=\R^n$,  were investigated by Avron, Herbst, and Simon \cite{AHS1, AHS2, AHS3} and Reed and Simon \cite{RS}. The quantum-mechanical scattering of a test particle in conical space was considered by t’Hooft \cite{Hooft} and Jackiw et al \cite{Jackiw1, Jackiw2}; later the consideration was extended to the case of a magnetic vortex placed along the axis of conical space, i.e. to the quantum-mechanical scattering on a cosmic string \cite{Yu}.

Building on these foundational works, as well as recent studies \cite{DFVV, EGS1, EGS2, FFFP, FFFP1, GYZZ, FV, FZZ}, this paper investigates—from a mathematical perspective—how electric/magnetic potentials and conical geometry influence the dynamics of solutions to dispersive equations, particularly the wave equation. \vspace{0.2cm}

Despite those aforementioned progress, pointwise decay estimates and Strichartz estimates in general conical geometries—especially under scaling-critical electromagnetic potentials—remain largely unexplored due to their sensitivity to geometric perturbations. This motivates our work to quantify the influence of conjugate points and scaling-critical potentials on dispersive decay rates, bridging gaps between existing results.

Concretely, in Definition~\ref{defn:non-focusing-Lagrangian}, we propose a natural non-focusing condition (NFC) on the Lagrangian submanifold $\mathscr{L}_\pm$ of $T^*(\mathbb{R} \times \CS \times \CS)$ (see \eqref{eq: propagating lagrangians}) governing the wave propagation on conic manifolds. We expect this to be the sharp condition for localized dispersive estimates to hold for the full range of indices. This condition is satisfied by a large class of closed manifolds, see Proposition~\ref{prop:non-focusing-curvature-condition} for some examples where we use some curvature conditions to deduce this non-focusing condition.

Roughly speaking, for each fixed time in $(0,\pi]$, we require that the part of this Lagrangian submanifold near the diagonal of $Y \times Y$ is a finite sheet cover and each sheet projects down to $Y \times Y$ diffeomorphically. This corresponds to requiring the geodesic flow, which governs the wave propagation, is non-focusing there.
This generalizes the aforementioned Euclidean case to a much larger class of cones that is robust under perturbations. In particular, any cone with cross section having sectional curvature less than $2$ is in this class (see Proposition~\ref{prop:non-focusing-curvature-condition}). For example, cones over flat tori, negatively curved (closed) manifold, spheres with radius larger than $\frac{1}{2}$ are in this class.   

We expect this condition to be sharp for localized dispersive estimates. 
This has been confirmed when $Y=\rho \mathbb{S}^{n-1}$ is a sphere with radius $\rho$ by Taira \cite{Taira} using special functions and our condition perhaps reveals the reason behind this from an more general geometric perspective. 

\subsection{The setup}

In this paper, we consider the Schrödinger operator in form of \eqref{H-AV}, featuring Coulomb-type magnetic potentials and inverse-square electric potentials, that is
\begin{equation} \label{eq:A-form}
A = {\A}(\hat{x}), \quad V = \frac{a(\hat{x})}{r^2},\quad \hat{x} \in Y,
\end{equation}
where $\A$ is an 1-form on $\CS$ and the first equality is interpreted as viewing ${\A}$ as a form on $X$ that is translation (in $r$) invariant.
This imposes a condition that $A$ has no radial component: $g(A,dr)=0$, which is the so-called Cronstr\"om or transversal gauge. 
One should think of $\eqref{eq:A-form}$ as requiring 
\begin{equation}
A \sim O(r^{-1}), \quad r\to 0, +\infty
\end{equation}
in the language of existing literature in PDEs. 
Because components there were (implicitly) in terms of basis of 1-forms like $dx^j$, which are equivalent to $dr,rd\hat{x}$, with $\hat{x}$ being a coordinate system on $Y$.

More concretely, in the special case of the Euclidean space, if we use standard Cartesian coordinates on $\mathbb{R}^n$ to write $A$ and write ${\A}$ also as a vector with $n$-components via $\mathbb{S}^{n-1} \hookrightarrow \mathbb{R}^n$, then \eqref{eq:A-form} means 
\begin{equation}
A_jdx^j = {\A}_j d\hat{x}_j,
\end{equation}
so their components in two local coordinates respectively satisfies $A_j = \frac{{\A}_j}{r}$.

Using conventions above, the transversal gauge in the Euclidean setting reads
\begin{equation}\label{eq:transversal,Euclidean}
{\A}(\hat{x})\cdot\hat{x}=0,
\qquad
\text{for all }\hat{x} \in \mathbb{S}^{n-1} \hookrightarrow \mathbb{R}^n.
\end{equation}
Putting \eqref{eq:A-form} into \eqref{H-AV}, we set
\begin{equation}\label{eq:LAa-general}
\begin{split}
\mathcal{L}_{{\A},a}&=-\partial_r^2-\frac{n-1}r\partial_r+\frac1{\sqrt{|h|}}\sum_{j,k=2}^n\Big(i\frac{\partial}{\partial \hat{x}_j}+A_j\Big)\sqrt{|h|} h^{jk} \Big(i\frac{\partial}{\partial \hat{x}_k}+A_k\Big)+\frac{a(\hat{x})}{r^2}.
\end{split}
\end{equation}

Due to the radial-tangential decomposition of our metric, we have
\begin{equation}
\sqrt{\det (g_{jk})} = r^{n-1} \sqrt{\det (h_{jk})}.
\end{equation}
Combining with \eqref{eq:A-form}, we have
\begin{equation}
\mathrm{div}_gA = \mathrm{div}_{r^2h}({\A}) = r^{-2}\mathrm{div}_{h}({\A}),
\end{equation}
where the $r^{-2}$-factor arises since we are dualizing ${\A}$ to vectors using different metrics.
In addition, by the transversal gauge encoded in \eqref{eq:A-form}, we have
\begin{equation}
\langle A, \nabla_g f \rangle = {\A}( \nabla_{r^2h} f) = r^{-2}{\A}(\nabla_{h} f),
\end{equation}
where we used the invariance of ${\A}$ and the tangential part of $\nabla_gf$ (which is the only part having contribution to the pairing due to our transversal gauge condition) in the first step, and the pairing in the middle and right hand side are on $Y$ only, with $r$ being the scaling parameter. So in our setting the operator in \eqref{eq:LAa-general} becomes
\begin{equation}\label{eq:LAa-general'}
\begin{split}
\mathcal{L}_{{\A},a}&=-\Delta_g+\frac{|{\A}(\hat{x})|^2+i\,\mathrm{div}_{h}{\A}(\hat{x})+a(\hat{x})}{r^2}+2i {\A}(\hat{x})\cdot \nabla_g\\
&=-\partial_r^2-\frac{n-1}r\partial_r+ \frac{L_{{\A},a}}{r^2},
\end{split}
\end{equation}
where $\hat{x}\in \CS$, $|{\A}(\hat{x})|^2 = \sum_{i,j=1}^{n-1} h^{jk}{\A}_j{\A}_k$, and
\begin{equation}\label{L-angle}
\begin{split}
L_{{\A},a}
=-\Delta_{h}+\big(|{\A}(\hat{x})|^2+i\,\mathrm{div}_{h}{\A}(\hat{x})+a(\hat{x})\big)+2i {\A}(\hat{x})\cdot\nabla_{h}.
\end{split}
\end{equation}

This is symmetric on $L^2(Y)$ with pairing using the density $\sqrt{\det (h_{ij})}$ and we can apply the standard spectral theory.
In the special case of the Euclidean space, we have
\begin{equation}\label{eq:LAa-Euclidean}
\mathcal{L}_{{\A},a}=\Big(i\nabla+\frac{{\A}(\hat{x})}{|x|}\Big)^2+\frac{a(\hat{x})}{|x|^2},
\end{equation}
which is the exact  Schr\"odinger operator studied in \cite{FFT, FFFP, FFFP1, FZZ}.
Of course, \eqref{eq:LAa-general}\eqref{L-angle} are still valid in this special case, and with divergence, dot product interpreted as in the standard vector calculus.

The Euclidean Schrödinger operator \eqref{eq:LAa-Euclidean}, which includes the critical Coulomb-type decay potential, has garnered significant attention from both the mathematical and physical communities. For instance, the Aharonov-Bohm effect \cite{ES1949, AB59} arises from the critical Coulomb-type decay potential, and the diffractive behavior of waves in such potentials has been investigated in \cite{Yang1,Yang2}. The asymptotic behavior of time-independent Schrödinger solutions was analyzed in \cite{FFT}. Motivated by these developments and to address open problems left in \cite{DFVV, EGS1, EGS2, FV}, the authors \cite{JZ2} introduced a new method to prove Strichartz estimates for Schrödinger equations associated with the electromagnetic Schrödinger operator \eqref{eq:LAa-Euclidean} in $\R^n$ with dimension $n\geq3$. In this work, we study the Schrödinger operator \eqref{eq:LAa-general}, which requires simultaneous consideration of both electromagnetic potentials and conical geometry in its analysis.

As discussed earlier, the dispersive and Strichartz estimates for this operator have been specifically investigated when $Y=\mathbb{S}^1$ in \cite{FFFP, FFFP1, FZZ, GYZZ}.
Furthermore, the spectral properties of this operator were analyzed in  \cite{FFT} for the more general case where $Y = \mathbb{S}^{n-1}$.
The aim of this paper is twofold.
First of all, the special case $Y = \mathbb{S}^{n-1}$, which corresponds to $X = \mathbb{R}^n$, fills the gap left by \cite{CS, DF} by studying the wave equation associated with the electromagnetic Schrödinger operator \eqref{eq:LAa-Euclidean}, where both the electric and magnetic potentials are singular at origin and scaling critical.

\subsection{Main results and assumptions on $Y$} 
Our main results are microlocalized decay estimates and global Strichartz estimates for wave equation. 
Before stating our main results, we need to introduce a few notions.

First of all, we introduce:
\begin{align}  \label{eq: propagating lagrangians}
\begin{split}
\mathscr{L}_{\pm}:= & \{ (s,\hat{y}, \hat{x},\tau,\mu_2,-\mu_1) \in T^*(\mathbb{R} \times \CS \times \CS): 
\\& \tau = \mp |\mu_1|_h, (\hat{y},\mu_2) = \exp(\pm s\mathsf{H}_{p})(\hat{x},\mu_1) \},
\end{split}
\end{align} 
where we use $p = |\mu|^2_h$ to denote the homogeneous principal symbol of $P$, and
\begin{equation*}
\mathsf{H}_{p} = (2|\mu|_h)^{-1}H_{p}
\end{equation*}
is the rescaled Hamilton vector field.
These are the Lagrangian submanifolds of $T^*(\mathbb{R} \times \CS \times \CS)$ governing the forward and backward wave propagations since $e^{\mp is \sqrt{P}}$ are Fourier integral operators associated to them. We will use this in Section~\ref{subsec:parametrix} to derive our desired form of parametrices.

Now we introduce the non-focusing condition (NFC) on $\mathscr{L}_\pm$ near the diagonal of $Y \times Y$:
\begin{definition}[The non-focusing condition (NFC)] \label{defn:non-focusing-Lagrangian}
Let $(Y,h)$ be a compact Riemanian manifold without boundary. We say that $Y$ has locally non-focusing wave propagation relation within time $\pi$ if there exists a neighborhood $\mathcal{U}$ of the diagonal of $Y \times Y$ and $\epsilon>0$ such that the projection
\begin{equation}
\mathscr{P}_\pm : \mathscr{L}_\pm \cap \big( T_{ (0,\pi+\epsilon) \times \mathcal{U} }^*(\R \times Y \times Y) \big)  \to Y \times Y 
\end{equation}
is a locally ray bundle. 
By a locally ray bundle, we mean for each $(\hat{x},\hat{y}) \in \mathcal{U}$, there is a neighborhood $U$ of it contained in $\mathcal{U}$ such that 
\begin{equation}
\mathscr{P}_{\pm}^{-1}(U)= \bigcup_{i}  U_i,
\end{equation}
where $i$ runs over a finite index set, $U_i$ are pairwise disjoint, and $\mathscr{P}_{\pm}|_{U_i \cap (S^*\R \times S^*Y \times S^*Y)}$ is a diffeomorphism. Since $\mathscr{L}_\pm$ is conic (i.e., invariant under the fiber dilation), this is equivalent to requiring that $\mathscr{P}_\pm|_{U_i}$ is a fiber bundle with fiber being the ray $\mathbb{R}_+$ corresponding to the fiber dilation on $T^*(\R \times Y \times Y)$.

If the conditions above are satisfied, we also say that $\mathscr{L}_\pm$ is non-focusing near the diagonal.
\end{definition}

The following proposition gives a large class of $(Y,h)$ satisfying our  non-focusing condition condition.
\begin{proposition} \label{prop:non-focusing-curvature-condition}
Let $(Y,h)$ be a closed Riemanian manifold satisfying one of the following:
\begin{itemize}
\item $(Y,h)$ has sectional curvature (not necessarily constant) $K<1$;
\item $(Y,h)$ is simply connected and has sectional curvature $\frac{1}{2} \leq K <2$;
\end{itemize}
then it has non-focusing wave propagation relation within time $\pi$ in the sense of Definition~\ref{defn:non-focusing-Lagrangian}. 
\end{proposition}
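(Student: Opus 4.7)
My plan is to translate Definition~\ref{defn:non-focusing-Lagrangian} into pure geodesic-flow statements on $(Y,h)$ and then verify them using Rauch's comparison theorem (for both cases) together with Klingenberg's theorem and Klingenberg's lemma (for case (ii)). Since $\mathsf{H}_p$ generates unit-speed geodesic flow on $S^*Y$ and is homogeneous of degree zero in the fiber variable, a sheet of $\mathscr{P}_\pm^{-1}(\hat{y},\hat{x})$ corresponds to a unit-speed geodesic from $\hat{x}$ to $\hat{y}$ of length $s\in(0,\pi+\epsilon)$, the $\mathbb{R}_+$-ray direction coming from the dilation $|\mu_1|_h$. Applying the inverse function theorem to the endpoint map $(s,v)\mapsto \exp_{\hat{x}}(sv)$, with $v$ a unit covector at $\hat{x}$, such a sheet projects diffeomorphically onto a neighborhood of $(\hat{x},\hat{y})$ in $Y\times Y$ if and only if the associated geodesic carries no conjugate point on $[0,s]$.

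For case (i), compactness yields $K_0<1$ with $K\leq K_0$, and Rauch's theorem gives conjugate radius at least $\pi/\sqrt{K_0}>\pi$. Choosing $\epsilon>0$ with $\pi+\epsilon<\pi/\sqrt{K_0}$, every geodesic of length $\leq\pi+\epsilon$ is immersed. For fixed $\hat{x}$, the preimage $\exp_{\hat{x}}^{-1}(\hat{y})\cap \overline{B_{\pi+\epsilon}(0)}\subset T_{\hat{x}}Y$ is a discrete subset of a compact set, hence finite; shrinking $\mathcal{U}$ to isolate these finitely many preimages delivers the local ray bundle structure required by Definition~\ref{defn:non-focusing-Lagrangian}.

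For case (ii), compactness gives $1/2\leq K\leq\Delta<2$, whence Rauch yields conjugate radius at least $\pi/\sqrt{\Delta}>\pi/\sqrt{2}$, and the pinching $1/2>\Delta/4$ together with simple-connectedness enables Klingenberg's theorem to give injectivity radius at least $\pi/\sqrt{\Delta}$. Via Klingenberg's lemma $i(p)=\min(\text{conj.\ rad.},\,\tfrac{1}{2}\ell_{\mathrm{loop}}(p))$ this forces $\ell_{\mathrm{loop}}(p)\geq 2\pi/\sqrt{\Delta}>\sqrt{2}\pi$ at every $p\in Y$, where $\ell_{\mathrm{loop}}(p)$ is the length of the shortest geodesic loop based at $p$. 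Fix $\epsilon>0$ with $\pi+\epsilon<\sqrt{2}\pi$. The key step is then to show that after shrinking $\mathcal{U}$ one may assume the only geodesics of length in $(0,\pi+\epsilon)$ between points of $\mathcal{U}$ are the unique short minimizers (of length below the injectivity radius, hence well inside the conjugate radius), reducing the situation to that of case (i). This is proved by contradiction via Arzel\`a--Ascoli: if not, there exist $(\hat{x}_n,\hat{y}_n)\to$ diagonal and unit-speed geodesics $\gamma_n:[0,L_n]\to Y$ with $\gamma_n(0)=\hat{x}_n$, $\gamma_n(L_n)=\hat{y}_n$ and $L_n\in[c,\pi+\epsilon]$ for some $c>0$; compactness of $S^*Y$ extracts a subsequential limit which is a geodesic loop at a common base point of length $L_\infty\in[c,\pi+\epsilon]\subset(0,\sqrt{2}\pi)$, contradicting the Klingenberg bound on $\ell_{\mathrm{loop}}$. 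Note that any non-minimizing geodesic between points within the injectivity radius has length at least $\pi/\sqrt{\Delta}$, so one may take $c=\pi/\sqrt{\Delta}$.

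The principal difficulty is precisely case (ii): Rauch alone only furnishes conjugate radius $>\pi/\sqrt{2}$, strictly below the required $\pi$, so conjugate points on mid-length geodesics cannot be ruled out directly. The resolution is to combine Klingenberg's bound on $\ell_{\mathrm{loop}}$ with the Arzel\`a--Ascoli limiting argument to exclude mid-length geodesics between near-diagonal points, thereby reducing the analysis to the short-geodesic (sub-conjugate-radius) regime already handled by case (i).
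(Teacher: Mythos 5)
Your proposal is correct and follows essentially the same route as the paper: Rauch's comparison theorem disposes of the $K<1$ case, while the $\tfrac12\le K<2$ case rests on the quarter-pinched injectivity radius theorem combined with a compactness/limiting argument that would otherwise produce a geodesic loop of length at most $\pi+\epsilon$ near the diagonal, which is then excluded. The only cosmetic differences are that you invoke Klingenberg's lemma $\mathrm{inj}(p)=\min\big(\mathrm{conj}(p),\tfrac12\ell_{\mathrm{loop}}(p)\big)$ where the paper carries out the loop-bisection argument by hand, and you apply the pinching hypothesis directly rather than rescaling the metric as the paper does.
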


We postpone the proof of this proposition in the appendix. 

\begin{remark}
In particular, by the first case, any non-positively curved compact manifolds is in this class. 
For spheres, combining two cases, any sphere (of dimension at least $2$) with radius larger than $\frac{1}{2}$ is in this class.
\end{remark}

\begin{remark}
In the first case, without the requirement $\pi_1(Y)=0$, the upper bound $1$ is sharp since the real projective space equipped with metrics induced from the unit sphere has sectional curvature one and does not satisfy the property in Definition~\ref{defn:non-focusing-Lagrangian}. In the second case, the upper bound $2$ is sharp as one can see that the sphere with radius $\frac{1}{2}$ again fails to satisfy Definition~\ref{defn:non-focusing-Lagrangian}. 

On the other hand, the question that what is the sharp lower bound when one only imposes such type conditions seems to be a deep one. 
Notice that the sphere theorem breaks down in even dimensions if one includes the endpoint $\frac{1}{4}$ with the complex projective space with Fubini-Study metric being the counterexample. In fact, according to \cite{berger1983}, this together with some other symmetric spaces are the only counterexamples.
However, \cite{abresch1996sphere} showed that one can break the threshold $\frac{1}{2}$ (corresponding to $\frac{1}{4}$ when we normalized the upper bound to $1$) and lower bounds of the injective radius keep to hold. And in the odd dimensional setting, even the sphere theorem keep to hold with pinching slightly below $\frac{1}{4}$.

On the other hand, if one want such results through such a pinching condition, then a lower bound that is at least $\frac{1}{9}$ (if the upper bound is $1$) has to be imposed. This can be seen from the Bergers sphere $\mathbb{S}^3 = \mathrm{SU}(2)$ with left-invariant metric $\epsilon^2\sigma_1^2+\sigma_2^2+\sigma_3^2$ in terms of the Pauli matrices, which forms a basis of $\mk{su}(2)$. It has pinching that could tends to $\frac{1}{9}$ and arbitrarily small injective radius.
See \cite[Section~6.6.2]{petersen2006} for a more detailed discussion.
\end{remark}

Let $(Y,h)$ be a closed manifold with locally non-focusing propagation relation in the sense of Definition~\ref{defn:non-focusing-Lagrangian}. 
We consider a covering of $\CS$:
\begin{align} \label{eq: Uj covering}
\CS = \cup_{j=1}^{\mathsf{F}} \mathcal{U}_j
\end{align}
such that point pairs in each $\mathcal{U}_j$ are not conjugate to each other. In addition, let $\mathcal{U}$ be a neighborhood of the diagonal of $Y \times Y$ as in Definition~\ref{defn:non-focusing-Lagrangian}, we can choose each $\mathcal{U}_j$ small so that
\begin{equation} \label{eq: Uj condition}
\mathcal{U}_j \times \mathcal{U}_j \subset \mathcal{U}.
\end{equation}
In statements using non-resonant endpoint condition (NREC) in Definition~\ref{defn:NREC}, we further assume the diameter of $\mathcal{U}_j$ is less than $d_0>0$ in Proposition~\ref{prop: NREC-thicken-diagonal}.
Let $\{Q_j\}_{j=1}^{\mathsf{F}}$ be a partition of unity subordinate to this covering, which means
\begin{equation}\label{Id-p-Q}
\mathrm{Id}=\sum_{j=1}^{\mathsf{F}} Q_j,
\end{equation}
and each $Q_j$ is supported in $\mathcal{U}_j$. Then we have the following localized dispersive estimate.

\begin{theorem}[Microlocalized pointwise estimates for half-wave propagator]\label{thm:dispersive}
Let  $\LL_{{\A},a}$ be the Friedrichs self-adjoint extension operator given in \eqref{eq:LAa-general} on $n$-dimensional product cone $\cone=C(Y)$ ($n\geq 3$), where the cross-section  $\CS$  satisfies the NFC in the sense of Definition~\ref{defn:non-focusing-Lagrangian}, and where $a(\hat{x})\in C^{\infty}(\CS,\mathbb{R})$  and $1$-form ${\A}(\hat{x})={\A}_jd\hat{x}^j$, ${\A}_j\in C^{\infty}(\CS,\mathbb{R})$ are in \eqref{eq:A-form} such that the operator $P_{{\A}, a}:=L_{{\A},a}+(n-2)^2/4$ in \eqref{L-angle} is strictly positive on the cross section $\CS$. 
Let $\nu_0$ be the positive square roof of the smallest eigenvalue of the positive operator
$P_{{\A}, a}$.
Let $x=(r_1,\hat{x})$ and $y=(r_2,\hat{y})$ be in $(0,+\infty)\times\CS$ and let $Q_j$ be defined in \eqref{Id-p-Q}. Then, for any $k\in\Z$ and $\varphi \in C_c^\infty([1,2])$,
the kernel of half-wave propagator satisfies the following properties:

$\bullet$ if  either $2^{k}r_1\lesssim 1$ or $2^{k}r_2\lesssim 1$, then the frequency localized estimates
\begin{equation}\label{est:dispersive<}
 \begin{split}
\big| \varphi(2^{-k}\sqrt{\LL_{{\A},a}}) e^{it\sqrt{\LL_{{\A},a}}} (x,y)\big|\leq &C2^{kn}\big(1+2^k|t|\big)^{-\frac{n-1}2} \\
&\times\begin{cases}(2^{2k}r_1r_2)^{\nu_0-\frac{n-2}2},\quad &2^k r_1, 2^k r_2\lesssim 1;\\
(2^k r_1)^{\nu_0-\frac{n-2}2},\quad &2^k r_1\lesssim 1\ll  2^k r_2;\\
(2^k r_2)^{\nu_0-\frac{n-2}2},\quad &2^k r_2\lesssim 1\ll  2^k r_1;
 \end{cases}
\end{split}
\end{equation}

$\bullet$ if both $2^{k}r_1\gg 1$ and $2^{k}r_2\gg 1$,  then for all integers
$1\leq j\leq \mathsf{F}$, the microlocalized estimates 
\begin{equation}\label{est:dispersive}
 \begin{split}
\big| \big[Q_j \varphi(2^{-k}\sqrt{\LL_{{\A},a}}) e^{it\sqrt{\LL_{{\A},a}}} Q_j\big] (x,y)\big|\leq C2^{kn}\big(1+2^k|t|\big)^{-\frac{n-1}2}
\end{split}
\end{equation}
hold, where  $C$ is a constant independent of points $x,y\in(0,+\infty)\times\CS$. 
\end{theorem}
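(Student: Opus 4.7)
My plan is to treat the two regimes of Theorem~\ref{thm:dispersive} separately, in each case starting from the Hankel-type spectral representation forced by the conic structure of $\LL_{{\A},a}$ and then reducing to oscillatory integrals estimated by standard stationary/non-stationary phase. Since $\LL_{{\A},a}=-\partial_r^2-(n-1)r^{-1}\partial_r+r^{-2}L_{{\A},a}$ and $P_{{\A},a}=L_{{\A},a}+(n-2)^2/4$ is strictly positive on $\CS$, one has an orthonormal eigenbasis $\{\psi_\ell\}$ of $P_{{\A},a}$ with positive eigenvalues $\nu_\ell^2$, $\nu_0\leq \nu_1\leq \cdots$. Separation of variables then yields the spectral decomposition
\begin{equation*}
dE_{\sqrt{\LL_{{\A},a}}}(\lambda)(x,y)=(r_1r_2)^{-(n-2)/2}\lambda\sum_{\ell\geq 0}J_{\nu_\ell}(r_1\lambda)J_{\nu_\ell}(r_2\lambda)\psi_\ell(\hat{x})\overline{\psi_\ell(\hat{y})}\,d\lambda,
\end{equation*}
so the kernel to estimate reads
\begin{equation*}
K(t,x,y)=(r_1r_2)^{-(n-2)/2}\sum_\ell \psi_\ell(\hat{x})\overline{\psi_\ell(\hat{y})}\int_0^\infty \varphi(2^{-k}\lambda)e^{it\lambda}J_{\nu_\ell}(r_1\lambda)J_{\nu_\ell}(r_2\lambda)\lambda\,d\lambda.
\end{equation*}

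For the near-tip case, where at least one of $2^kr_i\lesssim 1$, I would use the uniform small-argument bound $|J_\nu(z)|\lesssim z^\nu/(2^\nu\Gamma(\nu+1))$ on each factor whose argument is bounded. The smallest index $\nu_0$ controls the worst power, producing exactly the prefactors $(2^{2k}r_1r_2)^{\nu_0-(n-2)/2}$ or $(2^kr_i)^{\nu_0-(n-2)/2}$ in \eqref{est:dispersive<}; convergence of the $\ell$-sum follows from the Weyl law for $P_{{\A},a}$ combined with the factorial decay in the Bessel bound. On the remaining factor(s), if any, the standard large-argument asymptotic applies. What is left is a one-dimensional oscillatory integral of amplitude $\sim 2^{kn}$ with phase $t\lambda$, to which stationary phase or repeated integration by parts yields the desired $(1+2^k|t|)^{-(n-1)/2}$ decay.

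For the far-from-tip case, where both $2^kr_i\gg 1$, I would instead substitute the large-argument Bessel expansion $J_\nu(z)\sim (2/\pi z)^{1/2}\cos(z-\nu\pi/2-\pi/4)$, so that each product $J_{\nu_\ell}(r_1\lambda)J_{\nu_\ell}(r_2\lambda)$ decomposes into four oscillatory terms of the form $\lambda^{-1}(r_1r_2)^{-1/2}e^{i\lambda(\pm r_1\pm r_2)}e^{\mp i\nu_\ell\pi/2\mp i\nu_\ell\pi/2}$ modulo tame lower-order corrections. Summing the resulting series $\sum_\ell e^{\pm i\nu_\ell\pi/2}\psi_\ell(\hat{x})\overline{\psi_\ell(\hat{y})}$ reconstructs the half-wave kernel of $e^{\mp is\sqrt{P_{{\A},a}}}$ on $\CS$ for $s\in\{\pi/2,3\pi/2\}$, so the microlocalized kernel becomes $(r_1r_2)^{-(n-1)/2}$ times an integral in $\lambda$ of $\varphi(2^{-k}\lambda)e^{i\lambda(t\pm r_1\pm r_2)}$ against $Q_j e^{\mp is\sqrt{P_{{\A},a}}} Q_j$. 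By \eqref{eq: Uj condition} and Definition~\ref{defn:non-focusing-Lagrangian}, the parametrix construction of Section~\ref{subsec:parametrix} represents this sandwich as a \emph{finite} sum of non-degenerate FIOs associated to the individual sheets of $\mathscr{L}_\pm\cap T^*(\R\times\mathcal{U}_j\times\mathcal{U}_j)$. Standard stationary phase in both the radial frequency $\lambda$ and the angular phase produced by each FIO sheet then delivers exactly the bound $2^{kn}(1+2^k|t|)^{-(n-1)/2}$ in \eqref{est:dispersive}.

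The principal obstacle is the far-from-tip estimate. Outside the tip, even near the diagonal of $\CS\times\CS$ where $Q_j\otimes Q_j$ localizes, conjugate-point interactions between different sheets of $\mathscr{L}_\pm$ could coalesce and destroy the non-degeneracy of the phase, producing losses in the decay exponent. The NFC is precisely what forbids this: it guarantees that $\mathscr{L}_\pm$ splits into finitely many smooth sheets above each $\mathcal{U}_j\times\mathcal{U}_j$, each projecting diffeomorphically, so that the partition of unity \eqref{Id-p-Q} turns every sandwich $Q_j(\cdots)Q_j$ into a genuinely non-degenerate oscillatory integral. Uniformity in the dyadic frequency $2^k$ is then inherited from the conic homogeneity of $\mathscr{L}_\pm$ and the scale-invariance of the resulting amplitude bounds, while the electromagnetic data $({\A},a)$ enter only through the angular spectrum $(\nu_\ell,\psi_\ell)$ of $P_{{\A},a}$, so under the positivity hypothesis they do not affect the dispersive scaling.
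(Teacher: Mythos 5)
Your treatment of the near-tip regime \eqref{est:dispersive<} is essentially the paper's own argument: the Bessel-series form of the spectral measure (Lemma~\ref{lem:spect}), small-argument Bessel bounds plus the Weyl law to sum in $\ell$, and non-stationary phase in $\lambda$. One caveat: with the phase $e^{i\lambda(t\pm\max\{r_1,r_2\})}$ integration by parts fails precisely when $|t|\sim\max\{r_1,r_2\}$, and there the $(1+2^k|t|)^{-\frac{n-1}2}$ decay must come from the amplitude bound $(1+\lambda\max\{r_1,r_2\})^{-\frac{n-1}2}$ as in \eqref{bean-A}, not from stationary phase; your write-up glosses this, but the mechanism is the paper's.

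The main regime $2^kr_1,2^kr_2\gg1$ has a genuine gap. You substitute $J_{\nu_\ell}(z)\sim\sqrt{2/\pi z}\,\cos(z-\nu_\ell\pi/2-\pi/4)$ term by term in the infinite $\ell$-sum, but this asymptotic is not uniform in $\nu$: for every fixed $\lambda r_i$ there are infinitely many $\ell$ with $\nu_\ell\gtrsim\lambda r_i$, where $J_{\nu_\ell}(\lambda r_i)$ is exponentially small rather than oscillatory, and the errors cannot be summed (compare \eqref{eq:bess5}, which holds only for $r\geq2\nu$ and carries a $2^\nu$ constant; the paper uses it only when the companion factor with $\lambda r_1\lesssim1$ supplies compensating decay). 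A concrete symptom: after the product-to-sum formula the $\nu$-independent piece would formally sum to $\delta_{\hat x=\hat y}$, i.e.\ your reduction discards all angular propagation, which in reality is encoded exactly in the non-uniform corrections. Consequently the claimed reduction to four phases $e^{i\lambda(\pm r_1\pm r_2)}$ against fixed-time kernels $Q_je^{\mp is\sqrt{P}}Q_j$, $s\in\{\pi/2,3\pi/2\}$, is wrong both analytically and geometrically: the singularities live on the cone distance $d(s;r_1,r_2)=\sqrt{r_1^2+r_2^2-2r_1r_2\cos s}$ evaluated at $s=d_h(\hat x,\hat y)$ (and its companions in the distance spectrum), which interpolates between $|r_1-r_2|$ and $r_1+r_2$, so phases $\pm r_1\pm r_2$ alone cannot yield $(1+2^k|t|)^{-\frac{n-1}2}$ near the light cone, and in your reduction the NFC has no non-degenerate phase to act on. This is precisely why the paper instead derives Theorem~\ref{thm:spect} from the Cheeger--Taylor formula \eqref{S-kernel}: the spectral measure is an integral over continuous $s\in[0,\pi]$ of $\cos(s\sqrt P)$ against $e^{\pm i\lambda d(s;r_1,r_2)}$, plus the diffractive term $\sin(\pi\sqrt P)e^{-s\sqrt P}$; one then inserts the parametrices of Lemmas~\ref{lemma: parametrix 1}--\ref{lemma: localized-poisson} and runs dyadic decompositions in $s$ (at scales set by $r_1r_2/|t|$ or $r_1r_2$, around $s=0$ and $s=d_h$) with integrations by parts alternately in $s$ and in $\rho$. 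Your proposal also omits the diffractive term entirely (in particular the delicate regime $|{\bf n}_s|\sim|t|\gg\max\{r_1,r_2\}$, which needs the small/large angular-frequency splitting of $e^{-s\sqrt P}$), and, absent NREC, the boundary cancellation at $s=\pi$ between the propagating and diffractive parts (Lemma~\ref{lem:in-parts}, Proposition~\ref{prop:localized-GD-IBP}); none of this can be bypassed by the proposed fixed-time reduction.
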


\begin{remark}
The special case $Y=\mathbb{S}^{n-1}$ satisfies a stronger condition NREC stated in \eqref{eq:NREC-quantitative} below since close geodesic loops has length $2\pi$.
\end{remark}

\begin{remark} The method for wave requires new ingredients compared with our previous works \cite{JZ, JZ2}, even though it is in the spirit of them. This is due to the differences in the construction of the wave and Schr\"odinger propagators. For example, in \cite{JZ}, we derived the decay estimates for the wave equation from the Schr\"odinger's dispersive estimates using the Littlewood-Paley theory and the subordination formula. However, in our current case, as in \cite{JZ2}, we have to use a localizer $Q_j$ to separate conjugate points on the cross section $\CS$, which poses an obstruction to deriving the wave decay estimates directly from the Schr\"odinger's decay estimates proved in  \cite{JZ2}. Unlike the method outlined in \cite{JZ}, this necessity compels us to independently derive the spectral properties and prove the wave decay estimates without relying on pre-existing frameworks.
\end{remark}

To state the second main result on the Strichartz estimates, we need the following definition of admissible pairs.
\begin{definition}\label{def:Lambda} 
In dimension $n\geq3$, for $s\in\R$, we define the set $\Lambda_s$ of $s$-admissible pairs to be the set of pairs $(q,p)\in [2,\infty]\times [2,\infty]$ satisfying the wave-admissible condition
\begin{equation}\label{adm}
2/q\leq (n-1)(1/2-1/p),\quad (q,p, n)\neq(2,\infty, 3),
\end{equation}
and the scaling condition
\begin{equation}\label{scaling}
s=n\big(1/2-1/p\big)-1/q.
\end{equation}
\end{definition}

For the solution to the wave equation without potentials in Euclidean space $\R^n$,
\begin{equation}\label{wave}
\partial_{t}^2u-\Delta u=F, \quad u(0)=u_0,
~\partial_tu(0)=u_1,
\end{equation}
it is well known that the following sharp Strichartz estimates hold for $(q,p)\in\Lambda_{s}$ and $(\tilde{q},\tilde{p})\in \Lambda_{1-s}$
\begin{equation}\label{est:Stri-f}
\|u(t,x)\|_{L^q(\R;L^{p}(\R^n))}\leq C\left(\|u_0\|_{\dot H^{s}(\R^n)}+\|u_1\|_{\dot H^{s-1}(\R^n)}+\|F\|_{L^{\tilde{q}'}(\R;L^{\tilde{p}'}(\R^n))}\right),
\end{equation}
see \cite{KT}. 
In the following, we will denote the Sobolev spaces associated to $\mathcal{L}_{{\A},a}$ by
\begin{align}\label{def:sobolev}
& \dot H^{s}_{{\A},a}(\cone):=\mathcal L_{{\A},a}^{-\frac s2}L^2(\cone),
\qquad
H^s_{{\A},a}(\cone) :=L^2(\cone)\cap\dot H^{s}_{{\A},a}(\cone).
\nonumber
\end{align}
Equivalently, the homogeneous Sobolev norm of $\|\cdot\|_{\dot H^{s}_{{\A},a}(\cone)}$ can be defined by
\begin{equation}\label{Sobolev-n}
\|f\|_{ \dot H^{s}_{{\A},a}(\cone)}=\Big(\sum_{j\in\Z}2^{2js}\|\varphi_{j}(\sqrt{\mathcal L_{{\A},a}})f\|_{L^2(\cone)}^2\Big)^{1/2},
\end{equation}
where $s\in\R$ and $\varphi_{j}(\sqrt{\mathcal L_{{\A},a}})$ is the Littlewood-Paley operator; see Section \ref{subsec:LP} for details.
For $n\geq3$ and $-1\leq s\leq 1$, we have  
\begin{equation}\label{nor-eq}
 \dot H^{s}_{{\A},a}(\cone)\sim \dot H^s(\cone)
 \end{equation}
similarly by \cite[cf. Lemma 2.3]{FFT} in combination with duality and interpolation.

\begin{theorem}\label{thm:stri-wave} Let  $\LL_{{\A},a}$ and $\nu_0$ be the same as in Theorem \ref{thm:dispersive} and $\Lambda_s$ be given in Definition \ref{def:Lambda}. 
For
\begin{equation}\label{def:alpha}
\alpha=-(n-2)/2+ \nu_0,
\end{equation}
define
\begin{equation}\label{def:q-alpha}
p(\alpha)=
\begin{cases}
\infty,\quad \alpha\geq 0;\\
\frac{n}{|\alpha|}, \quad -(n-2)/2<\alpha< 0,
\end{cases}
\end{equation}
and 
\begin{equation}\label{Ls}
\Lambda_{s,\alpha(\nu_0)}:=\{(q,p)\in\Lambda_s: p<p(\alpha) \}.
\end{equation}
For any $s\in\R$ and let $u$ be the solution to
\begin{equation}\label{eq:wave}
\partial_{t}^2u+\mathcal{L}_{{\A},a} u=0, \quad u(0)=u_0,
\quad \partial_tu(0)=u_1.
\end{equation}
Then, the following homogeneous Strichartz estimate holds:
\begin{equation}\label{est:Stri}
\|u(t,x)\|_{L^q(\R;L^{p}(\cone))}\leq C\left(\|u_0\|_{\dot H^{s}_{{\A},a}(\cone)}+\|u_1\|_{\dot H^{s-1}_{{\A},a}(\cone)}\right),
\end{equation}
provided $(q,p)\in \Lambda_{s,\alpha(\nu_0)}$. Moreover, if $(q,p)\in \Lambda_{s,\alpha(\nu_0)}$ and $(\tilde{q},\tilde{p})\in \Lambda_{1-s,\alpha(\nu_0)}$, the inhomogeneous Strichartz estimates hold:
\begin{equation}\label{est:in-Stri}
\Big\|\int_{\tau<t}\frac{\sin{\big((t-\tau)\sqrt{\LL_{{\A},a}}\big)}}
{\sqrt{\LL_{{\A},a}}}F(\tau)d\tau\Big\|_{L^q_t(\R;L^p(\cone))}\lesssim\|F\|_{L^{\tilde{q}'}_t(\R;L^{\tilde{p}'}(\cone))}.
\end{equation}
The restriction $p<p(\alpha)$ is necessary in the sense that the Strichartz estimates \eqref{est:Stri} may fail even if $(q,p)\in \Lambda_s$, but  $p\geq p(\alpha)$.
\end{theorem}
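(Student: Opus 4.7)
The plan is to derive \eqref{est:Stri} from the microlocalized dispersive bounds of Theorem~\ref{thm:dispersive} via the Keel--Tao $TT^{*}$ scheme, combined with the Littlewood--Paley decomposition adapted to $\LL_{{\A},a}$ and the spatial partition $\{Q_{j}\}_{j=1}^{\mathsf{F}}$.

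First I would reduce to a frequency-localized estimate. By the square-function characterization \eqref{Sobolev-n} of $\dot H^{s}_{{\A},a}(\cone)$, Minkowski's inequality (valid since $q,p\geq 2$), the scaling condition \eqref{scaling}, and the norm equivalence \eqref{nor-eq}, it suffices to prove, uniformly in $k\in\Z$,
\begin{equation*}
\|\varphi(2^{-k}\sqrt{\LL_{{\A},a}})\,e^{\pm it\sqrt{\LL_{{\A},a}}}u_{0}\|_{L^{q}_{t}L^{p}_{x}}\lesssim 2^{ks}\|u_{0}\|_{L^{2}(\cone)}.
\end{equation*}
The conic dilation symmetry of $\LL_{{\A},a}$ (i.e.\ $r\mapsto 2^{k}r$, $t\mapsto 2^{k}t$) conjugates this estimate at different $k$ into a single one at $k=0$, the factor $2^{ks}$ being produced by the Lebesgue scaling.

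Next I would run the microlocal $TT^{*}$. Writing $\mathrm{Id}=\sum_{j}Q_{j}$ and using $\|f\|_{L^{p}_{x}}\leq\sum_{j=1}^{\mathsf{F}}\|Q_{j}f\|_{L^{p}_{x}}$ with $\mathsf{F}$ finite, it is enough to bound each $T_{j}:=Q_{j}\varphi(\sqrt{\LL_{{\A},a}})e^{it\sqrt{\LL_{{\A},a}}}:L^{2}_{x}\to L^{q}_{t}L^{p}_{x}$. The operator $T_{j}T_{j}^{\ast}$ has Schwartz kernel $\big[Q_{j}\varphi^{2}(\sqrt{\LL_{{\A},a}})e^{i(t-s)\sqrt{\LL_{{\A},a}}}Q_{j}\big](x,y)$, which is precisely the quantity controlled by Theorem~\ref{thm:dispersive}. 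In the region $2^{k}r_{1},\,2^{k}r_{2}\gtrsim 1$, estimate \eqref{est:dispersive} yields the clean decay $\lesssim 2^{kn}(1+2^{k}|t-s|)^{-(n-1)/2}$; coupled with the trivial $L^{2}\to L^{2}$ bound of the spectral multiplier, this is the exact input required by the Keel--Tao theorem and produces \eqref{est:Stri} on this piece for every $(q,p)\in\Lambda_{s}$ satisfying \eqref{adm}.

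Third, the weighted kernel \eqref{est:dispersive<} must be handled in the region where $2^{k}r_{1}\lesssim 1$ or $2^{k}r_{2}\lesssim 1$. There the kernel carries extra factors $(2^{k}r_{1})^{\alpha}$ and/or $(2^{k}r_{2})^{\alpha}$ with $\alpha=\nu_{0}-(n-2)/2$. If $\alpha\geq 0$ the weight is bounded on the relevant set and contributes nothing. If $\alpha<0$, applying H\"older in $y$ and computing the $L^{p}_{x}$ norm reduces matters to the finiteness of $\int_{0}^{2^{-k}}r^{p\alpha+n-1}\,dr$, which forces $p\alpha+n>0$, i.e.\ $p<n/|\alpha|=p(\alpha)$. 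Splitting the propagator by a radial cutoff at $r\sim 2^{-k}$ into the near/near, near/far and far/far pieces, running Keel--Tao on each piece with the appropriate kernel bound, and then reassembling by the Littlewood--Paley square function produces \eqref{est:Stri} for all $(q,p)\in\Lambda_{s,\alpha(\nu_{0})}$.

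The inhomogeneous bound \eqref{est:in-Stri} follows from \eqref{est:Stri} by the Christ--Kiselev lemma away from the double endpoint, while the double endpoint is covered by the Keel--Tao endpoint $TT^{*}$ argument applied to the microlocalized and weighted kernels above. The sharpness of $p<p(\alpha)$ is obtained by a concentration test: taking data of the form $\psi(r)\Psi(\hat{x})$ with $\Psi$ a ground-state eigenfunction of $P_{{\A},a}$ at eigenvalue $\nu_{0}^{2}$ and $\psi$ supported near $r=0$ forces the solution to behave like $r^{\alpha}$ near the cone tip and to fail to lie in $L^{p}$ whenever $p\geq p(\alpha)$, in the spirit of the counterexamples of \cite{FFFP, FFFP1}. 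The main obstacle I anticipate is the uniform treatment of the weighted kernel near the cone tip across all dyadic scales so that the square-function reassembly in $k$ preserves admissibility; this is precisely the step that forces the critical threshold $p(\alpha)$.
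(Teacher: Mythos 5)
Your overall architecture (Littlewood--Paley reduction via the square function, microlocal $TT^*$ with the operators $Q_j\varphi(2^{-k}\sqrt{\LL_{{\A},a}})e^{it\sqrt{\LL_{{\A},a}}}$, Keel--Tao, Christ--Kiselev for the inhomogeneous bound, and a ground-state concentration counterexample for $p\geq p(\alpha)$) is the same as the paper's, and the far/far and $\alpha\geq 0$ cases are handled correctly. The gap is in the step where $\alpha<0$ and you convert the weighted pointwise bounds \eqref{est:dispersive<} into the $L^{p_0'}\to L^{p_0}$ dispersive input required by the abstract theorem. Your ``H\"older in $y$ plus $L^p_x$ norm of the kernel'' device works only for the near/near piece, where the kernel is supported in $r_1,r_2\lesssim 2^{-k}$ and lies in $L^p(dx\,dy)$. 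For the near/far (and far/near) interaction piece, the only available bound from Theorem~\ref{thm:dispersive} is $2^{kn}(1+2^k|t-s|)^{-\frac{n-1}{2}}(2^kr_1)^{\nu_0-\frac{n-2}{2}}$, which is \emph{uniform in $r_2$} on the unbounded region $2^kr_2\gg 1$: the kernel is not in $L^p_y$ there, so no $L^{p'}\to L^p$ bound follows from H\"older/Schur-type domination, and testing against data spread over $\{r_2\sim R\}$ with $R\to\infty$ shows the pointwise bound alone genuinely cannot close this piece. The missing information is the localization of the kernel near $|t|\sim r_2$ coming from the oscillatory factor $e^{\pm i\lambda r_2}$ in \eqref{Spect<1}--\eqref{bean-A}, which is destroyed once one passes to the time-uniform pointwise estimate.

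This is exactly why the paper does not split in physical space but instead decomposes with the spectral projections $P_<$, $P_{\geq}$ onto angular modes with $\nu_\ell<(n-2)/2$ and $\nu_\ell\geq (n-2)/2$: the $P_\geq$ part obeys the unweighted $L^1\to L^\infty$ bound, while for the finitely many low modes one proves Proposition~\ref{prop:est-pp'}, a direct $L^{p'}\to L^p$ dispersive estimate for the one-dimensional Hankel-type operators $T_{\nu}$, using the Bessel asymptotics \eqref{eq:bess3}--\eqref{eq:bess4} and integration by parts in $\rho$ to produce $(1+|t\pm r_2|)^{-N}$ factors in the region $r_1\lesssim 1\lesssim r_2$; the $r_2$-integration over $|r_2\mp t|\lesssim |t|$ then yields the decay $(1+|t|)^{-\frac{n-1}{2}(1-\frac2p)}$ under the constraint $p<p(\alpha)$. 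To repair your argument you would need either this mode-by-mode oscillatory analysis or a strengthening of \eqref{est:dispersive<} recording the $|t\pm r_2|$ localization; as written, the near/far piece is not controlled. The remaining parts of your proposal (scaling reduction to $k=0$, the double-endpoint remark, and the counterexample sketch) are consistent with the paper's treatment.
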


\begin{remark}\label{rem:set}
The set $\Lambda_{s,\alpha(\nu_0)}$ is non-empty only if $s\in [0,1+\nu_0)$. Moreover, $\Lambda_{s,\alpha(\nu_0)}=\Lambda_s$ when $s\in [0,1/2+\nu_0)$, in which case the condition $p<p(\alpha)$ automatically disappears; 
while $\Lambda_{s,\alpha(\nu_0)} \subsetneq \Lambda_s$ when $s \in [1/2+\nu_0, 1+\nu_0)$ in general. 
On the other hand, if $a(\hat{x})\geq 0$, then $\alpha>0$,
so that $\Lambda_{s,\alpha(\nu_0)}=\Lambda_s$ for $s\in\R$.
\end{remark}

\begin{remark}\label{rem:sobolev}
Due to \eqref{nor-eq}, the homogeneous Sobolev norm of $\|\cdot\|_{\dot H^{s}_{{\A},a}(\cone)}$ in  \eqref{est:Stri} can be replaced by the standard Sobolev norm $\|\cdot\|_{\dot H^{s}(\cone)}$ when $0\leq s\leq 1$.
\end{remark}

\begin{remark} The smallest eigenvalue  of $P_{{\A}, a}$
plays a crucial role in the Strichartz estimates. In particular, when $0<\nu_0<(n-2)/2$, there is a `red line' in diagrammatic picture of the range of $(q,p)$, which differs from the case without potentials. In the case without potentials,
$\nu_0 \geq (n-2)/2$, this red line disappears.
\end{remark}

\begin{center}
 \begin{tikzpicture}[scale=1]
\draw[->] (0,0) -- (4,0) node[anchor=north] {$\frac{1}{q}$};
\draw[->] (0,0) -- (0,4)  node[anchor=east] {$\frac{1}{p}$};
\draw (0,0) node[anchor=north] {O}
(3,0) node[anchor=north] {$\frac12$};
\draw  (0, 3) node[anchor=east] {$\frac12$}
       (0, 0.6) node[anchor=east] {$\frac12-\frac{1+\nu_0}{n}$}
       (0, 1.2) node[anchor=east] {$\frac12-\frac{1}{n-1}$};

\draw[thick] (3,0) -- (3,1.2)  
              (3,1.2) -- (0,3);

  \filldraw[fill=gray!30](0,3)-- (0,0.5)-- (3,0.5)--(3,1.2);      
  
   \draw[red, dashed,thick](3,0.5) -- (0,0.5);       

\draw[dashed,thick] (0,1.2) -- (3,1.2); 
\draw (-0.1,3.2) node[anchor=west] {A};
\draw (2.9,1.2) node[anchor=west] {B};
\draw (2.9,0.6) node[anchor=west] {C};
\draw (2.9,0.15) node[anchor=west] {D};
\draw (-0.1,0.4) node[anchor=west] {E};
\draw (-0.1,1.0) node[anchor=west] {F};

\draw (1.65,2.88) node[anchor=west] {$\frac2q+\frac{n-1}{p}=\frac{n-1}{2}$};

\draw[<-] (1.6,2.1) -- (2,2.6) node[anchor=south]{$~$};

\path (2,-1) node(caption){Fig 1. $n\geq4$};  

\draw[->] (8,0) -- (12,0) node[anchor=north] {$\frac{1}{q}$};
\draw[->] (8,0) -- (8,4)  node[anchor=east] {$\frac{1}{p}$};
\path (9.6,-1) node(caption){Fig 2. $n=3$};  

\draw  (8.1, -0.1) node[anchor=east] {O};
\draw  (11, 0) node[anchor=north] {$\frac12$};
\draw  (8, 3) node[anchor=east] {$\frac12$}
(8, 1) node[anchor=east] {$\frac16$};

 \filldraw[fill=gray!30](8,0)-- (11,0)-- (8,3); 
 
 \draw[red, dashed,thick] (8,0) -- (11,0); 

\draw[thick] (8,3) -- (11,0);  
\draw[dashed,thick] (8,1) -- (11,0); 
\draw (7.9,3.15) node[anchor=west] {A};
\draw (10.9,0.2) node[anchor=west] {B};
\draw (7.9,1.15) node[anchor=west] {C};
\draw (10,2.6) node[anchor=west] {$\frac{2}{q}+\frac{2}{p}=1$};
\draw (10.7,1.3) node[anchor=west] {$\frac{1}{q}+\frac{3}{p}=\frac32-1$};

\draw (11,0) circle (0.06);

\draw[<-] (9,2.1) -- (10,2.6) node[anchor=south]{$~$};
\draw[<-] (10,0.5) -- (10.7,1.3) node[anchor=south]{$~$};

\path (6,-1.5) node(caption){Diagrammatic picture of the admissible range of $(q,p)$, when $\nu_0>1/(n-1)$.};  

\end{tikzpicture}

\end{center}

\subsection{Literature review}  

The study of decay estimates and Strichartz estimates for dispersive equations has a long and rich history, owing to their central importance in both mathematical analysis and the theory of partial differential equations (PDEs). For the classical Schrödinger and wave equations with electromagnetic potentials, these estimates have been extensively investigated in both mathematical and physical contexts. We refer to \cite{BPSS, BPST, BG, CS, DF, DFVV, EGS1, EGS2, S} and the references therein for a comprehensive overview of these results. However, due to the diverse effects of different potentials, it is challenging to develop a universal framework that applies to all types of potentials. As a result, the overall understanding of this program remains incomplete, particularly for scaling critical physical potentials. In this direction, a substantial body of literature has focused on the decay behavior of dispersive equations under perturbations by various potentials. For subcritical magnetic potentials, several works (see \cite{CS, DF, DFVV, EGS1, EGS2, S, SchlagSurvey} and the references therein) have established time-decay and Strichartz estimates. In the case of the scaling-critical purely inverse-square electric potential, pioneering results were obtained by Burq, Planchon, Stalker, and Tahvildar-Zadeh \cite{BPSS, BPST}, who proved Strichartz estimates for the Schrödinger and wave equations in space dimensions $n\geq2$. However, when a magnetic field is introduced, the situation becomes significantly more complex. This is because the scaling-critical magnetic potential acts as first-order differential operator with $O(|x|^{-1})$ coefficients and induces a long-range perturbation, which complicates the analysis. Recently, Beceanu and Kwon \cite{BK} studied decay estimates for the three-dimensional Schrödinger equation with magnetic potentials, but their results require the potential to decay at infinity strictly faster than the Coulomb potential $|x|^{-1}$. \vspace{0.1cm}

For the Coulomb type decay magnetic Schr\"odinger operators,  these estimates (decay and Strihcartz estimates) are known only in $\R^2$ and only for Aharonov-Bohm type magnetic fields, which is the special cases of \eqref{H-AV} that $Y=\mathbb{S}^1$, $A(x)={\A}(\hat{x})r^{-1}$ (if we write one forms in local coordinates on $X$ and $Y$ respectively and compare components) and $V=a(\hat{x})r^{-2}$, see  \cite{FFFP, FFFP1, FZZ, GYZZ}. We note that the aforementioned results for the scaling-critical long-range magnetic case are currently limited to two dimensions. The success of the argument relies on the simple structure of the cross section  $\mathbb{S}^1$  and the potentials, where the absence of conjugate points and the explicit knowledge of the eigenfunctions and eigenvalues of the magnetic Laplacian on $\mathbb{S}^1$ play a crucial role. Even dispersive estimates for a scaling-critical class of electric potentials (with respect to the global Kato norm) were established in \cite{BG} recently,
however, to the best of our knowledge, no Strichartz estimates have been established for scaling-critical magnetic Schrödinger operators in higher dimensions $n\geq3$ in the presence of both singular magnetic and electric potentials. For higher dimensions, we refer to  \cite{DFVV, BK, FV, EGS1, EGS2} for results on almost-critical magnetic Strichartz estimates, though the critical Coulomb case remains unresolved. \vspace{0.2cm}

While the aforementioned results concern Euclidean space, significant attention has also been given to the case of the cone, where the interplay between conical geometry and dispersive equations reveals rich phenomena. For the Laplacian without potentials, Cheeger and Taylor \cite{CT1, CT2} studied wave diffraction on conical manifolds, later extended to multi-cone settings by Ford and Wunsch \cite{FW}. Müller and Seeger \cite{MS1} analyzed wave propagation regularity, while Schrödinger dynamics with inverse-square potentials were examined in \cite{Carron, wang}, and Riesz transforms in \cite{HL}. 
Schlag, Soffer and Staubach further developed angular-momentum-dependent decay estimates for Schrödinger and wave equations on manifolds with conical ends \cite{SSS1, SSS2}. Pointwise dispersive estimates for Schrödinger equations on product cones were proved by Keeler and Marzuola \cite{KM}, and local-in-time dispersive/Strichartz estimates for general conic manifolds without conjugate points were proved by Chen \cite{Chen}. The special case of the flat 2D Euclidean cone $X=C(\mathbb{S}^1_{\sigma})$has been particularly fruitful, with Ford establishing Schrödinger dispersive estimates \cite{Ford}, and Blair-Ford-Marzuola \cite{BFM} obtaining decay estimates for $\sin(t\sqrt{\Delta_g})/\sqrt{\Delta_g}$ while conjecturing pointwise decay for $\cos(t\sqrt{\Delta_g})$. Most recently, \cite{Z} resolved this conjecture through resolvent kernel constructions. The Strichartz estimates for Schr\"odinger and wave in a general conical setting (without assumption on the conjugate radius of $Y$) have been proved by Zheng and the last author in \cite{ZZ1, ZZ2}.

The study of decay estimates for the Schrödinger and wave equations in the presence of potentials has undergone significant developments over the years. In the case of a pure electric potential (i.e., \( A = 0 \)), decay estimates for the Schrödinger equation were first established in \cite{JSS}. For the wave equation, analogous results were derived in \cite{Bea}, albeit under the assumption that the potential \( V \) belongs to the Schwartz class. These foundational works were subsequently refined and extended in a series of studies, including those in \cite{BG, DFVV, DF, EGS1, EGS2}.

For Schrödinger equations involving magnetic potentials, Strichartz and smoothing estimates have been developed by D'Ancona, Fanelli, Vega, Visciglia in \cite{DFVV} and Erdogan, Goldberg, Schlag in \cite{EGS1, EGS2}, where the assumption on the decay of potential is $O(\langle x \rangle^{-(1+\epsilon})$. 
For wave equations with magnetic potential, the Strichartz estimates were shown by Cuccagna, Schirmer \cite{CS} and D'Ancona, Fanelli \cite{DF}. 
In particular, in \cite{DF}, \( t^{-1} \) decay estimates were established for the wave equation with a small magnetic potential, assuming initial data in weighted \( H^s \) spaces. 
Very recently, \cite{BK} demonstrated that this statement holds even on \(\mathbb{R}^3\) when \( A \) is a short-range perturbation, meaning the potential decays at infinity strictly faster than \( |x|^{-1} \).
Despite these advancements, a significant gap remains in the literature when our focus lies on the scaling critical Coulomb potential, which decays at infinity exactly as \( |x|^{-1} \). 
As mentioned above,  for 2D the {\it Aharonov-Bohm} potential, \cite{FFFP, FFFP1} studied the Schr\"odinger equation and \cite{FZZ, GYZZ} studied the wave and Klein-Gordon equations. 
For higher dimensions, the problem was open until \cite{JZ2} for the Schrödinger equation.
Apart from dispersive and Strichartz estimates, the asymptotics (or, Price's law) in very similar geometric setting is considered in \cite{BM} and that for Dirac equation with similar potential (but being constant on the cross section) is considered \cite{BGRM-price}.
Our approach for wave here builds on the methodology introduced in \cite{JZ, JZ2}, which relies on the parametrix construction. Consequently, the wave results presented here are entirely novel and address a previously unexplored aspect of the problem.  In summary, our results are novel and still under active development, with progress in the following settings:  
\vspace{0.2cm}

\noindent$\bullet$ {\bf Euclidean space (special case):} When the cross section is spherical (\( Y = \mathbb{S}^{n-1} \)), the magnetic potential exhibits Coulomb decay, scaling as \( O(|x|^{-1}) \).  \vspace{0.1cm}

\noindent$\bullet$ {\bf Product cone (general case):} For a cone with a general cross section \( Y \), the analysis extends naturally, preserving key qualitative features.  

\subsection{The structure of this paper}

In Section~\ref{sec:tools}, we recall and introduce some ingredients of the harmonic analysis and microlocal analysis adapted to our current setting. Specifically, in Section~\ref{subsec:spectral-analysis}, we recall basic facts for reducing the problem on $X$ to that on $Y$.
In Section~\ref{subsec:geometric-pre}, we introduce the concept of distance spectrum, which will be used in our expression of the wave propagators. Also, we introduce the technical assumption: non-resonant endpoint condition (NREC) in Definition~\ref{defn:NREC}. Roughly speaking, this condition says that geodesics on $Y$ won't come back to the starting point around time $\pi$. 
We emphasize that this NREC is merely a technical assumption that allows one to simplify the proof in certain cases (which includes the most classical case $\CS = \mathbb{S}^{n-1}$), but the non-focusing condition is instead very intrinsically crucial for the dispersive and Strichartz estimate.
This in turn corresponds to the requirement that geodesics on $X$ won't come back to the starting point when they go back to infinity.
In Section~\ref{subsec:parametrix}, we give the concrete form of the parametrix of wave propagators that we will use.
Then in Section~\ref{subsec:LP}, we introduce the Littlewood-Paley theory adapted to our Schr\"odinger operator.

In Section~\ref{sec:kernel-spectral-measure}, we give the expression of the spectral measure associated with the operator $\mathcal{L}_{{\A},a}$. The key point is that we can write the spectral measure, hence also the propagator $e^{- i t \mathcal{L}_{{\A},a}}$ as an oscillatory integral in terms of $\cos(s\sqrt{P})$ and $\sin(\pi \sqrt{P})e^{-s \sqrt{P}}$, allowing us to apply the parametrix we obtained on $Y$.

In Section~\ref{sec:proof-with-NREC}, we will give the proof of our results under the aforementioned NREC condition. The  NREC condition is satisfied when $Y=\mathbb{S}^{n-1}$ due to the fact that the injective unit sphere is $\pi$. On a more technical level, this allows us to say that the contribution from $\cos(s\sqrt{P})$ with $s$ close to $\pi$  and all contribution from $\sin(\pi \sqrt{P})e^{-s \sqrt{P}}$, which corresponds to endpoint part of propagation and the diffraction part and are collected in the $I_R$-term in Section~\ref{sec:proof-with-NREC}, are almost negligible and the proof of our main results (in particular, Theorem~\ref{thm:dispersive}) will be simplified drastically and become conceptually more transparent. 

In Section~\ref{sec:striW}, we prove the Strichartz estimate (Theorem~\ref{thm:stri-wave})  using the localized dispersive estimate (Theorem~\ref{thm:dispersive}).

Finally, we remove the NREC assumption in Section~\ref{sec:without-NREC}.
This is achieved by analyzing the kernel including the part near the endpoint directly by exploiting the cancellation of the propagating part at the endpoint and the diffraction part at the starting point. See
Lemma~\ref{lem:in-parts} and Proposition~\ref{prop:localized-GD-IBP}. 
In the language of \cite{CT1,CT2}, this corresponds to analyzing the interface between `region II' and `region III' there.

{\bf Acknowledgments:} \quad  The authors would like to thank Andrew Hassell for his helpful discussions and thank Jiqiang Zheng for reading the manuscript carefully. 
J. Zhang is grateful for the hospitality of the Australian National University when he was visiting Andrew Hassell at ANU. 
J. Zhang was supported by National key R\&D program of China: 2022YFA1005700, National Natural Science Foundation of China(12171031) and Beijing Natural Science Foundation(1242011);
Q. Jia was supported by the Australian Research Council through grant FL220100072.

\vspace{0.2cm}
\section{Some microlocal and harmonic analysis tools} \label{sec:tools}
In this section, we recall some microlocal and harmonic analysis tools about the parametrix and the Littlewood-Paley theory, which were studied in \cite{JZ2,JZ}.
We highlight that our condition on the cross-section $Y$ strictly generalizes the settings considered in  \cite{JZ2,JZ}.

\subsection{Spectral analysis for $\mathcal{L}_{{\A},a}$} 
\label{subsec:spectral-analysis}
In local coordinates $x=(r, \hat{x})$, the metric reads $g=\big(g_{jk}\big)=\left( \begin{smallmatrix}
1 & {\bf 0} \\
{\bf 0} & r^2 h(\hat{x})
\end{smallmatrix} \right)$. We recall \eqref{eq:LAa-general'}
\begin{equation}\label{LAa-r}
\begin{split}
\mathcal{L}_{{\A},a}&=-\Delta_g+\frac{|{\A}(\hat{x})|^2+i\,\mathrm{div}_{h}{\A}(\hat{x})+a(\hat{x})}{r^2}+2i\frac{{\A}(\hat{x})}{r}\cdot\nabla_g\\
&=-\partial_r^2-\frac{n-1}r\partial_r+\frac{L_{{\A},a}}{r^2},
\end{split}
\end{equation}
where $\hat{x}\in \CS$ and
\begin{equation} 
\begin{split}
L_{{\A},a}=-\Delta_{h}+\big(|{\A}(\hat{x})|^2+i\,\mathrm{div}_{h}{\A}(\hat{x})+a(\hat{x})\big)+2i {\A}(\hat{x})\cdot\nabla_{h}.
\end{split}
\end{equation}
From the classical spectral theory, the spectrum of $L_{{\A},a}$ is formed by a countable family of real eigenvalues with finite multiplicity $\{\mu_k({\A},a)\}_{k=0}^\infty$ enumerated such that
\begin{equation}\label{eig-Aa}
\mu_0({\A},a)\leq \mu_1({\A},a)\leq \cdots 
\end{equation}
where we repeat each eigenvalue as many times as its multiplicity, and $\lim\limits_{k\to\infty}\mu_k({\A},a)=+\infty$. We refer to \cite[Lemma A.5]{FFT} for the special case $Y=\mathbb{S}^{n-1}$ .
For each $k\in\N$, let $\psi_k(\hat{x})\in L^2(\CS)$ be the normalized eigenfunction of the operator $L_{{\A},a}$ corresponding to the $k$-th eigenvalue $\mu_k({\A},a)$, i.e. satisfying that
\begin{equation}\label{equ:eig-Aa}
\begin{cases}
L_{{\A},a}\psi_k(\hat{x})=\mu_k({\A},a)\psi_k(\hat{x}) \quad \hat{x} \in  \CS,\\
\int_{\CS}|\psi_k(\hat{x})|^2 d\hat{x}=1.
\end{cases}\end{equation}
Notice that the operator $P_{{\A},a}$ defined in Theorem \ref{thm:dispersive} is related to $L_{{\A},a}$ by
$$P=P_{{\A},a}=L_{{\A},a}+(n-2)^2/4,$$
thus they have the same eigenfunctions and the difference of their eigenvalues is the constant $(n-2)^2/4$.
\vspace{0.2cm}

We have the orthogonal decomposition $$L^2(\CS)=\bigoplus_{k\in\N}h_{k}(\CS),$$
where 
\begin{equation}\label{hk}
h_{k}(\CS)=\text{span}\{\psi_k(\hat{x})\}.
\end{equation}
For $f\in L^2(\cone)$, we can write $f$ in the following form by separation of variables:
\begin{equation}\label{sep.v}
f(x)=\sum_{k\in\N} c_{k}(r)\psi_k(\hat{x}),
\end{equation}
where
\begin{equation*}
 c_{k}(r)=\int_{\CS}f(r,\hat{x})
\overline{\psi_k(\hat{x})} \, \sqrt{|\det h|} d\hat{x}.
\end{equation*}
Hence, on each space $\mathcal{H}^{k}=\text{span}\{\psi_k\}$, from \eqref{LAa-r}, we have
\begin{equation*}
\begin{split}
\LL_{{\A},a}=-\partial_r^2-\frac{n-1}r\partial_r+\frac{\mu_k}{r^2}.
\end{split}
\end{equation*}
For $\nu >-\frac{1}{2}$, $f\in L^2(\cone)$ we define the Hankel transform by
\begin{equation}\label{hankel}
(\mathcal{H}_{\nu}f)(\rho,\hat{x})=\int_0^\infty (r\rho)^{-\frac{n-2}2}J_{\nu}(r\rho)f(r,\hat{x}) \,r^{n-1}dr,
\end{equation}
where the Bessel function of order $\nu$ is given by
\begin{equation}\label{Bessel}
J_{\nu}(r)=\frac{(r/2)^{\nu}}{\Gamma\left(\nu+\frac12\right)\Gamma(1/2)}\int_{-1}^{1}e^{isr}(1-s^2)^{(2\nu-1)/2} ds, \quad \nu>-1/2, r>0.
\end{equation}

Using the functional calculus, for a Borel measurable function $F$ (see \cite{Taylor}), we define $F(\mathcal{L}_{{\A},a})$ by
\begin{equation}\label{funct}
F(\mathcal{L}_{{\A},a}) f(r_1,\hat{x})=\int_0^\infty \int_{\CS} K(r_1,\hat{x},r_2,\hat{y}) f(r_2,\hat{y})\; r^{n-1}_2\;dr_2\; \sqrt{|\det h|}d\hat{y}, 
\end{equation}
where
$$\nu_k=\sqrt{\mu_k+(n-2)^2/4}, \quad K(r_1,\hat{x},r_2,\hat{y})=\sum_{k\in\N}\psi_{k}(\hat{x})\overline{\psi_{k}(\hat{y})}K_{\nu_k}(r_1,r_2),$$
and
\begin{equation}\label{equ:knukdef}
  K_{\nu_k}(r_1,r_2)=(r_1r_2)^{-\frac{n-2}2}\int_0^\infty F(\rho^2) J_{\nu_k}(r_1\rho)J_{\nu_k}(r_2\rho) \,\rho d\rho.
\end{equation}
Similarly, recalling $\nu=\nu_k=\sqrt{\mu_k+(n-2)^2/4}$, the square root of the eigenvalue of the operator $P=L_{{\A},a}+(n-2)^2/4$,  and using the spectral theory, as well as \cite{Taylor}, we have
\begin{equation}\label{FA}
\tilde{F}(\sqrt{P})=\sum_{k\in\N}\psi_{k}(\hat{x})\overline{\psi_{k}(\hat{y})} \tilde{F}(\nu_k),
\end{equation}
where $\tilde{F}$ is a Borel measurable function. 

\subsection{Geometric preliminaries}
\label{subsec:geometric-pre}

We recall some geometric facts that we need in our parametrix construction from \cite{JZ}.
We take the symplectic (instead of Riemannian) perspective to view the geodesic flow of $Y$ as a flow on $T^*Y$, which is the Hamilton flow associated to (the principal symbol of) $\Delta_h$. Also, we consider the exponential map as a map defined on $T^*Y$.

Now we introduce the notion of the distance spectrum, which generalizes the distance function and will be the key ingredient of phase functions parametrizing $\mathscr{L}_\pm$. For $(\hat{x},\hat{y}) \in Y \times Y$ and $\hat{\mu}=\mu|\mu|_h^{-1}$, we define the forward/backward distance spectrum associated to $(\hat{x},\hat{y}) \in Y \times Y$ to be
\begin{align} \label{eq:defn-distance-spectrum}
\begin{split}
\mathfrak{D}_\pm (\hat{x},\hat{y}) = & \{ \mathfrak{d} \in [0,\pi+\epsilon): 
\exists \hat{\mu}_2 \in S^*_{\hat{y}}Y, \;\hat{\mu}_1 \in S^*_{\hat{x}}Y \text{ such that }
\\ & \exp(\mp \mathfrak{d}\mathsf{H}_{p})(\hat{y},\hat{\mu}_2) = (\hat{x},\hat{\mu}_1) \},
\end{split}
\end{align}
which is a collection of smooth functions $\mathfrak{d} (\hat{x},\hat{y})$ of $\hat{x},\hat{y}$. Here we count $\mathfrak{d}$ with multiplicity for different $\hat{\mu}_2$ (and corresponding $\hat{\mu}_1$). 
\begin{remark} \label{remark:D+- and D}
Since we allow the momentum run over the entire $S^*_{\hat{y}}Y$, and the forward $\mathsf{H}_{p}$-flow starting at $(\hat{y},\hat{\mu}_2)$ is the same as the backward flow starting at $(\hat{y},-\hat{\mu}_2)$, so $\mathfrak{D}_+(\hat{x},\hat{y})$ is actually the same as $\mathfrak{D}_-(\hat{x},\hat{y})$. We only keep the $\pm$ sign to emphasize which of $e^{\mp is\sqrt{P}}$ are we considering and we will denote it by $\mathfrak{D}(\hat{x},\hat{y})$ when we consider $\cos(s\sqrt{P})$.
\end{remark}

Equivalently, that is all those $\mathfrak{d} \in [0,\pi+\epsilon)$ such that there is a (unit speed) geodesic $\gamma$ (with loops counted with multiplicity) starting at $\hat{y}$ with $\gamma(\mathfrak{d}) = \hat{x}$.
In particular, when $d_h(\hat{x},\hat{y})<\mathrm{inj}(Y)$ (hence $d_h(\hat{x},\hat{y})$ is smooth and realized by the unique distance minimizing geodesic), $d_h(\hat{x},\hat{y}) \in \mathfrak{D}_\pm(\hat{x},\hat{y})$.

By our assumption that $\mathscr{L}_\pm$ is non-focusing in the sense of Definition~\ref{defn:non-focusing-Lagrangian}, we know that for each $(\hat{x},\hat{y}) \in \mathcal{U}$, there are only finitely many geodesics within length $\pi$ connecting $\hat{x},\hat{y}$ and they corresponds to sheets $U_i$ on $\mathscr{L}_\pm$ in Definition~\ref{defn:non-focusing-Lagrangian} in a one to one manner. After replacing $\mathcal{U}$ by a smaller region whose closure (which is compact) is contained in $\mathcal{U}$, we only need to take finitely many such neighborhoods to cover it and we know that the size of $\mk{D}(\hat{x},\hat{y})$ for $(\hat{x},\hat{y})$ in $\mathcal{U}$ is uniformly bounded.

Next we introduce a technical condition.
\begin{definition}[non-resonant endpoint condition(NREC)] \label{defn:NREC}
Let $\LS \subseteq \mathbb{R}$ denote \emph{the length spectrum} of $Y$, which is the set of lengths of closed geodesics (allowed to go through the same trajectory more than one times and allow the tangent vector at the endpoint to be different from that at the starting point) on $Y$. Then we say $(Y,h)$ satisfies the \emph{non-resonant endpoint condition (NREC)} on (in fact on $X$, so that $\pi$ is actually the `endpoint' of geodesic flows) if
\begin{center}  
\emph{$\pi \notin \overline{\LS}$.}
\end{center}
This implies that there exists $\delta_0>0$ such that
\begin{equation} \label{eq:NREC-quantitative}
[\pi-\delta_0,\pi+\delta_0] \cap \LS = \emptyset.
\end{equation}
\end{definition}

\begin{remark}
This NREC is satisfied if the injective radius of the section cross $Y$ is strictly greater than $\frac{\pi}2$. In particular, this NREC is satisfied for our special example $Y=\mathbb{S}^{n-1}$ whose injective radius is $\pi$. Notice that using the injective radius as the criterion puts restrictions on the global obstruction for the exponential to be a diffeomorphism, while our non-focusing condition in Definition~\ref{defn:non-focusing-Lagrangian} concerns only the local obstruction (i.e., its degeneracy). For example, a torus with radius of one of its circles being $\frac{1}{2k}, k \in \mathbb{Z}_+$ will satisfy Definition~\ref{defn:non-focusing-Lagrangian} but not this NREC, and we need to use the proof in Section~\ref{sec:without-NREC} for those cases.  
\end{remark}

Now we show that when this NREC is satisfied, we have control not only on the length of geodesic returning to the same point, but also geodesics returning to a point that is very close nearby.

\begin{proposition}  \label{prop: NREC-thicken-diagonal}
Suppose $(Y,h)$ satisfies NREC and let $\delta_0$ be as in \eqref{eq:NREC-quantitative}, then there exists $d_0>0$ such that whenever $d_h(\hat{x},\hat{y})<d_0$, then geodesics connecting $\hat{x}$ and $\hat{y}$ won't have length in $[\pi-\delta_0,\pi+\delta_0]$.
\end{proposition}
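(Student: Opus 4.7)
The plan is to argue by contradiction via a compactness argument, using that $Y$ is closed (hence so is the unit (co)sphere bundle $S^*Y$) together with the continuous dependence of the geodesic flow on initial data.

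Suppose no such $d_0$ exists. Then for every $n \in \mathbb{N}$ there exist points $\hat{x}_n, \hat{y}_n \in Y$ with $d_h(\hat{x}_n, \hat{y}_n) < \tfrac{1}{n}$ and a geodesic $\gamma_n:[0,\ell_n]\to Y$ (parametrized by arclength) with $\gamma_n(0)=\hat{x}_n$, $\gamma_n(\ell_n)=\hat{y}_n$, and $\ell_n \in [\pi-\delta_0,\pi+\delta_0]$. Since $Y$ is compact, after passing to a subsequence, $\hat{x}_n \to \hat{x}^*$ for some $\hat{x}^* \in Y$; by the triangle inequality $\hat{y}_n \to \hat{x}^*$ as well. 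Since $S^*Y$ is compact, extracting a further subsequence ensures that the initial covectors (using the musical isomorphism to convert $\gamma_n'(0)$ to a unit covector) converge to some $\hat{\mu}^* \in S^*_{\hat{x}^*}Y$. Finally, since $[\pi-\delta_0,\pi+\delta_0]$ is compact, after one more extraction $\ell_n \to \ell^* \in [\pi-\delta_0,\pi+\delta_0]$.

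Now let $\gamma^*$ be the geodesic with initial point $\hat{x}^*$ and initial (co)tangent vector $\hat{\mu}^*$. By continuous dependence of solutions to the geodesic ODE on initial conditions over a uniformly bounded time interval (say $[0, \pi+\delta_0+1]$), we have $\gamma_n(\ell_n) \to \gamma^*(\ell^*)$ as $n \to \infty$. Combining this with $\gamma_n(\ell_n)=\hat{y}_n \to \hat{x}^*$, we conclude $\gamma^*(\ell^*) = \hat{x}^*$. Hence $\gamma^*$ is a closed geodesic (of length $\ell^*>0$) based at $\hat{x}^*$, where the tangent vector at the endpoint is allowed to differ from that at the starting point, which matches the convention for $\LS$ in Definition~\ref{defn:NREC}. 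Therefore $\ell^* \in \LS \cap [\pi-\delta_0,\pi+\delta_0]$, contradicting \eqref{eq:NREC-quantitative}.

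The argument is essentially a standard closedness property of the length spectrum, with no real obstacle beyond carefully carrying out the three successive subsequence extractions (base point, initial covector, length) and appealing to continuous dependence on initial data. The only point worth checking is that the limit length $\ell^*$ is genuinely positive, which is automatic from $\ell^* \geq \pi - \delta_0 > 0$, so that $\gamma^*$ is a nontrivial closed geodesic contributing to $\LS$.
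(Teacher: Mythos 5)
Your proof is correct and follows essentially the same route as the paper: a contradiction argument using the compactness of $Y$ (and of $S^*Y$ and the interval $[\pi-\delta_0,\pi+\delta_0]$) together with continuity of the geodesic flow to extract a limiting geodesic loop of length in $[\pi-\delta_0,\pi+\delta_0]$, contradicting \eqref{eq:NREC-quantitative}. Your remark that the limit loop is nontrivial (length at least $\pi-\delta_0>0$) and that $\LS$ allows the endpoint tangent to differ from the initial one matches the paper's conventions, so nothing further is needed.
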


\begin{proof} We prove this by using the contradiction argument.
Otherwise, there is a sequence of point pairs $(y_{1,i},y_{2,i})$ such that 
\begin{equation} \label{eq: y1y2-distance-0}
d_h(y_{1,i},y_{2,i}) \to 0,
\end{equation}
and there are some $s_i \in [\pi-\delta_0,\pi+\delta_0]$ and $\hat{\mu}_{1,i} \in S^*_{ y_{1,i} } Y,\hat{\mu}_{2,i} \in S^*_{y_{2,i}}Y$ such that 
\begin{equation}
\exp(s_i\mathsf{H}_{\frac{1}{2}p})(y_{2,i},\hat{\mu}_{2,i}) = (y_{1,i},\hat{\mu}_{1,i}).
\end{equation}
By the compactness of $[\pi-\delta_0,\pi+\delta_0] \times S^*Y \times S^*Y$, we know that after passing to a subsequence we can assume that 
$s_i \to s_{\infty} \in [\pi-\delta_0,\pi+\delta_0]$, $(y_{1,i},y_{2,i}) \to (y,y)$ (recalling \eqref{eq: y1y2-distance-0}) and $\hat{\mu}_1, \hat{\mu}_2 \in S^*_{y}Y$ such that  
\begin{equation}
\exp(s_\infty \mathsf{H}_{\frac{1}{2}p})(y,\hat{\mu}_2) = (y,\hat{\mu}_1), 
\end{equation}
which contradicts \eqref{eq:NREC-quantitative}.
\end{proof}

\subsection{The Hadamard parametrix} 
\label{subsec:parametrix}

In this subsection, we recall the localized parametrices of the half-wave propagators $e^{\pm is\sqrt{P}}$, which gives the same type of expressions of the even wave propagator $\cos(s\sqrt{P})$ and the Poisson wave propagator $e^{(-s\pm i\pi)\sqrt{P}}$, where $P=P_{{\A}, a}$ is the operator on $\CS$. Similar results have been proved in \cite[Section~3]{JZ2} when $\CS = \mathbb{S}^{n-1}$, which in turn used the global parametrix construction in \cite[Section~3]{JZ}.

We first give the oscillatory integral expression of the half wave propagator $Q_je^{\pm is\sqrt{P}}Q_j$. 
\begin{lemma}[Half-wave propagator]\label{lemma: localized-halfwave}
Let $\mk{D}_{\pm}(\hat{x},\hat{y})$ be as in \eqref{eq:defn-distance-spectrum} and $P=P_{{\A},a}$ be as in Theorem \ref{thm:dispersive}, then, for each $Q_j$ with $1\leq j\leq \mathsf{F}$ given in \eqref{Id-p-Q},  the kernel of $Q_j e^{\pm is\sqrt{P}} Q_j$ can be written as 
\begin{equation}\label{eq:half-wave-kernel}
Q_je^{\pm is\sqrt{P}}(\hat{x},\hat{y})Q_j = K_{\pm,N}(s; \hat{x},\hat{y})+R_{\pm,N}(s; \hat{x},\hat{y}),
\end{equation}
where $R_{\pm,N}(s; \hat{x},\hat{y}) \in C^{N-n-2}([0,\pi] \times Y \times Y)$ and
\begin{equation} \label{K-pm-N}
\begin{split}
K_{\pm,N}(s; \hat{x},\hat{y})&=(2\pi)^{n-1} \sum_{\mk{d} \in \mk{D}_\pm (\hat{x},\hat{y})} \int_{\R^{n-1}} e^{i \mk{d}(\hat{x},\hat{y}){\bf 1}\cdot\xi} a_{\pm,\mk{d}}(s, \hat{x},\hat{y}; |\xi|) e^{\pm  is|\xi|} d\xi\\
&= \sum_{\mk{d} \in \mk{D}_\pm (\hat{x},\hat{y})} \sum_{\varsigma = \pm}\int_0^\infty b_{\varsigma }(\rho \mk{d}) e^{\varsigma i \rho \mk{d}} a_{\pm,\mk{d}}(s, \hat{x},\hat{y}; \rho) e^{\pm  is\rho} \rho^{n-2} d\rho.
\end{split}
\end{equation}
In addition, one may choose $a_{+,\mk{d}}=a_{-,\mk{d}}$ to satisfy
 \begin{equation}\label{a}
 |\partial^\alpha_{s,\hat{x},\hat{y}}\partial_\rho^k a_{\pm, \mk{d}}(s,\hat{x},\hat{y};\rho)|\leq C_{\alpha,k}(1+\rho)^{-k},
 \end{equation}
 and
 \begin{equation}\label{b+-}
\begin{split}
| \partial_r^k b_\pm(r)|\leq C_k(1+r)^{-\frac{n-2}2-k},\quad k\geq 0.
\end{split}
\end{equation}
Furthermore, under the NREC condition and let $\delta_0>0$ be as in \eqref{eq:NREC-quantitative}, 
then one can choose $a_{\pm,\mk{d}}(s,\hat{x},\hat{y},\rho)=0$ for $s>\pi-\frac{\delta_0}{2}$.
\end{lemma}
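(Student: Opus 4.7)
The plan is to adapt the parametrix construction of \cite[Section~3]{JZ2}, which handled the special case $\CS = \mathbb{S}^{n-1}$, to the present setting where $(\CS,h)$ is only assumed to satisfy NFC. The construction proceeds in three steps: first, decompose $\mathscr{L}_\pm$ into sheets using NFC; second, construct a half-wave FIO parametrix sheet-by-sheet; third, use NREC to truncate the $s$-support of the amplitudes.

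For the first step I would exploit that $e^{\pm is\sqrt{P}}$ is a Fourier integral operator associated to $\mathscr{L}_\pm$. On the region $\mathcal{U}_j \times \mathcal{U}_j \subset \mathcal{U}$, NFC decomposes $\mathscr{L}_\pm$ into a disjoint union of finitely many sheets, each of which projects diffeomorphically (up to the conic dilation) onto $\mathcal{U}_j \times \mathcal{U}_j$ and is therefore in one-to-one correspondence with an element of the distance spectrum $\mk{D}_\pm(\hat{x},\hat{y})$. This lets me write the kernel of $Q_j e^{\pm is\sqrt{P}} Q_j$ as a locally finite sum over $\mk{d} \in \mk{D}_\pm(\hat{x},\hat{y})$ of sheet-by-sheet contributions.

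Next, on each sheet I would realize the corresponding piece by a half-wave oscillatory integral with phase $\phi_{\pm,\mk{d}}(s,\hat{x},\hat{y};\xi) = \mk{d}(\hat{x},\hat{y})\,\mathbf{1}\cdot\xi \pm s|\xi|$, $\xi \in \R^{n-1}$, modeled on the Euclidean half-wave kernel with $|x-y|$ replaced by the generalized distance $\mk{d}(\hat{x},\hat{y})$. I would verify that $\phi_{\pm,\mk{d}}$ non-degenerately parametrizes the chosen sheet by computing its critical set $\{\partial_\xi\phi=0\}$ and matching it with the image of the sheet inside $T^*(\R \times \CS \times \CS)$. The amplitudes $a_{\pm,\mk{d}}$ would then be constructed iteratively by solving the standard transport equations coming from the geometric optics ansatz for $(\partial_s^2 + P)K = 0$ up to order $N$; the truncation error becomes a smoothing operator of regularity $C^{N-n-2}$, giving $R_{\pm,N}$. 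The second, Bessel-type form in \eqref{K-pm-N} then follows by passing to polar coordinates $\xi = \rho\omega$ on $\R^{n-1}$: the angular integral over $S^{n-2}$ produces a factor $(\rho\mk{d})^{-(n-3)/2} J_{(n-3)/2}(\rho\mk{d})$, which splits for large argument as $b_+(\rho\mk{d}) e^{i\rho\mk{d}} + b_-(\rho\mk{d}) e^{-i\rho\mk{d}}$ with $b_\pm$ satisfying \eqref{b+-} by the standard Hankel asymptotics. The symbol bounds \eqref{a} then follow from the homogeneity built into the iterative construction, and the identification $a_{+,\mk{d}} = a_{-,\mk{d}}$ is consistent with Remark~\ref{remark:D+- and D}.

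The main obstacle I expect is the bookkeeping of the branches of $\mk{D}_\pm$ across $\mathcal{U}_j \times \mathcal{U}_j$: while NFC guarantees a uniformly bounded sheet count and each sheet varies smoothly locally, the labeling of sheets may not extend continuously across all of $\mathcal{U}_j \times \mathcal{U}_j$. I would handle this by taking a finite open subcover of $\mathcal{U}_j \times \mathcal{U}_j$ on each piece of which the branches of $\mk{D}_\pm$ can be smoothly labeled, with a subordinate partition of unity absorbed into the amplitudes $a_{\pm,\mk{d}}$. Finally, for the NREC refinement I would invoke Proposition~\ref{prop: NREC-thicken-diagonal}: since the diameter of $\mathcal{U}_j$ is less than $d_0$, every $\mk{d} \in \mk{D}_\pm(\hat{x},\hat{y})$ over $\mathcal{U}_j \times \mathcal{U}_j$ avoids $[\pi-\delta_0,\pi+\delta_0]$, so the construction can be arranged with each amplitude supported in $\{s < \pi - \delta_0/2\}$ without disrupting the parametrix identity, at the cost of an additional smoothing error that is absorbed into $R_{\pm,N}$.
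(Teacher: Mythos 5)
Your proposal is correct and follows essentially the same route as the paper: the standard FIO/Hadamard parametrix construction with phase $\mk{d}(\hat{x},\hat{y})\,{\bf 1}\cdot\xi \pm s|\xi|$ parametrizing the sheets of $\mathscr{L}_\pm$ over $\mathcal{U}_j\times\mathcal{U}_j$ (guaranteed non-degenerate by NFC and the choice of covering), the polar-coordinate reduction producing the Bessel-type factors $b_\pm$ satisfying \eqref{b+-}, and the NREC truncation of the amplitude in $s$ justified via Proposition~\ref{prop: NREC-thicken-diagonal}, with the discarded piece absorbed into $R_{\pm,N}$ exactly as in the paper's cutoff-plus-wavefront-set argument. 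The only cosmetic differences are that you spell out the branch-labeling and Bessel reduction in more detail while stating the smoothness of the truncation error slightly less explicitly, but the underlying mechanism is the same.
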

For the rest of this paper, we keep the $\pm$ sub-indices to indicate which operator those amplitudes are associated to and make distinction between $a$, but one should keep in mind that they are actually the same function.

\begin{remark}
The conclusion that one can put the part of the propagator with $s$ large into the residual part and choose $a_{\mk{d}}(s,\hat{x},\hat{y},\rho)=0$ there should not be surprising, as it can be interpreted as a formulation of the finite speed of propagation (of singularities) for the half-wave propagators $e^{\pm is\sqrt{P}}$.
\end{remark}

\begin{proof}
The form of the parametrix in \eqref{KN1} follows from the standard parametrix construction using the calculus of Fourier integral operators, as explained in more detail in \cite[Section~3]{JZ}. We mention here that the key point is to verify the given phase function parametrizes (in the sense of H\"ormander \cite{FIO1}) the propagating Lagrangians in \eqref{eq: propagating lagrangians}, at least when the exponential map is non-degenerate, hence $\mathscr{L}_{\pm}$ at fixed $s$ project down to $Y \times Y$ diffeomorphically.

Now we justify the claim that one can choose $a_{\pm,\mk{d}}(s,\hat{x},\hat{y},\rho)=0$ for $s > \pi-\frac{\delta_0}{2}$ under NREC. 
Denote the amplitude of the parametrix obtained from the parametrix construction by $\overline{a}_{\pm,\mk{d}}(s,\hat{x},\hat{y},\rho)$ (in fact, one needs to change the parametrization of the Lagrangian $\mathscr{L}_\pm$ in \eqref{eq: propagating lagrangians} when we encounter conjugate point pairs, but this part only contributes a residual term after inserting those $Q_j$-factors, for the same reason as we explain below). 
Now one can insert a factor $\chi_0(s)$ such that $\chi_0(s)=1$ on $[0,\pi - \delta_0]$ and is supported in $[0,\pi - \frac{\delta_0}{2}]$. That is, consider the same oscillatory integral, but with decomposing it into to parts $I_1,I_2$, with
\begin{equation}
\chi_0(s)\overline{a}_{\pm,\mk{d}}  \text{ and }  (1-\chi_0(s))\overline{a}_{\pm,\mk{d}}
\end{equation} 
being amplitudes respectively.
$\pi - \delta_0$  is larger than the diameter of $\mathrm{supp} Q_j$, the first part $I_1$ satisfies the property of parametrix (i.e., solve the wave equation up to a $C^\infty$-error) on 
\begin{equation}
\mathrm{supp} Q_j \times \mathrm{supp} Q_j.
\end{equation}
For the $I_2$-term, we have
\begin{equation} \label{eq: QjI2Qj-smooth}
Q_jI_2Q_j \in C^\infty([0,\pi] \times \CS \times \CS).
\end{equation}

This is because by the support of the fact $(1-\chi_0(s))$, we know $\WF(I_2)$ is over point pairs $(\hat{x},\hat{y})$ that are connected by geodesics of length $\ell \in [\pi-\delta_0,\pi]$. 
But no geodesic having such length connects point pairs in $\mathrm{supp} Q_j \times \mathrm{supp} Q_j$ by our choice after \eqref{eq: Uj condition} and Proposition~\ref{prop: NREC-thicken-diagonal}.
Thus after composing $Q_j$ (viewing it as a $0$-th order Fourier integral operator (FIO), its wavefront set is contained in the conormal bundle of the diagonal restricted over $\mathcal{U}_j$) from left and right, its wavefront set is empty (because the corresponding composition of canonical relations is empty) and we have \eqref{eq: QjI2Qj-smooth}. Consequently, this part can be collected into the $R_N$-term.

We can take $a_{-,\mk{d}}=a_{+,\mk{d}}$ by the following observation: $\mathscr{L}_\pm$ are actually the same Lagrangian (ignoring time related components) just with the flow with initial condition $(\hat{y},\mu_2)$ in $\mathscr{L}_+$ replaced by $(\hat{y},-\mu_2)$ in $\mathscr{L}_-$. So the corresponding amplitude $a_{-,\mk{d}}(s,\hat{x}, \hat{y};\xi)$ should satisfy
\begin{equation} \label{eq: a+,- relationship}
a_{-,\mk{d}}(s,\hat{x}, \hat{y};-\xi) = a_{+,\mk{d}}(s,\hat{x}, \hat{y};\xi),
\end{equation}
since they are obtained through solving the same transport equation along the same (lifted) geodesic. But in the step of the reduction to a function of $|\xi|$, the critical point for $a_{-,\mk{d}}(s,\hat{x}, \hat{y};-\xi)$ is now at $\xi = |\xi|(-1,0,...,0)$. So after the reduction which makes the symbol depends only on $|\xi|$, (see \cite[Section~3]{JZ} for more details). We still use $a_{\pm,\mk{d}}$ to denote the amplitude) and we obtain 
\begin{equation} \label{eq: a+,- relationship}
a_{-,\mk{d}}(s,\hat{x}, \hat{y};|\xi|) = a_{+,\mk{d}}(s,\hat{x}, \hat{y};|\xi|).
\end{equation}
\end{proof}

 \begin{lemma}[Hadamard parametrix I] 
\label{lemma: parametrix 1}
Let $\mk{D}(\hat{x},\hat{y})=\mk{D}_\pm(\hat{x},\hat{y})$ (see Remark~\ref{remark:D+- and D}) and $P=P_{{\A},a}$ be as in Theorem \ref{thm:dispersive}, then for each $Q_j$ with $1\leq j\leq \mathsf{F}$ given in \eqref{Id-p-Q}, 
and $\forall N>n+2$, the kernel of  $ Q_j\cos(s \sqrt{P}) Q_j$ can be written as
 \begin{equation}\label{KR}
\big[Q_j \cos(s \sqrt{P})Q_j \big](\hat{x},\hat{y})=K_N(s; \hat{x},\hat{y})+R_N(s; \hat{x},\hat{y}),
 \end{equation}
 where $R_N(s; \hat{x},\hat{y})\in C^{N-n-2} ([0,\pi]\times \CS \times \CS)$ and  
  \begin{equation}\label{KN1}
  \begin{split}
K_N(s; \hat{x},\hat{y})&=(2\pi)^{n-1} \sum_{\mk{d} \in \mk{D}(\hat{x},\hat{y})} \int_{\R^{n-1}} e^{i \mk{d}(\hat{x},\hat{y}){\bf 1}\cdot\xi}  a_{\mk{d}}(s, \hat{x},\hat{y}; |\xi|) \cos(s |\xi|) d\xi\\
&= \sum_{\mk{d} \in \mk{D}(\hat{x},\hat{y})} \sum_{\pm}\int_0^\infty b_{\pm}(\rho \mk{d}) e^{\pm i \rho \mk{d}} a_{\mk{d}}(s, \hat{x},\hat{y}; \rho) \cos(s \rho) \rho^{n-2} d\rho
\end{split}
 \end{equation}
 with ${\bf 1}=(1,0,\ldots,0)$ and $a_{\mk{d}}\in S^0$ satisfies \eqref{a} and $b_\pm$ obeys \eqref{b+-}.
In addition, under the NREC condition and let $\delta_0>0$ be as in \eqref{eq:NREC-quantitative}, then we can choose $a_{\mk{d}}(s,\hat{x},\hat{y},\rho)=0$ for $s>\pi-\frac{\delta_0}{2}$.
\end{lemma}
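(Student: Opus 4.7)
The plan is to reduce this lemma directly to the half-wave propagator parametrix of Lemma~\ref{lemma: localized-halfwave} via the identity $\cos(s\sqrt{P}) = \tfrac{1}{2}(e^{is\sqrt{P}} + e^{-is\sqrt{P}})$. The structural match between the two statements is already strong: the expression in \eqref{KN1} differs from \eqref{K-pm-N} only in replacing $e^{\pm is\rho}$ by $\cos(s\rho)$, and the claims about $\mathfrak{D}$, $a_{\mk{d}}$, $b_\pm$, $R_N$, and the NREC support condition are direct analogues. So the task is really a bookkeeping exercise combining the $+$ and $-$ parametrices, and the only substantive point is that we can honestly merge the amplitudes into a single $a_{\mk{d}}$ on the same index set $\mk{D}(\hat{x},\hat{y})$.

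First I would invoke Lemma~\ref{lemma: localized-halfwave} separately for $Q_je^{is\sqrt{P}}Q_j$ and $Q_je^{-is\sqrt{P}}Q_j$, obtaining the oscillatory integral representations with amplitudes $a_{+,\mk{d}}$ and $a_{-,\mk{d}}$ and residual terms $R_{+,N}, R_{-,N} \in C^{N-n-2}$. I would then appeal to Remark~\ref{remark:D+- and D} to identify $\mathfrak{D}_+(\hat{x},\hat{y}) = \mathfrak{D}_-(\hat{x},\hat{y}) =: \mathfrak{D}(\hat{x},\hat{y})$, so that the sums over $\mk{d}$ in the two parametrices are indexed by the same (finite, by the NFC assumption together with the localization to $\mathcal{U}_j \times \mathcal{U}_j \subset \mathcal{U}$) set. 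Next, using the already-established relation $a_{-,\mk{d}}(s,\hat{x},\hat{y};|\xi|) = a_{+,\mk{d}}(s,\hat{x},\hat{y};|\xi|)$ from the end of the proof of Lemma~\ref{lemma: localized-halfwave}, define the common amplitude $a_{\mk{d}} := a_{+,\mk{d}} = a_{-,\mk{d}}$ and set $R_N := \tfrac{1}{2}(R_{+,N} + R_{-,N})$, which is again in $C^{N-n-2}([0,\pi] \times \CS \times \CS)$.

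With these identifications in place, averaging the $\pm$ parametrices gives
\begin{equation*}
\tfrac{1}{2}\bigl(e^{is|\xi|} + e^{-is|\xi|}\bigr) = \cos(s|\xi|),
\end{equation*}
which immediately yields the first equality of \eqref{KN1}. The second equality (the reduction from an $(n-1)$-dimensional oscillatory integral to a one-dimensional integral involving $b_\pm$) is then obtained by the same spherical-reduction computation already carried out in Lemma~\ref{lemma: localized-halfwave}: write $d\xi = \rho^{n-2}\,d\rho\,d\omega$, perform the angular $d\omega$ integral against $e^{i\mk{d}\,{\bf 1}\cdot\xi}$ to produce an amplitude that, by stationary phase at the two antipodal poles $\omega = \pm {\bf 1}$, splits as $\sum_{\pm} b_\pm(\rho\mk{d})e^{\pm i \rho \mk{d}}$ with $b_\pm$ satisfying the symbol bound \eqref{b+-}. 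The symbol bound \eqref{a} for $a_{\mk{d}}$ is inherited from the corresponding bound for $a_{\pm,\mk{d}}$.

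The only remaining point is the NREC addendum: if $a_{+,\mk{d}}(s,\cdot,\cdot,\cdot)$ and $a_{-,\mk{d}}(s,\cdot,\cdot,\cdot)$ both vanish for $s > \pi - \tfrac{\delta_0}{2}$ (as arranged in Lemma~\ref{lemma: localized-halfwave}), then so does their common value $a_{\mk{d}}$. Thus the only real obstacle is the somewhat delicate identification of the two index sets $\mathfrak{D}_\pm$ and the matching of amplitudes across the $\pm$ signs; once this is set up carefully using Remark~\ref{remark:D+- and D} and relation \eqref{eq: a+,- relationship}, the statement follows by term-by-term averaging of the parametrices already constructed.
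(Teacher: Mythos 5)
Your proposal is correct and matches the paper's (implicit) argument: the paper deliberately arranges $a_{+,\mk{d}}=a_{-,\mk{d}}$ and notes $\mathfrak{D}_+=\mathfrak{D}_-$ in Lemma~\ref{lemma: localized-halfwave} and Remark~\ref{remark:D+- and D} precisely so that Lemma~\ref{lemma: parametrix 1} follows by averaging the two half-wave parametrices, with $\tfrac12(e^{is\rho}+e^{-is\rho})=\cos(s\rho)$, the residuals combined into $R_N$, and the NREC support condition inherited.
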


\begin{lemma}[Localized Poisson-wave operator]
\label{lemma: localized-poisson} 
With $\mk{D}_\pm(\hat{x},\hat{y}),P$ as above, the localized Schwartz kernel of $e^{(-s\pm i\pi)\sqrt{P}}$ can be decomposed as
\begin{equation}\label{eq:Poisson-kernel-1}
Q_je^{(-s\pm i\pi))\sqrt{P}}Q_j=\tilde{K}_{\pm, N}(s; \hat{x},\hat{y})+\tilde{R}_N(s; \hat{x},\hat{y}),
\end{equation}
where $\tilde{R}_N(s; \hat{x},\hat{y})\in C^{N-n-2} ([0,+\infty)\times Y\times Y)$ and  
\begin{equation}\label{eq:Poisson-kernel-2}
\begin{split}
\tilde{K}_{\pm, N}(s; \hat{x},\hat{y})&=(2\pi)^{n-1} \sum_{\mk{d} \in \mk{D}_\pm (\hat{x},\hat{y})} \int_{\R^{n-1}} e^{i \mk{d}(\hat{x},\hat{y}){\bf 1}\cdot\xi} \tilde{a}_{\pm,\mk{d}}(s, \hat{x},\hat{y}; |\xi|) e^{-(s\mp i\pi)|\xi| } d\xi\\
&=\sum_{\mk{d} \in \mk{D}_\pm(\hat{x},\hat{y})} \sum_{\varsigma = \pm}\int_0^\infty b_{\varsigma}(\rho \mk{d}) e^{\varsigma i \rho \mk{d}} \tilde{a}_{\pm,\mk{d}} (s, \hat{x},\hat{y}; \rho) e^{-(s \mp i\pi)\rho } \rho^{n-2} d\rho
\end{split}
 \end{equation}
where ${\bf 1}=(1,0,\ldots,0)$ and $\tilde{a}_{\pm,\mk{d}} \in S^{0}$, which means it satisfies 
\begin{equation} \label{a'}
|\partial^\alpha_{s,\hat{x},\hat{y}}\partial_\rho^k \tilde{a}_{\pm,\mk{d}}(s,\hat{x},\hat{y};\rho)|\leq C_{\alpha,k}(1+\rho)^{-k}.
\end{equation}

In addition, we can choose $\tilde{a}_{\pm,\mk{d}}$ such that the jet of $\tilde{a}_{\pm,\mk{d}}$ at $s = 0$ coincide with that of $a_\pm$ (given in \eqref{K-pm-N}) at $s=\pi$ in the sense that
\begin{equation} \label{eq:wave-poisson-jet-match}
(\partial_{s}^k \tilde{a}_{\pm,\mk{d}})(0,\hat{x},\hat{y};\rho)
= i^k(\partial_s^k a_{\pm,\mk{d}})(\pi,\hat{x},\hat{y};\rho).
\end{equation}
Under NREC, the localized version of it is residual in the sense that:
\begin{equation}
Q_j(\hat{x})\tilde{K}_{\pm, N} Q_j(\hat{y}) \in C^\infty([0,\infty) \times \CS \times \CS).
\end{equation}
\end{lemma}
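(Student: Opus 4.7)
The plan is to construct the Poisson-wave parametrix directly via the Hadamard (WKB) method, treating $s\ge 0$ as the time variable for the first-order initial value problem
\[
(\partial_s + \sqrt{P})V_\pm(s)=0,\qquad V_\pm(0)=e^{\pm i\pi\sqrt{P}},
\]
where $V_\pm(s):=e^{(-s\pm i\pi)\sqrt{P}}$, and then to verify the jet compatibility with the half-wave parametrix at $s=\pi$. First, I would adopt the ansatz \eqref{eq:Poisson-kernel-2}, whose spatial phase $\mk{d}(\hat x,\hat y)\,\xi_1$ is identical to that used in the half-wave parametrix of Lemma~\ref{lemma: localized-halfwave}, and hence parametrizes the same Lagrangians $\mathscr{L}_\pm$ at $s=0$; the factor $e^{-(s\mp i\pi)|\xi|}$ plays the role of a rapidly decaying (complex) weight in $\xi$ as soon as $s>0$, and the reduction from the $\xi\in\R^{n-1}$ integral to the radial integral in $\rho=|\xi|$ with the $b_\pm$ coefficients follows by the same rotation argument used in \cite[Section~3]{JZ}.

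Second, I would derive the transport equations for $\tilde{a}_{\pm,\mk{d}}$ by demanding that $(\partial_s+\sqrt{P}_{\hat x})\tilde K_{\pm,N}$ lie in a residual class of regularity $C^{N-n-2}$. Since $e^{-(s\mp i\pi)|\xi|}$ is independent of $\hat x$, the operator $\sqrt{P}_{\hat x}$ acts on $\tilde{a}_{\pm,\mk{d}}\,e^{i\mk{d}\cdot\xi_1}$ through the usual real-phase pseudodifferential calculus, and the eikonal equation $|\partial_{\hat x}\mk{d}|_h=1$ is automatic because each $\mk{d}\in\mk{D}_\pm(\hat x,\hat y)$ is by construction the length of a geodesic joining $\hat x$ and $\hat y$. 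Writing $\tilde{a}_{\pm,\mk{d}}\sim\sum_k \tilde a_{\pm,\mk{d}}^{(k)}\rho^{-k}$, each coefficient satisfies an inhomogeneous linear ODE in $s$ along the same bicharacteristics as in the half-wave construction, the only difference being that $\partial_s$ acting on the phase now produces $-|\xi|$ instead of $\pm i|\xi|$. A standard Borel-type asymptotic summation in $\rho$ then yields a symbol $\tilde a_{\pm,\mk{d}}$ satisfying \eqref{a'}, with the tail and the pseudodifferential remainder absorbed into $\tilde R_N\in C^{N-n-2}$.

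Third, I would fix the initial data at $s=0$ by matching $V_\pm(0)=U_\pm(\pi)$ against the half-wave parametrix of Lemma~\ref{lemma: localized-halfwave} at $s=\pi$; this forces the $k=0$ identity $\tilde{a}_{\pm,\mk{d}}(0,\hat x,\hat y;\rho)=a_{\pm,\mk{d}}(\pi,\hat x,\hat y;\rho)$. For the higher $s$-jets \eqref{eq:wave-poisson-jet-match}, I would combine the operator identities $\partial_s V_\pm=-\sqrt{P}V_\pm$ and $\partial_s U_\pm=\pm i\sqrt{P}U_\pm$ with $V_\pm(0)=U_\pm(\pi)$ to get $\partial_s^k V_\pm(0)=(-\sqrt{P})^k U_\pm(\pi)$, and then equate the amplitude expansions on the two sides at the common exponential factor $e^{\pm i\pi|\xi|}$; the factors $\pm i\rho$ from differentiating the half-wave phase and $-\rho$ from differentiating the Poisson phase combine with the operator powers to give the stated jet identity by induction on $k$, once one uses the convention $a_{-,\mk{d}}=a_{+,\mk{d}}$ already fixed in Lemma~\ref{lemma: localized-halfwave}.

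Finally, for the NREC residuality, I would mirror the corresponding argument in the proof of Lemma~\ref{lemma: localized-halfwave}: by Proposition~\ref{prop: NREC-thicken-diagonal}, no geodesic of length in $[\pi-\delta_0,\pi+\delta_0]$ connects two points in $\mathrm{supp}\,Q_j\times\mathrm{supp}\,Q_j$, so the canonical-relation composition shows $Q_j e^{\pm i\pi\sqrt{P}}Q_j=Q_j V_\pm(0)Q_j\in C^\infty(\CS\times\CS)$. Writing $V_\pm(s)=e^{-s\sqrt{P}}V_\pm(0)$ and using that $e^{-s\sqrt{P}}$ is a strongly continuous semigroup on $L^2(\CS)$ that is smoothing for $s>0$ and acts smoothly in $s\ge 0$ on $C^\infty(\CS)$ (since $\sqrt{P}$ is a classical self-adjoint elliptic pseudodifferential operator of order one), one concludes $Q_j e^{(-s\pm i\pi)\sqrt{P}}Q_j\in C^\infty([0,\infty)\times\CS\times\CS)$. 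The main technical obstacle is the bookkeeping in the jet-matching step, where the sign conventions in the $\pm i$ factors and the specific choice of $a_{\pm,\mk{d}}$ from Lemma~\ref{lemma: localized-halfwave} must be tracked carefully so that \eqref{eq:wave-poisson-jet-match} emerges in the stated form.
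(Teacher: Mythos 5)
Your construction paragraphs are consistent with what the paper does: for the oscillatory-integral form \eqref{eq:Poisson-kernel-2} the paper simply invokes the parametrix construction of \cite[Section~3]{JZ}, and your explicit route (transport equations for the evolution $(\partial_s+\sqrt{P})V_\pm=0$ with data $V_\pm(0)=e^{\pm i\pi\sqrt{P}}$, Borel summation, then matching against the half-wave parametrix of Lemma~\ref{lemma: localized-halfwave} at time $\pi$, with \eqref{eq:wave-poisson-jet-match} extracted from $\partial_s^kV_\pm(0)=(-\sqrt{P})^kU_\pm(\pi)$ versus $\partial_s^kU_\pm(\pi)=(\pm i\sqrt{P})^kU_\pm(\pi)$) is a legitimate way to obtain exactly that. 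The sign bookkeeping you flag is indeed the delicate point: carried out naively, the lower sign produces $(\mp i)^k$-type factors, and one must use the convention $a_{+,\mk{d}}=a_{-,\mk{d}}$ fixed in Lemma~\ref{lemma: localized-halfwave} (and the corresponding choice of $\tilde{a}_{\pm,\mk{d}}$) to land on the stated form; this is bookkeeping, not a conceptual obstacle.

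The genuine gap is in your final (NREC) paragraph, for two reasons. First, the factorization step is applied to the wrong object: $Q_jV_\pm(s)Q_j=Q_j\,e^{-s\sqrt{P}}V_\pm(0)\,Q_j$, and since $Q_j$ does not commute with $e^{-s\sqrt{P}}$, this is \emph{not} $e^{-s\sqrt{P}}$ acting on the smooth kernel $Q_jV_\pm(0)Q_j$; the actual input $V_\pm(0)Q_j\delta_{\hat y}$ is singular (its wavefront set sits over points at distance $\pi$ from $\hat y$), so "smooth action of the semigroup on $C^\infty$" cannot be invoked, and joint smoothness down to $s=0$ needs a further argument (e.g.\ elliptic boundary regularity for $\partial_s^2-P$ away from that singular support, or a uniform microlocal argument). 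Second, and more importantly, the claim to be proved concerns the explicit parametrix piece $Q_j\tilde{K}_{\pm,N}Q_j$, not the operator kernel $Q_je^{(-s\pm i\pi)\sqrt{P}}Q_j$: by \eqref{eq:Poisson-kernel-1} these differ by $\tilde{R}_N$, which is only $C^{N-n-2}$, so operator-level smoothness yields at best $C^{N-n-2}$ regularity for $Q_j\tilde{K}_{\pm,N}Q_j$ and cannot give the asserted $C^\infty([0,\infty)\times\CS\times\CS)$. The paper sidesteps both issues by working directly on the oscillatory integral \eqref{eq:Poisson-kernel-2}: for $s>0$ the factor $e^{-s\rho}$ makes the amplitude effectively of order $-\infty$, while near $s=0$ the choice of $\mathcal{U}_j$ after \eqref{eq: Uj condition} together with Proposition~\ref{prop: NREC-thicken-diagonal} guarantees $|\mk{d}-\pi|\geq\delta_0$ for every $\mk{d}\in\mk{D}_\pm(\hat{x},\hat{y})$ with $(\hat{x},\hat{y})\in\mathrm{supp}\,Q_j\times\mathrm{supp}\,Q_j$, so the $\rho$-phase $i\rho(\varsigma\mk{d}\pm\pi)-s\rho$ is non-stationary uniformly in $s\geq0$ and repeated integration by parts in $\rho$ gives smoothness of $\tilde{K}_{\pm,N}$ jointly in $(s,\hat{x},\hat{y})$ up to $s=0$. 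Your wavefront-set composition argument for $Q_je^{\pm i\pi\sqrt{P}}Q_j$ is correct and matches the paper's treatment of the $s=0$ operator, but the semigroup step should be replaced by this direct estimate on \eqref{eq:Poisson-kernel-2} to conclude the stated claim.
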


\begin{proof}
The form of the oscillatory integral follows from the same argument as in \cite[Section~3]{JZ} and we discuss the smoothness under NREC.

The smoothness in $s$ follows directly if one shows that it is $C^\infty$ with respect to $\hat{x},\hat{y}$, since differentiating in $s$ only adds a $\sqrt{P}$ factor. 
For $s=0$, this follows from the property of
\begin{equation}
Q_je^{i\pi\sqrt{P}}Q_j,
\end{equation}
by the same discussion as above: $e^{i\pi\sqrt{P}}$ has canonical relation with points $\mk{d}(\hat{x},\hat{y})=\pi$ for $\mk{d} \in \mk{D}(\hat{x},\hat{y})$.
But there are no such point pairs according to our choice of $\mathcal{U}_j$ stated after \ref{eq: Uj condition}, in combination with Proposition~\ref{prop: NREC-thicken-diagonal}.
For $s>0$, the phase function has exponential decay in $\rho$ (or $|\xi|$) already, which means that one can view this as an oscillatory integral with amplitude in $S^{-\infty}$ and this gives the desired smoothness. 
\end{proof}

\subsection{The Littlewood-Paley theory}\label{subsec:LP}
In this subsection, we recall the the Bernstein inequalities and the square function inequalities associated with the Schr\"odinger operator $\LL_{{\A},a}$ which have been proved in \cite{JZ, JZ2} based on 
the Gaussian upper bounds of heat kernel in \cite{HZ23}.

We introduce $\varphi \in C_c^\infty(\mathbb{R}\setminus\{0\})$, with $0\leq\varphi\leq 1$, $\text{supp}\,\varphi\subset[1/2,2]$, and
\begin{equation}\label{LP-dp}
\sum_{j\in\Z}\varphi(2^{-j}\lambda)=1,\quad \varphi_{j}(\lambda):=\varphi(2^{-j}\lambda), \, j\in\Z,\quad \phi_0(\lambda):=\sum_{j\leq0}\varphi(2^{-j}\lambda).
\end{equation}
More precisely, we have the following propositions.

\begin{proposition}[Bernstein inequalities]\label{prop:Bern}
Let $\varphi(\lambda)$ be a $C^\infty_c$ bump function on $\R$  with support in $[\frac{1}{2},2]$ and let $\alpha$ and $p(\alpha)$ be given in \eqref{def:alpha} and \eqref{def:q-alpha} respectively, then it holds for any $f\in L^q(\cone)$ and $j\in\mathbb{Z}$
\begin{equation}\label{est:Bern}
\|\varphi(2^{-j}\sqrt{\LL_{{\A},a}})f\|_{L^p(\cone)}\lesssim2^{nj\big(\frac{1}{q}-\frac{1}{p}\big)}\|\varphi(2^{-j}\sqrt{\LL_{{\A},a}}) f\|_{L^q(\cone)}, \,
\end{equation}
provided $p'(\alpha)<q\leq p<p(\alpha)$.
In addition, if $\alpha\geq0$, the range can be extended to $1\leq q< p\leq +\infty$ including the endpoints.
\end{proposition}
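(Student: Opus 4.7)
The plan is to reduce this Bernstein inequality to a fixed-frequency estimate via the scaling symmetry of $\mathcal{L}_{{\A},a}$ on the cone, and then prove the fixed-frequency estimate using the weighted Gaussian heat kernel bounds of \cite{HZ23}. First, I fix an auxiliary $\tilde\varphi \in C_c^\infty((0,\infty))$ with $\tilde\varphi\,\varphi = \varphi$, so that $\varphi(2^{-j}\sqrt{\mathcal{L}_{{\A},a}})f = \tilde\varphi(2^{-j}\sqrt{\mathcal{L}_{{\A},a}})\big[\varphi(2^{-j}\sqrt{\mathcal{L}_{{\A},a}})f\big]$, and it suffices to establish
\begin{equation*}
\|\tilde\varphi(2^{-j}\sqrt{\mathcal{L}_{{\A},a}})\|_{L^q(\cone) \to L^p(\cone)} \lesssim 2^{nj(1/q - 1/p)}.
\end{equation*}
Because $\mathcal{L}_{{\A},a}$ is homogeneous of degree $-2$ under the cone dilation $D_\lambda: (r,\hat{x})\mapsto (\lambda r, \hat{x})$, one has $D_{2^j}^{-1}\tilde\varphi(2^{-j}\sqrt{\mathcal{L}_{{\A},a}})D_{2^j} = \tilde\varphi(\sqrt{\mathcal{L}_{{\A},a}})$, and $\|D_{2^j}\|_{L^p \to L^p} = 2^{-jn/p}$. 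This reduces everything to the $j=0$ bound $\|\tilde\varphi(\sqrt{\mathcal{L}_{{\A},a}})\|_{L^q \to L^p} < \infty$, with the Bernstein factor arising precisely from the mismatch $2^{jn/q}\cdot 2^{-jn/p}$ of the two dilation norms.

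For the $j=0$ step I would factor $\tilde\varphi(\lambda) = e^{-\lambda^2} g(\lambda)$ with $g(\lambda) := e^{\lambda^2}\tilde\varphi(\lambda) \in C_c^\infty((0,\infty))$, giving $\tilde\varphi(\sqrt{\mathcal{L}_{{\A},a}}) = e^{-\mathcal{L}_{{\A},a}}\,g(\sqrt{\mathcal{L}_{{\A},a}})$. The bounded spectral multiplier $g(\sqrt{\mathcal{L}_{{\A},a}})$ is $L^p$-bounded in the admissible range by a H\"ormander-type theorem for operators with Gaussian heat kernel bounds. It then remains to bound $e^{-\mathcal{L}_{{\A},a}} : L^q \to L^p$, for which I invoke the weighted Gaussian bound from \cite{HZ23}, which in our notation takes the schematic form
\begin{equation*}
|e^{-t\mathcal{L}_{{\A},a}}(x,y)| \leq C\,t^{-n/2} \big(1 \wedge \tfrac{r_1}{\sqrt{t}}\big)^{\alpha} \big(1 \wedge \tfrac{r_2}{\sqrt{t}}\big)^{\alpha} e^{-c d_g(x,y)^2/t},
\end{equation*}
with $\alpha = -(n-2)/2 + \nu_0$ as in \eqref{def:alpha}. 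When $\alpha \geq 0$ the weights are bounded by $1$ and a standard Young-type argument (using the Gaussian decay in $d_g$) yields $L^q \to L^p$ boundedness in the entire range $1 \leq q \leq p \leq \infty$. When $-(n-2)/2 < \alpha < 0$, the weight behaves like $r^{\alpha}$ near the cone tip, and a Schur-Young estimate requires this weight to lie in $L^p_x$ and $L^{q'}_y$ against the cone measure $r^{n-1}\,dr\,d\hat{x}$; this forces $|\alpha|p < n$ and $|\alpha|q' < n$, which is precisely $p'(\alpha) < q \leq p < p(\alpha)$.

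The main obstacle is the $L^p$-boundedness of the spectral multiplier $g(\sqrt{\mathcal{L}_{{\A},a}})$ throughout the full non-endpoint range determined by $p(\alpha)$. This requires a H\"ormander-type multiplier theorem that is sensitive to the non-doubling behavior of the heat kernel near the cone tip and is not immediate from classical Calder\'on-Zygmund theory. A workable bypass, following the strategy of \cite{JZ, JZ2}, is to avoid the general multiplier theorem by working directly with the Hankel representation \eqref{funct} of $\tilde\varphi(\sqrt{\mathcal{L}_{{\A},a}})$, expressed as a sum over the eigenbasis $\{\psi_k\}$ of $P_{{\A},a}$, and exploiting uniform estimates on the Bessel functions $J_{\nu_k}$ with $\nu_k \geq \nu_0$ to extract a direct kernel bound of the form $|K(x,y)| \lesssim (\min(1,r_1))^{\alpha}(\min(1,r_2))^{\alpha}(1 + d_g(x,y))^{-N}$ for any $N$; this kernel bound, combined with the integrability analysis above, yields the claimed Bernstein inequality without invoking a general multiplier theorem.
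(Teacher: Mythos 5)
Your proposal is essentially the paper's own route: the paper does not reprove this proposition but recalls it from \cite{JZ, JZ2}, whose argument rests precisely on the weighted Gaussian heat-kernel upper bounds of \cite{HZ23} (with weight exponent $\alpha=\nu_0-\frac{n-2}{2}$), the cone-dilation reduction to unit frequency, and direct kernel bounds from the Hankel/Bessel representation — exactly the ingredients you assemble, with the weight's local $L^p$/$L^{q'}$ integrability near the tip producing the same restriction $p'(\alpha)<q\leq p<p(\alpha)$. The argument and range bookkeeping are correct, so no further comparison is needed.
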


\begin{proposition}[The square function inequality]\label{prop:squarefun} Let $\{\varphi_{j}\}_{j\in\mathbb Z}$ be a Littlewood-Paley sequence given by \eqref{LP-dp} and let $\alpha$ and $p(\alpha)$ be given in \eqref{def:alpha} and \eqref{def:q-alpha} respectively.
Then, for $p'(\alpha)<p<p(\alpha)$,
there exist constants $c_p$ and $C_p$ depending on $p$ such that
\begin{equation}\label{square}
c_p\|f\|_{L^p(\cone)}\leq
\Big\|\Big(\sum_{j\in\Z}|\varphi_j(\sqrt{\LL_{{\A},a}})f|^2\Big)^{\frac12}\Big\|_{L^p(\cone)}\leq
C_p\|f\|_{L^p(\cone)}.
\end{equation}

\end{proposition}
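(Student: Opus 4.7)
The plan is to follow the classical Littlewood--Paley strategy adapted to the operator $\LL_{{\A},a}$, using the heat kernel Gaussian upper bounds from \cite{HZ23} as the main analytic input, together with Khintchine randomization and a Duong--Ouhabaz--Sikora type spectral multiplier theorem on $L^p(\cone)$.

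First I would randomize using Khintchine's inequality. Letting $\{\epsilon_j(\omega)\}_{j\in\Z}$ be an i.i.d.\ Rademacher sequence on a probability space $(\Omega,\mathbb{P})$, Khintchine combined with Fubini gives, for any $p\in(1,\infty)$,
\begin{equation*}
\Big\|\Big(\sum_{j\in\Z}|\varphi_j(\sqrt{\LL_{{\A},a}})f|^2\Big)^{1/2}\Big\|_{L^p(\cone)}
\sim_p
\Big(\mathbb{E}\,\Big\|\sum_{j\in\Z}\epsilon_j(\omega)\varphi_j(\sqrt{\LL_{{\A},a}})f\Big\|_{L^p(\cone)}^p\Big)^{1/p}.
\end{equation*}
Hence the upper square function bound reduces to showing that the spectral multiplier $m_\omega(\lambda) = \sum_{j\in\Z}\epsilon_j(\omega)\varphi(2^{-j}\lambda)$, which is a Mikhlin-type symbol satisfying $|\lambda^k\partial_\lambda^k m_\omega(\lambda)|\lesssim_k 1$ uniformly in $\omega$, yields a bounded operator $m_\omega(\sqrt{\LL_{{\A},a}})$ on $L^p(\cone)$ with norm independent of $\omega$.

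Next I would obtain such a spectral multiplier theorem on $L^p(\cone)$ by invoking the heat kernel Gaussian upper bounds for $e^{-t\LL_{{\A},a}}$ established in \cite{HZ23}. In the presence of a scaling-critical inverse-square potential these take a modulated form of the type
\begin{equation*}
|e^{-t\LL_{{\A},a}}(x,y)| \lesssim t^{-n/2}\Big(1+\tfrac{\sqrt{t}}{r_1}\Big)^{-\sigma}\Big(1+\tfrac{\sqrt{t}}{r_2}\Big)^{-\sigma} \exp\big(-c\, d_g(x,y)^2/t\big),
\end{equation*}
with an exponent $\sigma$ tied to $\alpha=-(n-2)/2+\nu_0$; the bound is a clean Gaussian precisely in the $L^p$-range $p\in(p'(\alpha),p(\alpha))$, which is the origin of the $p$-restriction in the proposition. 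Combining this kernel bound with the conic volume doubling and the Duong--Ouhabaz--Sikora framework yields $\|m_\omega(\sqrt{\LL_{{\A},a}})\|_{L^p\to L^p} \lesssim 1$ uniformly in $\omega$, which closes the upper square function inequality via the randomization step above.

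For the reverse inequality I would argue by duality. Choosing a fattened partition $\widetilde{\varphi}_j$ with $\widetilde{\varphi}_j\varphi_j = \varphi_j$, the reproducing identity $f = \sum_j \widetilde{\varphi}_j(\sqrt{\LL_{{\A},a}})\varphi_j(\sqrt{\LL_{{\A},a}})f$ lets one pair against $g\in L^{p'}(\cone)$, apply Cauchy--Schwarz in $j$ to separate the two halves, and invoke the already-proved upper square function estimate at the dual exponent $p'\in(p'(\alpha),p(\alpha))$ (which lies in the admissible range by the symmetry of the condition). The main technical obstacle I anticipate is verifying the modulated Gaussian heat kernel bound near the cone tip $r=0$, where the singular electromagnetic potentials and the conic singularity interact; this is exactly the content of \cite{HZ23} and is also what forces the $p$-range restriction. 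When $a(\hat{x})\geq 0$ so that $\alpha\geq 0$, the modulation disappears and the range extends to the full $1<p<\infty$, in parallel with the endpoint extension stated in Proposition~\ref{prop:Bern}.
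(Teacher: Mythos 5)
Your outline is essentially the same route the paper relies on: Proposition~\ref{prop:squarefun} is not proved in this paper but recalled from \cite{JZ,JZ2}, whose argument is exactly the heat-kernel-based one you describe --- the upper bounds from \cite{HZ23}, a Mikhlin-type spectral multiplier bound for $\LL_{{\A},a}$ in the restricted range $p'(\alpha)<p<p(\alpha)$, Khintchine randomization for the upper square-function bound, and duality (with $p'$ also in the admissible range) for the lower one. One caution on your multiplier step: when $\alpha<0$ the heat kernel carries \emph{growing} weights $\big(1+\sqrt{t}/r_i\big)^{|\alpha|}$ near the cone tip rather than a clean Gaussian bound, so the Duong--Ouhabaz--Sikora theorem cannot be invoked off the shelf; one must run the weighted variant of the multiplier argument in which these factors are absorbed in $L^p$, and it is precisely this absorption (not the pointwise kernel bound itself) that forces the restriction $p'(\alpha)<p<p(\alpha)$, as in the cited works.
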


\section{The kernel of the spectral measure }\label{sec:kernel-spectral-measure}

In this section, we first construct the representation of the spectral measure associated with the operator $\mathcal{L}_{{\A},a}$ and then prove properties of the kernel. 

Before stating the results, we introduce a few notations.
For $r_1,r_2,s \in [0,+\infty)$, we define ${\bf m}_s$ and ${\bf n}_s\in\R^2$ by
\begin{equation}\label{bf-m}
{\bf m}_s=(r_1-r_2, \sqrt{2(1-\cos s)r_1r_2}),
\end{equation}
and
\begin{equation}\label{bf-n}
{\bf n}_s=\big(r_1+r_2, \sqrt{2(\cosh s-1)r_1r_2}\big).
\end{equation} 
And we further denote their lengths by
\begin{equation}\label{d-1}
d(s; r_1,r_2)=|{\bf m}_s|=\sqrt{r_1^2+r_2^2-2r_1r_2\cos s},\quad s\in[0,\pi],
\end{equation}
and
\begin{equation} \label{d-2}
\tilde{d}(s; r_1,r_2)=|{\bf n}_s|=\sqrt{r_1^2+r_2^2+2 r_1r_2\,\cosh s},\quad s\in [0,+\infty).
\end{equation}
Geometrically, one can think of $s$ as the propagating time along the flow with unit speed on the cross section $Y$. And the singularity is carried by the propagating Lagrangian defined by \eqref{eq: propagating lagrangians}, on which $s=d_h(\hat{x},\hat{y})$ (where $d_h$ is the distance function on $Y$), and $d(s; r_1,r_2)=|{\bf m}_s|$ becomes the distance between $x$ and $y$ in $X$.

\begin{theorem} [Spectral measure kernel] \label{thm:spect}
Suppose 
$x=(r_1, \hat{x}), \; y=(r_2,\hat{y}) \in  (0,+\infty)\times\CS,$
and ${\bf m}_s, {\bf n}_s, d(s; r_1,r_2), \tilde{d}(s; r_1,r_2)$ are as above, then the Schwartz kernel of the spectral measure  $dE_{\sqrt{\mathcal{L}_{{\A},a}}}(\lambda; x, y)$ associated with $\mathcal{L}_{{\A},a}$ can be written as:
\begin{equation}\label{spect}
 \begin{split}
& dE_{\sqrt{\mathcal{L}_{{\A},a}}}(\lambda; x, y)=G(\lambda; x, y)+D(\lambda; x , y)\\& =
\frac{\lambda}{4\pi^2} (r_1r_2)^{-\frac{n-2}2}\Big(\frac1{\pi}
\int_0^\pi \int_{\mathbb{S}^1} e^{-i\lambda {\bf m}_s \cdot\omega} d\sigma_\omega  \cos(s\sqrt{P})(\hat{x},\hat{y}) ds
 \\&\qquad\qquad\qquad
-\frac{\sin(\pi\sqrt{P})}{ \pi }\int_0^\infty \int_{\mathbb{S}^1} e^{-i\lambda {\bf n}_s \cdot\omega} d\sigma_\omega
\, e^{-s\sqrt{P}}(\hat{x},\hat{y}) ds\Big)\\
&=
\frac{\lambda}{4\pi^2} (r_1r_2)^{-\frac{n-2}2}\sum_{\pm}\Big(\frac1{\pi}
\int_0^\pi \mathfrak{a}_\pm(\lambda d(s; r_1,r_2))e^{\pm i\lambda d(s; r_1,r_2)} \cos(s\sqrt{P}) ds
 \\&\qquad\qquad\quad
 -\frac{\sin(\pi\sqrt{P})}{ \pi }\int_0^\infty \mathfrak{a}_\pm(\lambda \tilde{d}(s; r_1,r_2))e^{\pm i\lambda \tilde{d}(s; r_1,r_2)}
e^{-s\sqrt{P}} ds\Big),
\end{split}
\end{equation}
where  $\mathfrak{a}_\pm\in C^\infty([0,+\infty))$ satisfies
\begin{equation}\label{bean}
\begin{split}
| \partial_r^k \mathfrak{a}_\pm(r)|\leq C_k(1+r)^{-\frac{1}2-k},\quad k\geq 0,
\end{split}
\end{equation}
and $P=P_{{\A},a}$ and $\cos(s\sqrt{P})$ with $s\in[0,\pi]$ denotes the even-wave propagator and  $e^{-s\sqrt{P}}$ with $s\in[0,+\infty)$ is the Poisson operator on the unit sphere $\CS$. Furthermore, if either $\lambda r_1 \lesssim 1$ or $\lambda r_2 \lesssim 1$, then  the spectral measure  $dE_{\sqrt{\mathcal{L}_{{\A},a}}}(\lambda; x, y)$ can be written as
\begin{equation}\label{Spect<1}
 \begin{split}
dE_{\sqrt{\mathcal{L}_{{\A},a}}}(\lambda; x, y)= \lambda^{n-1}\Big(\sum_{\pm}A_{\pm}(\lambda; x, y) e^{\pm i\lambda \max\{r_1, r_2\}}
+B(\lambda; x, y)\Big),
\end{split}
\end{equation}
where
\begin{equation}\label{bean-A}\begin{split}
\big|\partial_\lambda^\alpha A_\pm(\lambda,x, y) \big|\leq C_\alpha \lambda^{-\alpha} &(1+\lambda \max\{r_1, r_2\})^{-\frac{n-1}2}\\
&\times \begin{cases}0,\quad &\lambda r_1, \lambda r_2\lesssim 1;\\
(\lambda r_1)^{\nu_0-\frac{n-2}2},\quad &\lambda r_1\lesssim 1\ll   \lambda r_2;\\
(\lambda r_2)^{\nu_0-\frac{n-2}2},\quad &\lambda r_2\lesssim 1\ll   \lambda r_1;
 \end{cases},
\end{split}\end{equation}
\begin{equation}\label{bean-B}\begin{split}
\big|\partial_\lambda^\alpha B(\lambda,x, y) \big|\leq C_\alpha \lambda^{-\alpha} &(1+\lambda \max\{r_1, r_2\})^{-\frac{n+1}2}\\
&\times \begin{cases}(\lambda^2r_1r_2)^{\nu_0-\frac{n-2}2},\quad &\lambda r_1, \lambda r_2\lesssim 1;\\
(\lambda r_1)^{\nu_0-\frac{n-2}2},\quad &\lambda r_1\lesssim 1\ll   \lambda r_2;\\
(\lambda r_2)^{\nu_0-\frac{n-2}2},\quad &\lambda r_2\lesssim 1\ll   \lambda r_1.
 \end{cases}
\end{split}\end{equation}
Here $C$, $C_\alpha$ are constants independent of $\lambda$ and $x, y$. 
\end{theorem}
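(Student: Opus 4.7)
The plan is to start from the Hankel transform representation of $\mathcal{L}_{\A,a}$ developed in Section~\ref{subsec:spectral-analysis}. Applying \eqref{funct}--\eqref{equ:knukdef} with $F$ being the derivative of a characteristic function (or working distributionally), the spectral measure takes the form
$$dE_{\sqrt{\mathcal{L}_{\A,a}}}(\lambda;x,y) = \lambda (r_1 r_2)^{-(n-2)/2}\sum_{k \in \N} \psi_k(\hat{x})\overline{\psi_k(\hat{y})}\, J_{\nu_k}(\lambda r_1) J_{\nu_k}(\lambda r_2).$$
The first key analytical tool is a product-to-integral identity of Lipschitz--Hankel/Nicholson type:
$$J_\nu(\lambda r_1) J_\nu(\lambda r_2) = \frac{1}{2\pi^2}\int_0^\pi\!\!\Bigl(\int_{\mathbb{S}^1} e^{-i\lambda \mathbf{m}_s\cdot\omega} d\sigma_\omega\Bigr)\cos(\nu s)\,ds - \frac{\sin(\nu\pi)}{2\pi^2}\int_0^\infty\!\!\Bigl(\int_{\mathbb{S}^1} e^{-i\lambda \mathbf{n}_s\cdot\omega} d\sigma_\omega\Bigr) e^{-\nu s}\,ds,$$
derivable via contour deformation of the standard integral representation \eqref{Bessel}, where the inner $\mathbb{S}^1$-integrals equal $2\pi J_0(\lambda d(s;r_1,r_2))$ and $2\pi J_0(\lambda\tilde{d}(s;r_1,r_2))$ respectively. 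Substituting this identity into the Hankel series and interchanging summation and integration, the spectral resummation formula \eqref{FA} identifies
$$\sum_k \psi_k\overline{\psi_k}\cos(\nu_k s) = \cos(s\sqrt{P})(\hat{x},\hat{y}), \qquad \sum_k \psi_k\overline{\psi_k}\sin(\nu_k\pi)e^{-\nu_k s} = \bigl[\sin(\pi\sqrt{P})e^{-s\sqrt{P}}\bigr](\hat{x},\hat{y}),$$
yielding the first representation in \eqref{spect}. The second representation in \eqref{spect} follows by applying the large-argument decomposition $J_0(r) = \sum_\pm \mathfrak{a}_\pm(r)e^{\pm ir}$ with $\mathfrak{a}_\pm$ satisfying \eqref{bean} (standard stationary-phase extraction from the integral representation of $J_0$) to the two $\mathbb{S}^1$-integrals, producing the explicit phase factors $e^{\pm i\lambda d}$ and $e^{\pm i\lambda\tilde{d}}$.

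For the small-argument bounds \eqref{Spect<1}, I would go back to the Hankel-series expression and split each Bessel factor according to whether its argument is small or large. For a factor $J_{\nu_k}(\lambda r_j)$ with $\lambda r_j \lesssim 1$ one uses the Taylor expansion $J_{\nu_k}(t) = (t/2)^{\nu_k}/\Gamma(\nu_k+1) + O(t^{\nu_k+2})$; combined with the overall normalization $(r_1 r_2)^{-(n-2)/2}$ this yields the asserted $(\lambda r_j)^{\nu_0 - (n-2)/2}$ factor, with the minimal power saturated by the lowest eigenfunction and higher-$\nu_k$ contributions controlled via Weyl-type eigenvalue bounds and eigenfunction bounds on $\psi_k$. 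For a factor $J_{\nu_k}(\lambda r_j)$ with $\lambda r_j \gg 1$ one uses the large-argument asymptotic $J_{\nu_k}(t) \sim \sqrt{2/(\pi t)}\cos(t-\nu_k\pi/2 - \pi/4) + O(t^{-3/2})$; the leading term produces the explicit oscillation $e^{\pm i\lambda \max\{r_1,r_2\}}$ together with the $(1+\lambda\max r)^{-(n-1)/2}$ decay appearing in $A_\pm$, and the $O(t^{-3/2})$ remainder provides the stronger $(1+\lambda\max r)^{-(n+1)/2}$ decay in $B$. In the regime $\lambda r_1, \lambda r_2 \lesssim 1$ there is no large-argument asymptotic to apply, the oscillatory phase is trivial, and the entire contribution is placed in $B$ (so $A_\pm = 0$ there, as claimed). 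The $\lambda$-derivative bounds \eqref{bean-A}--\eqref{bean-B} follow by differentiating these expansions term by term, each derivative in $\lambda$ costing at most one factor of $\lambda^{-1}$.

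The main technical obstacle is the uniform control of the sum over $k$ when implementing the second representation, and more fundamentally the reconciliation of the propagator-style formula \eqref{spect} with the sharp small-argument bounds \eqref{Spect<1}. The former is expressed through the distributional kernel of $\cos(s\sqrt{P})$, so extracting explicit pointwise bounds of the type \eqref{bean-A}--\eqref{bean-B} directly from \eqref{spect} would require careful analysis of the $s$-support and regularity of that kernel near $s=0$ together with the smoothness of $\mathfrak{a}_\pm$; for this reason it is cleaner to derive \eqref{Spect<1} from the Hankel-series representation as outlined above. The other delicate point is making the termwise interchange of summation and $s$-integration in the derivation of \eqref{spect} rigorous: this requires the Bessel product identity to be valid for the relevant class of $\nu_k$ with constants (in $k$) that are summable against the growth of $\nu_k$, which one obtains from quantitative uniform bounds on the identity together with Weyl-type bounds on the eigenvalue counting function of $P_{\A,a}$.
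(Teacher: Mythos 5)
Your derivation of \eqref{spect} takes a genuinely different route from the paper's. The paper starts from the closed form \eqref{S-kernel} of the Schr\"odinger propagator, integrates in time to build the outgoing/incoming resolvents (Theorem \ref{thm:res}), converting the Fresnel factors into two-dimensional free resolvents, and then applies Stone's formula \eqref{stone}, so that the circle integrals in \eqref{spect} appear as the spectral measure of the 2D Laplacian; the second representation then follows from the stationary-phase decomposition \eqref{a-pm1}. You instead start from the Bessel-series kernel (which the paper establishes separately as Lemma \ref{lem:spect}, via analytic continuation of a modified-Bessel resolvent rather than by differentiating a characteristic function in \eqref{funct}--\eqref{equ:knukdef}, though the content is the same) and apply, eigenspace by eigenspace, the classical Macdonald/Gegenbauer product formula for $J_{\nu}(\lambda r_1)J_{\nu}(\lambda r_2)$ with non-integer $\nu$, then resum via \eqref{FA}. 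That scalar identity is precisely the eigenspace-by-eigenspace shadow of \eqref{spect} combined with Lemma \ref{lem:spect}; for integer $\nu$ it is the addition theorem for $J_0$, and for general $\nu>0$ the extra $\sin(\nu\pi)$ term is Macdonald's formula (see \cite{Watson}), so your route is legitimate and arguably more direct --- but you should quote or prove that identity rather than appeal vaguely to ``contour deformation of \eqref{Bessel}'', and the interchange of the $k$-sum with the $s$-integrals must be justified distributionally, as you acknowledge. What the paper's route buys is the resolvent formula of Theorem \ref{thm:res} along the way; what yours buys is bypassing the propagator-to-resolvent computation.

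For \eqref{Spect<1} your plan coincides with the paper's (estimate the Bessel series directly), but one step would fail as written: in the regime $\lambda r_1\lesssim 1\ll \lambda r_2$ you apply the large-argument asymptotic $J_{\nu_k}(t)\sim \sqrt{2/(\pi t)}\cos(t-\nu_k\pi/2-\pi/4)+O(t^{-3/2})$ to $J_{\nu_k}(\lambda r_2)$. This expansion is not uniform in $\nu_k$: its error constant grows with $\nu_k$, and the expansion is invalid altogether when $\nu_k\gtrsim \lambda r_2$, yet the sum runs over all $k$ with $\nu_k\to\infty$. The paper therefore splits the sum at $2\nu_k\leq \lambda r_2$: the tail $\nu_k\geq \lambda r_2/2$ is handled by the $\nu$-uniform bound \eqref{eq:Bessel-uniform-nu} together with the factor $(\lambda r_1)^{\nu_k}/\big(2^{\nu_k}\Gamma(\nu_k+\tfrac12)\big)$ supplied by the small-argument bound \eqref{eq:bessel-bound-near-zero} on $J_{\nu_k}(\lambda r_1)$ (which is also what absorbs the eigenfunction growth \eqref{eq:eigen-L-infinity}), and this tail goes into $B$; on $2\nu_k\leq \lambda r_2$ one needs the uniform oscillatory decomposition \eqref{eq:bess3} with the symbol bounds \eqref{eq:bess5}, $|\partial_r^N j_\pm(\nu,r)|\leq C_N 2^{\nu}(\nu/r)^N$, whose $2^{\nu}$ and $\nu^N$ losses are again beaten by the Gamma factor. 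For the same reason, your claim that ``each derivative in $\lambda$ costs at most one factor of $\lambda^{-1}$'' only holds after this absorption: a $\lambda$-derivative of $j_\pm(\nu_k,\lambda r_2)$ costs $\nu_k/\lambda$, not $1/\lambda$. So the architecture of your second part is right, but the uniform-in-$\nu_k$ inputs (Lemma \ref{lem:bessel} and \eqref{eq:Bessel-uniform-nu}) and the splitting of the sum at $\nu_k\sim \lambda r_2$ are essential and are missing from the sketch.
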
 

The proof of this theorem is divided into two subsections. We will discuss each region in separately, and they are robust under enlarging those regions by changing the implicit constants in inequalities above and can be glued together to obtain expressions above by using a partition of the unity.

\subsection{The abstract construction of the spectral measure}
In this subsection, we prove \eqref{spect}. Our starting point is the propagator of Schr\"odinger equation constructed in \cite{JZ2, JZ}, inspired by Cheeger-Taylor \cite{CT1,CT2}.
The kernel of Schr\"odinger propagator can be written as
\begin{equation}\label{S-kernel} 
\begin{split}
e^{-it\LL_{{\A},a}}(x,y)&=e^{-it\LL_{{\A},a}}(r_1, \hat{x}, r_2, \hat{y})\\
 &=\big(r_1 r_2\big)^{-\frac{n-2}2}\frac{e^{-\frac{r_1^2+r_2^2}{4it}}}{2it}
  \Big(\frac1{\pi}\int_0^\pi e^{\frac{r_1r_2}{2it} \cos(s)} \cos(s\sqrt{P})(\hat{x}, \hat{y}) ds\\
  &-\frac{\sin(\pi\sqrt{P})}{\pi}\int_0^\infty e^{-\frac{r_1r_2}{2it} \cosh s} e^{-s\sqrt{P}}(\hat{x}, \hat{y}) ds\Big),
\end{split}
\end{equation}
where $P=P_{{\A},a}=L_{{\A},a}+(n-2)^2/4$ with $L_{{\A},a}$ in \eqref{L-angle}.

We next construct the resolvent kernel from the above Schr\"odinger kernel. More precisely, we prove:
\begin{theorem}[Resolvent kernel]\label{thm:res}
Let $x=(r_1, \hat{x})$ and $y=(r_2,\hat{y})$ be in $(0,+\infty)\times\CS$ and let $d(s, r_1, r_2)$ and $\tilde{d}(s, r_1, r_2)$
be in \eqref{d-1} and \eqref{d-2} respectively. 
Then the Schwartz kernel of the resolvent $$(\LL_{{\A},a}-(\lambda^2\pm i0))^{-1}:=\lim_{\epsilon\searrow 0}(\LL_{{\A},a}-(\lambda^2\pm i\epsilon))^{-1}$$ can be written as
\begin{equation}\label{r-g}
\begin{split}
\frac{\pm i}{8\pi} \big(r_1 r_2\big)^{-\frac{n-2}2}\Big(\frac1{\pi}
 \int_0^{\pi} H_0^{\pm}\big(\lambda d(s, r_1, r_2)\big) \cos(s\sqrt{P})(\hat{x},\hat{y}) ds
\end{split}
\end{equation}
\begin{equation}\label{r-d}
\begin{split}
\\&
-\frac{\sin(\pi\sqrt{P})}{\pi}
 \int_0^\infty H_0^{\pm}\big(\lambda \tilde{d}(s, r_1, r_2)\big)\,e^{-s\sqrt{P}}(\hat{x},\hat{y}) ds\Big),
\end{split}
\end{equation}
where $P=P_{{\A},a}$
and
 $H_0^\pm$ are the Hankel functions of order zero with $H_0^-=\overline{H_0^+}$ and, for $y>0$
\begin{equation} \label{eq:hankel-zero-order}
H_0^+(y)= C\times \begin{cases} y^{-\frac12} e^{i(y+\frac\pi 4)}\Big(1+O(y^{-1})\Big),\qquad y\to+\infty\\
\log(\frac{y}{2}) +O(1) ,\qquad \quad y\to 0.
\end{cases}
\end{equation}
\end{theorem}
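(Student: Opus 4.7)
The plan is to derive the resolvent from the Schr\"odinger propagator \eqref{S-kernel} via the standard transmutation formula
\begin{equation*}
(\LL_{{\A},a} - (\lambda^2 \pm i0))^{-1} = \pm i\lim_{\epsilon \to 0^+}\int_0^\infty e^{\pm it(\lambda^2 \pm i\epsilon)} e^{\mp it\LL_{{\A},a}}\, dt,
\end{equation*}
where the $\epsilon$-regularization yields an absolutely convergent $t$-integral and legitimizes Fubini. I focus on the $+i0$ case; the $-i0$ case is its complex conjugate, which replaces $H_0^+$ by $H_0^-$ in the final formula.

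First I would substitute \eqref{S-kernel} into the above formula and interchange the $t$-integral with the $s$-integrals against $\cos(s\sqrt{P})(\hat{x},\hat{y})$ and $[\sin(\pi\sqrt{P})e^{-s\sqrt{P}}](\hat{x},\hat{y})$. Combining the Gaussian factors in \eqref{S-kernel}, the $t$-integrand in both pieces takes the shape
\begin{equation*}
\frac{e^{it\lambda^2}}{2it}\exp\!\Big(-\frac{D^2}{4it}\Big),
\end{equation*}
with $D = d(s;r_1,r_2)$ for the propagating (cosine-wave) term and $D = \tilde d(s;r_1,r_2)$ for the diffractive (Poisson) term, as defined in \eqref{d-1} and \eqref{d-2}.

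Next I would evaluate these $t$-integrals using the identity
\begin{equation*}
\int_0^\infty e^{it\lambda^2 - D^2/(4it)}\, \frac{dt}{t} \;=\; i\pi\, H_0^+(\lambda D),
\end{equation*}
obtained through the substitution $t = (D/(2\lambda))\, e^\sigma$, which converts the phase into $i\lambda D\cosh\sigma$ and reduces the integral to the classical Sommerfeld representation $\int_0^\infty e^{iz\cosh\sigma}\, d\sigma = \frac{i\pi}{2}\, H_0^{(1)}(z)$, valid for $\mathrm{Im}\,z \geq 0$ (which is guaranteed by the $\epsilon$-regularization). Dividing by $2i$ and collecting the global prefactors then yields exactly the two-term structure \eqref{r-g}-\eqref{r-d}; the asymptotic formulae \eqref{eq:hankel-zero-order} are the standard Hankel asymptotics.

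The main technical point is justifying the Fubini exchange and the limit $\epsilon \to 0^+$, given the oscillatory nature of the $t$-integrand and the logarithmic singularity of $H_0^+$ at the origin. This is handled by retaining $\epsilon > 0$ until after the explicit $t$-integral is performed, and by observing that $d(s;r_1,r_2) \geq |r_1 - r_2|$ and $\tilde d(s;r_1,r_2) \geq r_1 + r_2 > 0$, so the locus where $H_0^+$ is singular is negligible in the remaining $s$-variable and the resulting kernel is locally integrable. With the right-hand side of \eqref{r-g}-\eqref{r-d} in hand, one can apply $(\LL_{{\A},a} - \lambda^2)$ termwise, using the wave equation $(\partial_s^2 + P)\cos(s\sqrt{P}) = 0$, the Helmholtz equation satisfied by $H_0^+$, and integration by parts in $s$ to cancel boundary terms against the Poisson integral, thereby verifying the delta-function identity and the outgoing radiation condition characterising the $+i0$ resolvent.
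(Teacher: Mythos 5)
Your proposal is correct in substance and follows the same overall strategy as the paper — transmuting the Schr\"odinger propagator \eqref{S-kernel} in time, $(\LL_{{\A},a}-(\lambda^2\pm i0))^{-1}=\pm i\lim_{\epsilon\to0^+}\int_0^\infty e^{\pm it(\lambda^2\pm i\epsilon)}e^{\mp it\LL_{{\A},a}}\,dt$, and integrating term by term against $\cos(s\sqrt P)$ and $\sin(\pi\sqrt P)e^{-s\sqrt P}$ — but you evaluate the resulting $t$-integral differently. You substitute $t=\tfrac{D}{2\lambda}e^\sigma$ and invoke the Sommerfeld representation of $H_0^{(1)}$ directly, which is a clean, self-contained one-step identity. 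The paper instead uses the Gaussian Fourier identity \eqref{F-Gaussian} to rewrite the complex Gaussian factors as two-dimensional Fourier integrals in a dummy variable $\xi\in\R^2$, performs the $t$-integral to produce $(|\xi|^2-\sigma_\epsilon)^{-1}$, and then quotes the $2$D free resolvent \eqref{equ:res-free}. What the paper's detour buys is that the same $\R^2$-Fourier representation is recycled immediately afterwards in the Stone-formula computation of the spectral measure \eqref{spect}, whereas your route would require a separate argument (or the Hankel-function imaginary part) at that stage. A side effect of checking constants along your route: you obtain an overall factor $\pm i\pi/2$ rather than the $\pm i/(8\pi)$ stated in \eqref{r-g}--\eqref{r-d}; the discrepancy (a factor $4\pi^2$) traces to the normalization of \eqref{equ:res-free}, where the $(2\pi)^{-2}$ of the Fourier inversion appears to have been absorbed, so do not be troubled that your explicit constant disagrees — it is immaterial for the subsequent estimates.

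One genuine (though repairable) gap: your one-line disposal of the incoming resolvent, ``the $-i0$ case is its complex conjugate,'' is not literally correct here, because $\LL_{{\A},a}$ and $P_{{\A},a}$ are self-adjoint but \emph{not real} operators — the magnetic terms $i\,\mathrm{div}_h{\A}$ and $2i{\A}\cdot\nabla_h$ make the kernels of $\cos(s\sqrt P)$ and $\sin(\pi\sqrt P)e^{-s\sqrt P}$ genuinely complex, so the pointwise conjugate of the outgoing kernel is the incoming resolvent of $\overline{\LL_{{\A},a}}$ (i.e.\ of the operator with ${\A}$ replaced by $-{\A}$), not of $\LL_{{\A},a}$. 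The correct statement is the adjoint relation: the incoming kernel at $(x,y)$ is the conjugate of the outgoing kernel at $(y,x)$; combined with the symmetry of $d,\tilde d$ in $(r_1,r_2)$ from \eqref{d-1}--\eqref{d-2} and the self-adjointness of $\cos(s\sqrt P)$ and $\sin(\pi\sqrt P)e^{-s\sqrt P}$, this does yield \eqref{r-g}--\eqref{r-d} with $H_0^-$; alternatively, run your transmutation formula with the lower sign, which conjugates only the scalar $t$-integral and gives $H_0^{(2)}$ directly. The paper devotes the passage around \eqref{out-inc} and the $\overline{F(\overline z)}=F(z)$ functional-calculus argument to exactly this point, and your write-up should include one of these two fixes.
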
 

\begin{proof}
We first note that
when $\sigma\in \{\sigma\in\C: \Im(\sigma)>0\}$, then
$$(s-\sigma)^{-1}=\frac1{i}\int_0^\infty e^{-ist} e^{i\sigma t}dt,\quad \forall s\in\R,$$
thus we obtain, 
\begin{equation}\label{res+}
\begin{split}
(\LL_{{\A},a}-(\lambda^2+i0))^{-1}&=\frac1{i}\lim_{\epsilon\to 0^+}\int_0^\infty e^{-it\LL_{{\A},a}} e^{it\sigma_\epsilon}dt,
\end{split}
\end{equation}
where $\sigma_\epsilon=\lambda^2+i\epsilon$ with $\epsilon>0$ .
From \eqref{S-kernel}, we obtain
\begin{equation}\label{res+}
\begin{split}
&(\LL_{{\A},a}-(\lambda^2+i0))^{-1}\\
&=\frac1{i}\lim_{\epsilon\to 0^+}\int_0^\infty 
\big(r_1 r_2\big)^{-\frac{n-2}2}\frac{e^{-\frac{r_1^2+r_2^2}{4it}}}{2it}
  \Big(\frac1{\pi}\int_0^\pi e^{\frac{r_1r_2}{2it} \cos s} \cos(s\sqrt{P})(\hat{x}, \hat{y}) ds\\
  &+\frac{i}{2\pi}\int_0^\infty e^{-\frac{r_1r_2}{2it} \cosh s} \Big(e^{(-s+ i\pi)\sqrt{P}}-e^{(-s- i\pi)\sqrt{P}}\Big)(\hat{x}, \hat{y}) ds\Big)
 e^{it(\lambda^2+i\epsilon)}dt.
\end{split}
\end{equation}
To prove Theorem \ref{thm:res}, we first need the following lemma.
\begin{lemma} For fixed  $\sigma_\epsilon=\lambda^2+i\epsilon$ with $\epsilon>0$ 
\begin{equation}\label{equ:m}
\begin{split}
\int_0^\infty \frac{e^{-\frac{r_1^2+r_2^2}{4it}} }{it} e^{\frac{r_1r_2}{2it} \cos s} e^{i\sigma_\epsilon t}dt=\frac{i}\pi\int_{\R^2} \frac{e^{-i{\bf m}_s\cdot {\xi}}}{|\xi|^2-\sigma_\epsilon} \, d{ \xi},\quad s\in[0,\pi]
\end{split}
\end{equation}
and
\begin{equation}\label{equ:n}
\begin{split}
\int_0^\infty \frac{e^{-\frac{r_1^2+r_2^2}{4it}} }{it} e^{-\frac{r_1r_2}{2it}\cosh s} e^{i\sigma_\epsilon t}dt=\frac i \pi\int_{\R^2}\frac{e^{-i{\bf n}_s\cdot {\xi}}}{|\xi|^2-\sigma_\epsilon} \, d{ \xi},\quad s\in[0,+\infty),
\end{split}
\end{equation}
where $\xi=(\xi_1,\xi_2)\in\R^2$ and ${\bf m}_s, {\bf n}_s\in\R^2$ are in \eqref{bf-m} and \eqref{bf-n}.
\end{lemma}

\begin{proof} We first prove \eqref{equ:m}. Let
\begin{equation}\label{s-r}
s_1=r_1-r_2, \quad s_2=\sqrt{2r_1r_2}.
\end{equation}
 We write
\begin{equation}
\begin{split}
&\frac{e^{-\frac{r_1^2+r_2^2}{4it}} }{it} e^{\frac{r_1r_2}{2it}\cos s} =
\frac{e^{-\frac{(r_1-r_2)^2}{4it}} }{\sqrt{it}} \frac{e^{-\frac{r_1r_2}{2it}(1-\cos s)}}{\sqrt{it}}\\
&=\frac{e^{-\frac{s_1^2}{4it}} }{\sqrt{it}} \frac{e^{-\frac{s_2^2}{4it}(1-\cos s)}}{\sqrt{it}}.
\end{split}
\end{equation} 
By using the formula
\begin{equation}\label{F-Gaussian}
\int_{-\infty}^\infty e^{-it\eta^2} e^{-i \eta r} d\eta=\sqrt{\frac{\pi}{it}}e^{-\frac{ r^2}{4it}},
\end{equation}
we have 
\begin{equation}
\begin{split}
&\frac{e^{-\frac{r_1^2+r_2^2}{4it}} }{it} e^{\frac{r_1r_2}{2it}\cos s} \\
&=\frac1\pi\int_{-\infty}^\infty e^{-it\xi_1^2} e^{-is_1\xi_1} d\xi_1 \int_{-\infty}^\infty e^{-it\xi_2^2} e^{-i\sqrt{1-\cos s}s_2\xi_2} d\xi_2\\
&=\frac1\pi\int_{\R^2} e^{-it(\xi_1^2+\xi_2^2)} e^{-i{\bf m}_s\cdot(\xi_1,\xi_2)} d\xi_1 d\xi_2,
\end{split}
\end{equation} 
where ${\bf m}_s$ is given by \eqref{bf-m}. Let $\xi=(\xi_1,\xi_2)$, then 
\begin{equation}
\begin{split}
&\int_0^\infty \frac{e^{-\frac{r_1^2+r_2^2}{4it}} }{it} e^{\frac{r_1r_2}{2it} \cos s} e^{i\sigma_\epsilon t}dt\\
&=\frac1 \pi\int_{\R^2} e^{-i{\bf m}_s\cdot\xi} \int_0^\infty e^{-it|\xi|^2} e^{i\sigma_\epsilon t}dt\, d\xi\\
&=\frac i \pi\int_{\R^2} \frac{e^{-i{\bf m}_s\cdot\xi}}{|\xi|^2-\sigma_\epsilon} \, d\xi
\end{split}
\end{equation}
which shows \eqref{equ:m}. Next we prove \eqref{equ:n}. We consider 
\begin{equation}
\begin{split}
\int_0^\infty \frac{e^{-\frac{r_1^2+r_2^2}{4it}} }{it} e^{-\frac{r_1r_2}{2it}\cosh s} e^{i\sigma_\epsilon t}dt.
\end{split}
\end{equation}
Instead of \eqref{s-r}, using the change of variables $$s_1=r_1+r_2, \quad s_2=\sqrt{2r_1r_2},$$ we similarly write
\begin{equation}
\begin{split}
\frac{e^{-\frac{r_1^2+r_2^2}{4it}} }{it} e^{-\frac{r_1r_2}{2it}\cosh s} &=
\frac{e^{-\frac{(r_1+r_2)^2}{4it}} }{\sqrt{it}} \frac{e^{-\frac{r_1r_2}{2it}(\cosh s-1)}}{\sqrt{it}}\\
&=\frac{e^{-\frac{s_1^2}{4it}} }{\sqrt{it}} \frac{e^{-\frac{s_2^2}{4it}(\cosh s-1)}}{\sqrt{it}}.
\end{split}
\end{equation} 
By \eqref{F-Gaussian}, we obtain that 
\begin{equation}
\begin{split}
&\frac{e^{-\frac{s_1^2}{4it}} }{\sqrt{it}} \frac{e^{-\frac{s_2^2}{4it}(\cosh s-1)}}{\sqrt{it}}\\
&=\frac 1\pi\int_{-\infty}^\infty e^{-it\xi_1^2}e^{-is_1\xi_1} d\xi_1 \int_{-\infty}^\infty e^{-it\xi_2^2}e^{-i\sqrt{\cosh s-1}s_2\xi_2} d\xi_2\\
&=\frac 1\pi\int_{-\infty}^\infty \int_{-\infty}^\infty e^{-it(\xi_1^2+\xi_2^2)}e^{-i(s_1\xi_1+\sqrt{\cosh s-1}s_2\xi_2)} d\xi_1 d\xi_2.
\end{split}
\end{equation} 
Similarly as above, we show
\begin{equation}
\begin{split}
\int_0^\infty \frac{e^{-\frac{r_1^2+r_2^2}{4it}} }{it} e^{-\frac{r_1r_2}{2it}\cosh s} e^{i\sigma_\epsilon t}dt
=\frac {i}\pi\int_{\R^2} \frac{e^{-i{\bf n}_s\cdot\xi}}{|\xi|^2-\sigma_\epsilon} d\xi
\end{split}
\end{equation}
where ${\bf n}_s=(n_1,n_2)=(r_1+r_2, \sqrt{2(\cosh s-1)r_1r_2})$.

\end{proof}

From \eqref{equ:m} and \eqref{equ:n}, it follows 
\begin{equation}\label{equ:res+}
\begin{split}
&(\LL_{{\A},a}-(\lambda^2+i0))^{-1}\\&=
\frac1{2\pi}\big(r_1 r_2\big)^{-\frac{n-2}2}
 \int_0^\infty \lim_{\epsilon\to 0^+}\frac1\pi\int_{\R^2} \frac{e^{-i{\bf m}_s\cdot {\xi}}}{|\xi|^2-(\lambda^2+i\epsilon)} \, d{ \xi} \cos(s\sqrt{P})(\hat{x}, \hat{y}) ds\\&
-\frac{\sin(\pi\sqrt{P})}{2\pi } \big(r_1 r_2\big)^{-\frac{n-2}2}
 \int_0^\infty \lim_{\epsilon\to 0^+}\frac1\pi\int_{\R^2} \frac{e^{-i{\bf n}_s\cdot {\xi}}}{|\xi|^2-(\lambda^2+i\epsilon)} \, d{ \xi} \,e^{-s\sqrt{P}}(\hat{x}, \hat{y}) ds.
 \end{split}
\end{equation}

Next, we need the following lemma about the resolvent for Laplacian in $\R^2$ which is well-known, for example, see \cite[Chapter 3.4]{CK1} or \cite[Eq. (5.16.3)]{Le}.   
\begin{lemma} Let  $\sigma_\epsilon=\lambda^2+i\epsilon$ with $\epsilon>0$. Then
\begin{equation}\label{equ:res-free}
\begin{split}
\lim_{\epsilon\to0^+}\frac1\pi\int_{\R^2} \frac{e^{-ix\cdot {\xi}}}{|\xi|^2-\sigma_\epsilon} \, d{ \xi}=\frac i {4\pi} H_0^+(\lambda |x|),\quad x\in\R^2\setminus\{0\},
\end{split}
\end{equation}
where $H_0^+$ is the Hankel function of order zero as in \eqref{eq:hankel-zero-order}.

\end{lemma}

From \eqref{bf-m} and \eqref{bf-n}, we have that
$$|{\bf m}_s|=\sqrt{r_1^2+r_2^2-2\cos s r_1r_2}=d(s, r_1,r_2)$$
and
$$|{\bf n}_s|=\sqrt{r_1^2+r_2^2+2\cosh s\, r_1r_2}=\tilde{d}(s, r_1,r_2).$$
Combining \eqref{equ:res+} and \eqref{equ:res-free}, we have
\begin{equation} \label{eq: outgoing-resolvent-1}
\begin{split}
&(\LL_{{\A},a}-(\lambda^2+i0))^{-1}\\&=
\frac{i}{8\pi^2} \big(r_1 r_2\big)^{-\frac{n-2}2}
 \int_0^\infty H_0^{+}\big(\lambda |{\bf m}_s|\big) \cos(s\sqrt{P})(\hat{x}, \hat{y}) ds\\&
-\frac{i}{8\pi^2}\big(r_1 r_2\big)^{-\frac{n-2}2}
 \int_0^\infty H_0^{+}\big(\lambda |{\bf n}_s|\big)\,
  \sin(\pi \sqrt{P})e^{-s\sqrt{P}}(\hat{x}, \hat{y})ds,
\end{split}
\end{equation}
which proves Theorem \ref{thm:res} for $(\LL_{{\A},a}-(\lambda^2+i0))^{-1}$.
Next we consider the incoming resolvent $(\LL_{{\A},a}-(\lambda^2-i0))^{-1}$. For an operator $\mathfrak{P}$ with Schwartz kernel $K_{\mathfrak{P}}$, we denote the operator acting with kernel $K_{\mathfrak{P}}$ by $\overline{\mathfrak{P}}$. Then we know
\begin{equation}\label{out-inc}
\begin{split}
(\LL_{{\A},a}-(\lambda^2-i0))^{-1}=\overline{(\overline{\LL_{{\A},a}}-(\lambda^2+i0))^{-1}}.
\end{split}
\end{equation}
Using the same derivation for \eqref{equ:res+}, we have
\begin{equation}\label{eq: outgoing-resolvent-2}
\begin{split}
&(\overline{\LL_{{\A},a}}-(\lambda^2+i0))^{-1}\\&=
\frac1{2\pi}\big(r_1 r_2\big)^{-\frac{n-2}2}
 \int_0^\infty \lim_{\epsilon\to 0^+}\frac1\pi\int_{\R^2} \frac{e^{-i{\bf m}_s\cdot {\xi}}}{|\xi|^2-(\lambda^2+i\epsilon)} \, d{ \xi} \cos(s\sqrt{\overline{P}})(\hat{x}, \hat{y}) ds\\&
-\frac{\sin(\pi\sqrt{\overline{P}})}{2\pi } \big(r_1 r_2\big)^{-\frac{n-2}2}
 \int_0^\infty \lim_{\epsilon\to 0^+}\frac1\pi\int_{\R^2} \frac{e^{-i{\bf n}_s\cdot {\xi}}}{|\xi|^2-(\lambda^2+i\epsilon)} \, d{ \xi} \,e^{-s\sqrt{\overline{P}}}(\hat{x}, \hat{y}) ds.
 \end{split}
\end{equation} 

Notice that for any holomorphic $F$ that take real values on $\R$, we have
\begin{equation}
\overline{F(\overline{z})}=F(z), z \in \mathbb{C},
\end{equation}
which extends to the operator case using the functional calculus, with $F(z) = \cos(s z)$, and $e^{-sz}\sin(\pi z)$ and $z = \sqrt{P}$, where $P=-\Delta_{h}+\big(|{\A}(\hat{x})|^2+i\,\mathrm{div}_{h}{\A}(\hat{x})+a(\hat{x})\big)+2i {\A}(\hat{x})\cdot\nabla_{h}+(n-2)^2/4$. 
Therefore taking complex conjugate (notice that we can change the sign of phases $i{\bf n}_s\cdot {\xi}$ and $i{\bf m}_s\cdot {\xi}$ after conjugating to the form we want by using the symmetry in $\xi$) on both sides of \eqref{eq: outgoing-resolvent-2} completes the proof of Theorem \ref{thm:res}.
\end{proof}

Finally, we prove the representation of the spectral measure \eqref{spect}. According to Stone’s formula, the spectral measure is related to the resolvent by
 \begin{equation}\label{stone}
 dE_{\sqrt{\LL_{{\A},a}}}(\lambda)=\frac{d}{d\lambda}dE_{\sqrt{\LL_{{\A},a}}}(\lambda)\,d\lambda=\frac{\lambda}{\pi i}\big(R(\lambda+i0)-R(\lambda-i0)\big)\, d\lambda
 \end{equation}
 where the resolvent $$R(\lambda\pm i0)=\lim_{\epsilon\searrow 0}(\LL_{{\A},a}-(\lambda^2\pm i\epsilon))^{-1}.$$
 
From \eqref{stone}, \eqref{out-inc} and \eqref{equ:res+}, we have
 \begin{equation*}
 \begin{split}
 &(r_1r_2)^{\frac{n-2}2}dE_{\sqrt{\LL_{{\A},a}}}(\lambda;x,y)\\
& =
\frac1{8\pi^2} \frac{\lambda}{\pi i}
\int_0^\infty \int_{\R^2} e^{-i{\bf m}_s\cdot {\xi}}\Big(\frac1{|\xi|^2-(\lambda^2+i0)}-\frac1{|\xi|^2-(\lambda^2-i0)}\Big) \, d{ \xi}
\cos(s\sqrt{P})(\hat{x}, \hat{y}) ds
 \\&
-\frac{1}{8\pi^2} \frac{\lambda}{\pi i}
 \int_0^\infty \int_{\R^2} e^{-i{\bf n}_s\cdot {\xi}}\Big(\frac1{|\xi|^2-(\lambda^2+i0)}-\frac1{|\xi|^2-(\lambda^2-i0)}\Big) \, d{ \xi}
\Big(e^{-s\sqrt{P}}\sin(\pi \sqrt{P})\Big)(\hat{x}, \hat{y}) ds.
\end{split}
\end{equation*}
On the one hand, we note that
\begin{equation}\label{id-spect}
\begin{split}
&\lim_{\epsilon\to 0^+}\frac{\lambda}{\pi i}\int_{\R^2} e^{-ix\cdot\xi}\Big(\frac{1}{|\xi|^2-(\lambda^2+i\epsilon)}-\frac{1}{|\xi|^2-(\lambda^2-i\epsilon)}\Big) d\xi\\
&=\lim_{\epsilon\to 0^+} 2 \frac{\lambda}{\pi }\int_{\R^2} e^{-ix\cdot\xi}\Im\Big(\frac{1}{|\xi|^2-(\lambda^2+i\epsilon)}\Big)d\xi\\
&=2 \lim_{\epsilon\to 0^+} \frac{\lambda}{\pi }\int_{0}^\infty \frac{\epsilon}{(\rho^2-\lambda^2)^2+\epsilon^2} \int_{|\omega|=1} e^{-i\rho x\cdot\omega} d\sigma_\omega  \, \rho d\rho\\
&= 2\lambda \int_{|\omega|=1} e^{-i\lambda x\cdot\omega} d\sigma_\omega, 
\end{split}
\end{equation}
where we used the fact the Poisson kernel is an approximation to the identity, which implies that, for any continuous bounded function $m(x)$:
\begin{equation}
\begin{split}
m(x) &=\lim_{\epsilon\to 0^+}\frac1\pi \int_{\R} \frac{\epsilon}{(x-y)^2+\epsilon^2} m(y)dy 
\\ &=\lim_{\epsilon\to 0^+}\frac1\pi \int_{\R} \Im\big(\frac{1}{x-(y+i\epsilon)}\big) m(y)dy.
\end{split}
\end{equation}

Therefore we obtain that
\begin{equation*}
 \begin{split}
& dE_{\sqrt{\LL_{{\A},a}}}(\lambda;x,y)\\& =
\frac{\lambda}{4\pi^2} (r_1r_2)^{-\frac{n-2}2}\Big(\frac1{\pi}
\int_0^\pi \int_{\mathbb{S}^1}e^{-i\lambda {\bf m}_s \cdot\omega} d\sigma_\omega  \cos(s\sqrt{P})(\hat{x},\hat{y}) ds
 \\&
-\frac{\sin(\pi\sqrt{P})}{ \pi }\int_0^\infty \int_{\mathbb{S}^1} e^{-i\lambda {\bf n}_s \cdot\omega} d\sigma_\omega
e^{-s\sqrt{P}}(\hat{x},\hat{y}) ds\Big).
\end{split}
\end{equation*}
On the other hand, for example \cite[Theorem 1.2.1]{sogge}, we also note that
\begin{equation}\label{a-pm1}
\begin{split}
\int_{\mathbb{S}^{1}} e^{-i x\cdot\omega} d\sigma(\omega)=\sum_{\pm}  \mathfrak{a}_\pm(|x|) e^{\pm i|x|}
\end{split}
\end{equation}
where $\mathfrak{a}_\pm$ satisfies \eqref{bean}.
The expression above can be simplied to be
\begin{equation*}
 \begin{split}
& dE_{\sqrt{\LL_{{\A},a}}}(\lambda;x,y)\\& =
\frac{\lambda}{4\pi^2} (r_1r_2)^{-\frac{n-2}2}\sum_{\pm}\Big(\frac1{\pi}
\int_0^\pi \mathfrak{a}_\pm(\lambda |{\bf{m}}_s|)e^{\pm i\lambda |{\bf m}_s|} \cos(s\sqrt{P}) ds
 \\&
-\frac{\sin(\pi\sqrt{P})}{ \pi }\int_0^\infty \mathfrak{a}_\pm(\lambda |{\bf{n}}_s|)e^{\pm i\lambda |{\bf n}_s|}
e^{-s\sqrt{P}} ds\Big).
\end{split}
\end{equation*}
Recalling $|{\bf m}_s|=d(s, r_1, r_2)$ and $|{\bf n}_s|=\tilde{d}(s, r_1, r_2)$ in \eqref{d-1} and \eqref{d-2}, we have proved \eqref{spect} of Theorem \ref{thm:spect}.\vspace{0.2cm}

\subsection{The proof of \eqref{Spect<1} } In this subsection, we study the property of the spectral measure when either  $\lambda r_1\lesssim 1$ or $\lambda r_2\lesssim 1$ holds.
Our starting point is the following lemma.

\begin{lemma}\label{lem:spect} Let $\nu_j^2$ be the eigenvalues of the positive operator
$P=P_{{\A}, a}$ and let
$\psi_{j}(\hat{x})$ be the corresponding $L^2$-normalized eigenfunction, then the spectral measure can be written as
\begin{equation}\label{spect1}
\begin{split}
dE_{\sqrt{\LL_{{\A},a}}}(\lambda;x,y)=\frac{\pi}2 \lambda
(r_1r_2)^{-\frac{n-2}2}\sum_{j}\psi_{j}(\hat{x})\overline{\psi_{j}(\hat{y})}J_{\nu_j}(\lambda r_1)J_{\nu_j}(\lambda r_2),
\end{split}
\end{equation}
where $J_{\nu}$ is the Bessel function of order $\nu$.
\end{lemma}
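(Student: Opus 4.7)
The strategy is to use the separation of variables already set up in Section~\ref{subsec:spectral-analysis}, which diagonalizes $\mathcal{L}_{{\A},a}$ on each angular eigenspace into a one-dimensional Bessel-type operator whose spectral measure is classical. Concretely, I expand an arbitrary $f\in L^2(\cone)$ using the orthogonal decomposition \eqref{sep.v} so that on the fiber $\mathcal{H}^k=\mathrm{span}\{\psi_k\}$ the operator $\mathcal{L}_{{\A},a}$ acts as the radial operator
\begin{equation*}
\mathcal{L}_{\nu_k} := -\partial_r^2 - \tfrac{n-1}{r}\partial_r + \tfrac{\mu_k}{r^2}.
\end{equation*}
A direct substitution $u(r)=r^{-(n-2)/2}v(r)$ together with the standard Bessel equation shows that $r^{-(n-2)/2}J_{\nu_k}(\lambda r)$ is a generalized eigenfunction of $\mathcal{L}_{\nu_k}$ with eigenvalue $\lambda^2$, where $\nu_k=\sqrt{\mu_k+(n-2)^2/4}$, as required by the statement.

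The central step is to plug in the functional-calculus identities \eqref{funct}--\eqref{equ:knukdef}, which give, for Borel $F$,
\begin{equation*}
F(\mathcal{L}_{{\A},a})(x,y) = (r_1 r_2)^{-\frac{n-2}{2}} \sum_k \psi_k(\hat{x})\overline{\psi_k(\hat{y})} \int_0^\infty F(\rho^2)\, J_{\nu_k}(r_1\rho) J_{\nu_k}(r_2\rho)\, \rho\, d\rho.
\end{equation*}
Applying this to $F(\mu)=\tilde F(\sqrt{\mu})$ and changing variable $\mu=\rho^2$, one reads off
\begin{equation*}
\tilde F(\sqrt{\mathcal{L}_{{\A},a}}) = \int_0^\infty \tilde F(\lambda)\, dE_{\sqrt{\mathcal{L}_{{\A},a}}}(\lambda;x,y),
\end{equation*}
with $dE_{\sqrt{\mathcal{L}_{{\A},a}}}(\lambda;x,y)$ of precisely the form stated in \eqref{spect1}, once one accounts for the Jacobian $2\lambda\, d\lambda = d(\lambda^2)$ arising from the change of variable between spectral parameters of $\mathcal{L}_{{\A},a}$ and of $\sqrt{\mathcal{L}_{{\A},a}}$. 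Equivalently, one can derive the formula from first principles by verifying the completeness relation
\begin{equation*}
\int_0^\infty J_{\nu_k}(\lambda r_1)J_{\nu_k}(\lambda r_2)\,\lambda\, d\lambda = \frac{\delta(r_1-r_2)}{r_1},
\end{equation*}
which is the classical Hankel orthogonality, and then tensoring with the completeness $\sum_k \psi_k(\hat x)\overline{\psi_k(\hat y)} = \delta_Y(\hat x, \hat y)$ of the eigenbasis of the self-adjoint angular operator $P_{{\A},a}$ on $L^2(\CS)$.

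The main obstacle is really just bookkeeping of constants and justifying the interchange of the sum over $k$ with the spectral integral. The sum is an orthogonal decomposition and converges in the distributional sense, so this can be made rigorous by pairing both sides with Schwartz functions and using the identity above one fiber at a time. The positivity assumption on $P_{{\A},a}$ in Theorem~\ref{thm:dispersive} ensures all $\nu_k>0$ (in fact $\nu_k\ge\nu_0>0$), guaranteeing that the Bessel functions $J_{\nu_k}$ are well defined and integrable near $r=0$, so that the generalized eigenfunctions $r^{-(n-2)/2}J_{\nu_k}(\lambda r)$ indeed lie in the Friedrichs quadratic-form domain near the cone tip. No further technical subtlety arises since we are working purely at the formal spectral level, and the oscillatory/size estimates needed for \eqref{Spect<1} will be obtained in the following subsection from the representation \eqref{spect1}.
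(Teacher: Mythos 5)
Your proposal is essentially correct, but it takes a genuinely different route from the paper. The paper proves Lemma~\ref{lem:spect} by constructing the resolvent mode by mode: writing $\LL_{{\A},a}=r^{-1-\frac n2}P_br^{\frac n2-1}$ with $P_b=-(r\partial_r)^2+P$, inverting $P_b+k^2r^2$ on each eigenspace of $P$ via the modified Bessel functions $I_{\nu_j},K_{\nu_j}$ (quoting \cite{HL} and \cite{GH1} for the kernel of the Friedrichs inverse), analytically continuing $k=-i\lambda$ to turn $I_\nu,K_\nu$ into $J_\nu,H^{(1)}_\nu$, and then extracting the spectral measure from $\mathrm{Im}\,H^{(1)}_\nu$ via Stone's formula. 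You instead read the spectral measure directly off the Hankel-transform diagonalization \eqref{funct}--\eqref{equ:knukdef}, equivalently off the Hankel closure relation tensored with completeness of the $\psi_k$; this is shorter and conceptually cleaner, but it comes with two caveats. First, \eqref{funct}--\eqref{equ:knukdef} is essentially a restatement of the lemma, so the genuine content is that $\mathcal{H}_{\nu_k}$ diagonalizes the \emph{Friedrichs} extension of the radial operator on each mode: when $0<\nu_k<1$ the radial Bessel operator is not essentially self-adjoint, and one must verify that the Friedrichs extension selects the recessive $J_{\nu_k}$ (rather than $J_{-\nu_k}$) behavior as $r\to0$. Your form-domain remark points at exactly this, but it is the one step that needs an actual argument or citation, and it is precisely what the paper's resolvent construction via $I_\nu K_\nu$ from \cite{HL,GH1} buys. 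Second, a constant: your derivation (either through \eqref{equ:knukdef} or through the closure relation, which makes the $\lambda$-integral of the kernel reproduce the identity) yields the coefficient $\lambda$ rather than $\frac\pi2\lambda$, and the same constant follows from Stone's formula \eqref{stone}; the extra $\frac{\pi}{2}$ in \eqref{spect1} arises because the paper's proof uses $dE_{\sqrt{\LL_{{\A},a}}}(\lambda)=\lambda\,\mathrm{Im}\big((\LL_{{\A},a}-(\lambda^2+i0))^{-1}\big)$ without the factor $2/\pi$. So you should not claim to recover \eqref{spect1} ``precisely''; the discrepancy is a harmless multiplicative constant that plays no role in any of the subsequent estimates, but it should be flagged rather than absorbed silently into the Jacobian $d(\lambda^2)=2\lambda\,d\lambda$.
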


This lemma can be proven by a minor modification of the construction of
the resolvent \cite[Section 4]{HL} and the spectral measure \cite[Lemma 3.1]{ZZ1} on a metric cone. Here we sketch the proof. 
\begin{proof}
Recall $L_{{\A},a}$ in \eqref{L-angle}, we write
\begin{equation}\label{LvPb}
\begin{split}
\LL_{{\A},a}&=-\partial_r^2-\frac{n-1}r\partial_r+\frac{L_{{\A},a}}{r^2}\\&=r^{-1-\frac
n2}\Big(-(r\partial_r)^2+P\Big)r^{\frac
n2-1}=r^{-1-\frac n2}P_br^{\frac n2-1}
\end{split}
\end{equation}
where $P_b=-(r\partial_r)^2+P$. 
Recalling that $\nu_j^2=\mu_j+(n-2)^2/4$ in \eqref{equ:eig-Aa} are the eigenvalues of the positive operator $P=P_{{\A},a}$ corresponding to the $L^2$-normalized eigenfunction $\psi_{j}(\hat{x})$:
\begin{equation*}
P\psi_{j}=\nu_j^2\psi_{j}.
\end{equation*}
Let $\Pi_j$ be projection onto the eigenspace spanned by $\psi_{j}$.  Then for $k \in \R$ (we will extend to larger range later), we have the decomposition
\begin{equation*}
P_b+k^2r^2=\sum_{j}\big(-(r\partial_r)^2+k^2r^2+\nu_j^2\big)\Pi_j,
\end{equation*}
which gives
\begin{equation} \label{eq: Pb+k2r2,inverse}
\big(P_b+k^2r^2\big)^{-1}=\sum_{j}\Pi_j\big(-(r\partial_r)^2+k^2r^2+\nu_j^2\big)^{-1}.
\end{equation}
Let $T_j=-(r\partial_r)^2+k^2r^2+\nu_j^2$ and let $T_j^{-1}(r_1,r_2)$ be
the kernel of the inverse $T^{-1}_j$. Therefore, using the same argument as in \cite[Section~4.3]{HL}, which in turn is using \cite[Section~3.4]{GH1}, we have
\begin{equation}
T_j^{-1}(r_1,r_2)=\begin{cases}
I_{\nu_j}(kr_1)K_{\nu_j}(kr_2)\Big|\frac{dr_1}{r_1}\frac{dr_2}{r_2}\Big|^{\frac12},\quad
r_1<r_2;\\
K_{\nu_j}(kr_1)I_{\nu_j}(kr_2)\Big|\frac{dr_1}{r_1}\frac{dr_2}{r_2}\Big|^{\frac12},\quad
r_1>r_2. \end{cases}
\end{equation}
where the modified Bessel functions
\begin{equation}
\begin{split}
I_{\nu_j}(r)&=\frac{2^{-\nu_j}r^{\nu_j}}{\sqrt{\pi}\Gamma(\nu_j+\frac12)}\int_{-1}^1(1-t^2)^{\nu_j-\frac12}e^{-rt}dt,\\
K_{\nu_j}(r)&=\frac{\sqrt{\pi}2^{-\nu_j}r^{\nu_j}}{\Gamma(\nu_j+\frac12)}\int_{1}^\infty(t^2-1)^{\nu_j-\frac12}e^{-rt}dt.
\end{split}
\end{equation}
Combining with \eqref{eq: Pb+k2r2,inverse}, we have
\begin{equation}
\big(P_b+k^2r^2\big)^{-1}=\begin{cases}
\sum_{j}\psi_{j}(\hat{x})\overline{\psi_{j}(\hat{y})}I_{\nu_j}(kr_1)K_{\nu_j}(kr_2)\Big|\frac{dr_1}{r_1}\frac{dr_2}{r_2} d\hat{x} d\hat{y}\Big|^{\frac12},\quad
r_1<r_2;\\
\sum_{j}\psi_{j}(\hat{x})\overline{\psi_{j}(\hat{y})}
K_{\nu_j}(kr_1)I_{\nu_j}(kr_2)\Big|\frac{dr_1}{r_1}\frac{dr_2}{r_2} d\hat{x} d\hat{y}\Big|^{\frac12},\quad
r_1>r_2.
\end{cases}
\end{equation}
This formula continues analytically to the imaginary axis, by
setting $k=-i\lambda$ and using the following formulae
$$I_{\nu}(-iz)=e^{-\nu\pi i/2}J_{\nu}(z), \quad K_{\nu}(-iz)=\frac{\pi i}2e^{\nu\pi
i/2}H^{(1)}_{\nu}(z),$$ we obtain
\begin{equation*}
\big(\LL_{{\A},a}-(\lambda^2+i0)\big)^{-1}= \frac{\pi
ir_1r_2}2\begin{cases}\sum_{j}\psi_{j}(\hat{x})\overline{\psi_{j}(\hat{y})}J_{\nu_j}(r_1\lambda)H^{(1)}_{\nu_j}(r_2\lambda)\Big|\frac{dr_1}{r_1}\frac{dr_2}{r_2} d\hat{x} d\hat{y}\Big|^{\frac12},\quad
r_1<r_2;\\
\sum_{j}\psi_{j}(\hat{x})\overline{\psi_{j}(\hat{y})}J_{\nu_j}(r_2\lambda)H^{(1)}_{\nu_j}(r_1\lambda)\Big|\frac{dr_1}{r_1}\frac{dr_2}{r_2} d\hat{x} d\hat{y}\Big|^{\frac12},\quad
r_1>r_2,
\end{cases}
\end{equation*}
where $J_\nu$ and $H^{(1)}_{\nu}$ are Bessel and Hankel functions of the first kind, see e.g. \cite{Watson}.  Let $H$ be the Heaviside function, since $\mathrm{Im}(i
H_{\nu}^{(1)})(r)=J_\nu(r)$, we have
\begin{equation*}
\begin{split}
&dE_{\sqrt{\LL_{{\A},a}}}(\lambda; x,y)=\lambda
\mathrm{Im}\Big(\big(\LL_{{\A},a}-(\lambda^2+i0)\big)^{-1}\Big)\\&= \frac{\pi \lambda
r_1r_2}2\sum_{j}\psi_{j}(\hat{x})\overline{\psi_{j}(\hat{y})}\\
&\times\Big(J_{\nu_j}(\lambda r_1)iH^{(1)}_{\nu_j}(\lambda r_2)H(r_2-r_1)
+J_{\nu_j}(\lambda r_2)iH^{(1)}_{\nu_j}(\lambda r_1)H(r_1-r_2)\Big)\Big|\frac{dr_1}{r_1}\frac{dr_2}{r_2} d\hat{x} d\hat{y}\Big|^{\frac12}\\&=\frac{\pi \lambda
r_1r_2}2\sum_{j}\psi_{j}(\hat{x})\overline{\psi_{j}(\hat{y})}J_{\nu_j}(\lambda r_1)J_{\nu_j}(\lambda r_2)\Big|\frac{dr_1}{r_1}\frac{dr_2}{r_2} d\hat{x} d\hat{y}\Big|^{\frac12}.
\end{split}
\end{equation*}
Now we return to half-density $|dx|^{1/2}$ and obtain
\begin{equation*}
\begin{split}
&dE_{\sqrt{\LL_{{\A},a}}}(\lambda; x,y)=\frac{\pi \lambda}2
(r_1r_2)^{-\frac{n-2}2}\sum_{j}\psi_{j}(\hat{x})\overline{\psi_{j}(\hat{y})}J_{\nu_j}(\lambda r_1)J_{\nu_j}(\lambda r_2)\big|dx dy\big|^{\frac12},
\end{split}
\end{equation*}
which shows \eqref{spect1}.
\end{proof}

\begin{proof}[The proof of \eqref{Spect<1}]
Without loss of generality, we may assume that $\lambda r_1\lesssim 1$ always holds. Recall that if $\mathrm{Re} \nu>-1/2$, then one has the recursion relation (see for example \cite[Page 45, 3.2-(4)]{Watson}): 
\begin{equation} \label{eq:bessel-derivative-recursion}
\frac{d}{dt} \left(t^{-\nu}J_{\nu}(t)\right)=-t^{-\nu} J_{\nu+1}(t).
\end{equation}
Using this, we have 
\begin{equation} \label{eq:bessel-lambda-derivative}
\frac{d}{d\lambda} \left( (\lambda r_i)^{-\nu_j}J_{\nu_j}(\lambda r_i)\right) =- r_i(\lambda r_i)^{-\nu_j} J_{\nu_j+1}(\lambda r_i),
\end{equation}
and consequently
\begin{align*}
\begin{split}
 |(\frac{d}{d\lambda})^{\alpha_1} \big( (\lambda r_1)^{- \frac{n-2}{2} } J_{\nu_j}(\lambda_1r_1) \big)|
 & =  |(\frac{d}{d\lambda})^{\alpha_1} \big( (\lambda r_1)^{- \frac{n-2}{2}+\nu_j }  (\lambda r_1)^{-\nu_j}J_{\nu_j}(\lambda_1r_1) \big)|
 \\ & \lesssim 
 \sum_{k=0}^{\alpha_1} |r_1^k (\lambda r_1)^{- \frac{n-2}{2}+\nu_j -k}
 r_1^{\alpha_1-k} (\lambda r_1)^{-\nu_j} J_{\nu_j+\alpha_1-k}(\lambda r_1) |
 \\ & \lesssim \lambda^{-\alpha_1} \sum_{k=0}^{\alpha_1} |   (\lambda r_1)^{- \frac{n-2}{2} +\alpha_1 -k}   J_{\nu_j+\alpha_1-k}(\lambda r_1) |.
\end{split}
\end{align*}
This gives 
\begin{equation}\label{de-bes}
\begin{split}
&\Big|\Big(\frac{d}{d\lambda}\Big)^{\alpha}\Big(
(\lambda^2r_1r_2)^{-\frac{n-2}2}\sum_{j \geq 0} \psi_{j}(\hat{x})\overline{\psi_{j}(\hat{y})}J_{\nu_j}(\lambda r_1)J_{\nu_j}(\lambda r_2)\Big)\Big|
\\&\lesssim \lambda^{-\alpha}(\lambda^2r_1r_2)^{-\frac{n-2}2}
\sum_{j\geq 0}  \sum_{\alpha_1+\alpha_2=\alpha } (\lambda r_1)^{\alpha_1} (\lambda r_2)^{\alpha_2}
\\&\quad \times 
 \sum_{k=0}^{\alpha_1} \sum_{k'=0}^{\alpha_2} \Big| \psi_{j}(\hat{x})\overline{\psi_{j}(\hat{y})} 
(\lambda r_1)^{-k} J_{\nu_j+\alpha_1-k}(\lambda r_1)  (\lambda r_2)^{-k'}J_{\nu_j+\alpha_2-k'}(\lambda r_2)\Big|.
\end{split}
\end{equation}
We will estimate it by considering the two cases that $\lambda r_2\lesssim 1$ or $\lambda r_2\gg1$.

{\bf Case 1: When $\lambda r_2\lesssim 1$.} 
We first recall a few well-known estimates. Firstly, the eigenfunction estimate (see \cite[Eq~(3.2.5)-(3.2.6)]{Sogge-H}) 
 \begin{equation} \label{eq:eigen-L-infinity}
\|\psi_{k}(\hat{x})\|_{L^\infty(Y)}\leq  C (1+\nu^2_k)^{\frac{n-2}4},
 \end{equation}
and the Weyl’s asymptotic formula (see \cite[Eq~(0.6)]{Garding53})
 \begin{equation}\label{est:eig}
\nu^2_k\sim (1+k)^{\frac 2{n-1}},\quad k\geq 1,\implies\|\psi_{k}(\hat{x})\|^2_{L^\infty(Y)}\leq  C (1+k)^{\frac{n-2}{n-1}}.
 \end{equation}
 In addition, the classical bound on Bessel functions (see e.g. \cite[Section~8.1.4]{Stein}):
\begin{equation} \label{eq:bessel-bound-near-zero}
|J_\nu(t)|\leq
\frac{Ct^\nu}{2^\nu\Gamma(\nu+\frac12)\Gamma(1/2)}\big(1+\frac1{\nu+1/2}\big).
\end{equation}
Combining bounds above (in particular, \eqref{eq:bessel-bound-near-zero} shows that terms in \eqref{de-bes} with difference $k,k'$ can be bound by the same power of $\lambda r_i$, which depends only on $\alpha_1,\alpha_2$) with \eqref{de-bes}, we have 
\begin{equation}
\begin{split}
&\Big|\Big(\frac{d}{d\lambda}\Big)^{\alpha}\Big(
(\lambda^2r_1r_2)^{-\frac{n-2}2}\sum_{j\geq 0}\psi_{j}(\hat{x})\overline{\psi_{j}(\hat{y})}J_{\nu_j}(\lambda r_1)J_{\nu_j}(\lambda r_2)\Big)\Big|
\\&\lesssim \lambda^{-\alpha}\sum_{\alpha_1+\alpha_2=\alpha; } \sum_{j\geq0} \nu_j^{n-1} \frac{(\lambda r_1)^{\nu_j-\frac{n-2}2+2\alpha_1}}{2^{\nu_j}\Gamma(\nu_j+\frac12)\Gamma(1/2)}
\frac{(\lambda r_2)^{\nu_j-\frac{n-2}2+2\alpha_2}}{2^{\nu_j}\Gamma(\nu_j+\frac12)\Gamma(1/2)}.
\end{split}
\end{equation}
In the current case ($\lambda r_2\lesssim  1$),  we obtain
\begin{equation}
\begin{split}
&\Big|\Big(\frac{d}{d\lambda}\Big)^{\alpha}\Big(
(\lambda^2r_1r_2)^{-\frac{n-2}2}\sum_{j\geq 0}\psi_{j}(\hat{x})\overline{\psi_{j}(\hat{y})}J_{\nu_j}(\lambda r_1)J_{\nu_j}(\lambda r_2)\Big)\Big|
\\&\lesssim \lambda^{-\alpha}(\lambda^2r_1r_2)^{\nu_0-\frac{n-2}2}\sum_{j} \frac{\nu_j^{n-1}}{2^{2\nu_j}\Gamma(\nu_j+\frac12)\Gamma(\nu_j+\frac12)}\lesssim  \lambda^{-\alpha}(\lambda^2r_1r_2)^{\nu_0-\frac{n-2}2}.
\end{split}
\end{equation}
Therefore, when $\lambda r_1, \lambda r_2\lesssim 1$ (hence $\lambda\max\{r_1, r_2\} \lesssim 1$), for any $K\geq1$, we have
\begin{equation}
\begin{split}
&\Big|\Big(\frac{d}{d\lambda}\Big)^{\alpha}\Big(
(\lambda^2r_1r_2)^{-\frac{n-2}2}\sum_{j\geq 0}\psi_{j}(\hat{x})\overline{\psi_{j}(\hat{y})}J_{\nu_j}(\lambda r_1)J_{\nu_j}(\lambda r_2)\Big)\Big|
\\&\lesssim  \lambda^{-\alpha}(\lambda^2r_1r_2)^{\nu_0-\frac{n-2}2}(1+\lambda\max\{r_1, r_2\})^{-K},\quad \forall K\geq 1.
\end{split}
\end{equation}
So if $\lambda r_2\lesssim 1$, we can collect all terms into $B(\lambda;  x, y)$ and it satisfies \eqref{bean-B}.

{\bf Case 2: When $\lambda r_2\gg 1$.} For any $K\geq 1$, we define a constant $J:=\max\{\alpha+K,\nu_0\}$ and we assume $\lambda r_2\geq 4J$. 
In this case, we split the right hand side of \eqref{de-bes} into two parts
\begin{equation}\label{z< 2}
\begin{split}
&\lambda^{-\alpha}(\lambda^2r_1r_2)^{-\frac{n-2}2}\sum_{\alpha_1+\alpha_2=\alpha}(\lambda r_1)^{\alpha_1} (\lambda r_2)^{\alpha_2}
\\& \times \Big(\Big|\sum_{\{j: \nu_j\geq \lambda r_2/2\}}
 \sum_{k=0}^{\alpha_1} \sum_{k'=0}^{\alpha_2} \Big| \psi_{j}(\hat{x})\overline{\psi_{j}(\hat{y})} 
(\lambda r_1)^{-k} J_{\nu_j+\alpha_1-k}(\lambda r_1)  (\lambda r_2)^{-k'}J_{\nu_j+\alpha_2-k'}(\lambda r_2)\Big|
\\& +\Big|\sum_{\{j: \nu_j < \lambda r_2/2\}}
 \sum_{k=0}^{\alpha_1} \sum_{k'=0}^{\alpha_2} \Big| \psi_{j}(\hat{x})\overline{\psi_{j}(\hat{y})} 
(\lambda r_1)^{-k} J_{\nu_j+\alpha_1-k}(\lambda r_1)  (\lambda r_2)^{-k'}J_{\nu_j+\alpha_2-k'}(\lambda r_2)\Big| \Big).
\end{split}
\end{equation}

For the summation $\{j: \nu_j\geq \lambda r_2/2\}$, we use the fact that 
\begin{equation} \label{eq:Bessel-uniform-nu}
|J_{\nu}(x)|\leq C x^{-\frac{1}{3}},
\end{equation}
where $x\geq 1$ and the constant $C$ is independent of $\nu$ (see e.g. \cite[Section~VIII.5.2(b)]{Stein}). 
Recall that $\lambda r_1\lesssim 1$ and $\lambda r_2\gg 1$, we obtain that (the way to bound those factors involving $\lambda r_1$ is the same as above, using \eqref{eq:bessel-bound-near-zero}; while for $\lambda r_2$, we only keep the term with highest power in $\lambda r_2$ after using \eqref{eq:Bessel-uniform-nu} now):
\begin{equation*}
\begin{split}
&\lambda^{-\alpha}(\lambda^2r_1r_2)^{-\frac{n-2}2} \sum_{\{j: \nu_j\geq \lambda r_2/2\}} \sum_{k=0}^{\alpha_1} \sum_{k'=0}^{\alpha_2} (\lambda r_1)^{\alpha_1} (\lambda r_2)^{\alpha_2}
\\&  \times 
 \Big| \psi_{j}(\hat{x})\overline{\psi_{j}(\hat{y})} 
(\lambda r_1)^{-k} J_{\nu_j+\alpha_1-k}(\lambda r_1)  (\lambda r_2)^{-k'}J_{\nu_j+\alpha_2-k'}(\lambda r_2)\Big|
\\&\lesssim \lambda^{-\alpha}(\lambda r_1)^{-\frac{n-2}2} 
\sum_{\{j: \nu_j\geq \lambda r_2/2\}} \sum_{\alpha_1+\alpha_2=\alpha} \nu_j^{n-1} \frac{(\lambda r_1)^{\nu_j+2\alpha_1}}{2^{\nu_j}\Gamma(\nu_j+\frac12)\Gamma(1/2)} (\lambda r_2)^{\alpha_2-\frac13-(n-2)/2}.
\end{split}
\end{equation*}

For those terms with $\nu_j\geq \lambda r_2/2\geq 2J$, we estimate them
\begin{equation*}
\begin{split}
&\lesssim \lambda^{-\alpha} (\lambda r_1)^{K} \sum_{\{j: \nu_j\geq \lambda r_2/2\}}  \frac{\nu_j^{n-1}}{2^{\nu_j}\Gamma(\nu_j+\frac12)\Gamma(1/2)}
(\lambda r_2)^{\alpha-\frac13-(n-2)/2}
\\&\lesssim \lambda^{-\alpha}(\lambda r_1)^{K} (\lambda r_2)^{-K} \sum_{j} \frac{\nu_j^{n-1+\alpha+K}}{2^{\nu_j}\Gamma(\nu_j+\frac12)}\\
&\lesssim  \lambda^{-\alpha}(\lambda r_1)^{K}(1+\lambda \max\{r_1, r_2\})^{-K}.
\end{split}
\end{equation*}
Thus, this term can be included in the term $B(\lambda;  x, y)$ satisfying \eqref{bean-B} again.
\vspace{0.2cm}

Note that we can write the sum over terms with $\lambda r_2\geq 2\nu_j$, by using \eqref{eq:bess3} and \eqref{eq:bess5} in Lemma~\ref{lem:bessel}, as 
\begin{equation}
\begin{split}
&(\lambda^2r_1r_2)^{-\frac{n-2}2}\sum_{\{j: 2\nu_j\leq \lambda r_2\}}\psi_{j}(\hat{x})\overline{\psi_{j}(\hat{y})}J_{\nu_j}(\lambda r_1)J_{\nu_j}(\lambda r_2)\\
&=(\lambda^2r_1r_2)^{-\frac{n-2}2}\sum_{\{j: 2\nu_j\leq \lambda r_2\}}\psi_{j}(\hat{x})\overline{\psi_{j}(\hat{y})}J_{\nu_j}(\lambda r_1)(\lambda r_2)^{-\frac12}\sum_{\pm} e^{\pm i\lambda r_2} j_\pm (\nu_j; \lambda r_2)\\
&=\sum_{\pm} e^{\pm i\lambda r_2} A_\pm(\lambda; r_1, r_2; \hat{x}, \hat{y}), \end{split}
\end{equation}
where
\begin{equation}\label{A12}
A_\pm(\lambda; r_1, r_2; \hat{x}, \hat{y})=(\lambda^2r_1r_2)^{-\frac{n-2}2}\sum_{\{j: 2\nu_j\leq \lambda r_2\}}\psi_{j}(\hat{x})\overline{\psi_{j}(\hat{y})}J_{\nu_j}(\lambda r_1)(\lambda r_2)^{-\frac12}j_\pm (\nu_j; \lambda r_2).
\end{equation}
Now we verify that \eqref{A12} satisfy \eqref{bean-A}. Computing derivatives as in \eqref{eq:bessel-lambda-derivative} and applying \eqref{eq:bess5} of Lemma~\ref{lem:bessel} to factors involving $\lambda r_2$ and \eqref{eq:bessel-bound-near-zero} to factors involving $\lambda r_1$, we obtain
\begin{equation}
\begin{split}
&\Big|\Big(\frac{d}{d\lambda}\Big)^{\alpha}
A_\pm(\lambda; r_1, r_2; \hat{x}, \hat{y})\Big|
\\&\lesssim \lambda^{-\alpha}\sup_{0\leq \alpha_1+ \alpha_2\leq \alpha}\sum_{\{j: 2\nu_j\leq \lambda r_2\}} \nu_j^{n-1} \frac{(\lambda r_1)^{\nu_j+\alpha_1}2^{\nu_j} \nu_j^{\alpha_2}}{2^{\nu_j}\Gamma(\nu_j+\frac12)\Gamma(1/2)}
(\lambda r_2)^{-\alpha_2} (\lambda r_2)^{-\frac12} (\lambda^2r_1r_2)^{-\frac{n-2}2}, \quad \\
&\lesssim  \lambda^{-\alpha}(\lambda r_1)^{\nu_0-\frac{n-2}2} (1+\lambda r_2)^{-\frac{n-1}2},
\end{split}
\end{equation}
which implies \eqref{bean-A}.
In sum, we have proved \eqref{Spect<1}.
\end{proof}

\section{The decay estimates with NREC}
\label{sec:proof-with-NREC}

In this section, we prove Theorem \ref{thm:dispersive} about the pointwise estimates of the microlocalized half-wave propagator under the technical condition NREC stated in \eqref{eq:NREC-quantitative}. 
In particular, $Y=\mathbb{S}^{n-1}$ satisfies the technical condition NREC due to the fact the injective radius of unit sphere is $\pi$.
This technical assumption will be removed in Section~\ref{sec:without-NREC} to obtain our main results in full generality.

\subsection{The proof of \eqref{est:dispersive<}}

\begin{proof}[The proof of \eqref{est:dispersive<} in Theorem \ref{thm:dispersive}]
We only consider the first two cases $2^k r_1, 2^k r_2\lesssim 1$ and $2^k r_1\lesssim 1\ll  2^k r_2$ since the third case can be treated similarly.  Due to the compact support of $\varphi$, one has $\lambda\sim 2^k$ on the region we are concerning.
Then we can use the representation of spectral measure in \eqref{Spect<1} to write
\begin{equation*}
\begin{split}
&\int_0^\infty \varphi (2^{-k}\lambda) e^{it\lambda} dE_{\sqrt{\mathcal{L}_{{\A},a}}}(\lambda; x, y)\\
&=\int_0^\infty \varphi (2^{-k}\lambda) e^{it\lambda} \lambda^{n-1}\Big(\sum_{\pm}A_{\pm}(\lambda; x, y) e^{\pm i\lambda \max\{r_1, r_2\}}
+B(\lambda; x, y)\Big)\, d\lambda
\end{split}
\end{equation*}
where $A_\pm$ and $B$ satisfy \eqref{bean-A} and \eqref{bean-B} respectively. For the case $2^k r_1, 2^k r_2\lesssim 1$, $A_\pm$ vanishes, and we perform integration by parts for $N$-times on the $B$-term and use \eqref{bean-B} to obtain (derivatives hitting $\varphi(2^{-k}\lambda)$ create a $2^{-k}$-factor each time, which is comparable to $\lambda$):
\begin{equation} \label{eq:B-term-lambda r1,r2-small}
\begin{split}
&\Big|  \int_0^\infty  e^{it\lambda} \varphi (2^{-k}\lambda) B(\lambda; x, y)  \lambda^{n-1} d\lambda\Big|\\& \leq
C_N|t|^{-N}(2^{2k}r_1r_2)^{\nu_0-\frac{n-2}2}\int_{2^{k-1}}^{2^{k+1}}\lambda^{n-1-N}(1+\lambda
\max\{r_1, r_2\})^{-\frac{n+1}2}d\lambda\\&\leq
C_N2^{k(n-N)}(2^{2k}r_1r_2)^{\nu_0-\frac{n-2}2} |t |^{-N}(1+2^k \max\{r_1, r_2\})^{-\frac{n+1}2}.
\end{split}
\end{equation}
Thus we obtain 
\begin{equation*}
\begin{split}
&\Big|  \int_0^\infty  e^{it\lambda} \varphi (2^{-k}\lambda) B(\lambda; x, y)  \lambda^{n-1} d\lambda\Big| \leq
C_N2^{kn}(2^{2k}r_1r_2)^{\nu_0-\frac{n-2}2} (1+2^k|t|)^{-N},
\end{split}
\end{equation*}
which implies \eqref{est:dispersive<} by choosing $N$ large enough. \vspace{0.2cm}

Now we consider the case that $2^k r_1\lesssim 1\ll  2^k r_2$. For term with $A_\pm$, we perform integration by parts for $N$-times and use \eqref{bean-A} to obtain 
\begin{equation*}
\begin{split}
&\Big|  \int_0^\infty  e^{it\lambda} \varphi (2^{-k}\lambda) A_{\pm}(\lambda; x, y) e^{\pm i\lambda \max\{r_1, r_2\}} \lambda^{n-1} d\lambda\Big|\\&\leq \Big|\int_0^\infty \left(\frac1{
i(t\pm \max\{r_1, r_2\})}\frac\partial{\partial\lambda}\right)^{N}\big(e^{i(t\pm \max\{r_1, r_2\})\lambda}\big)
\varphi (2^{-k}\lambda) A_{\pm}(\lambda; x, y) \lambda^{n-1} d\lambda\Big|\\& \leq
C_N|t\pm  \max\{r_1, r_2\}|^{-N}(2^{k}r_1)^{\nu_0-\frac{n-2}2}\int_{2^{k-1}}^{2^{k+1}}\lambda^{n-1-N}(1+\lambda
\max\{r_1, r_2\})^{-\frac{n-1}2}d\lambda\\&\leq
C_N2^{k(n-N)}(2^{k}r_1)^{\nu_0-\frac{n-2}2} |t\pm \max\{r_1, r_2\} |^{-N}(1+2^k \max\{r_1, r_2\})^{-\frac{n-1}2}.
\end{split}
\end{equation*}
Therefore, it follows that
\begin{equation*}\label{dispersive2}
\begin{split}
&\Big|  \int_0^\infty  e^{it\lambda} \varphi (2^{-k}\lambda) A_{\pm}(\lambda; x, y) e^{\pm i\lambda \max\{r_1, r_2\}} \lambda^{n-1} d\lambda\Big|\\&\leq
C_N2^{kn}(2^{k}r_1)^{\nu_0-\frac{n-2}2} \big(1+2^k|t\pm \max\{r_1, r_2\}|\big)^{-N}(1+2^k \max\{r_1, r_2\})^{-\frac{n-1}2}.
\end{split}
\end{equation*}
If $|t|\sim \max\{r_1, r_2\}$, this proves \eqref{est:dispersive<}.
Otherwise, we have $|t\pm \max\{r_1, r_2\}|\geq c|t|$ for some constant $c$, choosing $N$ large enough gives us \eqref{est:dispersive<}. For the term $B$ in this case, we can do the similar argument as \eqref{eq:B-term-lambda r1,r2-small} to see \eqref{est:dispersive<}.
\end{proof}

\subsection{Preliminaries for \eqref{est:dispersive}}
 
Now we turn to \eqref{est:dispersive}. In this case, we use \eqref{spect} to see that
\begin{equation}
\begin{split}
&\int_0^\infty \varphi (2^{-k}\lambda) e^{it\lambda} dE_{\sqrt{\mathcal{L}_{{\A},a}}}(\lambda; x, y)\\
&= \frac{1}{4\pi^2} (r_1r_2)^{-\frac{n-2}2}\Big(\frac1{\pi}
\int_0^\pi \int_{\R^2} \varphi (2^{-k}|\xi|) e^{i(t|\xi|-{\bf m}_s \cdot\xi)} d\xi  \cos(s\sqrt{P})(\hat{x},\hat{y}) ds
 \\&\qquad\qquad\qquad
-\frac{\sin(\pi\sqrt{P})}{ \pi }\int_0^\infty \int_{\R^2} \varphi (2^{-k}|\xi|) e^{i(t|\xi|-{\bf n}_s \cdot\xi)} d\xi 
\, e^{-s\sqrt{P}}(\hat{x},\hat{y}) ds\Big)\\
&= \frac{2^{2k}}{4\pi^2} (r_1r_2)^{-\frac{n-2}2}\Big(\frac1{\pi}
\int_0^\pi W(2^kt, 2^k{\bf m}_s) \cos(s\sqrt{P})(\hat{x},\hat{y}) ds
 \\&\qquad\qquad\qquad
-\frac{\sin(\pi\sqrt{P})}{ \pi }\int_0^\infty W(2^kt, 2^k{\bf n}_s)
\, e^{-s\sqrt{P}}(\hat{x},\hat{y}) ds\Big).
\end{split}
\end{equation}
where 
\begin{equation}\label{def:W}
\begin{split}
W(t, v)=\int_{\R^2} \varphi (|\xi|) e^{i(t|\xi|- v \cdot\xi)} d\xi ,\quad v \in \R^2.
\end{split}
\end{equation}

To prove \eqref{est:dispersive}, by using the scaling, then it suffices to prove 
\begin{equation}\label{est:dispersive'}
\begin{split}
& (r_1r_2)^{-\frac{n-2}2}  \Big|Q_j\Big(\frac1{\pi}
\int_0^\pi W(t, {\bf m}_s) \cos(s\sqrt{P})(\hat{x},\hat{y}) ds
 \\&\qquad
-\frac{\sin(\pi\sqrt{P})}{ \pi }\int_0^\infty W(t,{\bf n}_s)
\, e^{-s\sqrt{P}}(\hat{x},\hat{y}) ds\Big)Q_j^*\Big|\leq C\big(1+|t|\big)^{-\frac{n-1}2},
\end{split}
\end{equation}
when $r_1, r_2\gg1$.
Using Lemma~\ref{lemma: parametrix 1} and Lemma~\ref{lemma: localized-poisson}, we now decompose the left hand side of \eqref{est:dispersive'} into two terms, the propagation term:
\begin{equation} \label{eq:IP,definition}
I_P(t,x,y) = Q_j(\hat{x})\Big(\frac{(r_1r_2)^{-\frac{n-2}2}}{\pi}\int_0^\pi W(t, {\bf m}_s) K_N(s;\hat{x},\hat{y}) ds \Big)Q_j(\hat{y}), 
\end{equation}
and the residual term:
\begin{align} \label{eq:IR,definition}
\begin{split}
I_R(t,x,y) = & Q_j(\hat{x})\Big(\frac{(r_1r_2)^{-\frac{n-2}2}}{\pi}\int_0^\pi W(t, {\bf m}_s)  R_N(s;\hat{x},\hat{y}) )ds
\\& -(r_1r_2)^{-\frac{n-2}2}\frac{\sin(\pi\sqrt{P})}{\pi}\int_0^\infty  W(t, {\bf n}_s) e^{-s\sqrt{P}} ds \Big)Q_j(\hat{y}).
\end{split}
\end{align}

Therefore, the inequality \eqref{est:dispersive} is proved if we could prove that both $I_P$ and $I_R$ are uniformly bounded by $(1+|t|)^{-\frac{n-1}2}$ up to a constant when $r_1, r_2\gg1$, and this is the content of the rest of this section.

From now on, without loss of generality, we only consider the cases that
\begin{equation}\label{assu:r}
r_1\gg r_2\gg1, \quad \text{or} \quad r_1\sim r_2\gg1.
\end{equation}
The proof is a stationary phase argument applied to those oscillatory integrals. As the phase is different with the case with Schr\"odinger operators in \cite{JZ,JZ2}, the corresponding region around the critical set having the major contribution would be different as well. Consequently, we need a different strategy to decompose the region of integrations.

Before estimating the two terms, we first recall \eqref{bf-m} and observe that
\begin{equation}\label{fact:ms}
|{\bf m}_s|=\sqrt{(r_1-r_2)^2+4r_1r_2\sin^2(\frac{s}2)}\sim 
\begin{cases} 
r_1, \quad &r_1\gg r_2\gg1,
\\  r_2, \quad &r_2\gg r_1\gg1,
\\ r_1\sin(\frac s2), \quad &r_1\sim r_2\gg1.
\end{cases}
\end{equation}

Recalling the definition of ${\bf n}_s$ in \eqref{bf-n}, we know
\begin{equation}\label{fact:ns}
|{\bf n}_s|=\sqrt{r_1^2+r_2^2+2r_1r_2\cosh(s)}\gtrsim r_1+r_2\sim \max\{r_1,r_2\}.
\end{equation}
Additionly, if $s \lesssim 1$, the $\gtrsim$ in \eqref{fact:ns} can be replaced by $\sim$.

 We have the following lemma estimating $W(t, {\bf m}_s),W(t, {\bf n}_s)$ and their derivatives.
\begin{lemma} Let $W(t, {\bf m}_s)$ and $W(t, {\bf n}_s)$ be given by \eqref{def:W} with ${\bf m}_s$ and ${\bf n}_s$ in \eqref{bf-m} and \eqref{bf-n}, respectively. Then, for any integers $m, N, K \geq 0$, there exists a constant $C_{N,K,m}$ such that 
\begin{equation}\label{est:W}
\begin{split}
\Big|\big(\frac{\partial}{\partial s}\big)^m &W(t, {\bf m}_s)\Big|
\leq C_{N,K,m}\big(1+|t\pm |{\bf m}_s||\big)^{-N}\\
&\times\big(1+\frac{|t|}{|{\bf m}_s|}\big)^{-K}(1+|{\bf m}_s|)^{-1/2} \Big( \big(1+\frac{r_1r_2\sin s}{|{\bf m}_s|}\big)^{m} +\big(1 + \frac{r_1r_2}{|{\bf m}_s|}\big)^{m/2} \Big),
\end{split}
\end{equation}
when $r_1, r_2\gg1$ and $s\in[0,\pi]$. Similarly, for $r_1, r_2\gg1$ and $s \in [0,\infty)$, we have
\begin{equation}\label{est:W-n}
\begin{split}
&\Big|\big(\frac{\partial}{\partial s}\big)^m W(t, {\bf n}_s)\Big|\\
&\leq C_{N,m}\big(1+|t\pm |{\bf n}_s||\big)^{-N}(1+|{\bf n}_s|)^{-1/2} \Big( \big(1+\frac{r_1r_2\sinh s}{|{\bf n}_s|}\big)^m + \big(1+\frac{r_1r_2}{|{\bf n}_s|}\big)^{m/2} \Big).
\end{split}
\end{equation}
\end{lemma}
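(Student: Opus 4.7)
The plan is to view $W(t,v)$ as (up to a constant) a frequency-localized wave kernel on $\mathbb{R}^2$ and apply stationary phase in the radial variable $\rho$, combined with the Fa\`a di Bruno chain rule in $s$. First I would pass to polar coordinates $\xi=\rho\omega$, $\omega\in\mathbb{S}^1$, and perform the angular integral using \eqref{a-pm1}, obtaining
\begin{equation*}
W(t,v)=\sum_{\pm}\int_0^\infty \varphi(\rho)\,\mathfrak{a}_\pm(\rho|v|)\,\rho\,e^{i\rho(t\pm|v|)}\,d\rho,
\end{equation*}
where $\mathfrak{a}_\pm$ satisfies the symbol estimates \eqref{bean}. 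This reduces matters to one-dimensional oscillatory integrals with phases $\rho(t\pm|v|)$.

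For the case $m=0$, I would integrate by parts $N$ times in $\rho$ using the operator $(i(t\pm|v|))^{-1}\partial_\rho$; no boundary terms appear since $\varphi\in C_c^\infty((0,\infty))$. When a derivative falls on $\mathfrak{a}_\pm(\rho|v|)$ it pulls out a factor $|v|$, but this is absorbed by the decay $(1+\rho|v|)^{-1/2-k}$ from \eqref{bean}, so that all amplitude derivatives remain controlled by $C_N(1+|v|)^{-1/2}$ uniformly on $\mathrm{supp}\,\varphi$. This gives the principal inequality $|W(t,v)|\le C_N(1+|t\pm|v||)^{-N}(1+|v|)^{-1/2}$. I would extract the additional factor $(1+|t|/|v|)^{-K}$ by running a second IBP scheme using $(it)^{-1}\partial_\rho$ and absorbing $e^{\pm i\rho|v|}$ into the amplitude; each step then costs $t^{-1}$ and differentiates $\mathfrak{a}_\pm(\rho|v|)e^{\pm i\rho|v|}$, whose $\rho$-derivative is $O(|v|)$ on $\mathrm{supp}\,\varphi$. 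Interpolating between the two schemes, i.e.\ distributing $N+K$ integrations by parts between the two IBP operators, produces the joint factor $(1+|t\pm|v||)^{-N}(1+|t|/|v|)^{-K}(1+|v|)^{-1/2}$.

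To treat the $s$-derivatives I would exploit the rotational invariance $W(t,v)=\widetilde W(t,|v|)$ and apply Fa\`a di Bruno: $\partial_s^m \widetilde W(t,|{\bf m}_s|)$ is a sum over $j\le m$ of $(\partial_r^j\widetilde W)(t,|{\bf m}_s|)$ multiplied by Bell polynomials in the $s$-derivatives of $|{\bf m}_s|$. Each radial derivative $\partial_r^j\widetilde W$ obeys the same type of bound as $\widetilde W$ itself (the extra $\rho^j$ produced when differentiating $e^{\pm i\rho|v|}$ is bounded on $\mathrm{supp}\,\varphi$). The derivatives of $|{\bf m}_s|$ are read off from $|{\bf m}_s|^2=r_1^2+r_2^2-2r_1r_2\cos s$: one finds $\partial_s|{\bf m}_s|=r_1r_2\sin s/|{\bf m}_s|$ and, by induction, $|\partial_s^k|{\bf m}_s||$ is bounded by a linear combination of $(r_1r_2\sin s/|{\bf m}_s|)^k$ and $(r_1r_2/|{\bf m}_s|)^{k/2}$. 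Combining these via the Bell polynomial identities assembles precisely into the factor $(1+r_1r_2\sin s/|{\bf m}_s|)^m+(1+r_1r_2/|{\bf m}_s|)^{m/2}$ displayed in \eqref{est:W}.

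The estimate \eqref{est:W-n} for $W(t,{\bf n}_s)$ follows by the same argument with $\cos s,\sin s$ replaced by $\cosh s,\sinh s$ throughout; the supplementary factor $(1+|t|/|{\bf n}_s|)^{-K}$ is absent because $|{\bf n}_s|\ge r_1+r_2\gg 1$ in the regime $r_1,r_2\gg 1$, so that refinement offers no gain and is not needed later. The main technical obstacle I anticipate will be the combinatorial bookkeeping in the $N$-fold integration by parts combined with the Bell polynomial expansion: one must carefully track which derivatives land on $\varphi(\rho)\rho$, on $\mathfrak{a}_\pm(\rho|v|)$, or on $e^{\pm i\rho|v|}$, balance the resulting powers of $|v|$ against the $(1+\rho|v|)^{-1/2-k}$ decay from \eqref{bean}, and verify that the Bell polynomials collapse into exactly the two terms on the right-hand side of \eqref{est:W}.
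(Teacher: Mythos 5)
Your overall route is the same as the paper's: the reduction of $W$ via \eqref{a-pm1} to one--dimensional oscillatory integrals with phases $\rho(t\pm|{\bf m}_s|)$, the two interleaved integration-by-parts schemes in $\rho$ yielding the factors $(1+|t\pm|{\bf m}_s||)^{-N}$, $(1+|t|/|{\bf m}_s|)^{-K}$ and $(1+|{\bf m}_s|)^{-1/2}$, and chain-rule bookkeeping in $s$ (the paper differentiates under the integral sign; your Fa\`a di Bruno formulation of the same computation is only a cosmetic difference). The ${\bf n}_s$ case is handled identically in both.

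There is, however, one concrete step of yours that fails as stated: the induction claim $|\partial_s^k|{\bf m}_s||\lesssim (r_1r_2\sin s/|{\bf m}_s|)^k+(r_1r_2/|{\bf m}_s|)^{k/2}$. It holds for $k=1,2$, but already for $k=3$ it breaks down in the admissible regime $r_1\approx r_2\gg1$, $s\approx |r_1-r_2|/\sqrt{r_1r_2}$, where $|{\bf m}_s|\approx\sqrt2\,|r_1-r_2|$ can be $\ll1$: differentiating $|{\bf m}_s|\,\partial_s^2|{\bf m}_s|=r_1r_2\cos s-(\partial_s|{\bf m}_s|)^2$ gives $\partial_s^3|{\bf m}_s|=-\big(r_1r_2\sin s+3\,\partial_s|{\bf m}_s|\,\partial_s^2|{\bf m}_s|\big)/|{\bf m}_s|$, which there has size $(r_1r_2)^{3/2}/|r_1-r_2|^{2}$, exceeding both $(r_1r_2/|{\bf m}_s|)^{3/2}\sim (r_1r_2)^{3/2}/|r_1-r_2|^{3/2}$ and $(r_1r_2\sin s/|{\bf m}_s|)^3\sim(r_1r_2)^{3/2}$ by a factor $|r_1-r_2|^{-1/2}$. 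Since your only other ingredient is that $\partial_r^j\widetilde W$ obeys the same bounds as $\widetilde W$, the Bell-polynomial assembly then overshoots the right-hand side of \eqref{est:W} in this corner, so the argument does not close there as written. The estimate is rescued by an observation you do not use: $\widetilde W(t,\cdot)$ extends to an even smooth function of the radial variable with derivatives bounded uniformly in $t$, so its odd-order radial derivatives are $O(|{\bf m}_s|)$ near the origin and exactly compensate the blow-up of the odd-order $s$-derivatives of $|{\bf m}_s|$; equivalently, for $|{\bf m}_s|\lesssim1$ run the chain rule in the smooth variable $|{\bf m}_s|^2=(r_1-r_2)^2+4r_1r_2\sin^2(s/2)$, all of whose $s$-derivatives are $O(r_1r_2)$. (To be fair, the paper's own bookkeeping --- terms $(\mathsf{z}\sin s)^k\mathsf{z}^{k'}$ with $\mathsf{z}=r_1r_2/|{\bf m}_s|$ ``modulo uniformly bounded factors'' --- is only airtight when $|{\bf m}_s|\gtrsim1$ for the same reason; likewise, in both your $K$-scheme and the paper's, a $\rho$-derivative landing on the cutoff $\varphi$ produces no factor of $|{\bf m}_s|$, so the per-step gain $|{\bf m}_s|/|t|$ also implicitly requires $|{\bf m}_s|\gtrsim1$, as one sees by letting $|{\bf m}_s|\to0$ at fixed large $t$, where $W(t,0)\neq0$ in general.) In short: same approach as the paper, but you must patch the small-$|{\bf m}_s|$ regime as indicated before the proof is complete.
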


\begin{proof} By the definition of $W(t, {\bf m}_s)$ in \eqref{def:W}, using \eqref{a-pm1}, we integrate over the angle direction (of $\xi$) to obtain:
\begin{equation}\label{def:W'}
W(t, {\bf m}_s)=\sum_{\pm} \int_0^\infty e^{i\lambda(t\pm |{\bf m}_s|)} \mathfrak{a}_{\pm}(\lambda |{\bf m}_s|) \varphi(\lambda) \,\lambda d\lambda
\end{equation}
where $\mathfrak{a}_{\pm}$ satisfies \eqref{bean}. 
Let $\mathsf{z}=\frac{r_1r_2}{|{\bf m}_s|}$. 
When we differentiate in $s$ for $m$-times, the integrand is a sum of terms of the form (modulo uniformly bounded smooth factors)
 \begin{equation} \label{eq:ms-derivative-extra-factors}
  (\mathsf{z} \sin s)^{k} \mathsf{z}^{k'}P(\cos s),
 \end{equation} 
 where $P(\cdot)$ is a polynomial.
Here $k+k'$ is the number of the derivatives falls on $e^{i\lambda |{\bf m}_s|}$. Notice that a $\mathsf{z}$ factor without $\sin s$ paired to it can only arise by differentiating $\sin s$ (or its power), so $\mathsf{z}^{k'}$ has also costed at least $k'$ derivatives on $\sin s$ and we have (one can produce $\sin s$-factors by differentiating $\cos s$, but inequality below still holds)
 \begin{equation}
k+2k' \leq m.
 \end{equation}
So \eqref{eq:ms-derivative-extra-factors} is bounded (up to a constant) by
\begin{equation}
\big(1+\frac{r_1r_2\sin s}{|{\bf m}_s|}\big)^{m} + \big(1 + \frac{r_1r_2}{|{\bf m}_s|}\big)^{m/2},
\end{equation}
and we have
\begin{equation}
\begin{split}
&\Big|\big(\frac{\partial}{\partial s}\big)^m W(t, {\bf m}_s)\Big| \\
&\lesssim \Big( (1+\frac{r_1r_2\sin s}{|{\bf m}_s|})^{m} + (1 + \frac{r_1r_2}{|{\bf m}_s|})^{m/2} \Big) \Big| \int_0^\infty e^{i\lambda(t\pm |{\bf m}_s|)} \mathfrak{a}_{\pm}(\lambda |{\bf m}_s|) \tilde{\varphi}(\lambda) \, d\lambda\Big|,
\end{split}
\end{equation}
where $\mathfrak{a}_\pm$ still satisfies \eqref{bean} and $\tilde{\varphi}(\lambda)=\lambda^{1+m} \varphi(\lambda)$ still is supported $[1,2]$.
We use $N+K$-times integration by parts in $\lambda$, among $N$ times of which we write the integrand as (with potentially different $\mathfrak{a}_\pm$ each time)
\begin{equation*}
 [i(t\pm |{\bf m}_s|)]^{-1} (\partial_{\lambda}e^{i\lambda(t\pm |{\bf m}_s|)} )\mathfrak{a}_\pm(\lambda |{\bf m}_s|) \tilde{\varphi}(\lambda),
\end{equation*}
and for the rest $K$-times (if $t \leq |{\bf m}_s|$, ignore this step but the final inequality \eqref{est:W} with $(1+\frac{|t|}{|{\bf m}_s|}\big)^{-K}$ is still true as that is effectively a constant factor in that case) we write the integrand as
\begin{equation*}
(it)^{-1} (\partial_\lambda e^{i\lambda t}) \big(e^{ \pm i\lambda|{\bf m}_s|} \mathfrak{a}_\pm(\lambda |{\bf m}_s|)\tilde{\varphi}(\lambda)\big).
\end{equation*}
After those steps, we know (with a smooth function $g(\lambda)$):
\begin{equation*}
\begin{split}
&\Big|\big(\frac{\partial}{\partial s}\big)^m W(t, {\bf m}_s)\Big| \\
&\lesssim \Big( (1+\frac{r_1r_2\sin s}{|{\bf m}_s|})^{m} + (1 + \frac{r_1r_2}{|{\bf m}_s|})^{m/2} \Big) |t\pm |{\bf m}_s||^{-N} \min\Big(1, \frac{|{\bf m}_s|}{|t|}\Big)^{K}  \int_1^2 (1+\lambda |{\bf m}_s|)^{-1/2} g(\lambda) d\lambda\\
& \leq C_{N,K, m} \Big( (1+\frac{r_1r_2\sin s}{|{\bf m}_s|})^{m} + (1 + \frac{r_1r_2}{|{\bf m}_s|})^{m/2} \Big)  |t\pm |{\bf m}_s||^{-N}\min\Big(1,  \frac{|{\bf m}_s|}{|t|}\Big)^{K}  (1+|{\bf m}_s|)^{-1/2},
\end{split}
\end{equation*}
which implies \eqref{est:W}.

The estimate \eqref{est:W-n} follows from the same argument by replacing ${\bf m}_s$ by ${\bf n}_s$ with $K=0$.
\end{proof}

\subsection{The proof of \eqref{est:dispersive}}
\label{subsec:main-proof-with-NREC}

Now we estimate the contributions of $I_P$ and $I_R$ to prove \eqref{est:dispersive}.

\begin{proof}[The contribution of $I_R$] 
We first consider the integral of $W(t, {\bf m}_s)  R_N(s; \hat{x}, \hat{y} )$ in $I_R$. 
In this part, $W(t, {\bf m}_s)$ and the prefactor $(r_1r_2)^{-\frac{n-2}2}$ have already offered sufficient decay and there is no singularity of the kernel that we are integrating over.
More concretely, consider the contribution from $R_N$ first. Using \eqref{est:W} with $m=0$, we know
  \begin{equation}\label{est:RN}
\begin{split}
&(r_1r_2)^{-\frac{n-2}2}\Big| \int_0^\pi W(t, {\bf m}_s)  R_N(s; \hat{x}, \hat{y} ) ds\Big| \\
&\lesssim (r_1r_2)^{-\frac{n-2}2}\int_0^\pi \big(1+|t\pm |{\bf m}_s||\big)^{-N}(1+|{\bf m}_s|)^{-1/2} \,ds
  \end{split}
 \end{equation}
due to the fact that  $$|R_N(s,\hat{x}, \hat{y})|\lesssim 1,\quad 0\leq s\leq \pi.$$ 
If either $|t|\ll |{\bf m}_s|$ or $|t|\gg |{\bf m}_s|$,  we have $|t\pm |{\bf m}_s||\geq c|t|$ for some small constant
$c$, and then we choose $N$ large enough to obtain 
\begin{equation*}
\eqref{est:RN}\lesssim (r_1r_2)^{-\frac{n-2}2} (1+|t|)^{-N},
\end{equation*}
which implies the desired $(1+|t|)^{-(n-1)/2}$-bound since $r_1, r_2\gg1$.
Otherwise, $|t|\sim |{\bf m}_s|$, then from \eqref{fact:ms} and \eqref{assu:r}, we see $|t|\sim r_1\gg1$ or $|t|\sim r_1\sin(\frac s2)\lesssim r_1$.
In both cases, we obtain
\begin{equation*}
\eqref{est:RN}\lesssim (r_1r_2)^{-\frac{n-2}2} (1+|t|)^{-1/2}\lesssim (1+|t|)^{-(n-1)/2},
\end{equation*}
which gives the desired $(1+|t|)^{-(n-1)/2}$-bound.

Now we consider the second part of $I_R$ contributed from $W(t, {\bf n}_s)e^{(-s \pm i\pi)\sqrt{P}}$. 
When $|{\bf n}_s| \lesssim \max\{r_1,r_2\}$, and this part can be estimated in the same manner as above, with the only difference being that we instead use \eqref{est:W-n} to estimate the $W(s,{\bf n}_s)$-factor and use Lemma~\ref{lemma: localized-poisson} to show that the kernel after localization is bounded. More concretely, we have
\begin{align}\label{est:IR}
\begin{split}
& \Big| Q_j(\hat{x})\Big(
(r_1r_2)^{-\frac{n-2}2}\frac{\sin(\pi\sqrt{P})}{\pi}\int_0^\infty  W(t, {\bf n}_s) e^{-s\sqrt{P}} ds \Big)Q_j(\hat{y})\Big|\\
&\lesssim (r_1r_2)^{-\frac{n-2}2} \int_0^\infty \big(1+|t\pm |{\bf n}_s||\big)^{-N} (1+|{\bf n}_s|)^{-1/2}\, ds.
\end{split}
\end{align}
Also, on the part of $s$ such that $|t \pm |{\bf n}_s|| \geq c|t|$ for a constant $c$, then the same argument as in the previous case will give the desired bound
\begin{align*}
\begin{split}
& \eqref{est:IR}
\lesssim (r_1r_2)^{-\frac{n-2}2} \big(1+|t|\big)^{-N}\int_0^\infty (1+e^{\frac s2})^{-1/2} \, ds\lesssim  (1+|t|)^{-N}.
\end{split}
\end{align*}
On the part of $s$ such that $|{\bf n}_s| \sim |t|$ and $|{\bf n}_s| \lesssim \max\{r_1,r_2\}$, we obtain
\begin{align*}
\begin{split}
 \eqref{est:IR}
&\lesssim (r_1r_2)^{-\frac{n-2}2} \int_{\{s\in[0,+\infty): |{\bf n}_s| \lesssim \max\{r_1,r_2\}\}} (r_1r_2)^{-1/4}  e^{-\frac s4} \, ds\\
&\lesssim  (r_1r_2)^{-\frac{n-2}2} \max\{r_1,r_2\}^{-1/2}\lesssim (1+|t|)^{-(n-1)/2}.
\end{split}
\end{align*}

So the only part of the contribution we need to discuss is the part $|{\bf n}_s| \sim |t|$ and in addition $|{\bf n}_s| \sim e^{s/2}(r_1r_2)^{1/2} \gg \max\{r_1,r_2\}$, which means $|t| \gg \max\{r_1,r_2\}$ and the $(r_1r_2)^{-(n-2)/2}$-factor can't offer us $|t|$-decay anymore. In addition, we must have $s \gg 1$ on this part and we only need to estimate:
\begin{equation} 
\begin{split}
\mathsf{K}(t; x, y):= (r_1r_2)^{-\frac{n-2}2}\int_0^\infty \chi(s)  W(t, {\bf n}_s) e^{-(s\pm i\pi)\sqrt{P}} ds,
\end{split}
\end{equation}
with $t \gg \max\{r_1,r_2\}$ and $\chi(s)$ is smooth with sufficiently many ($\frac{n-2}{2}$ is sufficient) uniformly bounded derivatives, supported in $\{s\geq1: ||{\bf n}_s| -t| \leq c|t|\}$ and identically one in a slightly smaller interval. 


For this part, instead of using the parametrix, we use the precise formula of the kernel from \eqref{FA} directly to write
\begin{equation} \label{eq:Poisson-Y-K1-K2-decomposition}
\begin{split}
&e^{-(s\pm i\pi)\sqrt{P}}=\sum_{k\in\mathbb{N}}e^{-(s\pm i\pi)\nu_k}\psi_{k}(\hat{x})\overline{\psi_{k}(\hat{y})} = K_{1}+K_2,
\end{split}
\end{equation}
where 
\begin{equation*}
\begin{split}
K_1(s;\hat{x},\hat{y}) = & \sum_{ \nu_k \leq \frac{n-2}{2} }   e^{-(s\pm i\pi)\nu_k}\psi_{k}(\hat{x})\overline{\psi_{k}(\hat{y})},\\
K_2(s;\hat{x},\hat{y}) = & \sum_{ \nu_k > \frac{n-2}{2} }   e^{-(s\pm i\pi)\nu_k}\psi_{k}(\hat{x})\overline{\psi_{k}(\hat{y})}.
\end{split}
\end{equation*}
And we have
\begin{equation} \label{eq:Poisson-X-K1-K2-decomposition}
\mathsf{K}(t; x, y) = \mathsf{K}_1(t;x,y)+\mathsf{K}_2(t;x,y),
\end{equation}
where
\begin{equation}
\mathsf{K}_i(t;x,y) = (r_1r_2)^{-\frac{n-2}2}\int_0^\infty \chi(s)  W(t, {\bf n}_s) K_i(s;\hat{x},\hat{y}) ds, \quad i = 1,2.
\end{equation}

As we have $s>0$ in this case, the growth of $\nu_k$ (due to the Weyl's law) leads to exponential decay.
The $\psi_k$-factors can be controlled by the estimate in \eqref{eq:eigen-L-infinity}: $\| \psi_k \|_{L^\infty}\leq C \nu_k^{(n-1)}$.
So we have
\begin{equation*}
\begin{split}
\Big|K_2(s;\hat{x},\hat{y}) \Big| \leq & \sum_{ \nu_k > \frac{n-2}{2} } e^{-\frac{s\nu_k}2}\nu_k^{2(n-1)}e^{-s(\frac{n-2}{4})} 
\lesssim e^{-s(\frac{n-2}{4})}.
\end{split}
\end{equation*}
As we have $|{\bf n}_s| \sim (r_1r_2)^{1/2}e^{s/2} \sim |t| $ on the region that we are concerning, this in turn gives (combining with \eqref{est:W-n})
\begin{equation*}
\begin{split}
|\mathsf{K}_2(t;x,y)| \lesssim  (r_1r_2)^{-\frac{n-2}2}\int_{ ||{\bf n}_s| -t| \leq c|t| } ( 1+|{\bf n}_s|)^{-1/2}  e^{-s(\frac{n-2}{4})} ds  \lesssim (1+|t|)^{- \frac{n-1}{2} }.
\end{split}
\end{equation*}
Here the length of the interval of integral in $s$ is $O(1)$.

Now we turn to $\mathsf{K}_1(t;x,y)$. For this term, we have to gain $e^{-\frac{s}{2}}$ by integration by parts. The fortunate aspect is now we are only dealing with a finite number of terms and we can consider the contribution from each eigen-space individually.
For each $k$ such that $\nu_k \leq \kappa$, we need to estimate (recall \eqref{def:W}):
\begin{equation*}
\begin{split}
\psi_{k}(\hat{x})\overline{\psi_{k}(\hat{y})} \int_0^\infty \int_{\R^2} \varphi(|\xi|) \chi(s) e^{i(t\pm|{\bf n}_s|)|\xi|}\, d\xi e^{-(s\pm i\pi)\nu_k}\, ds,
\end{split}
\end{equation*}
where $\mathrm{supp} \varphi \subset [1,2]$.
We will estimate this by integration by parts in $s$ and the integrand is controlled using the Lemma below.

\begin{lemma} \label{lemma: 1/ns derivative bound}
Let $|{\bf n}_s|$ be in \eqref{bf-n}, when $|{\bf n}_s| \sim (r_1r_2e^{s})^{1/2} \gg \max\{r_1,r_2\} \gg 1$, for any $k,j \in \mathbb{N}$, we have
\begin{equation} \label{eq: ns derivative bound}
\Big(\frac{\partial}{\partial s}\Big)^j|{\bf n}_s| \lesssim_j   (r_1r_2e^{s})^{1/2} \sim |{\bf n}_s|,
\end{equation}
and
\begin{equation}
\Big|\frac{\partial^k}{\partial s^k}\big( \frac{1}{|{\bf n}_s|}\big)\Big| \lesssim_k \frac{1}{|{\bf n}_s|}. 
\end{equation}

\end{lemma}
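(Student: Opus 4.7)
\medskip

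\noindent\textbf{Proof proposal.} The plan is to reduce both bounds to a single asymptotic statement for the squared quantity
\[
u_s := |{\bf n}_s|^2 = r_1^2 + r_2^2 + 2 r_1 r_2 \cosh s,
\]
and then apply Fa\`a di Bruno's formula to recover $|{\bf n}_s|$ and $1/|{\bf n}_s|$.

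First I would unpack the regime assumption. The hypothesis $|{\bf n}_s| \gg \max\{r_1,r_2\}$ forces $2r_1r_2 \cosh s \gg r_1^2+r_2^2$, so in particular $\cosh s \gg 1$, hence $s \gg 1$, and consequently $\sinh s \sim \cosh s \sim e^s/2$. This gives both $u_s \sim r_1 r_2 e^s$ (which is just the claim $|{\bf n}_s| \sim (r_1r_2 e^s)^{1/2}$ already stated), and the key derivative estimate
\[
u_s^{(j)} = 2 r_1 r_2 \cdot \begin{cases} \cosh s, & j \text{ even}, \; j \geq 2,\\ \sinh s, & j \text{ odd}, \; j \geq 1,\end{cases}
\]
so $|u_s^{(j)}| \lesssim r_1 r_2 e^s \sim u_s$ for every $j \geq 1$. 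The bound \eqref{eq: ns derivative bound} for $j=1$ follows at once from $\partial_s |{\bf n}_s| = r_1 r_2 \sinh s / |{\bf n}_s| \sim u_s / u_s^{1/2} = |{\bf n}_s|$.

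For general $j$, I would apply Fa\`a di Bruno's formula to $f(u) = u^{\alpha}$ with $\alpha = 1/2$ (for $|{\bf n}_s|$) and $\alpha = -1/2$ (for $1/|{\bf n}_s|$):
\[
\partial_s^{j} u_s^{\alpha} \;=\; \sum_{\pi} c_{\pi}\, u_s^{\alpha - |\pi|} \prod_{B \in \pi} u_s^{(|B|)},
\]
where the sum runs over set partitions $\pi$ of $\{1,\dots,j\}$ and $|\pi|$ is the number of blocks. Using $|u_s^{(|B|)}| \lesssim u_s$ for each block and $u_s \sim |{\bf n}_s|^2$, each term is bounded by a constant multiple of
\[
u_s^{\alpha - |\pi|} \cdot u_s^{|\pi|} \;=\; u_s^{\alpha} \;=\; |{\bf n}_s|^{2\alpha}.
\]
Specializing $\alpha = 1/2$ gives $|\partial_s^j |{\bf n}_s|| \lesssim_j |{\bf n}_s|$, i.e.\ \eqref{eq: ns derivative bound}, and $\alpha = -1/2$ gives $|\partial_s^k (1/|{\bf n}_s|)| \lesssim_k 1/|{\bf n}_s|$, which is the second assertion.

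The argument is essentially bookkeeping; the only nontrivial point is recognizing that the regime hypothesis $|{\bf n}_s|\gg\max\{r_1,r_2\}$ propagates through differentiation because it forces $\cosh s$ and $\sinh s$ to be comparable to each other (and to $e^s$), so that every derivative of $u_s$ stays comparable to $u_s$ itself. No finer stationary-phase or cancellation structure is needed.
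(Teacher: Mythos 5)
Your proposal is correct. The key estimate is the same in substance as the paper's: every $s$-derivative of $u_s=r_1^2+r_2^2+2r_1r_2\cosh s$ is $2r_1r_2\sinh s$ or $2r_1r_2\cosh s$, hence bounded by $u_s$ itself (note this holds unconditionally, since $2r_1r_2\cosh s\le u_s$ and $\sinh s\le\cosh s$; the regime hypothesis is only needed for the comparability $|{\bf n}_s|\sim(r_1r_2e^s)^{1/2}$, which is part of the statement). Where you differ from the paper is in the bookkeeping: the paper proves by induction an explicit structural formula for $\partial_s^j|{\bf n}_s|$ as a sum $\sum_l A_{jl}(r_1r_2,r_1^2+r_2^2)e^{ls}$ over $|{\bf n}_s|^{2j-1}$ (its \eqref{eq:ns,derivative}), bounds numerator against denominator, and then handles $1/|{\bf n}_s|$ by a separate chain-rule argument writing $\partial_s^k(1/|{\bf n}_s|)=P_k/|{\bf n}_s|^{k+1}$; you instead apply Fa\`a di Bruno once to $u_s^{\alpha}$ with $\alpha=\pm\tfrac12$, which treats $|{\bf n}_s|$ and $1/|{\bf n}_s|$ uniformly and avoids guessing and verifying the inductive form. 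Your route is a bit cleaner and more systematic (each Fa\`a di Bruno term is $u_s^{\alpha-|\pi|}\prod_B u_s^{(|B|)}\lesssim u_s^{\alpha}$, with $u_s>0$ so the power function is smooth on the relevant range); the paper's route has the minor advantage of exhibiting the explicit exponential structure of the derivatives, but that structure is not needed for the stated bounds. No gap.
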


\begin{proof}

A direct computation gives
\begin{equation}
\frac{\partial }{\partial s}|{\bf n}_s| =  \frac{r_1r_2\sinh s}{ (r_1^2+r_2^2+2r_1r_2 \cosh s)^{1/2}}\lesssim |{\bf n}_s|.
\end{equation}
And  
\begin{equation}
\frac{\partial ^2}{\partial s^2}|{\bf n}_s| =  \frac{(r_1 r_2 (2 (r_1^2 + r_2^2) \cosh s + r_1 r_2 (3 + \cosh(2s)))} {(2 (r_1^2 + 
   r_2^2 + 2 r_1 r_2 \cosh s)^{3/2})}\lesssim |{\bf n}_s|.
\end{equation}
We can inductively prove that $(\frac{\partial ^j}{\partial s})^j|{\bf n}_s|$ takes the form:
\begin{equation} \label{eq:ns,derivative}
(\frac{\partial}{\partial s})^j|{\bf n}_s| = \frac{ \sum_{l=-j}^j A_{jl}(r_1r_2,r_1^2+r_2^2) e^{ls} }{ |{\bf n}_s|^{2j-1} },
\end{equation}
where $A_{jl}$ is a polynomial of degree at most $j$ which is $(j-|l|)$-degree in $(r_1^2+r_2^2)$ and $|l|$-degree in $r_1r_2$. Here $(r_1^2+r_2^2)$-factors arise due to we want to combine terms having denominators with different powers of $|{ \bf n}_s|$.
Since both $(r_1^2+r_2^2)$ and $r_1r_2e^{\pm s}$ can be bounded by $|{ {\bf n} }_s|^2$, this gives us \eqref{eq: ns derivative bound}.

Notice that $(\frac{\partial}{\partial s})^k( \frac{1}{|{\bf n}_s|}) =  \frac{P_k}{|{\bf n}_s|^{k+1} } $, where $P_k$ is a polynomial of derivatives of $|{\bf n}_s|$ of degree at most $k$. Thus we can conclude the proof using \eqref{eq: ns derivative bound} and $|{\bf n}_s| \geq (r_1r_2e^{s})^{1/2}$.
\end{proof}

Now we turn to consider the integral
  \begin{equation*}
 \int_0^\infty e^{\pm i|{\bf n}_s||\xi|}e^{-s\nu_k}\chi(s)\, ds 
 = \int_0^\infty \frac{\partial}{\partial s}\big(e^{\pm i|{\bf n}_s||\xi|}\big)
 ((\pm i|\xi|\frac{\partial}{\partial s}|{\bf n}_s|)^{-1} e^{-s\nu_k}\chi(s)) \, ds.
\end{equation*}
Integrating by parts gives
\begin{equation*}
 \mp \int_0^\infty e^{\pm i|{\bf n}_s||\xi|}\frac{\partial}{\partial s}\Big(\big( i|\xi|\frac{d|{\bf n}_s|}{ds}\big)^{-1}e^{-s\nu_k}\chi(s)\Big)\, ds.
\end{equation*}

Since $|\xi|\sim 1$ by the support condition of $\varphi$ and we only concern those $\nu_k$ that are uniformly bounded by an absolute constant, we will ignore those factors when we discuss the upper bound of the integrand below.
Integrating by parts in $s$ for $K$-times, the integrand becomes a sum of terms of the form (modulo uniformly bounded coefficients)
\begin{equation*}
   (\frac{\partial}{\partial s}|{\bf n}_s|)^{-K}F(s),
\end{equation*}
where $F(s)$ is a product of $\frac{ \partial^k_s |{\bf n}_s| }{|{\bf n}_s|}  ,e^{-s\nu_k},\chi_A(s)$ and their derivatives. Factors like $\frac{ \partial^k_s |{\bf n}_s| }{|{\bf n}_s|}$ are bounded using Lemma~\ref{lemma: 1/ns derivative bound} and the rest is straightforward. In sum, the integrand is bounded by
\begin{equation*}
(r_1r_2e^{s})^{-K/2}e^{-s\nu_k}.
\end{equation*}

Combining this with \eqref{est:W-n} ($m=0$) and \eqref{fact:ns}, we obtain
\begin{equation*}
\begin{split}
&|\mathsf{K}_1(t; x, y)| \lesssim  (r_1r_2)^{-\frac{n-2}2}\sum_{\{k \in\mathbb{N}: \; \nu_k\leq \frac{n-2}{2} \}}\nu_k^K \int_0^\infty  (1+|{\bf n}_s|)^{-1/2}\chi(s) (r_1r_2e^s)^{-K/2} e^{-s\nu_k} ds\\
&\lesssim  (1+|t|)^{-\frac{n-1}2}\int_{||{\bf n}_s|-t| \leq c|t|} (1+|{\bf n}_s|)^{\frac{n-2}2} (r_1r_2e^s)^{-K/2} e^{-s\nu_0} ds\lesssim (1+|t|)^{-\frac{n-1}2},
\end{split}
\end{equation*}
if $K$ is larger than $n-2$.

In summary, we have shown that all parts of $I_R$ are uniformly bounded by $O((1+|t|)^{-\frac{n-1}2})$ when $r_1, r_2 \gg 1$, concluding the proof.

\end{proof}

\begin{proof}[The contribution of $I_P$] 
Next we recall \eqref{eq:IP,definition} and consider the term associated to $K_N(s; \hat{x}, \hat{y})$.
For the rest of this section, we only consider the term in \eqref{KN1} that is associated to $\mk{d} = d_h(\hat{x},\hat{y})$ for notational convenience. Terms associated to all other $\mk{d}$ can be bounded by the same proof. In fact the proof is simpler for $\mk{d} \neq d_h(\hat{x},\hat{y})$ since it will have an absolute lower bound, and we don't need to consider cases like $\mk{d}(\hat{x},\hat{y}) \lesssim z^{-1/2}$ where $z$ is given below.

Recall \eqref{KN1}, we only need to estimate 
\begin{equation}\label{osi-1}
\begin{split}
&(r_1r_2)^{-\frac{n-2}2}  \Big|\int_0^\pi W(t, {\bf m}_s) \int_{0}^\infty b_\pm(\rho d_h) e^{\pm i \rho d_h} a(s, \hat{x}, \hat{y}; \rho) \cos(s \rho) \rho^{n-2}d\rho \, ds\Big| \\ & \lesssim (1+|t|)^{-(n-1)/2},
 \end{split}
 \end{equation}
 where $a(s, \hat{x}, \hat{y}; \rho)$ and $b_\pm(\rho d_h)$ satisfy \eqref{a} and \eqref{b+-} respectively, and $a$ is supported in $\{ s \leq \pi - \frac{\delta_0}{2} \}$.

The high level idea of the proof is using various dyadic decompositions in different regimes around the propagating Lagrangian (the analogue of $\mathscr{L}_\pm$ in \eqref{eq: propagating lagrangians} over $\mathbb{R}_t \times X \times X$).  
To this end , let us fix a bump function
$\beta\in C^\infty_0((1/2,2))$ satisfying
\begin{equation}\label{beta-d}
\sum_{\ell=-\infty}^\infty \beta(2^{-\ell} s)=1, \quad s>0,
\end{equation}
and we set
$$\beta_{J}(s)=\sum_{\ell\le J} \beta(2^{-\ell}s)
\in C^\infty_0((0,2^{J+1})),$$
for $J \in \mathbb{N}$ to be determined. \vspace{0.1cm}

To prove \eqref{osi-1}, we divide into two parts considering $|t|\geq 1$ and $|t|\leq 1$, respectively. \vspace{0.05cm}

\textbf{Part 1.} We first consider $|t|\geq 1$ and set  $z:=\frac{r_1r_2}{|t|}$, and then we further consider two cases depending on how large $z^{1/2}d_h$ is.

 \textbf{Case 1}. $d_h(\hat{x}, \hat{y})\leq C_1 z^{-\frac12}$.   In this case, we take $J$ large enough so that $2^{J-1}\geq 2C_1$ and we want to show that 
 \begin{equation}\label{osi-1<b}
 \begin{split}
z^{-\frac{n-2}2}
&\Big|\int_0^\pi W(t, {\bf m}_s) \Big(\beta_{J}(z^{1/2} s)+\sum_{j\ge J+1}\beta(2^{-j}z^{1/2}s)\Big) \\
&\times \int_{0}^\infty b_\pm(\rho d_h) e^{\pm i \rho d_h} a(s, \hat{x}, \hat{y}; \rho) \cos(s \rho) \rho^{n-2}d\rho\, ds \Big|\lesssim (1+|t|)^{-1/2}.
\end{split}
 \end{equation}
 For the term associated with $\beta_J$, we have $|s|\lesssim z^{-\frac12}\ll 1$ due to the compact support of $\beta_J$. If we also have $\rho\le 4z^{1/2}$, since \eqref{est:W} with $K=m=0$, thus the integral in \eqref{osi-1<b} with $\beta_J$ is always bounded by 
  \begin{equation}\label{rho-l}
 \begin{split}
& z^{-\frac{n-2}2}
\int_{|s|\lesssim z^{-\frac12}} \big(1+|t\pm |{\bf m}_s||\big)^{-N}(1+|{\bf m}_s|)^{-1/2} ds
\int_{\rho\leq 4z^{\frac12}} \rho^{n-2}d\rho.
\end{split}
 \end{equation}
 If either $|t|\ll |{\bf m}_s|$ or $|t|\gg |{\bf m}_s|$,  we have $|t\pm |{\bf m}_s||\geq c|t|$ for some constant $c$, and then we choose $N$ large enough to prove 
\begin{equation*}
\eqref{rho-l}\lesssim z^{-\frac{n-2}2}  z^{-1/2} z^{\frac{n-1}{2}} (1+|t|)^{-N}\lesssim  (1+|t|)^{-N},
\end{equation*}
which implies the $(1+|t|)^{-(n-1)/2}$-bound.
Otherwise, we have $|t| \sim |{\bf m}_s|$ and we obtain
\begin{equation*}
\eqref{rho-l} \lesssim (z)^{-\frac{n-2}2} z^{-1/2} z^{\frac{n-1}{2}} (1+|t|)^{-1/2}\lesssim (1+|t|)^{-1/2},
\end{equation*}
which proves \eqref{osi-1<b} and in turn gives \eqref{est:dispersive'} again. 
 
On the other hand, for the part $\rho\ge 4z^{1/2}$, we do integration by parts in $s$. Notice that the terms at the boundary $(s=0, \pi)$ vanish, then each time we gain a factor of $\rho^{-1}$ from the function $\cos(s \rho)$. Next we consider factors introduced by differentiating other factors of the integrand. 
Suppose we integrate by parts for $m$ times. 
When the derivative hits $a(s,\hat{x},\hat{y},\rho)$, it has the same symbol order before $s$-differentiation.
When the derivative hits $\beta_J(z^{1/2}s)$, it produces a $z^{1/2}$-factor. When the derivative hits $W(t,{\bf m}_s)$, then it can be estimated by \eqref{est:W}. A further observation we need in the current circumstance is that 
\begin{equation} \label{est:r1r2s/ms}
\frac{r_1r_2\sin s}{|{\bf m}_s|}=\Big(\frac{|t|}{|{\bf m}_s|}\Big)z\sin s \lesssim \frac{|t|}{|{\bf m}_s|}z^{1/2},
\end{equation}
since $s \lesssim z^{-1/2}$ on the support of $\beta_J(z^{1/2}s)$. 


Therefore, taking $K=m$ in \eqref{est:W},  we have
 \begin{equation} \label{eq:derivative-Wms-small-s}
 \begin{split} \Big|\Big(\frac{\partial}{\partial s}\Big)^m\Big(W(t, {\bf m}_s)  \beta_{J}(z^{1/2} s)\Big)\Big|
\leq C_{m,N} z^\frac{m}2\big(1+|t\pm |{\bf m}_s||\big)^{-N}(1+|{\bf m}_s|)^{-1/2}.
 \end{split}
 \end{equation}
The same type estimates for $\Big(\frac{\partial}{\partial s}\Big)^k\Big(W(t, {\bf m}_s)  \beta_{J}(z^{1/2} s)\Big)$ with $k \leq m$ also holds, with $z^{k/2}$ on the right hand side and can be absorbed into this leading order term. For the rest of the paper, we only discuss this leading part in the proof.

So, after integration by parts $m$ times for $m \ge n$, the integral in \eqref{osi-1<b} is bounded by 
 \begin{equation}\label{osi-1<b'}
 \begin{split}
 z^{-\frac{n-2}2} z^{\frac m2}&\int_{|s|\lesssim z^{-\frac12}} \big(1+|t\pm |{\bf m}_s||\big)^{-N}(1+|{\bf m}_s|)^{-1/2} ds  \int_{4z^{1/2}}^\infty (1+\rho d_h)^{-\frac{n-2}2}\rho^{n-2-m}d\rho.
 \end{split}
 \end{equation}
  If either $|t|\ll |{\bf m}_s|$ or $|t|\gg |{\bf m}_s|$,  we have $|t\pm |{\bf m}_s||\geq c|t|$ for some small constant
$c$, and then we choose $m=K$ and $N$ large enough to prove 
\begin{equation*}
\eqref{osi-1<b'}\lesssim z^{-\frac{n-2}2}  z^{-1/2} z^{\frac m2} z^{\frac{n-1-m}{2}} (1+|t|)^{-N}\lesssim  (1+|t|)^{-N},
\end{equation*}
which implies \eqref{est:dispersive'}.
Otherwise, $|t|\sim |{\bf m}_s|$,  we obtain
\begin{equation*}
\eqref{osi-1<b'}\lesssim z^{-\frac{n-2}2} z^{-1/2} z^{\frac m2} z^{\frac{n-1-m}{2}} (1+|t|)^{-1/2}\lesssim (1+|t|)^{-1/2}.
\end{equation*}
 In sum, we have proved
\begin{equation}
\begin{split}
&z^{-\frac{n-2}2}
\Big|\int_0^\pi W(t, {\bf m}_s)  \beta_{J}(z^{1/2} s)\\
&\times \int_{0}^\infty b_\pm(\rho d_h) e^{\pm i \rho d_h} a(s, \hat{x}, \hat{y}; \rho) \cos(s \rho) \rho^{n-2}d\rho ds\Big| \lesssim (1+|t|)^{-1/2}.
\end{split}
\end{equation}

 For the terms with $\beta(2^{-j}z^{1/2}s), j \geq J+1$, we have $ 2^{j-1}z^{-1/2} \leq s \leq 2^{j+1}z^{-1/2}$ and $2^j\lesssim z^{1/2}$ on the support of this $\beta-$factor.  In this case, we will show that 
   \begin{equation}\label{beta-j-case1}
 \begin{split}
 &z^{-\frac{n-2}2}
\Big|\int_0^\pi W(t, {\bf m}_s) \beta(2^{-j}z^{1/2}s)\\
&\times \int_{0}^\infty b_\pm(\rho d_h) e^{\pm i \rho d_h} a(s, \hat{x}, \hat{y}; \rho) \cos(s \rho) \rho^{n-2}d\rho ds\Big| \lesssim 2^{-j(n-2)}(1+|t|)^{-1/2},
\end{split}
 \end{equation}
which would give us desired bounds after summing over $j$ when $n\geq3$.
 For the contribution from the part $\rho\le 2^{-j}z^{1/2}$, we do not do any integration by parts and the integral in \eqref{beta-j-case1} is always bounded by 
 \begin{equation*} 
 \begin{split}
& z^{-\frac{n-2}2} \int_{|s|\sim z^{-\frac12}2^j} \big(1+|t\pm |{\bf m}_s||\big)^{-N}(1+|{\bf m}_s|)^{-1/2} ds\, (2^{-j}z^{\frac{1}{2}})^{n-1}\\
 &\lesssim z^{-\frac{n-2}2} (z^{-\frac12}2^j) (2^{-j}z^{\frac{1}{2}})^{n-1} \big((1+|t|)^{-1/2} +(1+|t|)^{-N}\big)
\\ & \lesssim 2^{-j(n-2)}(1+|t|)^{-1/2}.
 \end{split}
 \end{equation*}

On the other hand, if we have $\rho\ge 2^{-j}z^{1/2}$, we write $\cos(s \rho)=\frac12\big(e^{is\rho}+e^{-is\rho}\big)$, then we do integration by parts in $d\rho$ instead\footnote{To rigorously justify the argument near the boundary at $\rho=+\infty$, one may further introduce a dyadic decomposition in $\rho$ to localize the analysis. The boundary term at $\rho=+\infty$ can be dropped since this equality is interpreted as for oscillatory integrals and one only need to pair with functions with sufficient decay in $\rho$.}, 
then each time we gain a factor of $\rho^{-1}$, and we at most lose a factor of $(s \pm d_h)^{-1}$. Recalling that $J$ is large enough so that $2^{J-2}$ is larger than $C_1$, then we have
\begin{equation*}
|s\pm d_h|^{-1}\lesssim \Big((2^{j-1}-C_1)z^{-\frac12}\Big)^{-1}\lesssim \Big((2^{j-2}+2^{J-2}-C_1)z^{-\frac12}\Big)^{-1}\sim 2^{-j}z^{\frac12}.
\end{equation*} 
So after integration by parts $m$ times for $m \geq n$, the integral in \eqref{beta-j-case1} is bounded by 
\begin{equation}\nonumber
 \begin{split}
& z^{-\frac{n-2}2} (z^{-\frac12}2^j)  \big(2^{-j}z^{\frac12}\big)^{m} \big((1+|t|)^{-1/2} +(1+|t|)^{-N}\big) \int_{2^{-j}z^{1/2}}^\infty \rho^{n-2-m}d\rho\\
&\lesssim 2^{-j(n-2)}(1+|t|)^{-1/2}.
\end{split}
  \end{equation}
 \bigskip
 
\textbf{Case 2}. $d_h(\hat{x}, \hat{y})\geq C_1 z^{-\frac12}$. 
In this case, taking $J=0$, we will show that 
 \begin{equation}\label{osi-1>b}
 \begin{split}
&z^{-\frac{n-2}2}
\Big|\int_0^\pi W(t, {\bf m}_s) \Big(\beta_{0}(zd_h |s-d_h|)+\sum_{j\ge 1}\beta(2^{-j}zd_h |s-d_h|)\Big) \\
&\times \int_{0}^\infty b_\pm(\rho d_h) e^{\pm i \rho d_h} a(s, \hat{x}, \hat{y}; \rho) \cos(s \rho) \rho^{n-2}d\rho\, ds \Big|\lesssim (1+|t|)^{-1/2},
\end{split}
 \end{equation}
where $\beta_0$ and $\beta$ are same to the above ones \eqref{beta-d}.

For the term associated with $\beta_0$, we have $|s-d_h|\leq (zd_h)^{-1}\lesssim z^{-\frac12}$ due to the compact support of $\beta_0$. 
For the $\rho$-integral, we first consider the part with $\rho\le zd_h$. By \eqref{est:W} with $m=0$, the integral in \eqref{osi-1>b} with $\beta_0$ is always bounded by 
  \begin{equation}
 \begin{split}
& z^{-\frac{n-2}2}
\int_{|s-d_h|\lesssim (zd_h)^{-1}}\big(1+|t\pm |{\bf m}_s||\big)^{-N}(1+|{\bf m}_s|)^{-1/2}   ds
\int_{\rho\leq z d_h} (1+\rho d_h)^{-\frac{n-2}2} \rho^{n-2}d\rho 
\\&\lesssim z^{-\frac{n-2}2} (zd_h)^{-1} (zd_h)^{\frac{n-2}{2}+1} d_h^{-\frac{n-2}2}\big((1+|t|)^{-1/2} +(1+|t|)^{-N}\big) \lesssim (1+|t|)^{-\frac12}.
\end{split}
 \end{equation}
 On the other hand, if we have $\rho\ge z d_h$, we do integration by parts in $ds$. Due to the support of $a(s,\hat{x},\hat{y},\rho)$, the term at the boundary $s=\pi$ still vanishes. 
 While at $s=0$, the boundary term also vanishes because on the support of $\beta_0(zd_h|s-d_h|)$, one has $|s-d_h|\leq 2(z d_h)^{-1}\leq  2C_1^{-1} z^{-1/2}$ which implies $s\geq C_1\big(1- 2C_1^{-2}\big)z^{-1/2}>0$ if $C_1>2$.
So each time we gain a factor of $\rho^{-1}$ from the function $\cos(s \rho)$. 
To estimate the factor introduced by differentiating in $s$, we use \eqref{est:W} with $K=m$ and the same proof of \eqref{eq:derivative-Wms-small-s}, with the only difference being that now we instead control $z \sin s$ using
\begin{equation*}
z\sin s\lesssim z (d_h+z^{-\frac12})\lesssim z d_h,
\end{equation*}
and the factor introduced when differentiating $\beta_0$-term is also $zd_h$ now. In sum, we have
 $$\Big|\Big(\frac{\partial}{\partial s}\Big)^m\Big(W(t, {\bf m}_s)\beta_{0}(z d_h |s-d_h|)\Big)\Big|\leq C_{m,N} (z d_h)^m\big(1+|t\pm |{\bf m}_s||\big)^{-N}(1+|{\bf m}_s|)^{-1/2}.$$
So after integration by parts $m$ times for $m>n/2$, 
  the integral in \eqref{osi-1>b} is bounded by 
  \begin{equation*}
 \begin{split}
& z^{-\frac{n-2}2} \int_{|s-d_h|\lesssim (zd_h)^{-1}}\big(1+|t\pm |{\bf m}_s||\big)^{-N}(1+|{\bf m}_s|)^{-1/2}   ds  (z d_h)^{m}  d_h^{-\frac{n-2}2}\int_{z d_h}^\infty \rho^{\frac{n-2}2-m}d\rho
 \\&\lesssim z^{-\frac{n-2}2} (zd_h)^{-1}(z d_h)^{m}  d_h^{-\frac{n-2}2} (z d_h)^{\frac{n}2-m}\big((1+|t|)^{-1/2} +(1+|t|)^{-N}\big)\\
 &  \lesssim (1+|t|)^{-\frac{1}2}.
 \end{split}
 \end{equation*}
In sum, we have proved
 \begin{equation}
 \begin{split}
&z^{-\frac{n-2}2}
\Big|\int_0^\pi W(t, {\bf m}_s) \beta_{0}(z d_h |s-d_h|)\\
&\times \int_{0}^\infty b_\pm(\rho d_h) e^{\pm i \rho d_h} a(s, \hat{x}, \hat{y}; \rho) \cos(s \rho) \rho^{n-2}d\rho ds\Big| \lesssim (1+|t|)^{-\frac{1}2}.
\end{split}
 \end{equation}
 
For terms associated with $\beta(2^{-j}z d_h |s-d_h|), j \geq 1$, we have $|s-d_h| \approx 2^{j}(z d_h)^{-1}$, due to the support condition of $\beta$, and $2^j\lesssim z d_h$ since $s,d_h$ are bounded. In this case, we will show that
\begin{equation}\label{beta-j'}
\begin{split}
 &z^{-\frac{n-2}2}
\Big|\int_0^\pi W(t, {\bf m}_s)  \beta(2^{-j}z d_h |s-d_h|)\\
&\times \int_{0}^\infty b_\pm(\rho d_h) e^{\pm i \rho d_h} a(s, \hat{x}, \hat{y}; \rho) \cos(s \rho) \rho^{n-2}d\rho ds\Big| \lesssim 2^{-j\frac{n-2}2}(1+|t|)^{-\frac{1}2},
\end{split}
 \end{equation}
which would give us desired bounds \eqref{osi-1>b} after summing over $j\geq1$.
 For the part of the $\rho$-integral with $\rho\le 2^{-j}z d_h$, we do not do any integration by parts, the integral in \eqref{beta-j'} is always bounded by 
 \begin{equation}\nonumber 
 \begin{split}
& z^{-\frac{n-2}2} \int_{|s-d_h|\sim 2^j(zd_h)^{-1}}\big(1+|t\pm |{\bf m}_s||\big)^{-N}(1+|{\bf m}_s|)^{-1/2} \,ds \int_{\rho\leq 2^{-j}z d_h}(1+\rho d_h)^{-\frac{n-2}2} \rho^{n-2}\, d\rho\\
 &\lesssim z^{-\frac{n-2}2} ((zd_h)^{-1}2^j) (2^{-j}z d_h)^{\frac{n-2}2+1} d_h^{-\frac{n-2}2} \big((1+|t|)^{-1/2} +(1+|t|)^{-N}\big)\\
 & \lesssim 2^{-j\frac{n-2}2}(1+|t|)^{-\frac{1}2}.
 \end{split}
 \end{equation}

On the other hand, if we have $\rho\ge 2^{-j}z d_h$, we write $\cos(s \rho)=\frac12\big(e^{is\rho}+e^{-is\rho}\big)$, then we do integration by parts in $d\rho$ again, then each time we gain a factor of $\rho^{-1}$, and we at most lose a factor of  
$$|s \pm d_h|^{-1}\lesssim  2^{-j}z d_h.$$ 
So after integration by parts $m$ times for $m > \frac{n}{2}$, the integral in \eqref{beta-j'} is bounded by 
 \begin{equation}\nonumber
 \begin{split}
&z^{-\frac{n-2}2}   \big(2^{-j}z d_h\big)^{m} \int_{|s-d_h|\sim 2^j(zd_h)^{-1}}\big(1+|t\pm |{\bf m}_s||\big)^{-N}(1+|{\bf m}_s|)^{-1/2} \,ds \int_{2^{-j}z d_h}^\infty \rho^{\frac{n-2}2-m} d_h^{-\frac{n-2}2}d\rho\\
&\lesssim r_1^{-\frac{n-2}2} (z d_h)^{-\frac{n-2}2-1} 2^j  \big(2^{-j}z d_h\big)^{m} \big(2^{-j}z d_h\big)^{\frac{n-2}2+1-m}  \big((1+|t|)^{-1/2} +(1+|t|)^{-N}\big)\\
 & \lesssim 2^{-j\frac{n-2}2}(1+|t|)^{-\frac{1}2}.
\end{split}
  \end{equation}
  Therefore we have proved \eqref{osi-1}. \vspace{0.2cm}
  
  \textbf{Part 2:} We next consider $|t|\leq 1$ and we  instead set  $z:=r_1r_2$. We only sketch the proof of this part since it is almost the same as above.

 \textbf{Case 1}. $d_h(\hat{x}, \hat{y})\leq C_1 z^{-\frac12}$.   In this case, we take $J$ large enough so that $2^{J-1}\geq 2C_1$ and we want to show that 
 \begin{equation}\label{osi-1<b<}
 \begin{split}
z^{-\frac{n-2}2}
&\Big|\int_0^\pi W(t, {\bf m}_s)  \Big(\beta_{J}(z^{1/2} s)+\sum_{j\ge J+1}\beta(2^{-j}z^{1/2}s)\Big) \\
&\times \int_{0}^\infty b_\pm(\rho d_h) e^{\pm i \rho d_h} a(s, \hat{x}, \hat{y}; \rho) \cos(s \rho) \rho^{n-2}d\rho\, ds \Big|\lesssim 1.
\end{split}
 \end{equation}
The similar argument as above can be applied to prove this. The only difference occurs when we treat the case $\rho\ge 4z^{1/2}$ and we do integration by parts in $ds$ to gain a $\rho^{-1}$ factor from $\cos(s\rho)$. The factors introduced by differentiating other functions in $s$ is estimated in the same (in fact simpler, as we don't concern $|t|$ now) manner as in the proof of \eqref{eq:derivative-Wms-small-s} using \eqref{est:W} and we point out minor difference below.
Concretely, when $|{\bf m}_s| \gtrsim 1$, noticing that $s \lesssim z^{-1/2}$ on the support of $\beta_J(z^{1/2}s)$, thus 
\begin{equation} \label{eq:small-s-bound3}
\frac{r_1r_2\sin s}{|{\bf m}_s|}=\Big(\frac{1}{|{\bf m}_s|}\Big)z\sin s \lesssim z^{1/2}. 
\end{equation}  
When $ |{\bf m}_s|\ll 1$,  by \eqref{fact:ms}, we must have that $r_1\sim r_2$ and $s\ll 1/r_1$ which implies $\frac{r_1r_2\sin s}{|{\bf m}_s|}\sim r_1\sim z^{1/2}$. So the entire factor is when integrate by parts for $m$-times is controlled by $z^{m/2}$ and the rest of the proof is the same.
 \bigskip
 
   \textbf{Case 2}. $d_h(\hat{x}, \hat{y})\geq C_1 z^{-\frac12}$. 
   In this case, taking $J=0$,
  we will show that 
 \begin{equation}\label{osi-1>b<}
 \begin{split}
&z^{-\frac{n-2}2}
\Big|\int_0^\pi W(t, {\bf m}_s)  \Big(\beta_{0}(zd_h |s-d_h|)+\sum_{j\ge 1}\beta(2^{-j}z d_h |s-d_h|)\Big) \\
&\times \int_{0}^\infty b_\pm(\rho d_h) e^{\pm i \rho d_h} a(s, \hat{x}, \hat{y}; \rho) \cos(s \rho) \rho^{n-2}d\rho\, ds \Big|\lesssim 1,
\end{split}
 \end{equation}
where $\beta_0$ and $\beta$ are same to the above ones \eqref{beta-d}. Similarly, the above argument proves this and the only difference happens when we have $\rho\ge z d_h$ and do integration by parts in $ds$.
Indeed, due to the support of $\beta_0$,  one has $|s-d_h|\leq 2(z d_h)^{-1}\leq  2C_1^{-1} z^{-1/2}$ which implies $s\geq C_1\big(1- 2C_1^{-2}\big)z^{-1/2}>0$ if  $C_1>2$.
So each time we gain a factor of $\rho^{-1}$ from the function $\cos(s \rho)$, and again the factor introduced by differentiating $W(t, {\bf m}_s  )\beta_{0}(zd_h |s-d_h|)$ for $m$-times is bounded by 
\begin{equation} \label{eq:small-s-DW-bound4}
|(\frac{\partial}{\partial s})^m \Big(W(t, {\bf m}_s)\beta_{0}(zd_h |s-d_h|) \Big)| \lesssim (zd_h)^m.
\end{equation} 
This follows from the same proof as \eqref{eq:derivative-Wms-small-s}, with the only minor difference being the same as in the discussion after \eqref{eq:small-s-bound3}, which we sketch again here.
When $|{\bf m}_s| \gtrsim 1$, we have
\begin{equation*}
\frac{r_1r_2\sin s}{|{\bf m}_s|}=\Big(\frac{1}{|{\bf m}_s|}\Big)z\sin s \lesssim z d_h,
\end{equation*}
where the last inequality follows from (again, by the support condition of $\beta_0$ and $d_h \gtrsim z^{-1/2}$ in the current case):
$$ z\sin s\lesssim z (d_h+z^{-\frac12})\lesssim z d_h. $$
When $1\gg |{\bf m}_s|$,  we must have that $r_1\sim r_2$ and $s\ll 1/r_1$ which implies $\frac{r_1r_2\sin s}{|{\bf m}_s|}\sim r_1\sim z^{1/2}\lesssim zd_h$. Then we can apply \eqref{est:W} as before to obtain \eqref{eq:small-s-DW-bound4}. And the rest of the proof of \eqref{osi-1>b<} is the same as previous cases. And this finishes the proof of \eqref{est:dispersive}.

 \vspace{0.2cm}
  \end{proof}

\section{The Strichartz estimates}\label{sec:striW}

In this section, we prove the Strichartz estimates in Theorem
\ref{thm:stri-wave}. To obtain the Strichartz estimates, we need a
variant of Keel-Tao's \cite{KT} abstract Strichartz estimate for wave
equation.

\subsection{Abstract Strichartz estimates}
We first recall a variant of the abstract Keel-Tao's Strichartz estimates, which is proven in \cite[Proposition~6.4]{CDYZ}:
\begin{proposition}\label{prop:KT}
Let $(X,\mathcal{M},\mu)$ be a $\sigma$-finite measured space and
$U: \mathbb{R}\rightarrow B(L^2(X,\mathcal{M},\mu))$ be a weakly
measurable map satisfying, for some constants $C$, $\kappa\geq0$,
$\sigma, h>0$,
\begin{equation}\label{md-1}
\begin{split}
\|U(t)\|_{L^2\rightarrow L^2}&\leq C,\quad t\in \mathbb{R},\\
\|U(t)U(s)^*f\|_{L^{p_0}}&\leq
Ch^{-\kappa(1-\frac2{p_0})}(h+|t-s|)^{-\sigma(1-\frac2{p_0})}\|f\|_{L^{p_0'}},\quad 2\leq p_0\leq +\infty.
\end{split}
\end{equation}
Then for every pair $q,p\in[2,\infty]$ such that $(q, p,\sigma)\neq
(2,\infty,1)$ and
\begin{equation*}
\frac{1}{q}+\frac{\sigma}{p}\leq\frac\sigma 2,\quad 2\leq p\leq p_0,
\end{equation*}
there exists a constant $\tilde{C}$ depending only on $C$, $\sigma$,
$q$ and $p$ such that
\begin{equation*}
\Big(\int_{\mathbb{R}}\|U(t) u_0\|_{L^p}^q dt\Big)^{\frac1q}\leq \tilde{C}
\Lambda(h)\|u_0\|_{L^2}
\end{equation*}
where $\Lambda(h)=h^{-(\kappa+\sigma)(\frac12-\frac1p)+\frac1q}$.
\end{proposition}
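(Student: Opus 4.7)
The plan is to reduce the proposition to the classical Keel--Tao Strichartz theorem by a time rescaling that absorbs the parameter $h$, and then invoke the standard $TT^*$ argument combined with Hardy--Littlewood--Sobolev fractional integration (in the non-endpoint case) or the bilinear dyadic decomposition of Keel--Tao (in the endpoint case).

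First, I would introduce the rescaled propagator $V(t):=U(ht)$. Under this rescaling the $L^2\to L^2$ bound in \eqref{md-1} is preserved, while the dispersive bound becomes
\begin{equation*}
\|V(t)V(s)^*f\|_{L^{p_0}}\leq C h^{-(\kappa+\sigma)(1-2/p_0)}(1+|t-s|)^{-\sigma(1-2/p_0)}\|f\|_{L^{p_0'}}.
\end{equation*}
Thus $V$ satisfies the hypotheses of the classical Keel--Tao theorem with dispersive decay exponent $\sigma$ on the unit time scale, with $h$ entering only as a constant prefactor in the dispersive estimate.

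Second, I would run the standard proof of Keel--Tao for $V$. By $TT^*$ duality, the desired homogeneous Strichartz bound $\|V(\cdot)u_0\|_{L^q_tL^p_x}\lesssim M\|u_0\|_{L^2}$ is equivalent to a bilinear estimate on the form $\iint \langle V(t)V(s)^*F(s),G(t)\rangle\,ds\,dt$. Interpolating the $L^2\to L^2$ and $L^{p_0'}\to L^{p_0}$ bounds for $V(t)V(s)^*$ at exponent $p\in[2,p_0]$, with interpolation parameter $\theta=(1/2-1/p)/(1/2-1/p_0)$, produces a dispersive decay $(1+|t-s|)^{-\sigma(1-2/p)}$ with prefactor $h^{-(\kappa+\sigma)(1-2/p)}$ (the $L^2$ endpoint being $h$-independent, so only the dispersive endpoint contributes to the $h$-exponent). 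In the strict admissibility case $1/q+\sigma/p<\sigma/2$, Hardy--Littlewood--Sobolev in the time variable closes the bilinear estimate. On the sharp line $1/q+\sigma/p=\sigma/2$ (which contains $q=2$ when $\sigma>1$), one instead dyadically decomposes $|t-s|$ and runs the Keel--Tao bilinear argument with different interpolation exponents on-diagonal and off-diagonal to sum the dyadic pieces; the forbidden case $(q,p,\sigma)=(2,\infty,1)$ is precisely where this dyadic summation logarithmically diverges. The net outcome for $V$ is
\begin{equation*}
\|V(\cdot)u_0\|_{L^q_tL^p_x}\lesssim h^{-(\kappa+\sigma)(1/2-1/p)}\|u_0\|_{L^2}.
\end{equation*}

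Third, I would undo the rescaling: the substitution $t=h\tau$ yields $\|U(\cdot)u_0\|_{L^q_tL^p_x}=h^{1/q}\|V(\cdot)u_0\|_{L^q_tL^p_x}$, which combines to $\Lambda(h)\|u_0\|_{L^2}$ with $\Lambda(h)=h^{-(\kappa+\sigma)(1/2-1/p)+1/q}$ exactly as claimed. The main obstacle is the endpoint bilinear argument, which is technically delicate but by now well known; the only bookkeeping specific to this $h$-dependent setting is verifying that the $h$-factor behaves correctly under interpolation, and this is automatic because one endpoint of the interpolation (the $L^2$ bound) is $h$-independent, so $\theta\cdot(1-2/p_0)=(1-2/p)$ yields the stated exponent.
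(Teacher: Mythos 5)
Your proposal is correct and follows essentially the same route as the paper, which does not reprove the proposition but simply cites \cite[Proposition~6.4]{CDYZ} and remarks that Keel--Tao's bilinear ($TT^*$ plus interpolation/Hardy--Littlewood--Sobolev, with dyadic decomposition at the endpoint) argument goes through once one restricts to $p\le p_0$. Your time rescaling $V(t)=U(ht)$ and the bookkeeping showing the prefactor $h^{-(\kappa+\sigma)(1/2-1/p)}$ plus the Jacobian factor $h^{1/q}$ reproduce exactly the stated $\Lambda(h)$, so the sketch is an accurate reconstruction of the argument behind that citation.
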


\begin{remark} The only minor difference between \cite{KT}  and here is that one needs to restrict to $p\leq p_0$. So the bilinear argument in \cite{KT} still works.
\end{remark}

For the $Q_j$ ($j=1,\cdots \mathsf{F}$) given in \eqref{Id-p-Q} and $\varphi$ in Theorem \ref{thm:dispersive}, we define the microlocalized half wave operator
\begin{equation}\label{Ukj}
U_{k,j}(t)= Q_j \varphi(2^{-k}\sqrt{\mathcal{L}_{{\A},a}}) e^{it\sqrt{\mathcal{L}_{{\A},a}}}, \quad k\in\Z, 
\end{equation}
then by dual, it gives
\begin{equation}
U^*_{k,j}(t)= e^{-it\sqrt{\mathcal{L}_{{\A},a}}} \varphi(2^{-k}\sqrt{\mathcal{L}_{{\A},a}}) Q^*_j .
\end{equation}
Hence it shows
\begin{equation}\label{equ:UU*}
U_{k,j}(t)U^*_{k,j}(s)=Q_j  \varphi(2^{-k}\sqrt{\mathcal{L}_{{\A},a}}) e^{i(t-s)\sqrt{\mathcal{L}_{{\A},a}}}  \varphi(2^{-k}\sqrt{\mathcal{L}_{{\A},a}}) Q^*_j.
\end{equation}
As a consequence of Theorem \ref{thm:dispersive}, we shall have
\begin{proposition}\label{prop:Dispersive} Let $U_{k,j}(t)$ be defined in \eqref{Ukj} and let $\alpha$ be in \eqref{def:alpha}.
Then there exists a constant $C$ independent of $t, s$ for all
$ k\in\Z$ such that:

$\bullet$ if $\alpha\geq0$, then for $2\le p\leq +\infty$, it holds
\begin{equation}\label{est:Dis}
\|U_{k,j}(t)U^*_{k,j}(s)\|_{L^{p'}(\cone)\rightarrow L^p(\cone)}\leq C
2^{k\frac{n+1}2(1-\frac2{p})}(2^{-k}+|t-s|)^{-\frac{n-1}2(1-\frac2{p})};
\end{equation}

$\bullet$ if $-(n-2)/2<\alpha<0$, then \eqref{est:Dis} still holds for $2\leq p<p(\alpha)$ with $p(\alpha)$ in \eqref{def:q-alpha}.
\end{proposition}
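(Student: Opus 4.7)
The plan is to prove Proposition~\ref{prop:Dispersive} by a standard interpolation argument: combine a uniform $L^2\to L^2$ bound for $U_{k,j}(t)U_{k,j}^*(s)$ with a dispersive $L^{p_0'}\to L^{p_0}$ bound derived from Theorem~\ref{thm:dispersive}, then apply Riesz--Thorin. The splitting into the two cases $\alpha\geq 0$ and $\alpha<0$ is dictated by whether the pointwise kernel bound of Theorem~\ref{thm:dispersive} yields a uniform $L^\infty$ bound on the Schwartz kernel or only a weighted one.

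First, I would establish the $L^2\to L^2$ bound. Since $\mathcal{L}_{{\A},a}$ is self-adjoint, the propagator $e^{i\tau\sqrt{\mathcal{L}_{{\A},a}}}$ is unitary on $L^2(\cone)$ by the spectral theorem; together with the obvious $L^2$-boundedness of $Q_j$ and of the spectral multiplier $\varphi(2^{-k}\sqrt{\mathcal{L}_{{\A},a}})$, formula \eqref{equ:UU*} gives $\|U_{k,j}(t)U_{k,j}^*(s)\|_{L^2\to L^2}\leq C$ uniformly in $k,t,s$. Next, rewrite \eqref{equ:UU*} using $\tilde\varphi:=\varphi^2\in C_c^\infty((0,\infty))$, so the kernel of $U_{k,j}(t)U_{k,j}^*(s)$ equals that of $Q_j\tilde\varphi(2^{-k}\sqrt{\mathcal{L}_{{\A},a}})e^{i(t-s)\sqrt{\mathcal{L}_{{\A},a}}}Q_j^*$, to which Theorem~\ref{thm:dispersive} applies. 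Setting
$$M:=C\,2^{kn}\bigl(1+2^k|t-s|\bigr)^{-(n-1)/2},$$
the kernel is bounded by $M$ on $\{2^kr_1,2^kr_2\gtrsim 1\}$ and by $M$ multiplied by weights $(2^k r_i)^{\alpha}$ (for the small $r_i$) on the remaining regions. A routine algebraic identity, $M^{1-2/p}=2^{k(n+1)(1-2/p)/2}\bigl(2^{-k}+|t-s|\bigr)^{-(n-1)(1-2/p)/2}$, identifies $M^{1-2/p}$ with the right-hand side of \eqref{est:Dis}. In the case $\alpha\geq 0$, the weights $(2^k r_i)^\alpha$ with $2^k r_i\lesssim 1$ are all $\leq 1$, so the kernel is globally bounded by $M$; this gives $\|U_{k,j}(t)U_{k,j}^*(s)\|_{L^1\to L^\infty}\leq M$, and Riesz--Thorin interpolation with the $L^2\to L^2$ bound yields \eqref{est:Dis} for every $p\in[2,\infty]$.

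In the case $-(n-2)/2<\alpha<0$, the weights blow up on $\{r\leq 2^{-k}\}$ and the $L^1\to L^\infty$ route is unavailable. For each $p_0\in(2,p(\alpha))$ I would instead establish an $L^{p_0'}\to L^{p_0}$ dispersive bound of size $CM^{1-2/p_0}$; interpolating this with the $L^2\to L^2$ bound produces \eqref{est:Dis} for all $p\in[2,p_0]$, and letting $p_0\nearrow p(\alpha)$ exhausts the range $p<p(\alpha)$. To obtain the $L^{p_0'}\to L^{p_0}$ estimate, decompose the operator according to whether each radial variable lies in $\{r\leq 2^{-k}\}$ (\emph{low}) or $\{r>2^{-k}\}$ (\emph{high}). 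The high--high piece uses the uniform bound $M$ exactly as in the previous case. The low--low piece is handled by Schur's test applied to the factored kernel bound $M(2^k r_1)^\alpha(2^k r_2)^\alpha$, via the sharp integrability
$$\int_0^{2^{-k}}(2^k r)^{\alpha p_0}\,r^{n-1}\,dr\simeq 2^{-kn}<\infty \iff p_0<p(\alpha),$$
which yields exactly the $L^{p_0'}\to L^{p_0}$ norm $\lesssim M\cdot 2^{-2kn/p_0}\leq M^{1-2/p_0}$. For the two mixed low--high pieces, one must go beyond the bare pointwise bound of Theorem~\ref{thm:dispersive} and exploit the oscillatory factor $e^{\pm i\lambda\max\{r_1,r_2\}}$ in the representation \eqref{Spect<1} of Theorem~\ref{thm:spect}: integration by parts in $\lambda$ against the spectral localizer $\tilde\varphi(2^{-k}\lambda)$ converts this phase into the $r_2$-decay (respectively $r_1$-decay) needed to close a Schur-type estimate with the low-side weight.

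The main obstacle is precisely this treatment of the mixed low--high regions in the $\alpha<0$ case: the pointwise bound of Theorem~\ref{thm:dispersive} alone provides no decay in the large radial variable, so a direct Schur or Young test fails there. One has to return to the finer spectral-measure representation of Theorem~\ref{thm:spect} to extract the missing $r_2$-oscillation (or equivalently, use a $TT^*$ factorization with an $L^2\to L^{p_0}$ bound for the half-wave itself), and the threshold $p(\alpha)=n/|\alpha|$ emerges as the sharp exponent beyond which the remaining singular radial weight fails to be locally integrable in $L^{p_0}$.
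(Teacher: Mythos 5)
Your proposal is correct in outline, and for the hard case $-(n-2)/2<\alpha<0$ it takes a genuinely different route from the paper. The paper does not decompose in the radial variable at the level of the $TT^*$ kernel; instead it splits the operator by \emph{angular} modes via the projections $P_<$ and $P_{\geq}$ in \eqref{def-pro}: the high-mode part $P_\geq$ has effective $\nu\geq (n-2)/2$, so it is handled exactly like the case $\alpha\geq0$ ($L^1\to L^\infty$ bound plus interpolation), while the low-mode part is a finite sum over eigenvalues $\nu_\ell<(n-2)/2$, each reduced to a one-dimensional Hankel-transform dispersive estimate (Proposition~\ref{prop:est-pp'}) proved with a fourfold radial cutoff, Bessel asymptotics \eqref{eq:bess3}, and integration by parts, combined with $L^p$ bounds for the individual eigenfunctions $\psi_\ell$. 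Your scheme — cut the cone into low/high radial regions relative to $2^{-k}$, use the bound of Theorem~\ref{thm:dispersive} with the product-form weight $(2^kr_1)^{\alpha}(2^kr_2)^{\alpha}$ and the integrability condition $\alpha p_0+n>0$ (which is exactly $p_0<p(\alpha)$) for the low--low block, and return to the representation \eqref{Spect<1} for the mixed blocks — avoids both the angular projection and the model operator $T_\nu$, and makes heavier use of what \eqref{est:dispersive<} already encodes; your low--low and mixed computations are, in substance, the paper's $T^1_\nu$ and $T^2_\nu,T^3_\nu$ estimates performed directly on the full kernel. The paper's route buys a clean reduction to a radial model and reuses the $\alpha\geq0$ machinery for all large modes; yours is shorter and stays at the level of the spectral measure.

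One caveat, in the only place you leave as a sketch (the mixed low--high block): the decay in the large radial variable does not come from integration by parts against the phase. Integrating by parts in $\lambda$ against $e^{i\lambda(t\pm r_2)}$ yields the concentration factor $(1+2^k|t\pm r_2|)^{-N}$, while the polynomial decay $(1+2^k r_2)^{-\frac{n-1}{2}}$ is already built into the amplitude bound \eqref{bean-A} (and $(1+2^kr_2)^{-\frac{n+1}{2}}$ into \eqref{bean-B}). You need both: the $r_2$-integral $\int r_2^{\,n-1}\,(1+2^kr_2)^{-\frac{(n-1)p_0}{2}}(1+2^k|t\pm r_2|)^{-Np_0}\,dr_2$ is what produces the time decay $(1+2^k|t|)^{-\frac{n-1}{2}(1-\frac2{p_0})}$ after pairing with the low-side weight $\|(2^kr_1)^{\alpha}\|_{L^{p_0}(r_1^{n-1}dr_1,\,r_1\lesssim 2^{-k})}\simeq 2^{-kn/p_0}$. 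With this correction the mixed estimate closes (it is the same computation as the paper's treatment of $T^3_\nu$), so the gap is one of precision rather than of substance.
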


\begin{proof}[The proof of Proposition \ref{prop:Dispersive}] Without loss of generality, we assume $s=0$ for simplicity. By using Theorem \ref{thm:dispersive} and \eqref{equ:UU*}, we have that
\begin{equation}\label{est:dispersive<'}
 \begin{split}
\big| U_{k,j}(t)U^*_{k,j}(0) (x,y)\big|\leq &C2^{kn}\big(1+2^k|t|\big)^{-\frac{n-1}2} \\
&\times\begin{cases}(2^{2k}r_1r_2)^{\nu_0-\frac{n-2}2},\quad &2^k r_1, 2^k r_2\lesssim 1;\\
(2^k r_1)^{\nu_0-\frac{n-2}2},\quad &2^k r_1\lesssim 1\ll  2^k r_2;\\
(2^k r_2)^{\nu_0-\frac{n-2}2},\quad &2^k r_2\lesssim 1\ll  2^k r_1;\\
1, \quad &2^k r_2,  2^k r_1\gg1.
 \end{cases}
\end{split}
\end{equation}
Recall $\alpha=\nu_0-(n-2)/2$ given in \eqref{def:alpha}, if $\alpha\geq0$, then
\begin{equation}\label{est:Dis'}
\|U_{k,j}(t)U^*_{k,j}(0)\|_{L^{1}(\cone)\rightarrow L^\infty(\cone)}\leq C2^{kn}\big(1+2^k|t|\big)^{-\frac{n-1}2}.
\end{equation}
Hence by interpolation,  \eqref{est:Dis} follows if we could prove 
\begin{equation}\label{est:L2}
\|U_{k,j}(t)U^*_{k,j}(0)\|_{L^{2}(\cone)\to L^2(\cone)}\leq C,
\end{equation}
which can be done by the spectral theorem. Since $\|Q_j\|_{L^2\to L^2}, \|\varphi(2^{-k}\sqrt{\mathcal{L}_{{\A},a}})\|_{L^2\to L^2}\leq C$, it suffices to consider $\|e^{it\sqrt{\mathcal{L}_{{\A},a}}} \|_{L^2\to L^2}$. 
From \eqref{funct}, we can write the kernel of $e^{it\sqrt{\mathcal{L}_{{\A},a}}}$ as
\begin{equation}
e^{it\sqrt{\mathcal{L}_{{\A},a}}}  =\sum_{k\in\N}\psi_{k}(\hat{x})\overline{\psi_{k}(\hat{y})}K_{\nu_k}(t; r_1,r_2),
\end{equation}
and
\begin{equation*}
  K_{\nu_k}(t; r_1,r_2)=(r_1r_2)^{-\frac{n-2}2}\int_0^\infty  e^{it\rho} J_{\nu_k}(r_1\rho)J_{\nu_k}(r_2\rho) \,\rho d\rho.
\end{equation*}
For $f\in L^2(\cone)$ and the Hankel transform of order $\mu$ defined in \eqref{hankel}, as in \cite[p.523]{BPSS}, we have the unitary property 
\begin{equation*}
\|\mathcal{H}_{\mu} f\|_{L^2_{\rho^{n-1}d\rho}(\R^+)}=\|f(r)\|_{L^2_{r^{n-1}dr}(\R^+)}.
\end{equation*}


For $f\in L^2(\cone)$, we expand 
\begin{equation}\label{f:exp}
\begin{split}
f=\sum_{k\in\mathbb{N}}c_{k}(r) \psi_k(\hat{x}),
\end{split}
\end{equation}
then, by orthogonality and the unitarity of the Hankel transform, we obtain
\begin{equation*}
\begin{split}
\|e^{it\sqrt{\mathcal{L}_{{\A},a}}}  f\|_{L^2(\cone)}&=\Big(\sum_{k\in\mathbb{N}} \big\| \mathcal{H}_{\nu_k}\big(e^{-it\rho} ( \mathcal{H}_{\nu_k} c_{k})\big)(r)\big\|_{L^2_{r^{n-1}dr}}^2\Big)^{1/2}\\&
=\Big(\sum_{k\in\Z, \atop m\in\mathbb{N}} \big\| c_{k}(r)\big\|^2_{L^2_{r^{n-1}dr}}\Big)^{1/2}
=\|f\|_{L^2(\cone)}.
\end{split}
\end{equation*}
In sum, we have proved \eqref{est:Dis} when $\alpha\geq0$.
 \vspace{0.2cm}

Now we prove \eqref{est:Dis} when $-(n-2)/2<\alpha<0$. To this end, we introduce the orthogonal projections on $L^{2}$
\begin{equation}\label{def-pro1}
  P_\ell:
  L^{2}(\cone)\to     L^{2}(r^{n-1}dr)\otimes  h_{\ell}(\CS),
\end{equation}
and
\begin{equation}\label{def-pro}
  P_<:
  L^{2}(\cone)\to   
  \bigoplus_{\{k\in \mathbb{N}: \nu_\ell< (n-2)/2\}}   L^{2}(r^{n-1}dr)\otimes  h_{\ell}(\CS),
  \quad
  P_{\geq }=I-P_{<}.
\end{equation}
Here the space $h_{\ell}(\CS)$ in \eqref{hk} is the linear
span of 
$\{ \psi_\ell(\hat{x})\}$
defined in \eqref{equ:eig-Aa}. Then we can decompose the operator as
\begin{equation}\label{dec:UU*}
\begin{split}
&U_{k,j}(t)U^*_{k,j}(0)=Q_j  \varphi(2^{-k}\sqrt{\mathcal{L}_{{\A},a}}) e^{it\sqrt{\mathcal{L}_{{\A},a}}}  \varphi(2^{-k}\sqrt{\mathcal{L}_{{\A},a}}) Q^*_j\\
&=Q_j  \varphi(2^{-k}\sqrt{\mathcal{L}_{{\A},a}}) e^{it\sqrt{\mathcal{L}_{{\A},a}}} (P_<+P_{\geq}) \varphi(2^{-k}\sqrt{\mathcal{L}_{{\A},a}}) Q^*_j.
\end{split}
\end{equation}
By \eqref{funct} and \eqref{equ:knukdef}, we see that the kernels 
\begin{equation}\label{ker:W-l}
\begin{split}
 e^{it\sqrt{\mathcal{L}_{{\A},a}}}P_{<}
&=\big(r_1 r_2\big)^{-\frac{n-2}2}\sum_{\{\ell\in\mathbb{N}: \nu_\ell<(n-2)/2\}}\psi_{\ell}(\hat{x})\overline{\psi_{\ell}(\hat{y})}K_{\nu_\ell}(t,r_1,r_2),
\end{split}
\end{equation}
and 
\begin{equation}\label{ker:W-h}
\begin{split}
e^{it\sqrt{\mathcal{L}_{{\A},a}}}P_{\geq }
&=\big(r_1 r_2\big)^{-\frac{n-2}2}\sum_{\{\ell\in\mathbb{N}: \nu_\ell\geq \frac12(n-2)\}}\psi_{\ell}(\hat{x})\overline{\psi_{\ell}(\hat{y})}K_{\nu_\ell}(t,r_1,r_2).
\end{split}
\end{equation}
Since the kernel $e^{it\sqrt{\mathcal{L}_{{\A},a}}}P_{\geq }$ has a projection to large angular modes, 
thus we can repeat the argument of \eqref{est:dispersive<} and \eqref{est:dispersive} with $\nu_0 \geq (n-2)/2$ to obtain
\begin{equation*}
\begin{split}
\|Q_j  \varphi(2^{-k}\sqrt{\mathcal{L}_{{\A},a}}) e^{it\sqrt{\mathcal{L}_{{\A},a}}} P_{\geq} \varphi(2^{-k}\sqrt{\mathcal{L}_{{\A},a}}) Q^*_j\|_{L^1(\cone)\to L^\infty(\cone)}\leq C2^{kn}\big(1+2^k|t|\big)^{-\frac{n-1}2}.
\end{split}
\end{equation*}

Therefore, the same as the case $\alpha\geq 0$, we can prove \eqref{est:Dis}  for the part of \eqref{dec:UU*} associated with $P_{\geq }$ with $p\geq2$. Thus it remains to consider the part of \eqref{dec:UU*} associated with $P_{<}$, in which we are restricted at small angular modes. Due to the Weyl’s asymptotic formula (e.g. see \cite{Garding53}) 
$$
\nu^2_\ell\sim (1+\ell)^{\frac 2{n-1}},\quad \ell\geq 1,
$$ the summation the part of \eqref{dec:UU*} associated with $P_{<}$ is finite. Consequently, using \eqref{dec:UU*}, to prove \eqref{est:Dis} for  the part with $P_{<}$,
it suffices to prove, for each $\ell$ satisfying $\nu_\ell<(n-2)/2$,
\begin{equation}\label{est:UU*<}
\begin{split}
\|Q_j  \tilde{\varphi}(2^{-k}\sqrt{\mathcal{L}_{{\A},a}}) &e^{it\sqrt{\mathcal{L}_{{\A},a}}} P_\ell  Q^*_j\|_{L^{p'}(\cone)\rightarrow L^p(\cone)}\\&\leq C
2^{k\frac{n+1}2(1-\frac2{p})}(2^{-k}+|t|)^{-\frac{n-1}2(1-\frac2{p})},
\end{split}
\end{equation}
where $ \tilde{\varphi}=\varphi^2$. In the following argument, since $\tilde{\varphi}$ has the same property of $\varphi$, without confusion, we drop off the tilde above $\varphi$ for brief.   

To prove \eqref{est:UU*<}, we need the following proposition, which can be interpreted as the dispersive estimate for spherically symmetric functions.
\begin{proposition}\label{prop:est-pp'}
  Let $0<\nu\leq \frac{n-2}2$ and $\sigma(\nu)=-(n-2)/2+\nu$. Let $T_\nu$ be the operator defined as
  \begin{equation}\label{Tnu-operator}
\begin{split}
(T_{\nu}g)(t,r_1)=\int_0^\infty  K^l_{\nu}(t;r_{1},r_{2}) g(r_2)\, r^{n-1}_2 dr_2,
 \end{split}
\end{equation}
 and 
\begin{equation*}
\begin{split}
  K^l_{\nu}(t,r_1,r_2)&=(r_1r_2)^{-\frac{n-2}2}\int_0^\infty e^{it\rho}J_{\nu}(r_1\rho)J_{\nu}(r_2\rho) \varphi(\rho)\,\rho d\rho,
  \end{split}
\end{equation*}
where $\varphi$ is given in \eqref{LP-dp}.
Then, for $2\leq q<q(\sigma)$, the following estimate holds
  \begin{equation}\label{est:q-q'}
  \|T_{\nu}g\|_{L^p({r^{n-1}_1 dr_1})}\le
    C_{\nu}(1+|t|)^{-\frac{n-1}2(1-\frac2{p})}\|g\|_{L^{p'}_{r^{n-1}_2 dr_2}}.
  \end{equation}
\end{proposition}
We postpone the proof of Proposition \ref{prop:est-pp'} for a moment. For $\tilde{f}\in L^{p'}(X)$, taking $f=Q_j^* \tilde{f}$ and recalling \eqref{f:exp}, \eqref{funct} and \eqref{Tnu-operator}, we write
  \begin{equation*}
    \begin{split}
\varphi(2^{-k}\sqrt{\mathcal{L}_{{\A},a}})e^{it\sqrt{\mathcal{L}_{{\A},a}}} P_\ell f&= \psi_\ell(\hat{x}) 2^{kn} \int_0^\infty  K^l_{\nu_\ell}(2^{k}t;2^kr_{1}, 2^kr_{2}) c_\ell(r_2)\, r^{n-1}_2 dr_2\\
&= \psi_\ell(\hat{x}) \big(T_{\nu_\ell}c_\ell(2^{-k}r_2)\big)(2^{k}t, 2^kr_1).
\end{split}
\end{equation*}

Notice that $p<p(\alpha)\leq p(\sigma)$, we use the eigenfunction's estimates
\begin{equation*}
\| \psi_\ell(\hat{x})\|_{L^p(\CS)}\leq C_\ell \| \psi_\ell(\hat{x})\|_{L^{p'}(\CS)},\quad p\geq 2,
\end{equation*}
and \eqref{est:q-q'} to obtain that (recalling the expansion \eqref{f:exp}):
\begin{equation*}
\begin{split}
&\Big\|\varphi(2^{-k}\sqrt{\mathcal{L}_{{\A},a}})e^{it\sqrt{\mathcal{L}_{{\A},a}}} P_\ell f\Big\|_{L^p((\cone))}\\
&\le
C \|\big(T_{\nu_\ell}c_\ell(2^{-k}\cdot)\big)(2^{k}t, 2^kr_1)\|_{L^p_{r^{n-1}_1 dr_1}} \| \psi_\ell(\hat{x})\|_{L^p(\CS)}
    \\&\le C_{\ell}2^{kn(1-\frac 2p)}(1+2^k|t|)^{-\frac{n-1}2(1-\frac 2p)}\|c_\ell(r)\|_{L^{p'}_{r^{n-1} dr}}\| \psi_\ell(\hat{x})\|_{L^{p'}(\CS)}\\
    &\le C_\ell
2^{k\frac{n+1}2(1-\frac2{p})}(2^{-k}+|t|)^{-\frac{n-1}2(1-\frac2{p})}  \big\| P_{\ell} f\big\|_{L^{p'}(\cone)}.
    \end{split}
  \end{equation*}
  Since we are only concerning finitely many $ \psi_\ell$ such that corresponding $\nu_\ell \in (0,\frac{n-2}{2}]$, those $C_{\ell}$ are uniformly bounded.
This completes the proof of the desirable estimate \eqref{est:UU*<}. 
\end{proof}
  
\begin{proof}[The proof of  Proposition \ref{prop:est-pp'}]
Let $\chi\in \mathcal{C}_c^\infty ([0,+\infty))$ be defined as 
\begin{equation}\label{def:chi}
\chi(r)=
\begin{cases}1,\quad r\in [0, \frac12],\\
0, \quad r\in [1,+\infty)
\end{cases}
\end{equation}
and let us set $\chi^c=1-\chi$. We decompose the kernel $K^l_{\nu}(t;r_{1},r_{2})$ into four terms as follows:
  \begin{equation}
\begin{split}
K^l_{\nu}(t;r_{1},r_{2})=&\chi(r_1)K^l_{\nu}(t;r_{1},r_{2})\chi(r_2)+\chi^c(r_1)K^l_{\nu}(t;r_{1},r_{2})\chi(r_2)\\
&+\chi(r_1)K^l_{\nu}(t;r_{1},r_{2})\chi^c(r_2)+\chi^c(r_1)K^l_{\nu}(t;r_{1},r_{2})\chi^c(r_2).
 \end{split}
\end{equation}
This yields a corresponding decomposition for the operator $T_{\nu}=T^1_{\nu}+T^2_{\nu}+T^3_{\nu}+T^4_{\nu}$. We thus estimate separately the norms $\|T^j_{\nu}g\|_{L^p_{r_1^{n-1} dr_1}}$ for $j=1,2,3,4$.

Consider the term $T^1_\nu$ first. From \eqref{eq:bess1} below, one has 
  \begin{equation}
\begin{split}
|\chi(r_1)K^l_{\nu}(t;r_{1},r_{2})\chi(r_2)|\lesssim  (r_1r_2)^{\sigma} \chi(r_1)\chi(r_2).
 \end{split}
\end{equation}
Therefore, as long as $2\leq p< p(\sigma)$, if $|t|\leq 1$, we can show
  \begin{equation}\label{est:q-q'1-1}
  \begin{split}
  \|T^1_{\nu}g\|_{L^p_{r^{n-1}_1 dr_1}}&\le
    C_{\nu} \Big(\int_0^1 r^{\sigma p} r^{n-1} dr\Big)^{2/p}\|g\|_{L^{p'}_{r^{n-1}_2 dr_2}}\\
    &\le
   C_{\nu}  \|g\|_{L^{p'}_{r^{n-1}_2 dr_2}} \lesssim (1+|t|)^{-\frac{n-1}2(1-\frac2p)}  \|g\|_{L^{p'}_{r^{n-1}_2 dr_2}}.
   \end{split}
  \end{equation}
For the case that $|t|\geq 1$, we perform integration by parts in $d\rho$ to obtain 
\begin{equation}
\begin{split}
&|\chi(r_1)K^l_{\nu}(t;r_{1},r_{2})\chi(r_2)|\\
&\lesssim   \big(r_1r_2\big)^{-\frac{n-2}2} \chi(r_1)\chi(r_2)|t|^{-N}\int_0^\infty \Big| \Big(\frac{\partial}{\partial\rho}\Big)^{N}\Big(J_{\nu}(r_1\rho)J_{\nu}(r_2\rho) \varphi(\rho)\Big)\Big| d\rho \\
&\lesssim   \big(r_1r_2\big)^{\nu-\frac{n-2}2} \chi(r_1)\chi(r_2)|t|^{-N},
 \end{split}
\end{equation}
where in the last inequality we used the fact that 
$$\Big| \Big(\frac{\partial}{\partial\rho}\Big)^{N}\Big(J_{\nu}(r_1\rho)J_{\nu}(r_2\rho) \varphi(\rho)\Big)\Big| \lesssim (r_1r_2)^{\nu},$$
provided $r_1, r_2\le 1$. Finally, if $|t|\geq1$ and taking $N$ large enough, as before, we obtain
  \begin{equation}\label{est:q-q'1-2}
  \begin{split}
  \|T^1_{\nu}g\|_{L^p_{r^{n-1}_1 dr_1}}&\le
    C_{\nu} |t|^{-N} \Big(\int_0^1 r^{\sigma p} r^{n-1} dr\Big)^{2/p}\|g\|_{L^{p'}_{r^{n-1}_2 dr_2}}\\
    & \lesssim(1+|t|)^{-\frac{n-1}2(1-\frac2p)}  \|g\|_{L^{p'}_{r^{n-1}_2 dr_2}}.
   \end{split}
\end{equation}
Next we consider $T^3_{\nu}$ and $T^2_{\nu}$ can be bounded in the same manner.
Using \eqref{eq:bess3}, we are reduced to estimate two integrals
  \begin{equation}\label{I+-}
    I_{\pm}(t; r_1,r_2)=(r_1r_2)^{-\frac{n-2}2}\int_{0}^{\infty}
    \rho \varphi(\rho)J_{\nu}(r_{1}\rho)(r_{2}\rho)^{-1/2}
   e^{it\rho} e^{\pm i r_{2}\rho}j_\pm(r_{2}\rho)d \rho.
  \end{equation}
 If $|t|\leq1$, by using integration by parts and recalling $\sigma=\nu-(n-2)/2$, we obtain 
   \begin{equation*}
   \begin{split}
    I_{\pm}&\lesssim (r_1r_2)^{-\frac{n-2}2} r_2^{-\frac12-N}
    \int_{0}^{\infty}
   \Big| \Big(\frac{\partial}{\partial\rho}\Big)^{N}\Big(J_{\nu}(r_1\rho)j_\pm(r_{2}\rho)\varphi(\rho)\rho^{1/2} e^{it\rho}\Big)\Big|d\rho\\
  & \lesssim r_1^{\sigma} r_2^{-\frac{n-1}2-N}.
  \end{split}
  \end{equation*}
   Hence if $|t|\leq 1$ and  $2\leq p< p(\sigma)$, by choosing $N$ large enough, we have
      \begin{equation}
         \begin{split}
  \|T^3_{\nu}g\|_{L^p_{r^{n-1}_1 dr_1}}&\lesssim \Big(\int_0^1 r_1^{\sigma q} r^{n-1}_1 dr_1\Big)^{1/p}\Big(\int_{\frac12}^{+\infty} r_2^{-(\frac{n-1}2+N)p} r^{n-1}_2 dr_2\Big)^{1/p}\|g\|_{L^{p'}_{r^{n-1}_2 dr_2}}\\&\lesssim \|g\|_{L^{p'}_{r^{n-1}_2 dr_2}}   \lesssim (1+|t|)^{-\frac{n-1}2(1-\frac2p)}  \|g\|_{L^{p'}_{r^{n-1}_2 dr_2}}.
      \end{split}
  \end{equation}
   It remains to consider the region $|t|\geq 1$.  
  In this case,  from \eqref{I+-}, we aim to estimate the kernel 
   \begin{equation}\label{I-pm}
   \begin{split}
    I_{\pm}(t; r_1, r_2)&=\int_{0}^{\infty}
   e^{i\rho(t\pm r_2)} \tilde{j}_{\pm}(\rho, r_{1}, r_{2}) \rho^{n-1}d \rho,
   \end{split}
  \end{equation}
  where 
\begin{equation} \label{def:ta}
\tilde{j}_{\pm}(\rho, r_1, r_2)= \varphi(\rho)(r_1\rho)^{-\frac{n-2}2} J_{\nu}(r_1\rho)(r_{2}\rho)^{-\frac{n-1}2} j_{\pm}(r_{2}\rho)
\end{equation}
with $j_{\pm}$ satisfying \eqref{eq:bess4}.
Since $r_1\rho\lesssim 1$ and $\sigma=\nu-\frac{n-2}2$, therefore we obtain
\begin{equation} \label{est:symb}
\Big| \Big(\frac{\partial}{\partial\rho}\Big)^N \Big(  \tilde{j}_\pm(\rho, r_1, r_2)\Big)\Big| \lesssim  (r_{1}\rho)^{\sigma} (r_{2}\rho)^{-\frac{n-1}2} \rho^{-N}
\lesssim r_{1}^{\sigma} r_{2}^{-\frac{n-1}2} \rho^{-N},
\end{equation}
since $\rho\sim 1$ on the support of $\varphi(\rho)$. Let us define $$\Phi(\rho, \bar{r}_2)=\rho(t\pm r_2), \quad L=L(r_2)=(t\pm r_2)^{-1}\partial_\rho.$$
By \eqref{est:symb} and using the integration by parts, for any $N$, we obtain 
     \begin{equation}
    \begin{split}
     \tilde{ I}_{\pm}&\leq \Big|\int_{0}^{\infty}
   L^N\Big(e^{i\rho(t\pm r_2)}\Big) \tilde{j}_\pm(\rho, r_1, r_2) \rho^{n-1} d \rho\Big|\\
&\leq r_{1}^{\sigma} r_{2}^{-\frac{n-1}2} |t\pm r_2|^{-N} \int_{\rho\sim 1}
 \rho^{-N} \rho^{n-1} d \rho\lesssim r_{1}^{\sigma} r_{2}^{-\frac{n-1}2}  |t\pm r_2|^{-N}.
    \end{split}
  \end{equation} 
Therefore, for any $N\geq 0$, we obtain that
   \begin{equation}
    \begin{split}
    \big| I_{\pm}(t; r_1, r_2)\big|\lesssim  r_{1}^{\sigma} r_{2}^{-\frac{n-1}2} \big(1+ |t\pm r_2|\big)^{-N}.
    \end{split}
  \end{equation} 
Then,  for $|t|\geq1$, we see
      \begin{equation*}
         \begin{split}
  &\|T^3_{\nu}g\|_{L^p_{r^{n-1}_1 dr_1}}\\
  &\lesssim \Big(\int_0^1 r_1^{\sigma p} r^{n-1}_1 dr_1\Big)^{1/p}\Big(\int_{\frac12}^{+\infty} r_2^{-\frac{(n-1)p}2} r^{n-1}_2 \big(1+ |t\pm r_2|\big)^{-pN} dr_2\Big)^{1/p}\|g\|_{L^{p'}_{r^{n-1}_2 dr_2}}.
        \end{split}
  \end{equation*}

On one hand, it is easy to check that 
$$\int_0^1 r_1^{\sigma p} r^{n-1}_1 dr_1\lesssim 1$$
  provided $2\leq p<p(\sigma)=\frac{2n}{n-2-2\nu}$. On the other hand, we have
  \begin{align*}
&\int_{\frac{1}{2}}^\infty r_2^{-\frac{(n-1)p}2} r^{n-1}_2 \big(1+ |t\pm r_2|\big)^{-pN} dr_2\\
&= \int_{\{r_2\geq 1/2: |r_2-t| \geq \frac{1}{2}|t| \}} r_2^{-\frac{(n-1)p}2} r^{n-1}_2 \big(1+ |t\pm r_2|\big)^{-pN} dr_2\\
 &\qquad+ \int_{\frac{1}{2}|t|}^{ \frac{3}{2}|t| } r_2^{-\frac{(n-1)p}2} r^{n-1}_2 \big(1+ |t\pm r_2|\big)^{-pN} dr_2
\\ & \lesssim (1+|t|)^{-N}
+ |t|^{-\frac{(n-1)p}2} t^{n-1} \int_{ \frac{1}{2}|t| }^{\frac{3}{2} |t|} \big(1+ |t\pm r_2|\big)^{-pN} dr_2  \lesssim (1+|t|)^{ -\frac{n-1}{2}(p-2) }.
\end{align*}
So in sum, for $2\leq p<p(\sigma)=\frac{2n}{n-2-2\nu}, (\nu>0)$, we have proved 
   \begin{equation}
         \begin{split}
  \|T^3_{\nu}g\|_{L^p_{r^{n-1}_1 dr_1}}\lesssim (1+|t|)^{ -\frac{n-1}{2}(1-\frac2p) }\|g\|_{L^{p'}_{r^{n-1}_2 dr_2}}.
        \end{split}
  \end{equation}

\vspace{0.2cm}

  We finally deal with $T^4_{\nu}$ by modifying the argument of $T^3_{\nu}$. Using \eqref{eq:bess3} again, we are reduced to estimate the
  two integrals
  \begin{equation}\label{I+-'}
    I_{\pm}=(r_1r_2)^{-\frac{n-2}2}\int_{0}^{\infty}
    \rho \varphi(\rho)(r_1r_{2}\rho^2)^{-1/2} e^{i[t\pm  (r_{1}\pm r_2)]\rho}j_\pm(r_{1}\rho) j_\pm(r_{2}\rho)d \rho.
  \end{equation}
  If $|t|\leq1$, by using integration by parts, we obtain 
   \begin{equation*}
   \begin{split}
    I_{\pm}&\lesssim (r_1r_2)^{-\frac{n-1}2} (1+|r_1\pm r_2|)^{-N}
    \int_{0}^{\infty}
   \Big| \Big(\frac{\partial}{\partial\rho}\Big)^{N}\Big(j_\pm(r_{1}\rho) j_\pm(r_{2}\rho)\varphi(\rho) e^{it\rho^2}\Big)\Big|d\rho\\
  & \lesssim (r_1r_2)^{-\frac{n-1}2} (1+|r_1\pm r_2|)^{-N}.
  \end{split}
  \end{equation*}
 Since $r_1, r_2\geq 1/2$,  hence if $|t|\leq 1$, we have
      \begin{equation}
         \begin{split}
  \|T^4_{\nu}g\|_{L^\infty_{r^{n-1}_1 dr_1}}&\lesssim  \|g\|_{L^{1}_{r^{n-1}_2 dr_2}}   \lesssim (1+|t|)^{-\frac {n-1}2}  \|g\|_{L^{1}_{r^{n-1}_2 dr_2}}.
      \end{split}
  \end{equation}
Now we consider the region $|t|\geq 1$. As above, by using integration by parts, we obtain 
   \begin{equation*}
   \begin{split}
    I_{\pm}&\lesssim (r_1r_2)^{-\frac{n-1}2} (1+|t\pm(r_1\pm r_2)|)^{-N}
    \int_{0}^{\infty}
   \Big| \Big(\frac{\partial}{\partial\rho}\Big)^{N}\Big(j_\pm(r_{1}\rho) j_\pm(r_{2}\rho)\varphi(\rho)\Big)\Big|d\rho\\
  & \lesssim (r_1r_2)^{-\frac{n-1}2} (1+|t\pm(r_1\pm r_2)|)^{-N}.
  \end{split}
  \end{equation*}
  If either $|t|\ll |r_1\pm r_2|$ or $|t|\gg |r_1\pm r_2|$, one always has $|t\pm(r_1\pm r_2)|\geq c|t|$ with a small positive constant $c$. So we have 
     \begin{equation*}
   \begin{split}
   | I_{\pm}|\lesssim (r_1r_2)^{-\frac{n-1}2} (1+|t|)^{-N}\lesssim (1+|t|)^{-\frac{n-1}2}.
  \end{split}
  \end{equation*}
  Otherwise, $1\leq |t|\sim |r_1\pm r_2|\leq 2\max\{r_1,r_2\}$,  therefore 
       \begin{equation*}
   \begin{split}
   | I_{\pm}|\lesssim (r_1r_2)^{-\frac{n-1}2} \lesssim (1+|t|)^{-\frac{n-1}2}.
  \end{split}
  \end{equation*}
 So we have
 \begin{equation}
  \|T^4_{\nu}g\|_{L^\infty_{r^{n-1}_1 dr_1}}\lesssim (1+|t|)^{-\frac{n-1}2}\|g\|_{L^{1}_{r^{n-1}_2 dr_2}}.
  \end{equation}
  By interpolating this with the $L^2$-estimate for $T^4_{\nu}$, we obtain 
      \begin{equation}\label{est:q-q'4}
    \begin{split}
  &\|T^4_{\nu}g\|_{L^p_{r^{n-1}_1 dr_1}}\leq C (1+|t|)^{-\frac{n-1}2(1-\frac2p)}\|g\|_{L^{p'}_{r^{n-1}_2 dr_2}},\quad p\geq2.
    \end{split}
  \end{equation}
  Collecting the estimates on the terms $T^j_\nu$, yields \eqref{est:q-q'} and the proof is concluded.
\end{proof}

\subsection{The proof of homogeneous Strichartz estimates \eqref{est:Stri}} The proof is based on Proposition \ref{prop:Dispersive}. Applying the
Littlewood-Paley frequency projector $ \varphi_k(\sqrt{\LL_{{\A},a}})$ in \eqref{LP-dp} to the wave equation \eqref{eq:wave}, we obtain 
\begin{equation}\label{eq:wave'}
\partial_{t}^2u_k+\LL_{{\A},a} u_k=0, \quad u_k(0)=f_k(x),
~\partial_tu_k(0)=g_k(x),
\end{equation}
where $u_k=\varphi_k(\sqrt{\LL_{\A},a})u(t,x)$, $f_k= \varphi_k(\sqrt{\LL_{{\A},a}})u_0$ and
$g_k= \varphi_k(\sqrt{\LL_{{\A},a}})u_1$.
By the square function estimate \eqref{square} and Minkowski's inequality, we obtain 
\begin{equation}\label{LP}
\|u\|_{L^q(\R;L^p(\cone))}\lesssim
\Big(\sum_{k\in\Z}\|u_k\|^2_{L^q(\R;L^p(\cone))}\Big)^{\frac12}
\end{equation}
provided $q\geq 2$ and $2\leq p<p(\alpha)$.
Let $U(t)=e^{it\sqrt{\LL_{{\A},a}}}$ be the half wave operator, then
we write
\begin{equation}\label{sleq}
\begin{split}
u_k(t,x)
=\frac{U(t)+U(-t)}2f_k+\frac{U(t)-U(-t)}{2i\sqrt{\LL_{{\A},a}}}g_k.
\end{split}
\end{equation}
To prove the homogeneous estimates \eqref{est:Stri} in Theorem \ref{thm:stri-wave},  by \eqref{LP} and \eqref{sleq}, it suffices to show the frequency localized Strichartz estimates:
\begin{proposition}\label{prop:stri} Let
$f=\varphi_k(\sqrt{\LL_{{\A},a}})f$ for $k\in\Z$, we have
\begin{equation}\label{lstri}
\|U(t)f\|_{L^q_tL^p_x(\mathbb{R}\times \cone)}\lesssim
2^{ks}\|f\|_{L^2(\cone)},
\end{equation}
where the admissible pair $(q,p)\in \Lambda_{s,\alpha(\nu_0)}$ and $s$ satisfies \eqref{scaling}.
\end{proposition}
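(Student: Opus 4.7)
My plan is to reduce the frequency-localized estimate to the abstract Keel-Tao framework (Proposition~\ref{prop:KT}) applied to each microlocalized half-wave operator $U_{k,j}(t)$ from \eqref{Ukj}. The key observation is that the tangential partition of unity $\sum_{j=1}^{\mathsf{F}} Q_j = \mathrm{Id}$ on $\CS$ gives the decomposition
\begin{equation*}
U(t)f = e^{it\sqrt{\LL_{{\A},a}}}\varphi_k(\sqrt{\LL_{{\A},a}})f = \sum_{j=1}^{\mathsf{F}} Q_j \varphi_k(\sqrt{\LL_{{\A},a}}) e^{it\sqrt{\LL_{{\A},a}}}f = \sum_{j=1}^{\mathsf{F}} U_{k,j}(t) f,
\end{equation*}
since $f=\varphi_k(\sqrt{\LL_{{\A},a}})f$. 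By the triangle inequality in $L^q_tL^p_x$ and the finiteness of $\mathsf{F}$, it then suffices to estimate each $\|U_{k,j}(t)f\|_{L^q_tL^p_x} \lesssim 2^{ks}\|f\|_{L^2}$.

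For each fixed $k,j$, I would verify the two hypotheses of Proposition~\ref{prop:KT}: first, the uniform $L^2$-bound $\|U_{k,j}(t)\|_{L^2\to L^2}\lesssim 1$, which follows from the unitarity of the half-wave propagator $e^{it\sqrt{\LL_{{\A},a}}}$ on $L^2(\cone)$ (established via the Hankel transform in the proof of \eqref{est:L2}) combined with the uniform $L^2$-boundedness of $Q_j$ (a smooth multiplication operator) and $\varphi_k(\sqrt{\LL_{{\A},a}})$ (the spectral multiplier). Second, the dispersive decay
\begin{equation*}
\|U_{k,j}(t)U^*_{k,j}(s)\|_{L^{p'}\to L^p}\leq C 2^{k\frac{n+1}{2}(1-\frac{2}{p})}(2^{-k}+|t-s|)^{-\frac{n-1}{2}(1-\frac{2}{p})}
\end{equation*}
is exactly the content of Proposition~\ref{prop:Dispersive}, valid for $2\leq p<p(\alpha)$ (which the restriction $(q,p)\in \Lambda_{s,\alpha(\nu_0)}$ ensures).

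With the identification $h=2^{-k}$, $\sigma=(n-1)/2$, $\kappa=(n+1)/2$, the wave admissibility condition \eqref{adm} translates precisely into the hypothesis $\frac{1}{q}+\frac{\sigma}{p}\leq \frac{\sigma}{2}$ of Proposition~\ref{prop:KT}, and the non-endpoint exclusion $(q,p,n)\neq(2,\infty,3)$ matches $(q,p,\sigma)\neq(2,\infty,1)$. Then Keel-Tao yields
\begin{equation*}
\|U_{k,j}(t)f\|_{L^q_tL^p_x}\lesssim \Lambda(2^{-k})\|f\|_{L^2}=2^{k[(\kappa+\sigma)(\frac{1}{2}-\frac{1}{p})-\frac{1}{q}]}\|f\|_{L^2}=2^{k[n(\frac{1}{2}-\frac{1}{p})-\frac{1}{q}]}\|f\|_{L^2}=2^{ks}\|f\|_{L^2},
\end{equation*}
using the scaling relation \eqref{scaling} in the last step. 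Summing over $1\leq j\leq \mathsf{F}$ concludes the proof.

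I do not anticipate any serious obstacle: the bulk of the work has already been done in Theorem~\ref{thm:dispersive} and its consequence Proposition~\ref{prop:Dispersive}. The only point requiring a bit of care is the bookkeeping of indices when matching our dispersive decay with the abstract framework, specifically confirming that the $\kappa+\sigma=n$ identity aligns with the scaling exponent $s$ dictated by \eqref{scaling}. The inhomogeneous estimate \eqref{est:in-Stri} would then follow from the Christ-Kiselev lemma applied to the bilinear form coming from Proposition~\ref{prop:KT}, and the full estimate \eqref{est:Stri} from combining the frequency-localized bound with the square function inequality \eqref{LP}.
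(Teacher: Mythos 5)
Your proposal is correct and follows essentially the same route as the paper: decompose into the microlocalized, frequency-localized pieces $U_{k,j}(t)$, verify the $L^2$ bound (via unitarity of $e^{it\sqrt{\LL_{{\A},a}}}$ and boundedness of $Q_j$, $\varphi_k(\sqrt{\LL_{{\A},a}})$) and the dispersive bound of Proposition~\ref{prop:Dispersive}, and then apply the abstract Keel--Tao result (Proposition~\ref{prop:KT}) with $h=2^{-k}$, $\sigma=(n-1)/2$, $\kappa=(n+1)/2$, which yields exactly the factor $2^{ks}$ by \eqref{scaling}. The only cosmetic difference is that the paper fixes $p_0=\infty$ (when $\alpha\geq 0$) or $p_0=p(\alpha)-\epsilon$ (when $\alpha<0$) in the hypothesis of Proposition~\ref{prop:KT} and then uses the restriction $p\leq p_0$, while you invoke the dispersive estimate at each admissible $p$ directly; this is immaterial.
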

Now we prove this proposition. Notice that
\begin{equation*}
U(t)=\sum_{j=1}^{\mathsf{F}}\sum_{k\in\Z}U_{k,j}(t),
\end{equation*}
where $U_{k,j}(t)$ is given by \eqref{Ukj}. To prove \eqref{lstri}, it is sufficient to prove
\begin{equation*}
\|U_{k,j}(t)f\|_{L^q_t(\R:L^p(\cone))}\lesssim
2^{k[n(\frac12-\frac1p)-\frac1q]} \|f\|_{L^2(\cone)}.
\end{equation*}
This is a consequence of Proposition \ref{prop:KT}  and
\begin{equation}\label{md-1}
\begin{split}
\|U_{k,j}(t)\|_{L^2\rightarrow L^2}&\leq C,\quad t\in \mathbb{R},\\
\|U_{k,j}(t)U(s)_{k,j}^*f\|_{L^{p_0}}&\leq
C2^{k\frac{n+1}2(1-\frac2{p_0})}(2^{-k}+|t-s|)^{-\frac{n-1}2(1-\frac2{p_0})}\|f\|_{L^{p_0'}},
\end{split}
\end{equation}
where either $p_0=+\infty$ when $\alpha\geq0$ or $p_0=p(\alpha)-\epsilon$ with any $0<\epsilon\ll1$ when $-(n-2)/2<\alpha<0$.
The first one $L^2$-estimate follows from the spectrum theory, as proved \eqref{est:L2} and the second one has been proved in Proposition \ref{prop:Dispersive}.

\subsection{Inhomogeneous Strichartz estimates}
In this subsection, we prove the inhomogeneous Strichartz estimates \eqref{est:in-Stri}.

Recall that $U(t)=e^{it\sqrt{\LL_{{\A},a}}}$ is unitary from $L^2$ to $L^2$, and we have already proved that
\begin{equation}\label{est:lstri}
\|U(t)u_0\|_{L^q_t(\R;L^p(\cone))}\lesssim\|u_0\|_{\dot{H}^s_{{\A},a}}
\end{equation} holds for all $(q,p)\in  \Lambda_{s,\alpha(\nu_0)}$ satisfying \eqref{scaling}.
For $s\in\R$ and $(q,p)\in  \Lambda_{s,\alpha(\nu_0)}$,
we define the operator $T_s$ by
\begin{equation}\label{Ts}
\begin{split}
T_s: L^2(\cone)&\rightarrow L^q_t(\R;L^p(\cone)),\quad f\mapsto \LL_{{\A},a}^{-\frac
s2}e^{it\sqrt{\LL_{{\A},a}}}f.
\end{split}
\end{equation}
Then \eqref{est:lstri} shows that $T_s$ is bounded.
By duality, the operator 
\begin{equation}\label{Ts*}
\begin{split}
T^*_{1-s}: L^{\tilde{q}'}_t(\R;L^{\tilde{p}'}(\cone))\rightarrow L^2(\cone),\quad
F(\tau,x)&\mapsto \int_{\R}\LL_{{\A},a}^{\frac
{s-1}2}e^{-i\tau\sqrt{\LL_{{\A},a}}}F(\tau)d\tau,
\end{split}
\end{equation}
is also bounded, where $(\tilde{q},\tilde{p})\in  \Lambda_{1-s,\alpha(\nu_0)}$ satisfies $1-s=n(\frac12-\frac1{\tilde{p}})-\frac1{\tilde{q}}$.
Therefore, we obtain
\begin{equation*}
\Big\|\int_{\R}U(t)U^*(\tau)\LL_{{\A},a}^{-\frac12}F(\tau)d\tau\Big\|_{L^q_t(\R;L^p(\cone))}
=\big\|T_sT^*_{1-s}F\big\|_{L^q_tL^r_z}\lesssim\|F\|_{L^{\tilde{q}'}_t(\R;L^{\tilde{p}'}(\cone))}.
\end{equation*}
Since $s=n(\frac12-\frac1p)-\frac1q$ and
$1-s=n(\frac12-\frac1{\tilde{p}})-\frac1{\tilde{q}}$, thus $(q,p)\in  \Lambda_{s,\alpha(\nu_0)}$ and
$(\tilde{q},\tilde{r})\in  \Lambda_{1-s,\alpha(\nu_0)}$ satisfy \eqref{scaling}. By the
Christ-Kiselev lemma \cite{CK}, we thus obtain for $q>\tilde{q}'$,
\begin{equation}\label{non-inhomgeneous}
\begin{split}
\Big\|\int_{\tau<t}\frac{\sin{(t-\tau)\sqrt{\LL_{{\A},a}}}}
{\sqrt{\LL_{{\A},a}}}F(\tau)d\tau\Big\|_{L^q_t(\R;L^p(\cone))}\lesssim\|F\|_{L^{\tilde{q}'}_t(\R;L^{\tilde{p}'}(\cone))}.
\end{split}
\end{equation}
Notice that for all $(q,p)\in  \Lambda_{s,\alpha(\nu_0)}$ and
$(\tilde{q},\tilde{r})\in  \Lambda_{1-s,\alpha(\nu_0)}$, we must have $q>\tilde{q}'$.
Therefore we have proved inhomogeneous Strichartz estimates \eqref{est:in-Stri}
including the endpoint $q=2$.

\subsection{Optimality on the condition $p<p(\alpha)$ in \eqref{est:Stri} }
In this subsection, we construct a counterexample to show that the restriction $p<p(\alpha)$ is in fact necessary. 

\begin{proposition}[Counterexample] 
  Let $(q,p)$ be as in $\Lambda_s$, that is,  $(q,p)$ satisfies \eqref{adm} and \eqref{scaling} and let $p\geq p(\alpha)$.
  Then the Strichartz estimates \eqref{est:Stri} may fail, in the sense that there exists an initial condition $f\in \dot{H}^s_{{\A},a} $ such that
  \begin{equation}\label{counter}
  \|e^{it\sqrt{\LL_{{\A},a}}}f\|_{L^q(\R;L^p(\cone))}=\infty
  \quad\text{for any}\quad p\geq p(\alpha).
  \end{equation}
\end{proposition}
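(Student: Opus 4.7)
The plan is to construct an explicit counterexample concentrated in a single angular mode, where the half-wave propagator cannot mask the $r^{\alpha}$ singularity forced at the cone tip by the smallest eigenvalue $\nu_0^{2}$ of $P_{\A,a}$. Concretely, I would fix the $L^{2}$-normalized eigenfunction $\psi_{0}\in L^{2}(\CS)$ of $P=P_{\A,a}$ associated to $\nu_{0}$ (see \eqref{equ:eig-Aa}) and take initial data $f(x)=c_{0}(r)\psi_{0}(\hat{x})$, where the radial profile $c_{0}$ is specified by prescribing its Hankel transform (in the sense of \eqref{hankel}) to be an arbitrary non-trivial bump $\tilde c_{0}:=\mathcal{H}_{\nu_{0}}c_{0}\in C_{c}^{\infty}((1,2))$. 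By the orthogonality of the $\psi_{k}$'s and the functional calculus \eqref{funct}--\eqref{equ:knukdef}, the angular mode is preserved by $e^{it\sqrt{\LL_{\A,a}}}$, and a direct computation reduces the action to the single-mode expression
\begin{equation*}
\bigl[e^{it\sqrt{\LL_{\A,a}}}f\bigr](r,\hat{x})=\psi_{0}(\hat{x})\,w(t,r),\qquad w(t,r)=r^{-\frac{n-2}{2}}\int_{0}^{\infty}e^{it\rho}J_{\nu_{0}}(r\rho)\rho^{\frac{n}{2}}\tilde c_{0}(\rho)\,d\rho.
\end{equation*}

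The next step would be to extract the leading behavior of $w(t,r)$ as $r\to 0^{+}$. Using the convergent expansion $J_{\nu_{0}}(\tau)=\tau^{\nu_{0}}/(2^{\nu_{0}}\Gamma(\nu_{0}+1))+O(\tau^{\nu_{0}+2})$ together with the compact $\rho$-support of $\tilde c_{0}$, I expect the uniform-in-$t$ expansion
\begin{equation*}
w(t,r)=C(t)\,r^{\alpha}+O(r^{\alpha+2}),\qquad C(t)=\frac{1}{2^{\nu_{0}}\Gamma(\nu_{0}+1)}\int_{0}^{\infty}e^{it\rho}\rho^{\nu_{0}+\frac{n}{2}}\tilde c_{0}(\rho)\,d\rho,
\end{equation*}
with $\alpha=\nu_{0}-(n-2)/2\in(-(n-2)/2,0)$. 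Since $\rho^{\nu_{0}+n/2}\tilde c_{0}(\rho)$ is a non-trivial smooth compactly-supported function on $(0,\infty)$, the Paley--Wiener theorem identifies $C(t)$ as the restriction to $\mathbb{R}$ of an entire function of exponential type; hence $C$ is real-analytic and not identically zero, so its zero set is discrete.

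From here the failure of Strichartz follows by a lower bound at the cone tip. For every $t$ outside the discrete zero set of $C$, the pointwise estimate $|w(t,r)|\gtrsim |C(t)|\,r^{\alpha}$ on some $r$-neighborhood of $0$ gives
\begin{equation*}
\|e^{it\sqrt{\LL_{\A,a}}}f\|_{L^{p}(\cone)}^{p}\gtrsim |C(t)|^{p}\|\psi_{0}\|_{L^{p}(\CS)}^{p}\int_{0}^{\delta}r^{\alpha p+n-1}\,dr,
\end{equation*}
and the last integral diverges precisely when $\alpha p+n\leq 0$, i.e.\ when $p\geq p(\alpha)=n/|\alpha|$. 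Consequently $\|e^{it\sqrt{\LL_{\A,a}}}f\|_{L^{p}(\cone)}=+\infty$ on a set of full measure in $t$, which forces $\|e^{it\sqrt{\LL_{\A,a}}}f\|_{L^{q}(\mathbb{R};L^{p}(\cone))}=+\infty$. The Sobolev side is harmless: the isometry property of $\mathcal{H}_{\nu_{0}}$ restricted to the mode $h_{0}(\CS)$ yields $\|\LL_{\A,a}^{s/2}f\|_{L^{2}(\cone)}=\|\rho^{s}\tilde c_{0}(\rho)\|_{L^{2}(\rho^{n-1}d\rho)}$, which is finite for every $s\in\mathbb{R}$ thanks to the compact support of $\tilde c_{0}$ away from the origin.

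The only step that will require real care is the uniform-in-$t$ asymptotics $w(t,r)=C(t)r^{\alpha}+O(r^{\alpha+2})$ as $r\to 0^{+}$: one must rule out that the subleading terms of the Bessel expansion conspire with the oscillatory factor $e^{it\rho}$ to cancel the leading $C(t)r^{\alpha}$ profile at times where $C(t)\neq 0$. This should follow from the fact that the compact $\rho$-support of $\tilde c_{0}$ gives a pointwise bound on the Bessel remainder that is independent of $t$, so that the $o(r^{\alpha})$ error is genuine uniformly in $t$; once that is checked, the remainder of the argument is essentially bookkeeping.
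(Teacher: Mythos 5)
Your proposal is correct and follows essentially the same route as the paper: initial data equal to the lowest eigenfunction $\psi_0$ times the (inverse) Hankel transform $\mathcal{H}_{\nu_0}$ of a bump supported in $[1,2]$, the small-argument expansion $J_{\nu_0}(\tau)=c_{\nu_0}\tau^{\nu_0}+O(\tau^{\nu_0+1})$ forcing the profile $C(t)r^{\alpha}$ near the cone tip, and the divergence of $\int_0^\delta r^{\alpha p+n-1}dr$ for $p\geq p(\alpha)$. The only (harmless) difference is the endgame: the paper restricts to $t\in[0,1/4]$, where $\cos(t\rho)\geq 1/2$ on $\mathrm{supp}\,\chi$ gives a uniform lower bound on $|C(t)|$, and then sends the spatial truncation $\epsilon\to0$, whereas you use Paley--Wiener/real-analyticity of $C(t)$ to conclude $\|e^{it\sqrt{\LL_{{\A},a}}}f\|_{L^p(\cone)}=\infty$ for all $t$ outside a discrete set, with the $t$-uniform remainder bound correctly justifying the leading-order asymptotics.
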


\begin{proof} 
Recall that $\nu_0$ is the positive square roof of the smallest eigenvalue of the positive operator
$P_{{\A}, a}$. By the definition of $p(\alpha)$, we only consider $0<\nu_0<(n-2)/2$, i.e $\alpha<0$. Choose initial data of the form $f=[\mathcal{H}_{\nu_0}\chi](r) \psi_0(\hat{x})$, as a Hankel transform of $\chi$ (see \eqref{hankel}), 
 where $\chi(\rho)\in\CC_c^\infty([1,2])$ takes value in $[0,1]$. Obviously,  $f\in \dot{H}^s_{{\A},a} $ due to the compact support of  $\chi$ and to the boundedness of the Hankel transform on $L^2(r^{n-1}dr)$.
  We prove that \eqref{counter} holds for this choice of $f$. Recalling \eqref{funct} and \eqref{equ:knukdef},  we obtain
 \begin{equation*}
 \begin{split}
e^{it\sqrt{\LL_{{\A},a}}}f&=\psi_{0}(\hat{x})\int_0^\infty  K_{\nu_0}(r_1,r_2) [\mathcal{H}_{\nu_0}\chi](r_2)\; r^{n-1}_2\;dr_2\\
&=\psi_{0}(\hat{x})r_1^{-\frac{n-2}2} \int_0^\infty J_{\nu_0}(r_1\rho)e^{it\rho}\chi(\rho)\rho d\rho,
\end{split}
\end{equation*}
where
\begin{equation*}
  K_{\nu_0}(r_1,r_2)=(r_1r_2)^{-\frac{n-2}2}\int_0^\infty e^{it\rho} J_{\nu_0}(r_1\rho)J_{\nu_0}(r_2\rho) \,\rho d\rho.
\end{equation*}
So we must prove that the quantity
  \begin{equation*}
    Z=\psi_{0}(\hat{x}) r^{-\frac{n-2}2} \int_0^\infty 
    J_{\nu_0}(r\rho)e^{it\rho}\chi(\rho)\rho d\rho
  \end{equation*}
  satisfies
  \begin{equation}\label{est:aim}
    \|Z\|_{L^q(\R;L^p(X))} =\infty,
    \qquad p\geq p(\alpha).
  \end{equation}
  From the series expansion of $J_{\nu_0}(r)$ at 0,
  we know that
  \begin{equation}\label{Bessel1}
    J_{\nu_0}(r)=C_{\nu_0}r^{\nu_0}+S_{\nu_0}(r)
  \end{equation}
  where
  \begin{equation}\label{Bessel3}
    |S_{\nu_0}(r)|\leq C_{\nu_0} r^{1+\nu_0},
    \qquad
    r\in(0,2].
  \end{equation}
Defining 
    \begin{equation*}
    \tilde{Z}= r^{-\frac{n-2}2} \int_0^\infty 
    J_{\nu_0}(r\rho)e^{it\rho}\chi(\rho)\rho d\rho,
  \end{equation*}
then for any $0<\epsilon<1$, we can estimate
  \begin{equation*}
  \begin{split}
    \|Z\|_{L^q(\R;L^p(X))}&\ge
  C \|\psi_0\|_{L^2(Y)}\|\tilde{Z}\|_{L^q_{t}([0,1/2];L^p_{r^{n-1}dr}[\epsilon,1])}\\
    &\ge
    C_{\nu_0}\Big(\|P\|_{L^q_{t}([0,1/4];L^p_{r^{n-1}dr}[\epsilon,1])}
    -
    \|Q\|_{L^q_{t}([0,1/2];L^p_{r^{n-1}dr}[\epsilon,1])}\Big),
    \end{split}
  \end{equation*}
  where
  \begin{equation*}
    P= r^{-\frac{n-2}2} \int_{0}^{\infty}(r \rho)^{\nu_0}e^{it \rho}\chi(\rho)
      \rho d \rho,
    \qquad
    Q= r^{-\frac{n-2}2} \int_{0}^{\infty}S_{\nu_0}(r\rho)e^{it \rho}\chi(\rho)
      \rho d \rho.
  \end{equation*}
  Now, on one hand by \eqref{Bessel3} and \eqref{def:alpha} we have
  \begin{equation*}
  \begin{split}
    \|Q\|_{L^q_{t}([0,1/2];L^p_{r^{n-1}dr}[\epsilon,1])}
   & \lesssim
    \left\|
     r^{-\frac{n-2}2} \int_{0}^{\infty}(r \rho)^{1+\nu_0}\chi(\rho)\rho d \rho
    \right\|_{L^q_{t}([0,1/2];L^p_{r^{n-1}dr}[\epsilon,1])}\\
   & \lesssim \max\{ 1, \epsilon^{1+\alpha+\frac np}\}.
    \end{split}
  \end{equation*}
  On the other hand, we have
  \begin{equation*}
    \|P\|_{L^p([0,1/4];L^q_{rdr}[\epsilon,1])}=
    \left(\int_0^{\frac14}\left( \int_{\epsilon}^1 \left|\int_0^\infty (r\rho)^{\nu_0}e^{
    it\rho}\chi(\rho)\rho d\rho\right|^p r^{-\frac{(n-2)p}2}r^{n-1} dr\right)^{q/p}dt\right)^{1/q}
  \end{equation*}
  and by the assumption $p\geq p(\alpha)=\frac{n}{|\alpha|}$
  \begin{equation*}
    \gtrsim
    \left(\int_0^{\frac14}
    \left|\int_0^\infty \rho^{\nu_0}
    e^{it\rho}\chi(\rho)\rho d\rho\right|^{q}dt\right)^{1/q}
    \times 
    \begin{cases}
      \epsilon^{\alpha+\frac np} &
      \text{if}\quad p |\alpha|>n\\
      \ln\epsilon &
      \text{if}\quad p|\alpha|=n
    \end{cases}
    \gtrsim C
    \begin{cases}
      \epsilon^{\alpha+\frac np} &
      \text{if}\quad p|\alpha|>n\\
      \ln\epsilon &
      \text{if}\quad p|\alpha|=n.
    \end{cases}
  \end{equation*}
  In the last inequality, we have used the fact that 
  $\cos(\rho t)\geq 1/2$ for $t\in [0, 1/4]$ and 
  $\rho\in [1,2]$, so that
  \begin{equation}
  \begin{split}
  \left|\int_0^\infty \rho^{\nu_0}e^{
  it\rho}\chi(\rho)\rho d\rho\right|\geq \frac12\int_0^\infty \rho^{\nu_0}\chi(\rho)\rho d\rho\geq c.
  \end{split}
  \end{equation}
  Summing up, if $p>p(\alpha)$, we obtain 
  \begin{equation*}
    \Big\|r^{-\frac{n-2}2}\int_0^\infty 
      J_{\nu_0}(r\rho)e^{it\rho}\chi(\rho)\rho d\rho\Big\|
      _{L^q(\R;L^p(X))} \geq
      c\epsilon^{\alpha+\frac np} -C \max\{ 1, \epsilon^{1+\alpha+\frac np}\} 
      \to +\infty \quad \text{as}\, \epsilon\to 0,
  \end{equation*}
  while if $p|\alpha|=n$, we have
  \begin{equation*}
    \Big\| r^{-\frac{n-2}2}\int_0^\infty 
      J_{\nu_0}(r\rho)e^{it\rho}\chi(\rho)\rho d\rho
    \Big\|_{L^q(\R;L^p(X))}
    \ge
    c\ln\epsilon-C\max\{ 1, \epsilon\}\to +\infty \quad \text{as}\, \epsilon\to 0,
  \end{equation*}
  and this implies \eqref{est:aim}.
  
 \end{proof}

\section{The proof without NREC}
\label{sec:without-NREC}
In the previous section, we have proved Theorem \ref{thm:dispersive} under the NREC condition, and then proved Theorem \ref{thm:stri-wave}. 
In this section we remove the NREC condition by estimating the entire left hand side of \eqref{est:dispersive'} directly, instead of using the NREC to say the contribution of it near $s=\pi$ and $I_R$ is almost trivial.
 
To this end, we introduce a smooth cutoff function $\chi_\delta\in C^\infty([0,\pi])$ with small $0<\delta\ll 1$ such that
 \begin{equation}
 \chi_\delta(s)=
 \begin{cases} 1, \quad s\in[0, \delta];\\
 0, \quad s\in[2\delta,\pi],
 \end{cases}
 \end{equation}
 and set $\chi^c_\delta(s)=1- \chi_\delta(s)$.
We now decompose the left hand side of \eqref{est:dispersive'} into three terms:
\begin{equation}\label{S1}
\begin{split}
I_G(t; x, y):=\frac{(r_1r_2)^{-\frac{n-2}2}}{\pi}Q_j\int_0^\pi  W(t, {\bf m}_s)  \chi^c_\delta(\pi-s)  \cos(s\sqrt{P}) ds \, Q_j^*,
\end{split}
\end{equation}
\begin{equation}\label{S2}
\begin{split}
I_{GD}(t; x, y):&=\frac{(r_1r_2)^{-\frac{n-2}2}}{\pi}Q_j \Big(\int_0^\pi  W(t, {\bf m}_s) \chi_\delta(\pi-s) \cos(s\sqrt{P}) ds\\
&-\sin(\pi\sqrt{P})\int_0^\infty  W(t, {\bf n}_s) \chi_\delta(s) e^{-s\sqrt{P}} ds\Big) Q_j^*,
\end{split}
\end{equation}
and
\begin{equation}\label{S3}
\begin{split}
I_{D}(t; x,y):=-(r_1r_2)^{-\frac{n-2}2}Q_j\frac{\sin(\pi\sqrt{P})}{\pi}\int_0^\infty  W(t, {\bf n}_s)  \chi^c_\delta(s) e^{-s\sqrt{P}} ds \,Q_j^*.
\end{split}
\end{equation}
Here $G$ stands for geometric propagation and $D$ stands for diffraction. We are not using $I_P$ here to avoid confusion with $I_P$ in the previous approach. But this $I_G$-term can be estimated using the same argument for $I_P$ in Section~\ref{subsec:main-proof-with-NREC}. Also, this $I_D$-term can be estimated in the same manner as the $\mathsf{K}_2$-term of \eqref{eq:Poisson-X-K1-K2-decomposition} in Section~\ref{subsec:main-proof-with-NREC} as well. So we are only left to estimate the $I_{GD}$-term 
\begin{equation}\label{est:IGD}
|I_{GD}(t; x, y)|\lesssim (1+|t|)^{-(n-1)/2},
\end{equation}
and this is the goal of the rest of this section. This part is residual under the NREC, but now it is not residual anymore as we are removing that condition.

We recall its expression here:
\begin{equation*}
\begin{split}
I_{GD}(t; x, y)&=\frac{(r_1r_2)^{-\frac{n-2}2}}{\pi}Q_j\Big(\int_0^\pi  W(t, {\bf m}_s)  \chi_\delta(\pi-s) \cos(s\sqrt{P}) ds\\
&-\sin(\pi\sqrt{P})\int_0^\infty  W(t, {\bf n}_s) \chi_\delta(s) e^{-s\sqrt{P}} ds\Big)\,Q_j^*.
\end{split}
\end{equation*}

For this term,  as mentioned in \cite{JZ2}, there are two issues to overcome. 
The first one is that, as $s \to \pi$, if there is no localizer $Q_j$, the kernel of $\cos(s\sqrt{P})(\hat{x},\hat{y})$ will not have the oscillatory integral form we used in \cite[Lemma~3.1, Lemma~3.3]{JZ} when $(\hat{x},\hat{y})$ is a pair of conjugate points due to the degeneracy of the projection from the propagating Lagrangian $\mathscr{L}_\pm$ in \eqref{eq: propagating lagrangians} to the base manifold.
This issue can be overcome by investigating $Q_j\cos(s\sqrt{P})Q^*_j(\hat{x},\hat{y})$ instead, in which $(\hat{x},\hat{y})$ cannot be a pair conjugate point due to the support of $Q_j$.
The second issue is that this term will produce boundary terms at $s=\pi$ which does not vanish when integrating by parts. 
In contrast to $I_{G}(t; x, y)$, we need to take the boundary term from the first part at $s=\pi$ and the second part at $s=0$ into consideration.
The fortunate fact is that the Taylor expansion of the integrand of the first term at $s=\pi$ differ with that of the second term at $s=0$ by a sign, which makes their contributions in the process of integration by parts cancel each other.
Since we are treating the wave operator rather than the Schr\"odinger operator as we did in \cite{JZ2} and we have no known results that completely covers the current setting, we have to estimate it by hand again.

Before estimating this part, we use the integration by parts to obtain the following lemma that we will apply to the amplitude of $I_{GD}(t; x, y)$. See \cite[Lemma~4.4]{JZ} for details of the computation.

\begin{lemma}\label{lem:in-parts} For any $m\in \mathbb{N}$, we have the following identity:
\begin{equation}\label{in-parts}
\begin{split}
&\frac1{\pi} \int_0^\pi W(t, {\bf m}_s) \chi_\delta(\pi-s)\cos(\nu s) ds
-\frac{\sin(\nu \pi )}{\pi} \int_0^\infty W(t, {\bf n}_s) \chi_\delta(s) e^{-s\nu} ds\\
&=\frac{(-1)^{m}}{\pi}\int_0^\pi \Big(\frac{\partial}{\partial s}\Big)^{2m}\big( W(t, {\bf m}_s) \chi_\delta(\pi-s)\big)\big) \frac{ \cos(\nu s)}{\nu^{2m}} ds
\\&\qquad-\frac{\sin(\nu \pi)}{\pi}\int_0^\infty \Big(\frac{\partial}{\partial s}\Big)^{2m}\big( W(t, {\bf n}_s)\chi_\delta(s)\big) \frac{e^{-s\nu}}{\nu^{2m}} ds.
\end{split}
\end{equation}
\end{lemma}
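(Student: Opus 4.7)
The plan is to perform $2m$ successive integrations by parts on each of the two integrals on the left-hand side of \eqref{in-parts} and then verify that all resulting boundary terms cancel in the difference, so that only the interior integrals claimed on the right survive. Writing $F(s)=W(t,{\bf m}_s)\chi_\delta(\pi-s)$ and $G(s)=W(t,{\bf n}_s)\chi_\delta(s)$, each pair of IBPs against $\cos(\nu s)$ yields a factor $-\nu^{-2}$ with a sign reversal, while each IBP against $e^{-s\nu}$ yields a factor $\nu^{-1}$. A standard induction produces
\begin{equation*}
\int_0^\pi F(s)\cos(\nu s)\,ds = \sum_{k=0}^{m-1}(-1)^k\Big[\tfrac{F^{(2k)}(s)\sin(\nu s)}{\nu^{2k+1}}+\tfrac{F^{(2k+1)}(s)\cos(\nu s)}{\nu^{2k+2}}\Big]_0^\pi + \tfrac{(-1)^m}{\nu^{2m}}\int_0^\pi F^{(2m)}(s)\cos(\nu s)\,ds,
\end{equation*}
\begin{equation*}
\int_0^\infty G(s)e^{-s\nu}\,ds = \sum_{k=0}^{2m-1}\tfrac{G^{(k)}(0)}{\nu^{k+1}} + \tfrac{1}{\nu^{2m}}\int_0^\infty G^{(2m)}(s)e^{-s\nu}\,ds,
\end{equation*}
with the boundary contributions at $s=0$ for $F$ and at $s=\infty$ for $G$ vanishing by the support properties of $\chi_\delta$ and the decay of $e^{-s\nu}$.

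The identity \eqref{in-parts} then reduces to showing that, after multiplying the two lines by $1/\pi$ and $-\sin(\nu\pi)/\pi$ respectively, the surviving boundary terms (at $s=\pi$ and $s=0$) cancel. Here the key observation is that $W(t,v)$ depends on $v$ only through $|v|$, together with the algebraic identity
\begin{equation*}
|{\bf m}_{\pi+v}|^2 = r_1^2+r_2^2+2r_1r_2\cos v = r_1^2+r_2^2+2r_1r_2\cosh(iv) = |{\bf n}_{iv}|^2,
\end{equation*}
which yields the analytic-continuation relation $W(t,{\bf m}_{\pi+v}) = W(t,{\bf n}_{iv})$ in a neighborhood of $v=0$. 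Since $\chi_\delta\equiv 1$ with trivial jet at the origin, the Leibniz rule gives $F^{(k)}(\pi)=\partial_s^k W(t,{\bf m}_s)|_{s=\pi}$ and $G^{(k)}(0)=\partial_s^k W(t,{\bf n}_s)|_{s=0}$, so differentiating the continuation identity $k$ times in $v$ at $v=0$ produces $F^{(k)}(\pi) = i^k G^{(k)}(0)$. Reality of both sides then forces $F^{(2j+1)}(\pi)=G^{(2j+1)}(0)=0$ while $F^{(2j)}(\pi)=(-1)^j G^{(2j)}(0)$.

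Feeding these jet identities into the boundary sums, the $\cos(\nu\pi)$-weighted contributions from the first line drop out (their coefficients being odd-order derivatives of $F$ at $\pi$), the remaining $\sin(\nu\pi)$-weighted pieces become $(-1)^k F^{(2k)}(\pi)/\nu^{2k+1} = G^{(2k)}(0)/\nu^{2k+1}$, and these exactly match the even-indexed boundary terms from the second line (the odd-indexed ones being zero), giving a clean pairwise cancellation in the difference. Collecting the interior integrals then yields \eqref{in-parts}. The step I expect to be the main obstacle is precisely this matching of boundary jets: one must recognize that the transition $\cos s \mapsto \cosh s$ in passing from ${\bf m}_s$ to ${\bf n}_s$, together with the shift $s\mapsto \pi-s$, is the analytic continuation $v\mapsto iv$, and extract from it the alternating-sign identity $F^{(2j)}(\pi)=(-1)^j G^{(2j)}(0)$; without this observation the boundary terms would not visibly cancel and \eqref{in-parts} could not be closed in the clean form stated.
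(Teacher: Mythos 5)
Your proof is correct and takes essentially the same route as the paper, which defers the computation to \cite[Lemma~4.4]{JZ}: repeated integration by parts in $s$, with the boundary terms at $s=\pi$ (first integral) and $s=0$ (second integral) cancelling because the jets are matched through $|{\bf m}_{\pi+v}|^2=r_1^2+r_2^2+2r_1r_2\cos v$ versus $|{\bf n}_s|^2=r_1^2+r_2^2+2r_1r_2\cosh s$, i.e.\ the substitution $s\mapsto iv$, exactly as the paper indicates. One small repair: $W(t,\cdot)$ is complex-valued (because of the factor $e^{it|\xi|}$), so the vanishing of the odd-order derivatives $F^{(2j+1)}(\pi)$ and $G^{(2j+1)}(0)$ should not be argued via ``reality of both sides'' but via evenness — $W$ is radial and both $|{\bf m}_{\pi+v}|$ and $|{\bf n}_s|$ are even functions of $v$ and $s$ respectively — which also directly yields $F^{(2j)}(\pi)=(-1)^jG^{(2j)}(0)$ at the level of Taylor jets without invoking analyticity.
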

 

Exploiting \eqref{eq:wave-poisson-jet-match}, this can be used to derive the same type identity for microlocalized version of parametrices. 
We first define the frequency localized version of $K_{\pm,N}$ and $\tilde{K}_{\pm,N}$. To avoid notations with too many sub-indices, we will fix and abbreviate the index $N$, which is the number of terms in the asymptotic sum in the parametrix construction when we have frequency localization. 
For $-1 \leq A,B \leq \infty$ and $\mk{d} \in \mk{D}(\hat{x},\hat{y})$, inspired by \eqref{K-pm-N} and \eqref{eq:Poisson-kernel-2} respectively, we set
\begin{equation}
\begin{split}
&K_{\pm,\mk{d}, [A,B]}(s,\hat{x},\hat{y}) \\
&=  \sum_{\varsigma = \pm} \int_0^\infty \chi_{[A,B]}(\rho) b_{\varsigma}(\rho \mk{d}) e^{\varsigma i \rho \mk{d}} a_{\pm,\mk{d}}(s, \hat{x},\hat{y}; \rho) e^{\pm  is\rho} \rho^{n-2} d\rho,
\end{split}
\end{equation}
where $\chi_{[A,B]}$ is a smooth cut-off function supported in  $[A,B]$ (we will be more specific when we apply this construction) and 
\begin{equation}
\begin{split}
&\tilde{K}_{\pm, \mk{d}, [A,B]}(s,\hat{x},\hat{y}) \\
&=  \sum_{\varsigma = \pm}\int_0^\infty \chi_{[A,B]}(\rho) b_{\varsigma}(\rho \mk{d}) e^{\varsigma i \rho \mk{d}} \tilde{a}_{\pm,\mk{d}}(s, \hat{x},\hat{y}; \rho) e^{(-s \pm  i\pi)\rho } \rho^{n-2} d\rho.
\end{split}
\end{equation}

Noticing that 
\begin{equation*}
\sin(\pi\sqrt{P})e^{-s\sqrt{P}}=\Im \big(e^{(-s+i\pi))\sqrt{P}}\big),
\end{equation*}
we define the corresponding frequency localized version of $I_{GD}$ to be
\begin{equation}\label{I-GD-L}
\begin{split}
&I_{GD,[A,B]}(t; x, y)\\
&:= \sum_{\mk{d} \in \mk{D}(\hat{x},\hat{y})} (r_1r_2)^{-\frac{n-2}2} \Big( \frac{1}{\pi}\int_0^\pi  
W(t, {\bf m}_s) \chi_\delta(\pi-s)  \frac{1}{2}\Big(K_{+, \mk{d}, [A,B]}(s,\hat{x},\hat{y})+K_{-, \mk{d}, [A,B]}(s,\hat{x},\hat{y})\Big)  ds\\
&-\frac{1}{\pi}\int_0^\infty  W(t, {\bf n}_s) \chi_\delta(s)
\frac{1}{2i}\Big(\tilde{K}_{+, \mk{d}, [A,B]}(s,\hat{x},\hat{y})-\tilde{K}_{-, \mk{d}, [A,B]}(s,\hat{x},\hat{y})\Big) ds \Big).
\end{split}
\end{equation}

\begin{proposition} \label{prop:localized-GD-IBP} For any $m \in \mathbb{N}$, $I_{GD,[A,B]}$ defined in \eqref{I-GD-L} can be rewritten as
\begin{equation} \label{eq:localized-GD-IBP}
\begin{split}
&I_{GD,[A,B]}(t;x, y)\\
&=\sum_{\mk{d} \in \mk{D}(\hat{x},\hat{y})} (r_1r_2)^{-\frac{n-2}2} \Big[\frac{1}{\pi}\int_0^\pi  
P_m(t,r_1,r_2,s) \frac{1}{2}\Big(K_{+,\mk{d}, [A,B],m}(s,\hat{x},\hat{y})+K_{-,\mk{d}, [A,B],m}(s,\hat{x},\hat{y})\Big)  ds\\
&-\frac{1}{\pi}\int_0^\infty  Q_m(t,r_1,r_2,s) \frac{1}{2i}\Big((\tilde{K}_{+, \mk{d}, [A,B],m}(s,\hat{x},\hat{y})-\tilde{K}_{-,\mk{d}, [A,B],m}(s,\hat{x},\hat{y})\Big) ds\Big],
\end{split}
\end{equation}
where $P_m(t,r_1,r_2,s)$ is a (linear) combination of derivatives of $W(t, {\bf m}_s) \chi_\delta(\pi-s)$ with respect to $s$ up to $2m$-th order and 
$Q_m(t,r_1,r_2,s)$ is a (linear) combination of derivatives of $W(t, {\bf n}_s) \chi_\delta(s)$ with respect to $s$ up to $2m$-th order, and most importantly, for $\mk{d} \in \mk{D}(\hat{x},\hat{y})$, $K_{\pm,\mk{d}, [A,B],m}(s,\hat{x},\hat{y})$, $\tilde{K}_{\pm, \mk{d}, [A,B],m}(s,\hat{x},\hat{y})$ are defined to be
\begin{align*}
\begin{split}
K_{\pm,\mk{d}, [A,B],m}(s,\hat{x},\hat{y}) = &  \sum_{\varsigma = \pm} \int_0^\infty \chi_{[A,B]}(\rho) b_{\varsigma}(\rho \mk{d}) e^{\varsigma i \rho \mk{d}} a_{\pm,m,\mk{d}}(s, \hat{x},\hat{y}; \rho) e^{\pm  is\rho} \rho^{n-2} d\rho,\\
\tilde{K}_{\pm, \mk{d}, [A,B],m}(s,\hat{x},\hat{y}) = &\sum_{\varsigma = \pm}\int_0^\infty \chi_{[A,B]}(\rho) b_{\varsigma}(\rho \mk{d}) e^{\varsigma i \rho \mk{d}} \tilde{a}_{\pm,m,\mk{d}}(s, \hat{x},\hat{y}; \rho) e^{(-s \pm  i\pi)\rho } \rho^{n-2} d\rho,
\end{split}
\end{align*}
where $a_{\pm,m, \mk{d}}, \tilde{a}_{\pm,m,\mk{d}} \in S^{-2m}$ and the symbol order is in terms of $\rho$.
\end{proposition}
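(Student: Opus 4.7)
My plan is to derive \eqref{eq:localized-GD-IBP} by integrating by parts $2m$ times in the $s$-variable on each of the four inner oscillatory integrals composing $I_{GD,[A,B]}$ (wave $\pm$ and Poisson $\pm$). The IBPs are driven by the identities $e^{\pm is\rho}=\pm(i\rho)^{-1}\partial_s e^{\pm is\rho}$ on the wave side and $e^{(-s\pm i\pi)\rho}=-\rho^{-1}\partial_s e^{(-s\pm i\pi)\rho}$ on the Poisson side, so each step transfers one $s$-derivative onto the full $s$-dependent amplitude $(W\chi_\delta)\cdot a_{\pm,\mk{d}}$ (respectively, $(W\chi_\delta)\cdot\tilde{a}_{\pm,\mk{d}}$) at the cost of a $\rho^{-1}$ factor. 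After $2m$ steps the accumulated $\rho^{-2m}$ factor, harmless because $\chi_{[A,B]}$ localises to $\rho\geq A\geq 1$, is absorbed together with $\partial_s^{2m-k}a_{\pm,\mk{d}}$ into a new $\rho$-amplitude that lies manifestly in $S^{-2m}$. A Leibniz expansion $\partial_s^{2m}[(W\chi_\delta)\,a]=\sum_{k=0}^{2m}\binom{2m}{k}\partial_s^k(W\chi_\delta)\,\partial_s^{2m-k}a$ then reorganises the outcome into a finite sum of the claimed form: the $s$-derivatives of $W(t,{\bf m}_s)\chi_\delta(\pi-s)$ (respectively $W(t,{\bf n}_s)\chi_\delta(s)$) assemble into $P_m$ (respectively $Q_m$), and the residual $\rho$-integrals become $K_{\pm,\mk{d},[A,B],m}$ (respectively $\tilde{K}_{\pm,\mk{d},[A,B],m}$) with $S^{-2m}$ amplitudes.

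The essential content is the cancellation of the boundary terms produced at each IBP step. Those at $s=0$ on the wave side vanish because $\chi_\delta(\pi-s)$ and all its $s$-derivatives are zero there (since $\pi>2\delta$ and $\chi_\delta$ is flat near $0$), and those at $s=\infty$ on the Poisson side vanish thanks to the support of $\chi_\delta(s)$ and the decay of $e^{-s\rho}$. The only potentially surviving boundary contributions therefore sit at $s=\pi$ on the wave side and $s=0$ on the Poisson side. A direct sign computation, accounting for the prefactors $\tfrac12$ and $-\tfrac1{2i}$ in \eqref{I-GD-L}, shows that for each $k\in\{0,\ldots,2m-1\}$ and each sign the pair will cancel provided
\begin{equation*}
\partial_s^k\!\bigl[W(t,{\bf n}_s)\chi_\delta(s)\,\tilde{a}_{\pm,\mk{d}}\bigr]\big|_{s=0}
\;=\; i^{k}\,\partial_s^k\!\bigl[W(t,{\bf m}_s)\chi_\delta(\pi-s)\,a_{\pm,\mk{d}}\bigr]\big|_{s=\pi}.
\end{equation*}
By Leibniz this reduces to three matching statements: (a) $\chi_\delta(\pi-s)|_{s=\pi}=\chi_\delta(s)|_{s=0}=1$ with all higher $s$-derivatives at those points zero; (b) the hypothesis \eqref{eq:wave-poisson-jet-match} providing $\partial_s^k\tilde{a}_{\pm,\mk{d}}(0,\cdot)=i^{k}\partial_s^k a_{\pm,\mk{d}}(\pi,\cdot)$; and (c) the analogous geometric jet-matching $\partial_s^k W(t,{\bf n}_s)|_{s=0}=i^{k}\partial_s^k W(t,{\bf m}_s)|_{s=\pi}$.

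I expect (c) to be the main obstacle. The plan to establish it is to use \eqref{bf-m}--\eqref{bf-n}, which give $|{\bf m}_{\pi-\sigma}|^2=r_1^2+r_2^2+2r_1r_2\cos\sigma=|{\bf n}_{i\sigma}|^2$ via $\cos=\cosh(i\,\cdot\,)$, combined with the rotation-invariance of $W(t,\cdot)$. Expanding in even powers of $\sigma$ respectively $s$ shows that the Taylor coefficients of $W(t,{\bf m}_{\pi-\sigma})$ are $(-1)^k=i^{2k}$ times those of $W(t,{\bf n}_s)$ at the same order, while all odd-order $s$-derivatives vanish on both sides by the parity of $\cos$ and $\cosh$; together these yield (c). Conceptually, (c) is the geometric heart of the proposition: it says that the endpoint of the propagating flow at $s=\pi$ is the analytic continuation under $s\mapsto i\sigma$ of the base point of the diffractive/Poisson parameter at $s=0$, and this is precisely the matching responsible for the cancellation of the endpoint-propagation and diffraction contributions---the ``region II'' / ``region III'' interface cancellation of \cite{CT1,CT2}. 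Once (a)--(c) are combined into the required full jet-matching, every boundary term annihilates order by order and only the interior integral survives, giving \eqref{eq:localized-GD-IBP}.
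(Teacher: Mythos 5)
Your proposal follows essentially the same route as the paper's own proof: integrate by parts in $s$ via $e^{\pm is\rho}=\pm(i\rho)^{-1}\partial_s e^{\pm is\rho}$ and $e^{(-s\pm i\pi)\rho}=-\rho^{-1}\partial_s e^{(-s\pm i\pi)\rho}$, note that the only surviving boundary contributions sit at $s=\pi$ on the wave side and $s=0$ on the Poisson side, and cancel them order by order using the amplitude jet matching \eqref{eq:wave-poisson-jet-match} together with the jet matching of the $W$-factors; the latter is exactly what the paper delegates to (the proof of) Lemma~\ref{lem:in-parts}, i.e.\ \cite[Lemma~4.4]{JZ}, and your verification of it via $|{\bf m}_{\pi-\sigma}|^2=r_1^2+r_2^2+2r_1r_2\cos\sigma$, $\cosh(i\sigma)=\cos\sigma$ and parity (all odd $s$-derivatives of $W(t,{\bf m}_s)$ at $s=\pi$ and of $W(t,{\bf n}_s)$ at $s=0$ vanish) is the intended argument. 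Your Leibniz reorganisation of $\partial_s^{2m}$ into $P_m,Q_m$ times $S^{-2m}$-amplitudes, and the vanishing of the other boundary ends by the support of $\chi_\delta$, also match the paper.

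One bookkeeping caveat, which you share with the printed form of \eqref{eq:wave-poisson-jet-match}: since $e^{i\pi\rho}$ and $e^{-i\pi\rho}$ are independent, branch-by-branch cancellation requires
\begin{equation*}
\partial_s^k\bigl[W(t,{\bf n}_s)\chi_\delta(s)\,\tilde a_{\pm,\mk d}\bigr]\big|_{s=0}
=(\pm i)^k\,\partial_s^k\bigl[W(t,{\bf m}_s)\chi_\delta(\pi-s)\,a_{\pm,\mk d}\bigr]\big|_{s=\pi},
\end{equation*}
i.e.\ the sign of the imaginary unit must follow the branch, consistent with the continuation picture $\tilde a_{\pm,\mk d}(s)\sim a_{\pm,\mk d}(\pi\pm is)$. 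If one imposes $i^k$ on \emph{both} branches, as written in your condition and literally in \eqref{eq:wave-poisson-jet-match}, the even-order boundary terms still cancel (there $i^k=(\pm i)^k$), but at odd $k$ the $-$ branch leaves an uncancelled term proportional to $e^{-i\pi\rho}\rho^{-k-1}\partial_s^k(\cdot)|_{s=\pi}$, because the odd $s$-derivatives of the amplitudes at the endpoints need not vanish even though those of $W\chi_\delta$ do. With the jets read as $(\pm i)^k$, your reduction to (a)--(c) and the order-by-order cancellation go through, and the surviving interior integrals reassemble into \eqref{eq:localized-GD-IBP} exactly as you describe.
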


\begin{proof}
This follows from integrating by parts in $s$ by writing 
\begin{equation*}
e^{is\rho} = i^{-1}\rho^{-1}\partial_s(e^{is\rho}), 
\end{equation*}
and our amplitudes are supported in $\rho \geq 1$.
The boundary terms from two parts cancels each other as in the proof of Lemma~\ref{lem:in-parts}, in combination with \eqref{eq:wave-poisson-jet-match}, which deals with terms having derivatives falling on 
$a_{\pm,\mk{d}}$ and $\tilde{a}_{\pm,\mk{d}}$.
\end{proof}

Using these identities for frequency localization, the modified Hadamard parametrices in Lemma \ref{lemma: localized-halfwave} for half-wave operator and  in Lemma \ref{lemma: localized-poisson}  for the Poisson-wave operator, we split the kernel $I_{GD}(t; x, y)$ into three parts
\begin{equation}
\begin{split}
I_{GD}(t; x, y)=I^{<\kappa}_{GD}(t;x, y)+I^{>\kappa}_{GD}(t;x, y)+R(t; x, y),
\end{split}
\end{equation}
where
\begin{equation}
I^{<\kappa}_{GD}(t;x, y) = I_{GD,[-1,2\kappa]}(t;x, y),
\quad I^{>\kappa}_{GD}(t;x, y) = I_{GD,[\kappa,\infty]}(t;x, y),
\end{equation}
are given in \eqref{I-GD-L} and we choose cut-off functions such that $\chi_{[-1,2\kappa]}+\chi_{[\kappa,\infty]}$ is identically $1$ on $[0,\infty)$. \footnote{The choice of $\chi_{[-1,2\kappa]}$ on $[-1,0)$ is unimportant, as long as it is smooth.}
Here $R(t; x, y)$ contains contributions from smooth terms $R_N(s; \hat{x}, \hat{y})$ and $\tilde{R}_N(s; \hat{x}, \hat{y})$ in Lemma~\ref{lemma: parametrix 1} and Lemma~\ref{lemma: localized-poisson}:
\begin{equation}
\begin{split}
R(t; x, y)=(r_1r_2)^{-\frac{n-2}2}&\Big(\int_0^\pi  W(t, {\bf m}_s) \chi_\delta(\pi-s) R_N(s; \hat{x}, \hat{y}) \,ds\\
&\quad+\int_0^\infty W(t, {\bf n}_s) \chi_\delta(s) \tilde{R}_N(s; \hat{x}, \hat{y}) ds\Big).
\end{split}
\end{equation}

Explicitly, using \eqref{eq:localized-GD-IBP}, for any $m\geq0$, we have 
\begin{equation} \label{I>}
\begin{split}
&I^{>\kappa}_{GD}(t;x, y)\\
=&\sum_{\mk{d} \in \mk{D}(\hat{x},\hat{y})} (r_1r_2)^{-\frac{n-2}2} \Big[\frac{1}{\pi}\int_0^\pi  
P_m(t,r_1,r_2,s) \frac{1}{2}\Big(K_{+,\mk{d}, [\kappa,+\infty],m}(s,\hat{x},\hat{y})+K_{-,\mk{d}, [\kappa,+\infty],m}(s,\hat{x},\hat{y})\Big)  ds\\
&-\frac{1}{\pi}\int_0^\infty  Q_m(t,r_1,r_2,s) \frac{1}{2i}\Big((\tilde{K}_{+, \mk{d}, [\kappa,+\infty],m}(s,\hat{x},\hat{y})-\tilde{K}_{-,\mk{d}, [\kappa,+\infty],m}(s,\hat{x},\hat{y})\Big) ds\Big],
\end{split}
\end{equation}
and
\begin{equation} \label{I<}
\begin{split}
&I^{<\kappa}_{GD}(t;x, y) \\
=&\sum_{\mk{d} \in \mk{D}(\hat{x},\hat{y})} (r_1r_2)^{-\frac{n-2}2}\Big[\frac{1}{\pi}\int_0^\pi  
P_m(t,r_1,r_2,s) \frac{1}{2}\Big(K_{+,\mk{d}, [-1,2\kappa],m}(s,\hat{x},\hat{y})+K_{-,\mk{d}, [-1,2\kappa],m}(s,\hat{x},\hat{y})\Big)  ds\\
&-\frac{1}{\pi}\int_0^\infty  Q_m(t,r_1,r_2,s) \frac{1}{2i}\Big((\tilde{K}_{+, \mk{d}, [-1,2\kappa],m}(s,\hat{x},\hat{y})-\tilde{K}_{-,\mk{d}, [-1,2\kappa],m}(s,\hat{x},\hat{y})\Big) ds\Big].
\end{split}
\end{equation}
In the following argument, we will take  $m$ large enough to treat with the high frequency term $I^{>\kappa}_{GD}(t;x, y)$, while take $m=0$ to estimate $I^{<\kappa}_{GD}(t;x, y)$, which equals to 
\begin{equation} \label{I<'}
\begin{split}
&I^{<\kappa}_{GD}(t;x, y) 
=\sum_{\mk{d} \in \mk{D}(\hat{x},\hat{y})} (r_1r_2)^{-\frac{n-2}2}\\&\Big[\frac{1}{\pi}\int_0^\pi  
W(t, {\bf m}_s) \chi_\delta(\pi-s) \sum_{\varsigma = \pm} \int_0^\infty \chi_{[-1,2\kappa]}(\rho) b_{\varsigma}(\rho \mk{d}) e^{\varsigma i \rho \mk{d}} a_{\mk{d}}(s, \hat{x},\hat{y}; \rho) \cos(s\rho) \rho^{n-2} d\rho  ds\\
&-\frac{1}{\pi}\int_0^\infty  W(t, {\bf n}_s) \chi_\delta(s) \sum_{\varsigma = \pm}\int_0^\infty \chi_{[-1,2\kappa]}(\rho) b_{\varsigma}(\rho \mk{d}) e^{\varsigma i \rho \mk{d}} \tilde{a}_{\mk{d}}(s, \hat{x},\hat{y}; \rho) e^{(-s \pm  i\pi)\rho } \rho^{n-2} d\rho ds\Big].
\end{split}
\end{equation}

\vspace{0.2cm}


Now we return to estimate $I_{GD}(t;x, y)$. The first term in $R(t; x, y)$ can be estimated  using \eqref{est:RN} associated with $R_{N}$ and ${\bf m}_s$.
Similar to the case of \eqref{est:RN}, we estimate the second term associated with $\tilde{R}_{N}$ and ${\bf n}_s$  by using \eqref{est:W-n} with $m=0$:
 \begin{equation}\label{est:RN'}
\begin{split}
&(r_1r_2)^{-\frac{n-2}2}\Big|\int_0^\infty W(t, {\bf n}_s) \chi_\delta(s) \tilde{R}_N(s; \hat{x}, \hat{y}) ds\Big| \\
&\lesssim (r_1r_2)^{-\frac{n-2}2}\int_0^\infty \big(1+|t\pm |{\bf n}_s||\big)^{-N}(1+|{\bf n}_s|)^{-1/2}\chi_\delta(s)  \,ds.
  \end{split}
 \end{equation}

If either $|t|\ll |{\bf n}_s|$ or $|t|\gg |{\bf n}_s|$, we have $|t\pm |{\bf n}_s||\geq c|t|$ for some constant $c$, and then we choose $N$ above large enough to obtain
\begin{equation*}
\eqref{est:RN'}\lesssim (r_1r_2)^{-\frac{n-2}2} (1+|t|)^{-N}\lesssim  (1+|t|)^{-\frac{n-1}2}
\end{equation*}
where we used $r_1, r_2\gg1$ in the last inequality. 
Otherwise, $|t|\sim |{\bf n}_s|$, then from \eqref{fact:ns} and \eqref{assu:r}, we see $|t|\sim r_1\gg1$.
So we obtain
\begin{equation*}
\eqref{est:RN'}\lesssim (r_1r_2)^{-\frac{n-2}2} (1+|t|)^{-1/2}\lesssim (1+|t|)^{-(n-1)/2}.
\end{equation*}

From now on, we focus on estimating $I^{<\kappa}_{GD}(t;x, y)$ and $I^{>\kappa}_{GD}(t;x, y)$.
In these terms, since $\pi-\delta\leq s\leq\pi$, combining with \eqref{assu:r} and \eqref{fact:ms} (we assumed either $r_1 \gg r_2$ or $r_1 \sim r_2$), we always have 
\begin{equation}\label{fact:ms'}
|{\bf m}_s|\sim r_1.
\end{equation}
Similarly, as in the proof of the uniform boundedness of \eqref{S1}, we only consider the case $\mk{d} = d_h(\hat{x}, \hat{y})$ and omit the $\mk{d}$-index. Terms with all other $\mk{d}$ can be bounded in the same (in fact simpler) way.
For this $d_h$-term, we consider two cases that $d_h(\hat{x}, \hat{y})\leq C_1 r_2^{-\frac12}$ and $d_h(\hat{x}, \hat{y})\geq C_1 r_2^{-\frac12}$ where $C_1\gg1$. In each case, we choose different $\kappa$ in the argument. In contrast to the part in which we try to bound \eqref{S1}, the fact \eqref{fact:ms'} enables us to consider this term without dividing into two cases $t\geq1$ and $t\leq 1$. \vspace{0.2cm}

\textbf{Case 1}. $d_h(\hat{x}, \hat{y})\leq C_1 r_2^{-\frac12}$. In this case, we take $\kappa=2r_2^{\frac12}$. We first consider $I^{<\kappa}_{GD}(t; x, y)$. For this low frequency term, since we do not do integration by parts in $ds$, the boundary issue mentioned above will not be involved, so we use \eqref{I<} with $m=0$. For the first term of \eqref{I<'},  we modify the argument for \eqref{osi-1<b} due to the fact that \eqref{fact:ms'}.
 In this case, we take $J$ large enough so that $2^{J-1}\geq 2C_1$ and we want to show that\footnote{In the following argument, we omit the $\pm, \mk{d}$-index of the amplitudes $a, \tilde{a}$, and left the $m$-index to denote that  $a_m, \tilde{a}_m\in S^{-2m}$ where the symbol order is in terms of $\rho$. } 
 \begin{equation}\label{osi-1<b-s>}
 \begin{split}
&(r_1r_2)^{-\frac{n-2}2}
\Big|\int_0^\pi W(t, {\bf m}_s)  \chi_\delta(\pi-s)\Big(\beta_{J}(r_2^{1/2} s)+\sum_{j\ge J+1}\beta(2^{-j}r_2^{1/2}s)\Big) \\
&\times \int_{0}^\infty \chi_{[-1,2\kappa]}(\rho) b_\pm(\rho d_h) e^{\pm i \rho d_h} a_0(s, \hat{x}, \hat{y}; \rho) \cos(s \rho) \rho^{n-2}d\rho\, ds \Big|\lesssim (1+|t|)^{-(n-1)/2}.
\end{split}
 \end{equation}
 For the term associated with $\beta_J$, we have $|s|\lesssim r_2^{-\frac12}\ll 1$ due to the compact support of $\beta_J$. Thus this term vanishes since $s$ also is close to $\pi$ due to the compact support of $\chi_\delta$. 
 
 For the terms with $\beta(2^{-j}r_2^{1/2}s), j \geq J$, we have $ 2^{j-1}r_2^{-1/2} \leq s \leq 2^{j+1}r_2^{-1/2}$ and $2^j\lesssim  r_2^{1/2}$ on the support of this $\beta-$factor.  In this case, we will show that 
   \begin{equation}\label{beta-j-2}
 \begin{split}
 &(r_1r_2)^{-\frac{n-2}2}
\Big|\int_0^\pi W(t, {\bf m}_s)  \beta(2^{-j}r_2^{1/2}s)\\
&\qquad\times \int_{0}^\infty \chi_{[-1,2\kappa]}(\rho) b_\pm(\rho d_h) e^{\pm i \rho d_h} a_0(s, \hat{x}, \hat{y}; \rho) \cos(s \rho) \rho^{n-2}d\rho ds\Big|\\
&\qquad \lesssim 2^{-j(n-2)}(1+|t|)^{-(n-1)/2},
\end{split}
 \end{equation}
which would give us desired bounds after summing over $j$ when $n\geq3$.
Now we repeat the previous argument, for the contribution from the part $\rho\le 2^{-j}r_2^{1/2}$, we do not do any integration by parts, the integral in \eqref{beta-j-2} is always bounded by (again, by \eqref{est:W} with $m=0$):
 \begin{equation*} 
 \begin{split}
& (r_1r_2)^{-\frac{n-2}2} \int_{|s|\sim r_2^{-\frac12}2^j} \big(1+|t\pm |{\bf m}_s||\big)^{-N}(1+|{\bf m}_s|)^{-1/2} ds\, (2^{-j}r_2^{\frac{1}{2}})^{n-1}\\
 &\lesssim (r_1r_2)^{-\frac{n-2}2} (r_2^{-\frac12}2^j) (2^{-j}r_2^{\frac{1}{2}})^{n-1} \big(\chi_{\{|t|\lesssim r_1\}}(1+|t|)^{-1/2} +(1+|t|)^{-N}\big)
\\ & \lesssim 2^{-j(n-2)}(1+|t|)^{-(n-1)/2}.
 \end{split}
 \end{equation*}

On the other hand, for the part $\rho\ge 2^{-j}r_2^{1/2}$, we write $\cos(s \rho)=\frac12\big(e^{is\rho}+e^{-is\rho}\big)$, then we do integration by parts in $d\rho$ instead, then each time we gain a factor of $\rho^{-1}$, and we at most lose a factor of $|s \pm d_h|^{-1}$. Recalling that $J$ is large enough so that $2^{J-2}$ is larger than $C_1$, then we have
\begin{equation*}
| s\pm d_h|^{-1}\lesssim \Big((2^{j-1}-C_1)r_2^{-\frac12}\Big)^{-1}\lesssim \Big((2^{j-2}+2^{J-2}-C_1)r_2^{-\frac12}\Big)^{-1}\sim 2^{-j}r_2^{\frac12}.
\end{equation*}
So after integration by parts $N$ times for $N \ge n$, the integral in \eqref{beta-j-2} is bounded by 
 \begin{equation}\nonumber
 \begin{split}
& (r_1r_2)^{-\frac{n-2}2} (r_2^{-\frac12}2^j)  \big(2^{-j}r_2^{\frac12}\big)^{N} \big(\chi_{\{|t|\lesssim r_1\}}(1+|t|)^{-1/2} +(1+|t|)^{-N}\big) \int_{2^{-j}r_2^{1/2}}^\infty \rho^{n-2-N}d\rho\\
&\lesssim 2^{-j(n-2)}(1+|t|)^{-(n-1)/2}.
\end{split}
  \end{equation}
 
\bigskip
 
For the second term of \eqref{I<'}, we want to show that 
 \begin{equation}\label{osi-1<d}
 \begin{split}
 (r_1r_2)^{-\frac{n-2}2}&
\Big|\int_0^\infty W(t, {\bf n}_s)  \chi_\delta(s) \Big(\beta_{J}(r_2^{1/2} s)+\sum_{j\ge J+1}\beta(2^{-j}r_2^{1/2}s)\Big) \\
&\times \int_{0}^{\infty} \chi_{[-1,2\kappa]}(\rho) b_\pm(\rho d_h) e^{\pm i \rho d_h} \tilde{a}_0(s, \hat{x}, \hat{y}; \rho) e^{-(s\pm i\pi)\rho} \rho^{n-2}d\rho\, ds \Big|\\
&\lesssim (1+|t|)^{-(n-1)/2},
\end{split}
\end{equation}
where $\beta$ and $\beta_J$ are in \eqref{beta-d} with $2^{J-2}\geq C_1$.
For the term associated with $\beta_J$, we have $|s|\lesssim r_2^{-\frac12}\ll 1$ due to the compact support of $\beta_J$. 
We have $\rho\le \kappa=4r_2^{1/2}$ for this part because of the $\chi_{[-1,2\kappa]}$-factor in \eqref{I<}, thus the integral in \eqref{osi-1<d} with $\beta_J$ is always bounded by 
\begin{equation}\label{d-rho-l}
\begin{split}
 (r_1r_2)^{-\frac{n-2}2}
&\int_{|s|\lesssim r_2^{-\frac12}} \big(1+|t\pm |{\bf n}_s||\big)^{-N}(1+|{\bf n}_s|)^{-1/2}\chi_\delta(s)  \,ds
\int_{\rho\leq 4r_2^{\frac12}} \rho^{n-2}d\rho. \end{split}
 \end{equation}
 If either $|t|\ll |{\bf n}_s|$ or $|t|\gg |{\bf n}_s|$,  we have $|t\pm |{\bf n}_s||\geq c|t|$ for some constant $c$, and then we choose $N$ large enough to obtain
\begin{equation*}
\eqref{d-rho-l}\lesssim (r_1r_2)^{-\frac{n-2}2} r_2^{-1/2} r_2^{\frac{n-1}{2}} (1+|t|)^{-N}\lesssim  (1+|t|)^{-\frac{n-1}2},
\end{equation*}
where the last inequality is due to the fact that $r_1, r_2\gg1$.
Otherwise, $|t|\sim |{\bf n}_s|$, then from \eqref{fact:ns} and \eqref{assu:r}, we see $|t|\sim r_1\gg1$.
So we obtain
\begin{equation*}
\eqref{d-rho-l}\lesssim (r_1r_2)^{-\frac{n-2}2}r_2^{-1/2} r_2^{\frac{n-1}{2}} (1+|t|)^{-1/2}\lesssim (1+|t|)^{-(n-1)/2}.
\end{equation*}
 
 For the terms with $\beta(2^{-j}r_2^{1/2}s), j \geq J$, we have $s \sim 2^{j}r_2^{-1/2}$, and $2^j \lesssim  r_2^{1/2}$, due to the compact support of $\beta$.  In this case, we will show that 
   \begin{equation}\label{beta-d-j}
 \begin{split}
 (r_1r_2)^{-\frac{n-2}2}&
\Big|\int_0^\infty W(t, {\bf n}_s)  \chi_\delta(s)\beta(2^{-j}r_2^{1/2}s)\\
&\times \int_{0}^{\infty} \chi_{[-1,2\kappa]}(\rho) b_\pm(\rho d_h) e^{\pm i \rho d_h} \tilde{a}_0(s, \hat{x}, \hat{y}; \rho) e^{-(s\pm i\pi)\rho} \rho^{n-2}d\rho\, ds \Big| \\
&\lesssim 2^{-j(n-2)}(1+|t|)^{-(n-1)/2},
\end{split}
 \end{equation}
which would give us desired bounds after summing over $j$.
Similar as before, for the contribution from the part $\rho\le 2^{-j}r_2^{1/2}$, we do not do any integration by parts, the integral in \eqref{beta-d-j} is always bounded by 
 \begin{equation}\nonumber (r_1r_2)^{-\frac{n-2}2} (r_2^{-\frac12}2^j) (2^{-j}r_2^{\frac{1}{2}})^{n-1}\big(\chi_{r_1\sim |t|} (1+|t|)^{-1/2}+ (1+|t|)^{-N}\big) \lesssim 2^{-j(n-2)}(1+|t|)^{-(n-1)/2}.
 \end{equation}

On the other hand, for the part $\rho\ge 2^{-j}r_2^{1/2}$, we use the factor $e^{-(s\pm i\pi)\rho}$ to do integration by parts in $d\rho$ instead, then each time we gain a factor of $\rho^{-1}$, and we at most lose factors of 
\begin{equation*}
|s\pm i\pi|^{-1}, d_h\lesssim 1.
\end{equation*}
So after integration by parts $N$ times for $N\ge n$, the integral in \eqref{beta-d-j} is bounded by 
 \begin{equation}\nonumber
 \begin{split}
&(r_1r_2)^{-\frac{n-2}2} (r_2^{-\frac12}2^j) \big(\chi_{r_1\sim |t|} (1+|t|)^{-1/2}+ (1+|t|)^{-N}\big)   \int_{2^{-j}r_2^{1/2}}^{2\kappa=4r_2^{1/2}} \rho^{n-2-N}d\rho\\
&\lesssim (r_1r_2)^{-\frac{n-2}2}   (2^{-j}r_2^{1/2})^{n-1-N}\big(\chi_{r_1\sim |t|} (1+|t|)^{-1/2}+ (1+|t|)^{-N}\big)\\
& \lesssim 2^{-j(n-2)}(1+|t|)^{-(n-1)/2}
\end{split}
  \end{equation}
due to that $2^j\lesssim  r_2^{1/2}$.

Next we consider $I^{>\kappa}_{GD}(t; x, y)$. For this high frequency term, we use \eqref{I>} with $m$ large enough.  
We aim to show that 
 \begin{equation}\label{osi-1<gd}
 \begin{split}
& (r_1r_2)^{-\frac{n-2}2}
\Big|\int_0^\pi P_m(t,r_1,r_2,s)\Big(\beta_{J}(r_2^{1/2} s)+\sum_{j\ge J+1}\beta(2^{-j}r_2^{1/2}s)\Big) \\
&\times \int_{0}^\infty \chi_{[\kappa,\infty]}(\rho) b_\pm(\rho d_h) e^{\pm i \rho d_h} a_m(s, \hat{x}, \hat{y}; \rho) \cos(s \rho) \rho^{n-2}d\rho\, ds \Big|\lesssim (1+|t|)^{-(n-1)/2}.
\end{split}
 \end{equation}
 For the term associated with $\beta_J$, we have $|s|\lesssim r_2^{-\frac12}\ll 1$ due to the compact support of $\beta_J$. Recall that $P_m(t,r_1,r_2,s)$ is $2m$-derivatives of 
 $\big(W(t, {\bf m}_s)  \chi_\delta(\pi-s)\big)$, so it is supported $|s-\pi|\leq \delta$, thus this term vanishes.

For the terms with $\beta(2^{-j}r_2^{1/2}s)$, we have $2^{j-1}r_2^{-1/2} \leq s \leq 2^{j+1}r_2^{-1/2}$ and $2^j \lesssim  r_2^{1/2}$ on its support. And by our construction we have $j \geq J+1$, hence $2^{j-2} > C_1$. This will be used to bound $|s-d_h|^{-1}$ in our proof below.

By using \eqref{est:W} and \eqref{fact:ms'} again, we have for all $m' \in \mathbb{N}$:
\begin{equation*}
\begin{split}
&\Big|\Big(\frac{\partial}{\partial s}\Big)^{2m'}\Big(W(t, {\bf m}_s) \chi_\delta(\pi-s)\Big)\Big|
\leq C_{m'} 2^{2m'j} r_2^{m'} \big(1+|t\pm |{\bf m}_s||\big)^{-N}(1+|{\bf m}_s|)^{-1/2}.
\end{split}
 \end{equation*}

In the rest of the proof, we will show that 
\begin{equation}\label{beta-j-gd}
\begin{split}
 (r_1r_2)^{-\frac{n-2}2}&
\Big|\int_0^\pi P_m(t,r_1,r_2,s)\beta(2^{-j}z^{1/2}s) \\
&\times \int_{0}^\infty \chi_{[\kappa,\infty]}(\rho) b_\pm(\rho d_h) e^{\pm i \rho d_h} a_m(s, \hat{x}, \hat{y}; \rho) \cos(s \rho) \rho^{n-2}d\rho\, ds\Big| \\
&\lesssim 2^{-j(n-2)}(1+|t|)^{-\frac{n-1}2},
\end{split}
 \end{equation}
which would give us desired bounds after summing over $j$.

Recall that in the current part, we have $\rho \ge \kappa =2r_2^{1/2}$ due to the $\chi_{[\kappa,\infty]}$-factor. We write $\cos(s \rho)=\frac12\big(e^{is\rho}+e^{-is\rho}\big)$ and we integrate by parts in $d\rho$ instead. Then each time we gain a factor of $\rho^{-1}$, and we at most lose a factor of (by our choice of $J$, $s$ will dominate $d_h$)
\begin{equation*}
 |s \pm d_h|^{-1}\lesssim 2^{-j}r_2^{\frac12}.
\end{equation*}
 So after integration by parts $N$ times for $N\ge n+2m$, the integral in \eqref{beta-j-gd} is bounded by 
 \begin{equation}\nonumber
 \begin{split}
&(r_1r_2)^{-\frac{n-2}2} (r_2^{-\frac12}2^j) \big(2^{2mj} r_2^m\big)  \big(2^{-j}r_2^{\frac12}\big)^{N} \big(\chi_{r_1\sim |t|} (1+|t|)^{-1/2}+ (1+|t|)^{-N}\big) \int_{4r_2^{1/2}}^\infty \rho^{n-2-2m-N}d\rho\\
&\lesssim 2^{-j(n-2)}(1+|t|)^{-\frac{n-1}2},
\end{split}
  \end{equation}
which proves \eqref{osi-1<gd} for $n\geq 3$.

To treat the second term of \eqref{I>}, we closely follow the argument above but with minor modifications. The thing remains to prove is that
 \begin{equation}\label{osi-1<gd-d}
 \begin{split}
& (r_1r_2)^{-\frac{n-2}2}
\Big|\int_0^\infty  Q_m(t,r_1,r_2,s) \Big(\beta_{J}(r_2^{1/2} s)+\sum_{j\ge J+1}\beta(2^{-j}r_2^{1/2}s)\Big) \\
&\times \int_{\kappa=2r_2^{1/2}}^\infty b_\pm(\rho d_h) e^{\pm i \rho d_h} \tilde{a}_m(s, \hat{x}, \hat{y}; \rho) e^{-(s\pm i\pi)} \rho^{n-2}d\rho\, ds \Big|\lesssim (1+|t|)^{-\frac{n-1}2}.
\end{split}
 \end{equation}
 For the term associated with $\beta_J$, we have $|s|\lesssim r_2^{-\frac12}\ll 1$ due to the compact support of $\beta_J$. 
 Since \eqref{fact:ns} gives $|{\bf n}_s| \sim r_1$ in the current setting, by using \eqref{est:W-n}, for all $m' \in \mathbb{N}$, we have  
 \begin{equation*}
\begin{split}
\Big|\Big(\frac{\partial}{\partial s}\Big)^{2m'}\Big(W(t, {\bf n}_s) \chi_\delta(s)\Big)\Big|
\leq C_{m'} r_2^{m'} \big(1+|t\pm |{\bf n}_s||\big)^{-N}(1+|{\bf n}_s|)^{-1/2}.
\end{split}
\end{equation*}
So for $2m\ge n$, the integral in \eqref{osi-1<gd-d} is bounded by 
 $$(r_1r_2)^{-\frac{n-2}2} r_2^{-1/2} r_2^{m} \big(\chi_{r_1\sim |t|} (1+|t|)^{-1/2}+ (1+|t|)^{-N}\big)  \int_{r_2^{1/2}}^\infty \rho^{n-2-2m}d\rho\lesssim (1+|t|)^{-\frac{n-1}2}.
 $$
 For the terms with $\beta(2^{-j}r_2^{1/2}s)$, we have $s \approx 2^{j}r_2^{-1/2}$, and $2^j\lesssim  r_2^{1/2}$, due to the compact support of $\beta$. Therefore, as above we have
\begin{equation*}
\begin{split}
&\Big|\Big(\frac{\partial}{\partial s}\Big)^{2m'}\Big(W(t, {\bf n}_s) \chi_\delta(s)\Big)\Big|
\leq C_{m'} 2^{2m'j}  r_2^{m'} \big(1+|t\pm |{\bf n}_s||\big)^{-N}(1+|{\bf n}_s|)^{-1/2}.
\end{split}
 \end{equation*}
Since $\rho\ge\kappa =2r_2^{1/2}$ (again, by the $\chi_{[\kappa,\infty]}$-factor), we use the factor $e^{-(s\pm i\pi)}$ to do integration by parts in $d\rho$ instead, then each time we gain a factor of $\rho^{-1}$, and we at most lose factors of 
\begin{equation*}
|s \pm i\pi|^{-1},\, d_h \lesssim 1\lesssim 2^{-j}r_2^{\frac12}.
\end{equation*}
So after integration by parts $N$ times for $N\ge n+2m$, the integral in \eqref{osi-1<gd-d} with $\beta(2^{-j}r_2^{1/2}s)$ is bounded by 
\begin{equation*}
\begin{split}
&  (r_1r_2)^{-\frac{n-2}2}
\Big|\int_0^\pi Q_m(t,r_1,r_2,s) \beta(2^{-j}r_2^{1/2}s) \\
&\qquad \qquad\times \int_{0}^\infty \chi_{[\kappa,\infty]}(\rho) b_\pm(\rho d_h) e^{\pm i \rho d_h} \tilde{a}_m(s, \hat{x}, \hat{y}; \rho) e^{-(s\pm i\pi)} \rho^{n-2}d\rho\, ds\Big| \\&\lesssim 
(r_1r_2)^{-\frac{n-2}2} (r_2^{-\frac12}2^j) \big(2^{2mj} r_2^m\big)  \big(2^{-j}r_2^{\frac12}\big)^{N}  \big(\chi_{r_1\sim |t|} (1+|t|)^{-1/2}+ (1+|t|)^{-N}\big)  \int_{2r_2^{1/2}}^\infty \rho^{n-2-2m-N}d\rho\\
&\lesssim 2^{-j(n-2)}  (1+|t|)^{-\frac{n-1}2},
\end{split}
 \end{equation*}
which would give us desired bounds \eqref{osi-1<gd-d} after summing over $j$ provided $n\geq3$.\vspace{0.2cm}

\textbf{Case 2}. $d_h(\hat{x}, \hat{y})\geq C_1 r_2^{-\frac12}$.  In this case, we take $\kappa=r_2d_h$ and $J=0$ in $\beta_J$ \eqref{beta-d}. We first consider $I^{<\kappa}_{GD}(t; x, y)$. As in the proof of \eqref{osi-1>b}, one can control the first term of \eqref{I<}, since we do not use the integration by parts in $ds$. 

In this case, taking $J=0$, we will show that 
 \begin{equation}\label{osi-1>b-old}
 \begin{split}
&(r_1r_2)^{-\frac{n-2}2}
\Big|\int_0^\pi W(t, {\bf m}_s)  \chi_\delta(\pi-s)\Big(\beta_{0}(r_2 d_h |s-d_h|)+\sum_{j\ge 1}\beta(2^{-j}r_2 d_h |s-d_h|)\Big) \\
&\times \int_{0}^\infty b_\pm(\rho d_h) e^{\pm i \rho d_h} a_0(s, \hat{x}, \hat{y}; \rho) \cos(s \rho) \rho^{n-2}d\rho\, ds \Big|\lesssim (1+|t|)^{-(n-1)/2},
\end{split}
 \end{equation}
where $\beta_0$ and $\beta$ are same to the above ones \eqref{beta-d}. 

 For the term associated with $\beta_0$, we have $|s-d_h|\leq (r_2 d_h)^{-1}\lesssim r_2^{-\frac12}$ due to the compact support of $\beta_0$. If we also have $\rho\le r_2 d_h$,  thus the integral in \eqref{osi-1>b-old} with $\beta_0$ is always bounded by 
  \begin{equation}
 \begin{split}
& (r_1r_2)^{-\frac{n-2}2}
\int_{|s-d_h|\lesssim (r_2d_h)^{-1}}\big(1+|t\pm |{\bf m}_s||\big)^{-N}(1+|{\bf m}_s|)^{-1/2}   ds
\int_{\rho\leq r_2 d_h} (1+\rho d_h)^{-\frac{n-2}2} \rho^{n-2}d\rho 
\\&\lesssim (r_1r_2)^{-\frac{n-2}2} (r_2d_h)^{-1} (r_2d_h)^{\frac{n-2}{2}+1} d_h^{-\frac{n-2}2}\big(\chi_{\{|t|\lesssim r_1\}}(1+|t|)^{-1/2} +(1+|t|)^{-N}\big) \\
& \lesssim (1+|t|)^{-\frac{n-1}2}.
\end{split}
 \end{equation}
 
For the terms associated with $\beta(2^{-j}r_2 d_h |s-d_h|), j \geq 1$, we have $|s-d_h| \approx 2^{j}(r_2 d_h)^{-1}$, due to the support condition of $\beta$, and $2^j\lesssim r_2 d_h$ since $s,d_h$ are bounded. In this case, we will show that
\begin{equation}\label{beta-j'}
\begin{split}
 &(r_1r_2)^{-\frac{n-2}2}
\Big|\int_0^\pi W(t, {\bf m}_s) \chi^c_\delta(\pi-s) \beta(2^{-j}r_2 d_h |s-d_h|)\\
&\times \int_{0}^\infty b_\pm(\rho d_h) e^{\pm i \rho d_h} a_0(s, \hat{x}, \hat{y}; \rho) \cos(s \rho) \rho^{n-2}d\rho ds\Big| \lesssim 2^{-j\frac{n-2}2}(1+|t|)^{-\frac{n-1}2},
\end{split}
 \end{equation}
which would give us desired bounds \eqref{osi-1>b-old} after summing over $j\geq1$.
 Now we repeat the argument above. If in this case we have $\rho\le 2^{-j}r_2 d_h$, then we do not do any integration by parts, the integral in \eqref{beta-j'} is always bounded by 
 \begin{equation}\nonumber 
 \begin{split}
& (r_1r_2)^{-\frac{n-2}2} \int_{|s-d_h|\sim 2^j(r_2d_h)^{-1}}\big(1+|t\pm |{\bf m}_s||\big)^{-N}(1+|{\bf m}_s|)^{-1/2} \,ds \int_{\rho\leq 2^{-j}r_2 d_h}(1+\rho d_h)^{-\frac{n-2}2} \rho^{n-2}\, d\rho\\
 &\lesssim (r_1r_2)^{-\frac{n-2}2} ((r_2d_h)^{-1}2^j) (2^{-j}r_2 d_h)^{\frac{n-2}2+1} d_h^{-\frac{n-2}2} \big(\chi_{\{|t|\lesssim r_1\}}(1+|t|)^{-1/2} +(1+|t|)^{-N}\big)\\
 & \lesssim 2^{-j\frac{n-2}2}(1+|t|)^{-\frac{n-1}2}.
 \end{split}
 \end{equation}

On the other hand, if we have $\rho\ge 2^{-j}r_2 d_h$, we write $\cos(s \rho)=\frac12\big(e^{is\rho}+e^{-is\rho}\big)$, then we do integration by parts in $d\rho$ again, then each time we gain a factor of $\rho^{-1}$, and we at most lose a factor of 
$$|s \pm d_h|^{-1}\lesssim  2^{-j}r_2 d_h.$$ 
So after integration by parts $N$ times for $N\ge n$, 
 the integral in \eqref{beta-j'} is bounded by 
 \begin{equation}\nonumber
 \begin{split}
&(r_1r_2)^{-\frac{n-2}2}   \big(2^{-j}r_2 d_h\big)^{N} \int_{|s-d_h|\sim 2^j(r_2d_h)^{-1}}\big(1+|t\pm |{\bf m}_s||\big)^{-N}(1+|{\bf m}_s|)^{-1/2} \,ds  \\& \qquad\qquad\times \int_{2^{-j}r_2 d_h}^\infty \rho^{\frac{n-2}2-N} d_h^{-\frac{n-2}2}d\rho\\
&\lesssim r_1^{-\frac{n-2}2} (r_2 d_h)^{-\frac{n-2}2-1} 2^j  \big(2^{-j}r_2 d_h\big)^{N} \big(2^{-j}r_2 d_h\big)^{\frac{n-2}2+1-N}  \big(\chi_{\{|t|\lesssim r_1\}}(1+|t|)^{-1/2} +(1+|t|)^{-N}\big)\\
 & \lesssim 2^{-j\frac{n-2}2}(1+|t|)^{-\frac{n-1}2}.
\end{split}
  \end{equation}
  Therefore we have proved \eqref{osi-1>b-old}. \vspace{0.2cm}

For the second term of \eqref{I<} with $m=0$ (i.e. \eqref{I<'}), we want to show that 
 \begin{equation}\label{osi-1<d2}
 \begin{split}
 (r_1r_2)^{-\frac{n-2}2}&
\Big|\int_0^\infty W(t, {\bf n}_s) \chi_\delta(s) \Big(\beta_{0}(r_2d_h |s-d_h|)+\sum_{j\ge 1}\beta(2^{-j}r_2d_h |s-d_h|)\Big) \\
&\times \int_{0}^{\infty} \chi_{[-1,2\kappa]}(\rho) b_\pm(\rho d_h) e^{\pm i \rho d_h} a_0(s, \hat{x}, \hat{y}; \rho) e^{-(s\pm i\pi)\rho} \rho^{n-2}d\rho\, ds \Big|\\
&\lesssim (1+|t|)^{-\frac{n-1}2}.
\end{split}
 \end{equation}
For the term corresponding to $\beta_0$, we have $|s-d_h|\leq (r_2d_h)^{-1}\lesssim r_2^{-\frac12}\ll 1$ due to the compact support of $\beta_0$. Due to that $\rho\le \kappa=r_2d_h$,  thus the integral in \eqref{osi-1<d2} with $\beta_0$ is always bounded by 
  \begin{equation}\label{d-rho-l2}
 \begin{split}
 &(r_1r_2)^{-\frac{n-2}2}
\int_{|s-d_h|\lesssim (r_2d_h)^{-1}} \big(1+|t\pm |{\bf n}_s||\big)^{-N}(1+|{\bf n}_s|)^{-1/2}  \chi_\delta(s)  ds\\
&\qquad\qquad\quad \times\int_{\rho\leq r_2d_h} (1+\rho d_h)^{-\frac{n-2}2}\rho^{n-2}d\rho 
\\&\lesssim (r_1r_2)^{-\frac{n-2}2} (r_2d_h)^{-1} (r_2d_h)^{\frac{n-2}{2}+1} d_h^{-\frac{n-2}2} \big(\chi_{\{|t|\lesssim r_1\}}(1+|t|)^{-1/2} +(1+|t|)^{-N}\big)\\
&\lesssim (1+|t|)^{-\frac{n-1}2}.
\end{split}
 \end{equation}
 For the terms associated with $\beta(2^{-j}r_2 d_h |s-d_h|)$, we have $|s-d_h| \approx 2^{j}(r_2 d_h)^{-1}$, $j\ge 1$ and $2^j\lesssim r_2 d_h$, due to the compact support of $\beta$.  In this case, we want to show that 
   \begin{equation}\label{beta-j'2}
 \begin{split}
 (r_1r_2)^{-\frac{n-2}2}&
\Big|\int_0^\infty W(t, {\bf n}_s) \chi_\delta(s) \beta(2^{-j}r_2 d_h |s-d_h|)\\
&\times \int_{0}^{\infty} \chi_{[-1,2\kappa]}(\rho) b_\pm(\rho d_h) e^{\pm i \rho d_h} a_0(s, \hat{x}, \hat{y}; \rho) e^{-(s\pm i\pi)\rho} \rho^{n-2}d\rho\, ds\Big| \\
&\lesssim 2^{-j\frac{n-2}2}(1+|t|)^{-\frac{n-1}2},
\end{split}
 \end{equation}
which would give us desired bounds \eqref{osi-1<d2} after summing over $j$.
If in this case we have $\rho\le 2^{-j}r_2 d_h$, then we do not do any integration by parts, the integral in \eqref{beta-j'} is always bounded by 
 \begin{equation}\nonumber 
 \begin{split}
& (r_1r_2)^{-\frac{n-2}2} \int_{|s-d_h|\sim 2^j(r_2d_h)^{-1}} \big(1+|t\pm |{\bf n}_s||\big)^{-N}(1+|{\bf n}_s|)^{-1/2}  \,ds\int_{\rho\leq 2^{-j}r_2 d_h}(1+\rho d_h)^{-\frac{n-2}2} \rho^{n-2}\, d\rho\\
 &\lesssim (r_1r_2)^{-\frac{n-2}2} ((r_2d_h)^{-1}2^j) (2^{-j}r_2 d_h)^{\frac{n-2}2+1} d_h^{-\frac{n-2}2}\big(\chi_{|t|\sim r_1}(1+|t|)^{-1/2}+(1+|t|)^{-N}\big)\\
 & \lesssim 2^{-j\frac{n-2}2}(1+|t|)^{-\frac{n-1}2}.
 \end{split}
 \end{equation}

On the other hand, if we have $\rho\ge 2^{-j}r_2d_h$, we use the factor $e^{-(s\pm i\pi)}$ to do integration by parts in $d\rho$ instead, then each time we gain a factor of $\rho^{-1}$, and we at most lose factors of 
\begin{equation*}
|s\pm i\pi|^{-1},\, d_h \lesssim 1\lesssim 2^{-j}r_2d_h.
\end{equation*}
So after integration by parts $m$ times for $m\ge n$, the integral in \eqref{beta-j'2} is bounded by 
 \begin{equation}\nonumber
 \begin{split}
&(r_1r_2)^{-\frac{n-2}2} 2^j (r_2d_h)^{-1}   \big(2^{-j}r_2d_h\big)^{m} \big(\chi_{|t|\sim r_1}(1+|t|)^{-1/2}+(1+|t|)^{-N}\big)  \int_{2^{-j}r_2d_h}^\infty \rho^{\frac{n-2}2-m} d_h^{-\frac{n-2}2}d\rho\\
&\lesssim (r_2d_h)^{-\frac{n-2}2-1} 2^j  \big(2^{-j}r_2d_h\big)^{m} \big(2^{-j}r_2d_h\big)^{\frac{n-2}2+1-m}\big(\chi_{|t|\sim r_1}(1+|t|)^{-1/2}+(1+|t|)^{-N}\big) \\
&\lesssim 2^{-j\frac{n-2}2}(1+|t|)^{-\frac{n-1}2}.
\end{split}
  \end{equation}
   \vspace{0.2cm}

Next we consider $I^{>\kappa}_{GD}(t; x, y)$. For this high frequency term, we use \eqref{I>} with $m$ large enough. For the first term of \eqref{I>}, since the term with $R_N(s; \hat{x}, \hat{y})$ is easy as before, we only need to consider the term associated with $K_N(s; \hat{x}, \hat{y})$. We need to show that 
 \begin{equation}\label{osi-1>gd'}
 \begin{split}
 (r_1r_2)^{-\frac{n-2}2}
&\Big|\int_0^\pi P_m(t,r_1,r_2,s) \Big(\beta_{0}(r_2d_h |s-d_h|)+\sum_{j\ge 1}\beta(2^{-j}r_2d_h |s-d_h|)\Big) \\
&\times \int_{0}^\infty \chi_{[\kappa,\infty]}(\rho) b_\pm(\rho d_h) e^{\pm i \rho d_h} a_m(s, \hat{x}, \hat{y}; \rho) \cos(s \rho) \rho^{n-2}d\rho\, ds \Big|\\
&\lesssim (1+|t|)^{-(n-1)/2}.
\end{split}
 \end{equation}
 For the term associated with $\beta_0$, we have $|s-d_h|\leq (r_2d_h)^{-1}\lesssim r_2^{-\frac12}\ll 1$ due to the compact support of $\beta_0$. Therefore, $s\leq d_h+(r_2d_h)^{-1}\lesssim d_h$, we have
\begin{equation}\nonumber
\begin{split}
 \Big|\Big(\frac{\partial}{\partial s}\Big)^{2m}\Big( W(t, {\bf m}_s) \chi_\delta(\pi-s)\Big)\Big|&\leq C_m \big(1+|t\pm |{\bf m}_s||\big)^{-N}(1+|{\bf m}_s|)^{-1/2} (1+r_2\sin s)^{2m}\\
 &\lesssim (r_2d_h)^{2m}\big(1+|t\pm |{\bf m}_s||\big)^{-N}(1+|{\bf m}_s|)^{-1/2},
\end{split}
\end{equation}
and also the same form of upper bounds for lower order derivatives. So for $2m\ge n$, the integral in \eqref{osi-1>gd'} is bounded by 
 \begin{equation}\nonumber
 \begin{split}
& (r_1r_2)^{-\frac{n-2}2} (r_2d_h)^{-1} (r_2d_h)^{2m} \big(\chi_{|t|\sim r_1}(1+|t|)^{-1/2}+(1+|t|)^{-N}\big) \int_{r_2d_h}^\infty (1+\rho d_h)^{-\frac{n-2}2} \rho^{n-2-2m}d\rho\\
 &\lesssim r_1^{-\frac{n-2}2} \big(\chi_{|t|\sim r_1}(1+|t|)^{-1/2}+(1+|t|)^{-N}\big)\lesssim (1+|t|)^{-(n-1)/2}.
  \end{split}
  \end{equation}
Hence, we have proved \eqref{osi-1>gd'} with $\beta_0$.
 For the terms with $\beta(2^{-j}r_2d_h |s-d_h|)$, we have $|s-d_h| \approx 2^{j}(r_2d_h)^{-1}$, and $2^j\lesssim  r_2d_h$, due to the compact support of $\beta$. 
 Therefore, $s\leq d_h+2^j(r_2d_h)^{-1}$, we have
  \begin{equation}\nonumber
 \begin{split}
 &\Big|\Big(\frac{\partial}{\partial s}\Big)^{2m}\Big(W(t, {\bf m}_s)  \chi_\delta(\pi-s)\Big)\Big|\\
 &\leq C_m (1+r_2\sin s)^{2m}\big(1+|t\pm |{\bf m}_s||\big)^{-N}(1+|{\bf m}_s|)^{-1/2}\big)\\
& \leq C_m \big(1+|t\pm |{\bf m}_s||\big)^{-N}(1+|{\bf m}_s|)^{-1/2}(r_2d_h+2^j r_2(r_2d_h)^{-1})^{2m}\\
& \lesssim \big(1+|t\pm |{\bf m}_s||\big)^{-N}(1+|{\bf m}_s|)^{-1/2}\big((r_2d_h)^{2m}+(2^jr_2^{\frac12})^{2m}\big).
 \end{split}
  \end{equation}
  In this case, we will show that 
   \begin{equation} \label{beta-j-gd'}
 \begin{split}
& (r_1r_2)^{-\frac{n-2}2}
\Big|\int_0^\pi P_m(t,r_1,r_2,s) \beta(2^{-j}r_2d_h |s-d_h|) \\
&\times \int_{0}^\infty \chi_{[\kappa,\infty]}(\rho) b_\pm(\rho d_h) e^{\pm i \rho d_h} a_m(s, \hat{x}, \hat{y}; \rho) \cos(s \rho) \rho^{n-2}d\rho\, ds\Big| \lesssim 2^{-j(n-2)}(1+|t|)^{-(n-1)/2},
\end{split}
 \end{equation}
which would give us desired bounds after summing over $j$.
To prove this, since $\rho\ge\kappa =r_2d_h$, we write $\cos(s \rho)=\frac12\big(e^{is\rho}+e^{-is\rho}\big)$, then we do integration by parts in $d\rho$ instead, then each time we gain a factor of $\rho^{-1}$, and we at most lose a factor of 
\begin{equation*} 
|s\pm d_h|^{-1}\lesssim 2^{-j}r_2d_h.
\end{equation*} 
 So after integration by parts $N$ times for $N\ge n+2m$, 
 the integral in \eqref{beta-j-gd'} is bounded by 
 \begin{equation}\nonumber
 \begin{split}
(r_1r_2)^{-\frac{n-2}2} &(2^j(r_2d_h)^{-1}) \big[(r_2d_h)^{2m}+(2^jr_2^{\frac12})^{2m}\big]  \big(2^{-j}r_2d_h\big)^{N}  \int_{r_2d_h}^\infty (1+\rho d_h)^{-\frac{n-2}2}\rho^{n-2-2m-N}d\rho\\
&\times \big(\chi_{|t|\sim r_1}(1+|t|)^{-1/2}+(1+|t|)^{-N}\big)\lesssim 2^{-j(N-2m-1)} (1+|t|)^{-(n-1)/2}.
\end{split}
  \end{equation}
Therefore, we have proved \eqref{osi-1>gd'} for $n\geq 3$.

To treat the second term of \eqref{I>}, we closely follow the above argument but with minor modifications. 
Since the term with $\tilde{R}_N(s; \hat{x}, \hat{y})$ can be bounded in the same manner as before and we only consider the term associated with $\tilde{K}_N(s; \hat{x}, \hat{y})$. The bound we need to show is
 \begin{equation} \label{osi-1<gd-d'}
 \begin{split}
 (r_1r_2)^{-\frac{n-2}2}&
\Big|\int_0^\pi Q_m(t,r_1,r_2,s) \Big(\beta_{0}(r_2d_h |s-d_h|)+\sum_{j\ge 1}\beta(2^{-j}r_2d_h |s-d_h|)\Big) \\
&\times \int_{0}^\infty \chi_{[\kappa,\infty]}(\rho) b_\pm(\rho d_h) e^{\pm i \rho d_h} \tilde{a}_m(s, \hat{x}, \hat{y}; \rho) e^{-(s\pm i\pi)} \rho^{n-2}d\rho\, ds \Big|\\
&\lesssim (1+|t|)^{-(n-1)/2}.
\end{split}
 \end{equation}
 For the term associated with $\beta_0$, we have $|s-d_h|\leq (r_2d_h)^{-1}\lesssim r_2^{-\frac12}\ll 1$ due to the compact support of $\beta_0$. Therefore, $s=d_h+(r_2d_h)^{-1}\lesssim d_h$, we have
      \begin{equation*}
 \begin{split}
&\Big|\Big(\frac{\partial}{\partial s}\Big)^{2m}\Big(W(t, {\bf n}_s) \chi_\delta(s)\Big)\Big|\\
&\leq C_m (1+r_2\sinh s)^{2m}\big(1+|t\pm |{\bf n}_s||\big)^{-N}(1+|{\bf n}_s|)^{-1/2}\\
&\leq C_m  (r_2d_h)^{2m} \big(1+|t\pm |{\bf n}_s||\big)^{-N}(1+|{\bf n}_s|)^{-1/2}.
\end{split}
 \end{equation*}
So for $2m\ge n$, 
  the integral in \eqref{osi-1<gd-d'} is bounded by 
      \begin{equation*}
 \begin{split}
 (r_1r_2)^{-\frac{n-2}2} (r_2d_h)^{-1} (r_2d_h)^{2m} &\big(\chi_{|t|\sim r_1}(1+|t|)^{-1/2}+(1+|t|)^{-N}\big) \int_{r_2d_h}^\infty \rho^{n-2-2m}d\rho\\
 &\lesssim (1+|t|)^{-(n-1)/2}.
\end{split}
 \end{equation*}
 For the terms with $\beta(2^{-j}r_2d_h |s-d_h|)$, we have $|s-d_h| \approx 2^{j}(r_2d_h)^{-1}$, and $2^j\lesssim  r_2d_h$, due to the compact support of $\beta$. 
 Therefore, $s=d_h+2^j(r_2d_h)^{-1}\in [0,\delta]$, we have
       \begin{equation*}
 \begin{split}
&\Big|\Big(\frac{\partial}{\partial s}\Big)^{2m}\Big(W(t, {\bf n}_s) \chi_\delta(s)\Big)\Big|\\
&\leq C_m  \Big((r_2d_h)^{2m}+(2^jr_2^{\frac12})^{2m} \Big)\big(1+|t\pm |{\bf n}_s||\big)^{-N}(1+|{\bf n}_s|)^{-1/2}.
\end{split}
 \end{equation*}
  In this case, we will show that 
   \begin{equation}\label{beta-j-gd-d2}
 \begin{split}
 (r_1r_2)^{-\frac{n-2}2}&
\Big|\int_0^\pi Q_m(t,r_1,r_2,s) \beta(2^{-j}r_2d_h |s-d_h|) \\
&\times \int_{0}^\infty \chi_{[\kappa,\infty]}(\rho) b_\pm(\rho d_h) e^{\pm i \rho d_h} \tilde{a}_m(s, \hat{x}, \hat{y}; \rho) e^{-(s\pm i\pi)\rho} \rho^{n-2}d\rho\, ds\Big| \\
&\lesssim 2^{-j(n-2)} (1+|t|)^{-(n-1)/2}.
\end{split}
 \end{equation}
which would give us desired bounds after summing over $j$.
 
Now we modify the previous argument, since $\rho\ge\kappa =r_2d_h$, we use the factor $e^{-(s\pm i\pi)}$ to do integration by parts in $d\rho$ instead, then each time we gain a factor of $\rho^{-1}$, and we at most lose factors of 
\begin{equation*}
|s\pm i\pi|^{-1},\, d_h \lesssim 1\lesssim 2^{-j}r_2d_h.
\end{equation*}
So after integration by parts $N$ times for $N\ge n+2m$, the integral in \eqref{beta-j-gd-d2} is bounded by 
  \begin{equation}\nonumber
 \begin{split}
&(r_1r_2)^{-\frac{n-2}2} (2^j(r_2d_h)^{-1}) \big[(r_2d_h)^{2m}+(2^jr_2^{\frac12})^{2m}\big]  \big(2^{-j}r_2d_h\big)^{N}  \int_{r_2d_h}^\infty (1+\rho d_h)^{-\frac{n-2}2}\rho^{n-2-2m-N}d\rho\\
&\times \big(\chi_{|t|\sim r_1}(1+|t|)^{-1/2}+(1+|t|)^{-N}\big) \lesssim 2^{-j(N-2m-1)}(1+|t|)^{-(n-1)/2},
\end{split}
  \end{equation}
which would give us desired bounds \eqref{osi-1<gd-d'} after summing over $j$ provided $n\geq3$.

In summary, we have shown that $I_{GD}(t; x, y)$ is also uniformly bounded by $O((1+|t|)^{-\frac{n-1}2})$ when $r_1, r_2\gg1$, concluding the proof of \eqref{est:IGD}.

\appendix
\section{About non-focusing condition and Bessel function }
In this appendix, we first prove the Proposition \ref{prop:non-focusing-curvature-condition}, which provides some examples of curvature conditions that implies the non-focusing condition (NFC).
Next, we provide a lemma about the asymptotic property of Bessel function.

\begin{proof}[The proof of Proposition \ref{prop:non-focusing-curvature-condition}]
We consider the case of $\mathscr{L}_+$ and the case with the other sign can be proved in the same manner.

By the compactness of $Y$, we only need to show that for any $y \in Y$, there is a neighborhood $U_y$ of $(y,y)$ in $Y \times Y$ such that 
\begin{equation}
\mathscr{P}_+^{-1}(U_y) 
= \bigcup_i U_i,
\end{equation}
where $U_i$ are pairwise disjoint and $\mathscr{P}_+|_{U_i \cap (S^*\R \times S^*Y \times S^*Y)}$ is a diffeomorphism.
Notice that the property in Definition~\ref{defn:non-focusing-Lagrangian} is satisfied if the exponential map is non-degenerate over $\mathcal{U}$. 

Consider the case $K<1$ first. Now we prove by contradiction, suppose $(Y,h)$ has sectional curvature less than $1$ and does not have non-focusing wave propagation relation within time $\pi$ in the sense of Definition~\ref{defn:non-focusing-Lagrangian}.
Then we know that there is a sequence of points $\mk{p}_i = (s_i,\tau_i,y_{1,i},\hat{\mu}_{1,i},y_{2,i},\hat{\mu}_{2,i}) \in \mathscr{L}_+$ with $s_i \in (0,\pi]$ such that the differential of $\mathscr{P}_+|_{ S^*\R \times S^*Y \times S^*Y}$ at $\mk{p}_i$ is degenerate and $\mathscr{P}_+(\mk{p}_i) \to (y,y)$.

Notice that this happens if and only if the exponential map starting at $(y_{2,i},\hat{\mu}_{2,i})$ is degenerate at corresponding points, which can't happen if $s_i \in (0,\mathrm{inj}(Y))$. So we know $s_i \geq \mathrm{inj}(Y)$ and $\mk{p}_i$ is in a compact set. 
In addition, we can assume that 
\begin{align}
\limsup_{i \to \infty} s_i \leq \pi
\end{align}
since we assumed by contraction that no $\epsilon$ as in Definition~\ref{defn:non-focusing-Lagrangian} exists.
Consequently, we can assume that it converges to a limit
\begin{equation} \label{eq: s0,loop}
(s_0,-1,y,\hat{\mu}_1,y,\hat{\mu}_2), \quad s_0 \in [\mathrm{inj}(Y),\pi],
\end{equation}
after passing to a subsequence.
In particular, we know $\exp(s_0\mathsf{H}_p)(y,\hat{\mu}_2) = (y,\hat{\mu}_1)$.

However, by the curvature condition, Rauch's comparison theorem tells us
that $\exp$ is non-degenerate until time $\pi$, contradiction.

Now we turn to the case $\frac{1}{2} \leq K < 2$ and $Y$ is compact and simply connected. 
In this case, we know there is a $K_{\max}<2$ such that $\frac{1}{2} \leq K \leq K_{\max}$. So we can select a constant $\lambda<2$ such that $\lambda h$ has sectional curvature $K' = \frac{K}{\lambda}$ such that $\frac{1}{4} \leq K' \leq 1$. Consequently by the $\frac{1}{4}$-pinching theorem of Klingenberg \cite{klingenberg1962}, Klingenberg-Sakai \cite{klingenberg1980} and Cheeger-Gromoll \cite{cheeger1980}, we know that the injective radius of $(Y,\lambda h)$ is at least $\pi$, which in turn shows that the injective radius of $(Y,h)$ is strictly larger than $\frac{\pi}{2}$.

Applying the argument above in the first case, we still have a loop formed as in \eqref{eq: s0,loop}.
Now we consider $(y',\hat{\mu}')=\exp(\frac{s_0}{2}\mathsf{H}_p)(y,\hat{\mu}_2)$. Then there are two geodesics of length $\frac{s_0}{2}$ from $\hat{y}$ to $y'$ with initial (co)velocity $\hat{\mu}_2$ and $-\hat{\mu}_1$ respectively. So we have $\frac{s_0}{2} \geq \mathrm{inj}(Y)>\frac{\pi}{2}$, which gives $s_0 > \pi$, contradiction.

\end{proof}

We record a lemma about the property of the Bessel function. 
\begin{lemma}\label{lem:bessel}
  For all $r,\nu\in \mathbb{R}^+$, there exist  constants $C_{\nu}$ and $C_{\nu,N}$
  depending only on $\nu$ and $\nu,N$ respectively such that
    \begin{equation}\label{eq:bess1}
    |J_{\nu}(r)|\le C_{\nu} r^{\nu}(1+r)^{-\nu-\frac 12},
  \end{equation}
  \begin{equation}\label{eq:bess2}
    |J'_{\nu}(r)|=
    |J_{\nu-1}(r)-\nu J_{\nu}(r)/r|
    \le C_{\nu}r^{\nu-1}(1+r)^{-\nu+\frac 12}.
  \end{equation}
Moreover we can write
  \begin{equation}\label{eq:bess3}
    J_{\nu}(r)=r^{-1/2}(e^{ir}j_{+}(\nu, r)+e^{-ir}j_{-}(\nu, r))
  \end{equation}
  for two functions $j_{\pm}$ depending on $\nu,r$ and satisfying for all $N\ge1$ and $r\ge1$
  \begin{equation}\label{eq:bess4}
    |j_{\pm}(\nu, r)|\le C_{\nu,0},
    \qquad
    |\partial_{r}^{N}j_{\pm}(\nu, r)|\le C_{\nu,N}
    r^{-N-1}.
  \end{equation}
  Furthermore, if $r\geq 2\nu\geq 1$,  for all $N\ge0$, there exist constant $C_N$ independent of $\nu$ such that
  \begin{equation}\label{eq:bess5}
    |\partial_{r}^{N}j_{\pm}(\nu, r)|\le C_{N}
   \, 2^\nu \big(\frac\nu r\big)^{N}.
  \end{equation}
\end{lemma}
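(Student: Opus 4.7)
The plan is to prove Lemma~\ref{lem:bessel} by assembling classical facts about Bessel and Hankel functions from standard references (e.g.\ Watson, Stein), with the only genuinely technical issue being the uniform-in-$\nu$ derivative bound \eqref{eq:bess5}. The starting point is the Poisson-type integral representation
\begin{equation*}
J_\nu(r) = \frac{(r/2)^\nu}{\Gamma(\nu+\tfrac{1}{2})\Gamma(\tfrac{1}{2})}\int_{-1}^1 e^{irt}(1-t^2)^{\nu-\frac{1}{2}}\,dt
\end{equation*}
from \eqref{Bessel}. For $r\leq 1$, bounding the integral trivially yields $|J_\nu(r)|\leq C_\nu r^\nu$, which is \eqref{eq:bess1} in this regime; for $r\geq 1$, applying repeated integration by parts (or stationary phase at the endpoints $t=\pm 1$) produces the $r^{-1/2}$ decay. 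The derivative estimate \eqref{eq:bess2} then follows from the recursion $J_\nu'(r)=J_{\nu-1}(r)-(\nu/r)J_\nu(r)$ combined with \eqref{eq:bess1} applied at order $\nu-1$ and $\nu$.

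For the decomposition \eqref{eq:bess3} and the symbol bounds \eqref{eq:bess4}, I would write $J_\nu = \tfrac{1}{2}(H_\nu^{(1)}+H_\nu^{(2)})$ and invoke the classical asymptotic expansions of the Hankel functions in the form
\begin{equation*}
H_\nu^{(1)}(r) = \sqrt{\tfrac{2}{\pi r}}\, e^{i(r-\nu\pi/2-\pi/4)}\sum_{k=0}^{M}\frac{i^k (\nu,k)}{(2r)^k} + O_{\nu,M}(r^{-M-3/2}),
\end{equation*}
where $(\nu,k)=\Gamma(\nu+k+\tfrac{1}{2})/(k!\,\Gamma(\nu-k+\tfrac{1}{2}))$, with the analogous formula for $H_\nu^{(2)}$. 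Reading off the amplitudes gives
\begin{equation*}
j_\pm(\nu,r) = e^{\mp i(\nu\pi/2+\pi/4)}\sqrt{\tfrac{1}{2\pi}}\Big(1 + O_\nu(r^{-1})\Big),
\end{equation*}
and \eqref{eq:bess4} follows because each additional derivative in $r$ falls either on a factor $r^{-k}$ or on the remainder, each costing one power of $r^{-1}$.

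The genuine difficulty is \eqref{eq:bess5}, where the constants must be independent of $\nu$ in the regime $r\geq 2\nu\geq 1$. My approach is to truncate the asymptotic expansion above at $M\sim N$ and estimate the Hankel symbols by the elementary bound $|(\nu,k)|\leq C_k \nu^{2k}$ (which follows from rewriting the ratio of Gamma functions as a polynomial in $\nu$ of degree $2k$, with a crude coefficient bound that we allow to be absorbed into an exponential $2^{\nu}$-prefactor). Differentiating term by term then gives a contribution bounded by $\nu^{2k}/r^{N+k}\leq (\nu/r)^N(\nu/r)^k$, and since $\nu/r\leq 1/2$ by hypothesis, the geometric series sums to $O((\nu/r)^N)$. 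The remainder in the Hankel expansion is handled by choosing $M$ large depending on $N$ and invoking the classical uniform remainder estimate for Debye-type asymptotics, whose $\nu$-dependence is absorbed into the $2^\nu$ factor. The main obstacle is precisely this bookkeeping: one must verify that after differentiating the expansion $N$ times, no term is worse than $C_N 2^\nu(\nu/r)^N$, which relies on the fact that in the regime $r\geq 2\nu$ the small parameter $\nu/r$ controls every successive term in the expansion.
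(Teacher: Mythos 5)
Your treatment of \eqref{eq:bess1}--\eqref{eq:bess4} is fine and essentially standard (the paper simply quotes these estimates from \cite[Lemma 5.1]{CDYZ}), and identifying $j_{\pm}$ through $2J_{\nu}=H^{(1)}_{\nu}+H^{(2)}_{\nu}$ is consistent with the paper. The problem is \eqref{eq:bess5}, the only part the paper proves in detail, and there your argument has a genuine gap at exactly the critical point. You estimate $j_{\pm}$ by truncating the large-argument (Poincar\'e) expansion of $H^{(1,2)}_{\nu}$ at $M\sim N$ and then assert that the remainder is handled by ``the classical uniform remainder estimate for Debye-type asymptotics, whose $\nu$-dependence is absorbed into the $2^{\nu}$ factor.'' Two objections. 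First, the expansion you wrote is the fixed-order large-argument expansion, not a Debye (large-order) expansion; its classical error bounds with a \emph{fixed} number of terms are not uniform as $\nu\to\infty$ along $r\ge 2\nu$ --- the standard ``remainder bounded by the first neglected term'' statements require the truncation order to be comparable to $\nu$ (roughly $M\gtrsim \nu-\tfrac12$), and otherwise extra $\nu$-dependent factors appear which you would have to track explicitly against the allowed $2^{\nu}$. Second, and more seriously, \eqref{eq:bess5} is a bound on $\partial_{r}^{N}$ of the \emph{exact} functions $j_{\pm}$, so you need $N$ derivatives of the remainder; an asymptotic expansion with an error bound gives no control whatsoever on derivatives of the error, and your proposal never addresses this. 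So the one estimate where uniformity in $\nu$ is the whole point is asserted rather than proved. (A small additional slip: $\nu^{2k}/r^{N+k}\le(\nu/r)^{N+k}$ needs $\nu\ge1$; for $\tfrac12\le\nu<1$ it only holds up to a factor $2^{N}$, which is harmless but should be said.)

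For comparison, the paper's proof of \eqref{eq:bess5} avoids asymptotic series entirely: starting from the Macdonald-function representation it writes $j_{+}(\nu,r)=b'_{\nu}\int_{0}^{\infty}e^{-t}t^{\nu-1/2}\bigl(1+\tfrac{it}{2r}\bigr)^{\nu-1/2}dt$ exactly, and for $r\ge 2\nu$ uses the pointwise bounds $\bigl|\partial_{r}^{N}\bigl(1+\tfrac{it}{2r}\bigr)^{\nu-1/2}\bigr|\le\bigl(\tfrac{\nu}{r}\bigr)^{N}\bigl(1+\tfrac{t}{2r}\bigr)^{\nu-1/2}\le\bigl(\tfrac{\nu}{r}\bigr)^{N}e^{t/2}$, so that differentiation under the integral gives $|\partial_{r}^{N}j_{\pm}(\nu,r)|\le C\bigl(\tfrac{\nu}{r}\bigr)^{N}b_{\nu}\Gamma(\nu+\tfrac12)2^{\nu}\le C\,2^{\nu}\bigl(\tfrac{\nu}{r}\bigr)^{N}$, with every $\nu$-dependence explicit. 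If you want to rescue your route, note that since $2\nu\ge1$ one has $r^{-N}\le 2^{N}(\nu/r)^{N}$, so it suffices to prove $|\partial_{r}^{N}j_{\pm}|\le C_{N}2^{\nu}r^{-N}$, e.g.\ by a sup bound $|j_{\pm}(z)|\lesssim 2^{\nu}$ on a complex disc $|z-r|\le r/2$ plus Cauchy estimates; but obtaining that sup bound again requires an exact representation valid off the real axis rather than a truncated series, which is precisely what the paper's integral formula supplies.
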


\begin{remark} The estimations \eqref{eq:bess1}- \eqref{eq:bess4} are mainly used for $\nu$ in a fixed range, and the last one \eqref{eq:bess5} is used for $2\nu\le r$ as $\nu\to\infty$.
\end{remark}

\begin{proof} The estimations \eqref{eq:bess1}- \eqref{eq:bess4} have been proved in \cite[Lemma 5.1]{CDYZ}. 
Now we only prove \eqref{eq:bess5} by detecting the parameter $\nu$.

 We recall the integral representation of the
  Macdonald function $K_{\nu}(z)$:
  \begin{equation}\label{eq:macd}
    K_{\nu}(z)=b_{\nu}z^{-1/2}e^{-z}
    \int_{0}^{\infty}
    e^{-t}t^{\nu-1/2}
    \left(1+\frac{t}{2z}\right)^{\nu-1/2}dt
  \end{equation}
  where
  $b_{\nu}=\sqrt{\frac \pi2}/\Gamma(\nu+\frac 12)$.
  The representation \eqref{eq:macd} is valid for
  all $z\not\in(-\infty,0]$ and $\Re \nu>-\frac 12$
  (see e.g.~\cite{EMO} page 19).
  By the standard connection formulas
  \begin{equation*} 
    \textstyle
    K_{\nu}(z)=
    \frac{i\pi}{2}e^{i\nu\pi/2}H^{(1)}_{\nu}(iz)=
    -\frac{i\pi}{2}e^{-i\nu\pi/2}H^{(2)}_{\nu}(-iz)
  \end{equation*}
  we deduce the analogous representations, for some
  constants $b'_{\nu},b''_{\nu}$ and

  \begin{equation}\label{eq:hank1}
    H_{\nu}^{(1)}(z)=b'_{\nu}
    z^{-1/2}e^{iz}
    \int_{0}^{\infty}
    e^{-t}t^{\nu-1/2}
    \left(1+\frac{it}{2z}\right)^{\nu-1/2}dt,
  \end{equation}
  \begin{equation}\label{eq:hank2}
    H_{\nu}^{(2)}(z)=b''_{\nu}
    z^{-1/2}e^{-iz}
    \int_{0}^{\infty}
    e^{-t}t^{\nu-1/2}
    \left(1+\frac{t}{2iz}\right)^{\nu-1/2}dt.
  \end{equation}
  where $\Re{\nu}>-1/2$ and
  $$b'_{\nu}=\frac{2e^{-i\nu\pi/2}}{\sqrt{i}\pi} b_{\nu}, \quad b''_{\nu}=\frac{2e^{i\nu\pi/2}}{i\sqrt{i}\pi} b_{\nu}.$$
  Both \eqref{eq:hank1}--\eqref{eq:hank2} are valid at
  least for $\Re z>0$, and we shall use them for
  $z=r\in(0,\infty)$ and $\nu>-1/2$.
  Consider the function on $r\in [1,\infty)$
  \begin{equation*}
    j_+(\nu,r)=
    b'_{\nu}\int_{0}^{\infty}
    e^{-t}t^{\nu-1/2}
    \left(1+\frac{it}{2r}\right)^{\nu-1/2}dt.
  \end{equation*}
  Since 
  $$\Big|\left(1+\frac{it}{2r}\right)^{\nu-1/2}\Big|=
    \big(1+\frac{t^{2}}{4r^{2}}\big)^{\frac \nu2-\frac 14}\le
    \big(1+\frac{t}{2r}\big)^{\nu-\frac12}\leq  \big(1+\frac{\nu}{r}\frac{t}{2\nu}\big)^{\nu}\leq \big(1+\frac{t}{2\nu}\big)^{\nu} \leq e^{\frac{t}2}$$
    provided $r\geq 2\nu$,
  we have (by a change of variable and the definition of the $\Gamma$ function):
  \begin{equation}\label{eq:esta}
    |  j_+(\nu,r)|\le C b_{\nu} \int_{0}^{\infty}
    e^{-\frac t2}t^{\nu-1/2} dt\le C b_{\nu} \Gamma(\nu+\frac12) 2^{\nu}\le C 2^{\nu}
    \quad\text{for}\quad r\geq 2\nu\ge1.
  \end{equation}
 For $r$-derivatives, we have
  \begin{equation*}
    \left|
    \partial_{r}\left(1+\frac{it}{2r}\right)^{\nu-1/2}
    \right|=
    \left|
    \frac{\nu-1/2}{r}
    \left(1+\frac{it}{2r}\right)^{\nu-3/2}\frac{it}{2r}
    \right|
    \le\frac{\nu}r
    (1+\frac{t}{2r})^{\nu-\frac12}
  \end{equation*}
  and more generally 
  \begin{equation*}
    \left|
    \partial_{r}^{N}\left(1+\frac{it}{2r}\right)^{\nu-1/2}
    \right|\le
    \big(\frac{\nu}r\big)^N (1+\frac{t}{2r})^{\nu-\frac12}\leq  \big(\frac{\nu}r\big)^N e^{\frac t2}, 
    \qquad
    r\ge 2\nu\geq 1.
  \end{equation*}
  This implies
  \begin{equation}\label{eq:estan}
    |\partial_{r}^{N}  j_+(\nu,r)|\le
   C 2^{\nu}  \big(\frac{\nu}r\big)^N
    \qquad
    r\ge 2\nu\geq 1.
  \end{equation}
  Recalling \eqref{eq:hank1} we have thus proved,
  for $r\geq 2\nu\geq1$,
  \begin{equation*}
    H^{(1)}_{\nu}(r)=r^{-1/2}e^{ir}  j_+(\nu,r)
    \quad\text{with $ j_+(\nu,r)$ satisfying }\quad 
    \eqref{eq:esta}, \eqref{eq:estan}.
  \end{equation*}
  In a similar way we prove, for $r\geq 2\nu\geq1$,
  \begin{equation*}
    H^{(2)}_{\nu}(x)=r^{-1/2}e^{-ir}  j_-(\nu,r)
    \quad\text{with $ j_-(\nu,r)$ satisfying }\quad 
    \eqref{eq:esta}, \eqref{eq:estan}.
  \end{equation*}
  Since $2J_{\nu}=H^{(1)}_{\nu}+H^{(2)}_{\nu}$,
  we have proved \eqref{eq:bess5}
  for all $r\geq 2\nu\geq1$.
\end{proof}

\begin{center}

\end{center}


\begin{thebibliography}{99}


\bibitem{AB59} Y. Aharonov and D. Bohm, Significance of electromagnetic potentials in the quantum theory, Phys. Rev., 115(1959), 485-491.

\bibitem{AHS1}  J. Avron, I. Herbst and B. Simon, Schr\"odinger operators with magnetic fields, I. General interactions, Duke Math. J. 45 (1978), 847-883.

\bibitem{AHS2}  J. Avron, I. Herbst and B. Simon, Schr\"odinger operators with magnetic fields, II. Separation of center of mass in homogeneous magnetic fields, Ann. Phys. 114 (1978), 431-451.

\bibitem{AHS3}  J. Avron, I. Herbst and B. Simon, Schr\"odinger operators in magnetic fields, III. Atoms in homogeneous magnetic field, Comm. Math. Phys. 79 (1981), 529-572.

\bibitem{BM} D. Baskin, J. L. Marzuola, The radiation field on product cones Adv. Math (408), 2022

\bibitem{BGRM-price} D. Baskin, J. Gell-Redman and J. L. Marzuola, Price’s law for the massless Dirac--Coulomb system, 
Pac. J. Math (335) 2, 211 - 227, 2025

\bibitem{Bea} M. Beals, Optimal $L^\infty$ decay for solutions to the wave equation with a potential, Comm. Partial Differential Equations 19 (1994), 1319–1369. 

\bibitem{BG} M. Beceanu, M. Goldberg, Dispersive estimates for the Schr\"odinger equation with scaling-critical potential, Comm. Math. Phys. 314(2012), 471–481.

\bibitem{BK} M. Beceanu, H.-K. Kwon, Decay estimates for Schr\"odinger's equation with magnetic potentials in three dimensions, arXiv:2411.11787.

\bibitem{BFM} {\sc M. D. Blair, G. A. Ford, and J. L. Marzuola}, \textit{Strichartz estimates for the wave equation on flat cones}, IMRN, 2012, 30 pages, doi:10.1093/imrn/rns002.

\bibitem{BPSS}
 N. Burq, F. Planchon, J. Stalker, and A. S.
Tahvildar-Zadeh, Strichartz estimates for the wave and Schr\"odinger
equations with the inverse-square potential, J. Funct. Anal., 203
(2003), 519-549.

\bibitem{BPST}
 N. Burq, F. Planchon, J. G. Stalker, and A. S. Tahvildar-Zadeh,
Strichartz estimates for the wave and Schr\"odinger equations with potentials of critical decay,
Indiana Univ. Math. J.,  53(2004), 1665-1680.

\bibitem{Carron}  G. Carron, Le saut en z\'ero de la fonction de d\'ecalage spectral, J. Funct. Anal., 212 (2004), 222-260.
 
 \bibitem{CDYZ} F. Cacciafesta, P. D'Ancona, Z. Yin and J. Zhang, Dispersive estimates for Dirac equations in Aharonov-Bohm magnetic fields: massless case, arXiv:2407.12369.

\bibitem{Chen}  X. Chen,  The semiclassical resolvent on conic manifolds and application to Schr\"odinger equations. Comm. Math. Phy., 2(2022),757-826.



\bibitem{C1} J. Cheeger, On the spectral geometry of spaces with cone-like singularities,
Proceedings of the National Academy of Sciences of the United States of America, 76(1979), 2103-2106.

\bibitem{C2} J. Cheeger, Spectral geometry of singular Riemannian spaces,  J. Diff. Geo. 18(1983), 575-657.

\bibitem{cheeger1980} J. Cheeger, D. Gromoll, 
On the lower bound for the injectivity radius of $1/4$-pinched Riemannian manifolds, J. Differential Geom. 15(1980) 437--442.



\bibitem{CK}
 M. Christ, and A. Kiselev,
Maximal functions
associated to filtrations, J. Funct. Anal., 179(2001),
409-425.

\bibitem{CK1}  D. Colton and R. Kress, Inverse acoustic and electromagnetic scattering theory, volume 93 of Applied Mathematical Sciences. Springer-Verlag, Berlin, 1992.

\bibitem{CS}
 S. Cuccagna, and Schirmer,
    On the wave equation with a magnetic potential,
    Comm. Pure Appl. Math., 54(2001), 135-152.
    
    

\bibitem{CT1}
 J. Cheeger, M. Taylor, Diffraction of waves by Conical Singularities parts I, Comm.
Pure Appl. Math., 35(1982), 275-331.

\bibitem{CT2}
 J. Cheeger, M. Taylor, Diffraction of waves by Conical Singularities parts  II, Comm.
Pure Appl. Math., 35(1982),  487-529.


\bibitem{DF}
 P. D'Ancona, and L. Fanelli,
Decay estimates for the wave and Dirac equations with a magnetic potential,
Comm. Pure Appl. Math., 60(2007), 357-392.

\bibitem{DFVV}
 P. D'Ancona, L. Fanelli, L. Vega, and N. Visciglia,
Endpoint Strichartz estimates for the magnetic Schr\"odinger equation,
J. Funct. Anal.,  258(2010), 3227-3240.

\bibitem{Jackiw2} S. Deser and R. Jackiw R, Classical and quantum scattering on a cone, Commun. Math. Phys. 118(1988), 495-509.


\bibitem{ES1949} 
W. Ehrenberg, R.E. Siday,
The refractive index in electron optics and the principles of dynamics,
Proceedings of the Physical Society. Section B, 62(1949), 8-21.

\bibitem{EGS1}
 M.B. Erdogan, M. Goldberg and W. Schlag,
Strichartz and Smoothing Estimates for Schr\"odinger
Operators with Almost Critical Magnetic Potentials in Three and Higher
Dimensions,
Forum Math., 21(2009), 687-722.

\bibitem{EGS2}
 M.B. Erdogan, M. Goldberg and W. Schlag,
Strichartz and smoothing estimates for Schr\"odinger operators with
large magnetic potentials in $\R^3$, J. European Math.
Soc.,  10(2008), 507-531.

\bibitem{EMO}
A. {Erd\'elyi}, W. {Magnus}, F. {Oberhettinger}, and F. G. {Tricomi}.
\newblock {Higher transcendental functions. Vol. II}.
\newblock   (1953).

\bibitem{Fanelli}
 L. Fanelli, Spherical Schr\"odinger Hamiltonians: Spectral Analysis and Time Decay,
A. Michelangeli, G. Dell’Antonio (eds.), Advances in Quantum Mechanics,
Springer INdAM Series 18, DOI 10.1007/978-3-319-58904-6\_8.

\bibitem{Ford}  G. A. Ford, The fundamental solution and
Strichartz estimates for the Schr\"odinger equation on flat
Euclidean cones, Comm. Math. Phys., 299(2010), 447-467.


\bibitem{FW}  G. A. Ford, and J. Wunsch, The diffractive wave trace on manifolds with conic singularities, Adv. in Math. 304 (2017), 1330-1385.


\bibitem{FFFP1}
 L. Fanelli, V. Felli, M. A. Fontelos, and A. Primo, Time decay of scaling critical electromagnetic Schr\"odinger flows,
Comm. Math. Phys., 324(2013), 1033-1067.

\bibitem{FFFP}
 L. Fanelli, V. Felli, M. A. Fontelos, and A. Primo, Time decay of scaling invariant electromagnetic Schr\"odinger equations on the plane,
Comm. Math. Phys., 337(2015), 1515-1533.


\bibitem{FV}
 L. Fanelli,  and L. Vega, Magnetic virial identities, weak dispersion and Strichartz inequalities,
Math. Ann. 344(2009), 249-278.


\bibitem{FFT} V. Felli, A. Ferrero, S. Terracini,  Asymptotic behavior of solutions to Schr\"odinger equations near an
isolated singularity of the electromagnetic potential. J. Eur. Math. Soc. 13(2011), 119-174.

\bibitem{GYZZ} X. Gao, Z. Yin, J. Zhang, J. Zheng, Decay and Strichartz estimates in critical electromagnetic fields. Journal of Functional Analysis. 2022;282(5).


\bibitem{FZZ} L. Fanelli, J. Zhang, J. Zheng, Dispersive estimates for 2D-wave equations with critical potentials. Advances in Mathematics. 2022;400.

\bibitem{Garding53} L. G\r{a}rding On the asymptotic distribution of the eigenvalues and eigenfunctions of elliptic differential operators.
Mathematica Scandinavica. 1953, 237-255.


\bibitem{Jackiw1} P. de Sousa Gerbert and R. Jackiw, Classical and quantum scattering on a spinning cone, Commun. Math. Phys. 124(1989); 229-260.


\bibitem{GH1} C. Guillarmou, and A. Hassell, Uniform Sobolev estimates for
non-trapping metrics, Math. Ann,  (2008),341: 859–896.

\bibitem{HL} A. Hassell and P. Lin,  The Riesz transform for homogeneous Schr\"odinger operators on metric cones.
Rev. Mat.Iberoamericana 30(2014),477-522.


\bibitem{Hooft}  G.'t Hooft, Nonperturbative 2 particle scattering amplitudes in 2+1-dimensional quantum gravity, Comm. Math. Phys. 117(1988), 685-700.

\bibitem{HorVol3}  L. H\"ormander, The analysis of linear partial differential operators III: pseudo-differential operators. Springer Berlin-Heidelberg 2009.




\bibitem{FIO1} L. H\"ormander, Fourier integral operators I, Acta. Math.,  127(1971), 79-183.

\bibitem{HZ23}   X. Huang and J. Zhang, Heat kernel estimate in a conical singular space,  J. Geometry  Anal., 33(2023), 284.


\bibitem{JSS} J.-L. Journe, A. Soffer, and C. D. Sogge, Decay estimates for Schr\"odinger operators, Comm. Pure Appl. Math. 44 (1991), 573-604.

\bibitem{JZ} Q. Jia and J. Zhang, Pointwise dispersive estimates for Schr\"odinger and wave  equations in a conical singular space, Preprint, arXiv:2411.16029.

\bibitem{JZ2} Q. Jia and J. Zhang, Strichartz estimates for Schr\"odinger equation in high dimensional critical electromagnetic fields,  Preprint, arXiv:2412.02296.

\bibitem{KM}  B. Keeler and J. L. Marzuola, Pointwise dispersive estimates for Schr\"odinger operators on product cones,
J. Diff. Equ.,  320(2022), 419-468.

\bibitem{KT}
 M. Keel and T. Tao, Endpoint Strichartz estimates,
Amer. J. Math., 120(1998), 955-980.


\bibitem{klingenberg1962} 
W. Klingenberg, {\"U}ber Riemannsche Mannigfaltigkeiten mit nach oben beschr{\"a}nkter Kr{\"u}mmung, Ann. Mat. Pura Appl. 60 (1962) 49-59.

\bibitem{klingenberg1980} 
W. Klingenberg, T. Sakai, Injectivity radius estimate for 1/4-pinched manifolds, Archiv der Mathematik, 34(1980) 371--376.

\bibitem{abresch1996sphere}
U. Abresch,  W. T. Meyer, A sphere theorem with a pinching constant below 1/4, J. Differential Geom. 44(2): 214-261 (1996)

\bibitem{berger1983} M. Berger, Sur les vari{\'e}t{\'e}s riemanniennes pinc{\'e}es juste au-dessous de 1/4, Annales de l'institut Fourier, 33(1983) 135-150.

\bibitem{petersen2006}
P. Petersen, Riemannian geometry, 2006, Springer.

\bibitem{Le}  N. N. Lebedev. Special functions and their applications. Dover Publications Inc., New York, 1972. Revised edition, translated from the Russian and edited by Richard A. Silverman, Unabridged and corrected republication.


\bibitem{MS1}  D. M\"uller and A. Seeger, Regularity properties of wave propagation on conic manifolds and applications to spectral multipliers,
Adv. in Math. 161(2001), 41-130.


\bibitem{RS}
M. Reed, and B. Simon, Methods of modern mathematical physics. II. Fourier analysis, self-adjointness. Academic Press, New York-London, 1975.

\bibitem{SchlagSurvey}  {W. Schlag}, On pointwise decay of waves. J. Math. Phys. 62(2021), 061509.


\bibitem{SSS1}  W. Schlag, A. Soffer and W. Staubach, Decay for the wave and Schr\"odinger evolutions on manifolds with conical ends, Part I, Trans. AMS 362(2010), 19-52.

\bibitem{SSS2}  W. Schlag, A. Soffer and W. Staubach,  Decay for the wave and Schr\"odinger evolutions on manifolds with conical ends, Part II, Trans. AMS 362(2010), 289-318.

\bibitem{S}  W. Schlag, Dispersive estimates for Schr\"odinger operators: a survey. Mathematical aspects of nonlinear dispersive equations, 255-285, Ann. of Math. Stud. 163, Princeton Univ. Press, Princeton, NJ, 2007.


\bibitem{sogge} C. D. Sogge, Fourier Integrals in Classical Analysis, Cambridge Tracts in Mathematics,
vol. 105, Cambridge University Press, Cambridge, 1993.


\bibitem{Sogge-H}  C. D. Sogge,  Hangzhou lectures on eigenfunctions of the Laplacian. Annals of Mathematics Studies, 188. Princeton University Press, Princeton, NJ, 2014. 

\bibitem{Stein}  E.M. Stein (with the assistance of M.S. Timothy), Harmonic analysis: real-variable methods, orthogonality, and oscillatory integrals. 1993.

\bibitem{Taira} K. Taira, Dispersive estimates and optimality for Schr\"odinger equations on product cones, arXiv:2503.21527.

\bibitem{Taylor}
M. Taylor, Partial Differential Equations, Vol II,
Springer, 1996.

\bibitem{Watson}  G. N. Watson, A Treatise on the Theory of Bessel Functions. Second Edition, Cambridge
University Press, 1944.

\bibitem{wang}  X. Wang,   Asymptotic expansion in time of the Schr\"odinger group on conical manifolds, Ann. Inst. Fourier, 56(2006), 1903-1945.

\bibitem{Yu}  A. Sitenko Yu, The Aharonov-Bohm effect and the inducing of vacuum charge by a singular magnetic string, Nuclear Physics B, 372(1992), 622-634.

\bibitem{Yang1}  M. Yang, Diffraction of the Aharonov–Bohm Hamiltonian,   Annales Henri Poincar\'e,  22(2021),3619–3640.

\bibitem{Yang2} M. Yang, The wave trace and resonances of the magnetic Hamiltonian with singular vector potentials,  Comm. Math. Phys. 389(2022), 1099–1133.

\bibitem{Z}  J. Zhang, Resolvent and spectral measure for Schrodinger operators on flat Euclidean cones, J. Funct. Anal. 282(2022), 109311.

\bibitem{ZZ1} J. Zhang and J. Zheng, Global-in-time Strichartz estimates and cubic Schr\"{o}dinger equation in a conical singular space, arXiv:1702.05813v3

\bibitem{ZZ2}  J. Zhang and J. Zheng, Strichartz estimates and wave equation in a conic singular space,  Math. Ann., 376(2020),525–581.

\end{thebibliography}
\end{document}